\def\F {{\cal F}}
\def\a {\alpha}
\def\av {\alpha^\vee}
\def\Piv {\Pi^\vee}
\def\ai {\a_i}
\def\aiv {\a^\vee_i}
\def\fall   {~\text{for all}~}
\def\with   {~\text{with}~}
\def\for    {~\text{for}~}
\def\iff    {\quad\text{if}\quad}
\def\fa     {\quad\text{for all}~}
\def\g {\mathfrak g}
\def\h {\mathfrak h}
\def\nn{\mathfrak n}
\def\I0{\overline I^0}
\def\Endv{{\rm End}(V)}
\def\Hom{\rm{Hom}({\it P}, \C^\times)}
\def\br {\boldsymbol{\rho}}
\def\ri {{\rm{ri}}\,}
\def\m {\mu}
\def\C {\mathbb C}
\def\D {\Delta}
\def\D {\Delta}
\def\Dr {\Delta^{re}}
\def\Di {\Delta^{im}}
\def\Dp {\Delta_{+}}
\def\Dm {\Delta_{-}}
\def\Drp {\Delta^{re}_+}
\def\Drm {\Delta^{re}_-}
\def\Qp {Q_{+}}
\def\Zp {\mathbb Z_{+}}
\def\la {\langle}
\def\ra {\rangle}
\def\ang{\la\a_j, \av_i\ra}
\def\hR {\mathfrak h_{\mathbb R}}
\def\Z {\mathbb Z}
\def\R {\mathbb R}
\def\n { {\bf m} }
\def\T  {\overline T}
\def\N  {\overline N}
\def\bG{\boldsymbol{G}}
\def\bL{\boldsymbol{L}}
\def\bT{\boldsymbol{T}}
\def\bN{\boldsymbol{N}}
\def\bU{\boldsymbol{U}}
\def\bB{\boldsymbol{B}}
\def\Cai{\star}
\newtheorem{theorem}{Theorem}[section]
\newtheorem{lemma}[theorem]{Lemma}
\newtheorem{corollary}[theorem]{Corollary}
\newtheorem{prop-def}{Proposition-Definition}[section]
\newtheorem{remark}[theorem]{Remark}
\newtheorem{proposition}[theorem]{Proposition}
\newtheorem{exam}{Example}[section]
\newenvironment{proof}{\trivlist \item[\hskip \labelsep{\it Proof.}]}{
 \endtrivlist}
\begin{document}
\title  {Infinite-dimensional reductive monoids associated to highest weight representations of Kac-Moody groups
        }
\date{}
\maketitle

\vspace{ -1.8cm}
\begin{center}
Zhenheng Li \qquad Zhuo Li\footnote{Partially supported by national NSF of China (No 11171202).}\qquad Claus Mokler
\end{center}

\begin{abstract}
Starting with a highest weight representation of a Kac-Moody group over the complex numbers, we construct a monoid whose unit group is the image of the Kac-Moody group under the representation, multiplied by the nonzero complex numbers. We show that this monoid has similar properties to those of a $\mathcal J$-irreducible reductive linear algebraic monoid. In particular, the monoid is unit regular and has a Bruhat decomposition, and the idempotent lattice of the generalized Renner monoid of the Bruhat decomposition is isomorphic to the face lattice of the convex hull of the Weyl group orbit of the highest weight.

\vspace{ 0.3cm}
\noindent {\bf Mathematics Subject Classification 2010:}
20M32, 20G44, 20E42.

\vspace{ 0.3cm}
\noindent {\bf Keywords:} Infinite-dimensional algebraic monoid, reductive algebraic monoid, $\mathcal J$-irre\-ducible, Kac-Moody group, Weyl group orbit.

\end{abstract}

\section{Introduction}
Let $\bG$ be a semisimple linear algebraic group over $\C$. Let $\br:\bG\to GL(V)$ be an irreducible rational representation of $\bG$, i.e., an irreducible highest weight representation of $\bG$ with dominant highest weight $\mu$. Then the Zariski closure
\begin{eqnarray}\label{M as Zariski closure}
   M(\br):= \overline{\C^\times \br(\bG)} \subseteq \Endv
\end{eqnarray}
is an irreducible linear algebraic monoid with reductive unit group $\C^\times \br(\bG)$. It belongs to the class of $\mathcal J$-irreducible reductive linear algebraic monoids \cite[Proposition 4.2]{PR88}. It is normal if and only if $\mu$ is minuscule \cite[Theorem 3.1]{DC04}. The most familiar example is $M(m+1,\C)$, which can be obtained from $SL(m+1,\C)$ and its natural representation on $\C^{m+1}$. Here the corresponding highest weight is the first fundamental dominant weight, which is minuscule.

The theory of reductive linear algebraic monoids has been developed mainly by M. S. Putcha and L. E. Renner. Contributions to the theory have been made also by several other people, for example M. Brion, S. Doty, W. Huang, J. Okni{$\rm\acute{n}$}ski, A. Rittatore, L. Solomon, and E. B. Vinberg. Excellent accounts can be found in \cite{Pu88, Re05, LS95}. Much of the information about a reductive linear algebraic monoid is encoded in its combinatorial objects such as cross-section lattice, type functions, and Renner monoid which is a finite unit regular monoid whose unit group is the Weyl group and whose idempotent lattice is isomorphic to the face lattice of a certain polyhedral cone. In particular, $\mathcal J$-irreducible reductive linear algebraic monoids are quite accessible because here the idempotent lattice of a Renner monoid is isomorphic to the face lattice of the convex hull of a single Weyl group orbit. They have been further investigated in \cite{L2, LR1, LP1, LC1}; some special $\mathcal J$-irreducible reductive linear algebraic monoids, referred to as classical algebraic monoids, have been studied in \cite{Li03a, Li03b, LR03}.

To construct the monoids in (\ref{M as Zariski closure}) we may restrict to semisimple simply connected linear algebraic groups. Now the minimal Kac-Moody groups as defined in \cite{Ku02, MP95}, which we simply call Kac-Moody groups,  generalize semisimple simply connected linear algebraic groups. Is it possible to carry out a similar construction for these groups and their irreducible highest weight representations with dominant highest weights?

There are differences and obstructions. These Kac-Moody groups generalize semisimple simply connected linear algebraic groups only as groups. They are constructed by representation and group theoretical methods and do not carry a coordinate ring or even a topology by their construction. The common theory to deal with finite- as well as infinite-dimensional algebraic geometric objects is the theory of schemes, but a more elementary approach would be appropriate in our context. It is also worth to note the following difference. If the generalized Cartan matrix is degenerate then the Kac-Moody group contains a nontrivial maximal central torus. To descend to the factor group is not a reasonable option because many of the highest weight representations would be lost. For example, an affine Kac-Moody group factored by its maximal central torus has only one-dimensional highest weight representations.

Let $\bG$ be a Kac-Moody group over $\C$, and let $\br:\bG\to {\rm GL}(V)$ be an irreducible highest weight representation with dominant highest weight $\mu$. To generalize the construction (\ref{M as Zariski closure}) we proceed as follows:
\begin{itemize}
\item[a)] We define the monoid $M(\br)$ algebraically and show that it has similar algebraic properties to those of a $\mathcal J$-irreducible reductive linear algebraic monoid.
\item[b)] We equip $M(\br)$ with a coordinate ring and show that $M(\br)$ has similar algebraic geometric properties to those of a $\mathcal J$-irreducible reductive linear algebraic monoid. In general, $M(\br)$ is infinite-dimensional. We determine its Lie algebra.
\item[c)]  We provide a certain subalgebra of $\Endv$, which contains $\mathbb{C}^\times \br(\bG)$, with a coordinate ring, and show that $M(\br)=\overline{\mathbb{C}^\times \br(\bG)}$.
\end{itemize}
This requires some amount of work. In this article we carry out part a). In a subsequent article parts b) and c) are treated. We also restrict to the case where no indecomposable component of $\bG$ stabilizes the highest weight space $V_\mu$, which captures already all relevant ideas and is less technical to write down.

Let $W$ be the associated Weyl group, and $S$ the set of simple reflections. Let $H$ be the orbit hull of $\mu$, which is the convex hull of the Weyl group orbit $W\mu$. For every face $F$ of the orbit hull $H$ we get a linear projection $e(F)$ on $V$ acting on the elements $v_\lambda\in V$ of weight $\lambda$ as
\begin{equation*}
  e(F) v_\lambda := \left\{ \begin{array}{ll} v_\lambda & \text{ if } \lambda \in F, \\
   0 & \text{ else. }\end{array} \right.
\end{equation*}
We define $M(\br)$ to be the monoid generated by $G:=\C^\times \br(\bG)$ and the linear projections $e(F)$ of the faces $F$ of the orbit hull $H$.

For our investigation of $M(\br)$ it is necessary to describe at first the face lattice $\F(H)$ of the orbit hull $H$ of $\mu$, and the action of the Weyl group $W$ on $\F(H)$ by lattice isomorphisms. We even allow $\mu$ to be an arbitrary point of the closed fundamental chamber $\overline{C}$, generalizing some results obtained for finite Weyl groups by W. A. Casselman \cite[Sections 3 and 4]{Ca} building on the articles of A. Borel, J. Tits \cite[Sections 12.14  --  12.17]{BT} and I. Satake \cite[Section 2.3]{Sa}, by E. B. Vinberg \cite[Section 3.1]{V91}, and by M. S. Putcha, L. E. Renner \cite[Section 4]{PR88}.
We show that the set of fundamental faces
\begin{equation*}
  \F := \{ F\in \F(H)\mid F\cap\overline{C}\neq \emptyset \} \cup\{\emptyset \}
\end{equation*}
is a sublattice of $\F(H)$, and a cross section for the action of $W$ on $\F(H)$. We obtain the following detailed descriptions of the fundamental faces, of certain $W$-stabilizers, and of the lattice operations:

(a) A subset $I\subseteq S$ is called $\mu$-connected, if no connected component of $I$ is contained in $J_0:=\{s\in S \mid s\mu=\mu\}$. We show that the convex hull $F_I$ of $W_I\mu$, where $W_I$ is the standard parabolic subgroup associated to $I$, is a fundamental face. It can be described as $F_I=W_I (F_I\cap \overline{C})$, and its relative interior as $\text{\rm ri}(F_I)  = W_I \,(\text{\rm ri}(F_I) \cap \overline{C} )$ with
\begin{eqnarray*}
 \text{\rm ri}(F_I) \cap \overline{C} =  (\mu-\R_>I)\cap  \overline{C}
   = \bigcup_{I_f\subseteq I ,\, (I_f)^0 =I_f} \underbrace{ (\mu-\R_>I)\cap C_{I_*\cup I_f}   }_{\neq\emptyset}\; ,
\end{eqnarray*}
where $(I_f)^0$ denotes the union of the connected components of $I_f$ of finite type,
\[
   I_*:=\{s\in J_0 \setminus I \mid st=ts \text{ for all } t\in I\},
\]
and $C_{I_*\cup I_f}$ is the open facet of $\overline{C}$ of type $I_*\cup I_f$. Even for finite Weyl groups this description of the relative interior seems to be new. Furthermore, $\mu+ \R I$ is the affine hull of $F_I$.

The map from the set of all $\mu$-connected subsets of $S$ to the set $\F\setminus\{\emptyset\}$ of all nonempty fundamental faces, which maps $I$ to $F_I$, is bijective.

(b) Let $I\subseteq S$ be $\mu$-connected. We show that $W_{I_*}$ is the stabilizer of $F_I$ in $W$, and $W_{I\cup I_* } = W_I \times W_{I_*}$ is the isotropy group of $F_I$ in $W$.

(c) Let $I, I'\subseteq S$ be $\mu$-connected and $w,w'\in W$. We show that $w F_I\subseteq w' F_{I'}$ if and only if $I\subseteq I'$ and $w^{-1}w'\in W_{I_*}W_{I'}$.

Let $I, I'\subseteq S$ be $\mu$-connected, and let $w\in W$ be a minimal coset representative of $W_{I_*\cup I }  \backslash  W/ W_{I'_*\cup I'}$. If $w=1$, we set $\text{red}(w):=\emptyset$. If $w=s_1 s_2\cdots s_k$ is a reduced expression, we set $\text{red}(w):=\{s_1,s_2,\ldots,s_k\}$, which is independent of the chosen reduced expression. We show that $I\cup I'\cup \text{\rm red}(w)$ is $\mu$-connected and
\begin{eqnarray*}
 F_I\vee w F_{I'} = F_{I\cup I'\cup \text{\rm red}(w)} \quad \text{ and }\quad    F_I \cap w F_{I'} = \left\{\begin{array}{cll}
  F_{(I\cap w I')^*} & \text{if} &  w\in W_{J_0}, \\
   \emptyset & \text{if} &  w\notin W_{J_0} ,
\end{array}\right.
\end{eqnarray*}
where $(I\cap wI')^*$ denotes the biggest $\mu$-connected subset of $I\cap wI'$. The lattice join and lattice meet of two arbitrary faces can be reduced to these formulas. Even for finite Weyl groups these descriptions of the lattice join and lattice meet seem to be new.

In addition to the results of (a), (b), (c) we obtain characterizations for $H\cap\overline{C}$ to be closed, and for $H$ to have finitely many edges containing $\mu$.

Let $(\bB^\pm,\bN)$ be the twin BN-pairs obtained by the construction of the Kac-Moody group $\bG$. The Weyl group is obtained as $W=\bN/\bT$, where $\bT:=\bB^+\cap\bN=\bB^-\cap \bN$ is a maximal torus of $\bG$. We set $B^\pm:=\C^\times \br(\bB^\pm)$, $N:=\C^\times \br(\bN)$, and $T:=\C^\times \br(\bT)$. Since we restrict to the case where no indecomposable component of $\bG$ stabilizes the highest weight space $V_\mu$, or equivalently where $\Pi$ is $\mu$-connected, the Weyl group $W$ identifies with $N/T$. We obtain the following results, which show that the monoid $M(\br)$ has similar algebraic properties to those of a $\mathcal J$-irreducible reductive linear algebraic monoid as listed in \cite[Chapters 7 and 8]{Re05}.

The group $G$ is the unit group of $M(\br)$, and we have the $G\times G$-orbit decomposition
\begin{eqnarray*}
      M(\br) = \bigcup_{F\in \F} G e(F) G  \quad\text{(disjoint)}.
\end{eqnarray*}
Every idempotent of $M(\br)$ is $G$-conjugate to a unique idempotent $e(F)$, $F\in\F$. In particular, the monoid $M(\br)$ is unit regular.

We have Bruhat and Birkhoff decompositions
\begin{eqnarray*}
     M(\br) = \bigcup_{x\in R} B^\epsilon x B^\delta  \quad\text{(disjoint)},
\end{eqnarray*}
where $\epsilon,\delta\in\{+,-\}$, and $R$ is the monoid generated by $N$ and the idempotents $e(F)$, $F\in \F(H)$, factored by the maximal torus $T$.
The monoid $R$ is a generalized Renner monoid in the sense of E. Godelle \cite[Definition 1.4]{Go}, but we simply call it a Renner monoid. In particular, it is unit regular with unit group the Weyl group $W$, and its idempotent lattice is isomorphic to the face lattice $\F(H)$ of the orbit hull $H$. Furthermore, for $s\in S$, $x\in R$, and $\epsilon,\delta\in\{+,-\}$ we have
\begin{eqnarray*}
   (B^\epsilon s B^\epsilon)(B^\epsilon x B^\delta) \subseteq B^\epsilon s x B^\delta \cup B^\epsilon x B^\delta
     \quad \text {and }\quad
   (B^\delta x B^\epsilon)(B^\epsilon s B^\epsilon) \subseteq B^\delta  x s  B^\epsilon \cup B^\delta x B^\epsilon ,
\end{eqnarray*}
generalizing some properties of the twin BN-pairs of $G$.

All the results on the monoid $ M(\br)$ are reached purely algebraically by explicit calculations, for which several sorts of centralizers and stabilizers have to be determined. Important examples are the left and right centralizers monoids
\[
    \begin{aligned}
        C_G^l(e) := \{g\in G\mid ge =ege\}  \quad\text{and}\quad C_G^r(e) := \{g\in G\mid eg = ege\}
    \end{aligned}
\]
for $e:=e(F_I)$, where $I\subseteq S$ is $\mu$-connected. These coincide with the opposite standard parabolic subgroups $P_{I\cup I_*}^+$ and $P_{I\cup I_*}^-$ of $G$. In particular, this requires the investigation of the weight strings through weights contained in the faces of $H$.

Another class of analogues of reductive algebraic monoids whose unit groups are Kac-Moody groups, called face monoids associated to Kac-Moody groups, have been described and investigated in  \cite{Mo02, Mo05, Mo04, Mo07, Mo09, Mo15a}. Here the idempotent lattice of a Renner monoid is isomorphic to the face lattice of the Tits cone. It is surprising and unexpected that such analogues exist, because they reduce classically to the groups themselves. In general, the face lattice of the Tits cone and the face lattice of the convex hull of a Weyl group orbit in the Tits cone differ drastically. In particular, the face monoids have one idempotent different from the identity for affine and strongly hyperbolic Kac-Moody groups and infinitely many idempotents for indefinite, not strongly hyperbolic Kac-Moody groups,  whereas the monoids discussed in this article have infinitely many idempotents for affine and indefinite Kac-Moody groups.

In some subsequent articles the authors investigate the analogues of normal reductive algebraic monoids over $\mathbb{C}$ whose unit groups are general Kac-Moody groups (Kac-Moody groups which generalize reductive linear algebraic groups). A first step of this program has been reached in \cite{Mo15b} by describing the faces and face lattices of arbitrary Coxeter group invariant convex subcones of the Tits cone for a certain class of root bases, where the simple roots and simple coroots may be linearly dependent.

The contents of the sections of this article are the following: In Section 2 we collect some basic facts about Kac-Moody algebras, Kac-Moody groups, their images under highest weight representations, and some needed facts from convex geometry. It is unfortunate that sometimes the usual notation of Kac-Moody theory and of the theory of $\mathcal J$-irreducible reductive linear algebraic monoids are in conflict. In these instances we keep the notation of the latter.
In Section 3 we establish some facts about the faces and the face lattice of the convex hull of a single Weyl group orbit in the Tits cone.
Section 4 is the main part of the article. Here the monoid $M(\br)$ is introduced and investigated. In particular, all the results stated above on $M(\br)$ are proved.

\vskip 2mm
{\bf Acknowledgement} {We would like to thank M. Putcha and L. Renner for valuable conversations and helpful email communications, and R. Koo for useful comments.
}

\tableofcontents

%
\newpage
%

\section{Preliminaries}

We gather necessary notation and some basic facts about Kac-Moody algebras, Kac-Moody groups, and convex geometry from \cite{Kac90, KP83a, KP85, Ku02, Mo05, MP95, Ro96}.

We denote by $\Z_>$, $\R_>$ the sets of strictly positive numbers of $\Z$, $\R$, and $\Z_+$, $\R_+$ contain in addition the zero.
If $M=\bigcup_{i\in I}M_i$ is a disjoint union of sets we write $M=\bigsqcup_{i\in I}M_i$ briefly. We say that a set $B$ intersects a set $C$, if $B\cap C\neq\emptyset$.

\subsection{Kac-Moody algebras}
A {\it generalized Cartan matrix} is an integral matrix $A=(a_{ij})_{i, j = 1}^m$ such that $a_{ii} = 2, ~a_{ij}\le 0$ for $i\ne j$, ~and $a_{ij} = 0$ implies $a_{ji} = 0$. In this article we fix such a matrix $A$ of rank $l$.

A realization of $A$ is a triple $(\h, \Pi, \Piv)$, where $\h$ is a $(2m-l)$-dimensional complex vector space, $\Pi=\{\a_1, \ldots, \a_m\}\subset \h^*$ and $\Piv=\{\av_1, \ldots, \av_m\}\subset \h$ are linearly independent subsets such that $\ang=a_{ij}$, where $\la\,, \ra: \h^* \times \h \rightarrow \C$ is the natural pairing and $i, j\in\n=\{1, \ldots, m\}$. There exists a realization of $A$, unique up to isomorphism.

Let $\tilde{\g}(A)$ be the complex Lie algebra generated by the abelian Lie algebra $\h$ and the symbols $e_i, f_i$, where $i\in\n$, with the following relations
\begin{equation*}
    [e_i, f_j] = \delta_{i j}\av_i ,   \qquad
    [h, e_i]      = \la\a_i, h\ra e_i ,      \qquad
    [h, f_i]      = -\la\a_i, h\ra f_i ,
\end{equation*}
where $i, j \in\n$, $h\in \h$.
There exists a biggest ideal of $\tilde{\g}(A)$ whose intersection with $\h$ is $\{0\}$. The {\it Kac-Moody algebra} $\g=\g(A)$ is the corresponding quotient Lie algebra. We keep the same notation for the images of $e_i, f_i, \h$ in $\g$.

The set $\Pi$ is called the {\it root basis} and $\Piv$ the {coroot basis}; elements in $\Pi$ are referred to as {\it simple roots} and those in $\Piv$ {\it simple coroots.} The free abelian group
\[
    Q := \bigoplus_{i=1}^m\Z\a_i \subseteq \h^*
\]
is called the {\it root lattice}.
The Lie algebra $\g$ has the {\it root space decomposition}
\[
    \g = \bigoplus_{\a\in Q} \g_\a \quad\text{ where }\quad \g_\a=\{x\in \g \mid [h, x] = \la\a, h\ra x \text{ for all } h\in\h\},
\]
and $\g_\a$ is called the {\it root space} associated to $\a$. In particular, $\g_0=\h$, $\g_{{\a}_i}= \C e_i$, $\g_{{-\a}_i}= \C f_i$, $i\in\n$.
The {\it set of roots} is
\[
    \D:=\{\a\in Q\setminus\{0\} \mid \g_\a\ne \{0\}\}.
\]

The {\it Chevalley anti-involution} $\Cai$ of $\g$ is determined by
\[
    (e_i)^\Cai = f_i, \quad (f_i)^\Cai = e_i, \quad h^\Cai = h, \quad\text{ for all } i\in \n, ~h\in\h.
\]
It satisfies $(\g_\a)^\Cai=\g_{-\a}$, $\a\in Q$.

We put $\Qp := \sum\limits_{i=1}^m\Zp\a_i$, $Q_-:=-\Qp$, and give $\h^*$ the partial order
\[
    \mu \ge\mu_1 \quad \Leftrightarrow \quad\m-\mu_1\in\Qp.
\]
Then $\D=\Dp\cup\Dm$ where $\Dp=\{\a\in \D \mid \a > 0\}$ is the set of positive roots and $\Dm=\{\a\in \D \mid \a < 0\}$ is the set of negative roots.  Accordingly, there is the triangular decomposition
\[
    \g = \nn_- \oplus \h \oplus \nn_+ \quad\text{ with } \quad\nn_\pm :=  \bigoplus_{\a\in \D_\pm} \g_\a,
\]
and $(\nn_-)^\Cai=\nn_+$, ~$\h^\Cai=\h$, and $(\nn_+)^\Cai=\nn_-$.

As in \cite[Section 4.7]{Kac90} we associate to a generalized Cartan matrix $A$ its Dynkin diagram, a certain graph whose vertices can be identified with the elements of $\n$ or $\Pi$. The connected components of the Dynkin diagram correspond to the indecomposable generalized Cartan submatrices of $A$. We call $I,J\subseteq\Pi$ separated if $\la\a,\beta^\vee\ra=0$ for all $\a\in I$ and $\beta\in J$.

For the classification of the Kac-Moody algebras $\g(A)$ whose generalized Cartan matrices $A$ are indecomposable into finite, affine, and indefinite type we refer to Chapter 4 of \cite{Kac90}. The Kac-Moody algebra $\g(A)$ is of strongly hyperbolic type if it is of indefinite type and every proper nonempty indecomposable generalized Cartan submatrix of $A$ is of finite type.

For each $i \in\n$ define the {\it fundamental reflection} $r_i\in GL(\h^*)$ by
\[
    r_i(\m) = \m - \la \m, \av_i\ra \a_i , \fa  \m \in \h^* .
\]
The Weyl group $W$ of $\g$ is the subgroup of $GL(\h^*)$ generated by $S:=\{r_i\mid i\in\n\}$. Moreover, $(W, S)$ is a Coxeter system.

For $I, J\subseteq \Pi$ we denote by $W_I$ the standard parabolic subgroup generated by $I$. We denote by $\mbox{}^I W$ the set of minimal coset representatives of $W_I\backslash W$, and by $W^J$ the set of minimal coset representatives of $W/W_J$. We use $\mbox{}^I W^J$ to denote the set of minimal coset representatives of $W_I\backslash W/W_J$.

A {\it real root} is an element of $\Dr:=W\Pi$, and an {\it imaginary root} is an element of $\Di:=\D\setminus\Dr$. If $\a\in\Dr$, then $\dim g_\a = 1$ and $\D\cap\Z\a = \{\a, -\a\}$. If $\a\in\Di$, then $\Z\a \subset \D\cup\{0\}$.

The Weyl group $W$ acts dually on $\h$. In particular, for $i\in\n$ we have
\[
    r_i(h) = h - \la \a_i, h\ra h , \fa  h \in \h.
\]
A {\it real coroot} is an element of $(\Dr)^\vee :=W\Pi^\vee$.  We obtain a $W$-equivariant bijective map $\mbox{}^\vee:\Dr \to (\Dr)^\vee$ by mapping $\a=w\a_i$ to $\av = w\av_i $, $w\in W$, $i\in\n$. Furthermore, $\a>0$ if and only if $\av>0$; the partial order on $\h$ is defined similarly as on $\h^*$.

The reflection with respect to $\a\in\Dr$ is defined by
\[
    r_\a(\m) = \m - \la \m, \av\ra \a , \fa  \m \in \h^* .
\]
If $\a=w(\a_i)$ for some $w\in W$ and $i\in\n$, then $ r_\a = w r_i w^{-1}$. In particular, $r_{\a_i}=r_i$.

Let $\h_\R\subset\h$ be a real vector space of dimension $2m-l$ such that $(\h_\R, \Pi, \Piv)$ is a realization of $A$ over $\R$. Then $\h^*_\R\subset\h^*$ is stable under $W$. The set
\[
    \overline{C} = \{\mu \in \h_\R^* \mid \la \m, \a^\vee \ra \ge 0, \text{ for all } \a \in \Pi \}
\]
is called the fundamental chamber, and
\[
    X = \bigcup_{w\in W} w \overline{C}
\]
is referred to as the Tits cone.

For $J\subseteq \Pi$ we set
\begin{eqnarray*}
  &&  C_J =\{ \mu \in \h_\R^*\mid  \la\mu , \a^\vee\ra = 0\text{ for all }\a\in J,\; \la \mu , \a^\vee\ra > 0\text{ for all }\a\in \Pi\setminus J\},\\
  && \overline{C}_J =\{ \mu \in \h_\R^*\mid \la \mu, \a^\vee\ra = 0\text{ for all }\a\in J,\;  \la \mu , \a^\vee\ra \geq 0\text{ for all }\a\in \Pi\setminus J\} .
\end{eqnarray*}
We call $C_J$ the open, and $\overline{C}_J$ the closed standard facet of type $J$. In particular, $\overline{C}_\emptyset=\overline{C}$. For every $w\in W$ we call $w C_J$ an open, and $w\overline{C}_J$ a closed facet of type $J$.

Any $W$-orbit contained in $X$ intersects the fundamental chamber $\overline{C}$ in exactly one point. We have
\begin{equation*}
\overline{C} =\bigsqcup_{J\subseteq\Pi} C_J ,
\end{equation*}
and for every $\mu\in C_J$ its isotropy group $ W_\m := \{w\in W\mid w \mu=\mu\}$ is given by the standard parabolic subgroup $W_J$. We also call $J$ the type of $\mu$.

We have $wC_J=w'C_{J'}$ if and only if $J=J'$ and $wW_J=w'W_{J'}$. The open facets $\{ w C_J\mid w\in W,\, J\subseteq\Pi \}$ give a $W$-invariant partition of the Tits cone $X$.

A $\g$-module $V$ is called {\it $\h$-diagonalizable} if
\begin{eqnarray*}
    V=\bigoplus_{\eta\in\h^*}V_\eta \quad\text{ where }\quad V_\eta = \{v\in V\mid h v = \la\eta,\,h\ra v \fall h\in\h\},
\end{eqnarray*}
 and $V_\eta$ is called the weight space associated to $\eta$. The set of weights is
\begin{eqnarray*}
   P(V):=\{\eta\in\h^*\mid V_\eta\neq\{0\} \}.
\end{eqnarray*}

An $\h$-diagonalizable $\g$-module $V$ is called {\it integrable} if $e_i$ and $f_i $ are locally nilpotent on $V$ for all $i\in\n$. Its set of weights $P(V)$ is $W$-invariant. For $\eta\in P(V)$ and $\a\in\D^{re}$ the set $P(V)\cap(\eta+\mathbb{Z}\a)$ is called the $\a$-weight string through $\eta$. If $V_\eta$ is finite dimensional then the $\a$-weight string through $\eta$ is of the form
\[
    \eta-p\a, ~\ldots~, ~\eta-\a, ~\eta, ~\eta+\a, ~\ldots~, ~\eta+q\a,
\]
where $p$ and $q$ are nonnegative integers and $ p-q = \la\eta,\,\a^\vee\ra$.

Associated to each $\mu\in \h^*$ is, up to isomorphism, a unique irreducible highest weight module $V$ with highest weight $\m$. It is $\h$-diagonalizable with finite dimensional weight spaces. There is a nondegenerate symmetric contravariant bilinear form on $V$, unique up to a non-zero multiplicative scalar, such that
\begin{equation*}
    (gx\,|\,y)=(x\,| \,g^\Cai y) \quad\for g\in\g, ~x, y\in V.
\end{equation*}
Moreover, $V$ is integrable if and only if $\la \mu , \alpha_i^\vee\ra \in\mathbb{Z}^+$ for all $i\in\n$.

\subsection{Kac-Moody groups}\label{KM}

There are different versions of Kac-Moody groups. We use the one given in \cite{Mo05, MP95}, which will be described below. Others can be found in \cite{KP83a, KP85, Ku02, T85, T87, T92}.
The construction requires certain dual free abelian groups as additional data, which are used to specify a torus algebraic geometrically.

The set $\Piv$ can be extended to a basis of $\h$ by adding elements $\a_{m+1}^\vee, \ldots, \a_{2m-l}^\vee\in\h_{\mathbb{R}}$ such that $\ang\in\Z$ for $j\in\n$ and $i=m+1, \ldots, 2m-l$.
Let
\[
    \h_\Z := \Z\av_1 + \cdots + \Z\av_{2m-l} \subset \h_{\mathbb{R}}\subset  \h,
\]
which is a free abelian group of rank $2m-l$. It is $W$-invariant.

Let $\{\m_1, \ldots, \m_{2m-l}\}$ be the basis of $\h^*$ dual to the basis $\{\av_1, \ldots, \av_{2m-l}\}$ of $\h$. The {\it weight lattice}
\[
    P := \Z\m_1 + \cdots + \Z\m_{2m-l} \subset \h_{\mathbb{R}}^*\subset \h^*
\]
is a free abelian group of rank $2m-l$, and is dual to $\h_\Z$. It is $W$-invariant. Note also that $\Pi\subset P$, and hence $Q\subseteq P$.
The elements of $P$ are called {\it integral weights}, or simply {\it weights}. The elements of
\[
     P_+ := P\cap \overline{C} = \{\m\in P \mid \la\m, \aiv\ra \in\Zp\fall i\in\n\}
\]
are called  {\it dominant} integral weights, or simply {\it dominant} weights, and $\m_1,\ldots,\m_m\in P_+$ are called {\it fundamental} dominant weights.

An integrable representation $(V, \rho)$ of $\g$ is called {\it admissible} if $P(V)\subseteq P$. The adjoint representation of $\g$ is admissible.
Let $\widetilde{\bG}$ be the free product of the torus
\[
 \text{Hom}((P,+), (\C^\times,\cdot))
\]
 and the additive groups $\g_\a$ for $\a\in\Dr$. For any admissible representation $(V, \rho)$ of $\g$, there is a unique group homomorphism $\widetilde{\rho}: \widetilde{\bG}\rightarrow GL(V)$ which maps $\chi\in\Hom$ to $\chi_\rho\in GL(V)$ given by
\begin{equation*}
    \chi_\rho(v_\eta) := \chi(\eta)v_\eta \iff v_\eta\in V_\eta ,\, \eta\in P(V),
\end{equation*}
and which maps $x_\a\in\g_\a$ to $\exp(\rho(x_\a))\in GL(V)$, $\a\in\D^{re}$.

Let $\widetilde{K}$ be the intersection of the kernels of all homomorphisms $\widetilde{\rho}\,'$ where the intersection is taken over all admissible representations $(V', \rho\,')$. The Kac-Moody group is defined to be
\[
    \bG = \widetilde{\bG}/\widetilde{K}.
\]
Let $q: \widetilde{\bG}\rightarrow \bG$ be the natural quotient homomorphism. Then there is a unique group representation $\boldsymbol{\rho}: \bG \rightarrow GL(V)$ such that the diagram
\[
\begin{diagram}
\node{\widetilde{\boldsymbol{G}}}\arrow{s,l}{q}\arrow{e,t}{\widetilde{\rho}}\node{GL(V)}\\
\node{\boldsymbol{G}}\arrow{ne,r}{\br}\node{}
\end{diagram}
\]
commutes.

Let $i_0: {\Hom} \rightarrow  \widetilde{\bG} $ and $i_\a : \g_\a \rightarrow \widetilde{\bG} $, $\a\in\D^{re}$, be the canonical inclusions. The composition $\boldsymbol{t}:=qi_0: {\Hom}\rightarrow \bG$ is an injective group homomorphism. Its image $\bT$ is called the torus of $\bG$ associated to $\h$. For any $\a\in\D^{re}$ the composition $\exp:=qi_\a:\g_\a\to \bG$ is an injective group homomorphism. Its image $\bU_\a$ is called the root group of $\bG$ associated to $\g_\a$ or simply to $\a$. Note that
\begin{align*}
      \boldsymbol{\rho}({\boldsymbol t}(\chi)) &= \chi_\rho \quad\quad\quad\quad \fall \chi\in\Hom,
\\
      \boldsymbol{\rho}(\exp (x)) &= \exp (\rho(x)) \quad \fall x\in\g_\a,\,\a\in\D^{re},
\end{align*}
for every admissible representation $(V,\rho)$ of $\g$.

The torus $\bT$ can be also described in a different way. For every $h\in\h_\Z$ and $c\in\C^\times$ we get an element $\chi_h(c)\in \Hom$ by $ \chi_h(c) \eta := c^{\la\eta, h\ra}$, $ \eta\in P$, inducing an isomorphism of the abelian groups $(\h_\Z,+)\otimes_\Z (\C^\times,\cdot)$ and  $\text{Hom}((P,+), (\C^\times,\cdot))$.
For every $h\in\h_\Z$ and $c\in\C^\times$ the corresponding element $t_h(c):={\boldsymbol t}(\chi_h(c))\in \bT$ acts on each admissible representation $(V,\rho)$ of $\g$ by
\begin{eqnarray*}
     \br( t_h(c)) v_\eta = c^{\la\eta, h\ra} v_\eta ,\quad  v_\eta\in V_\eta,\; \eta\in P(V).
\end{eqnarray*}
Each element $t\in \bT$ can be written uniquely as
\begin{eqnarray*}
    t=\prod_{i=1}^{2m-l} t_i(c_i), \quad\text{for some } c_1, \ldots, c_{2m-l}\in \C^\times,
\end{eqnarray*}
where we have set $t_i(c_i):=t_{h_i}(c_i)$, $i=1,\ldots, 2m-l$.

For every $\a\in\Dr$ we choose $e_\alpha\in\g_{\alpha}$, $f_{\alpha}\in\g_{-\alpha}$ such that $[e_{\alpha},f_{\alpha}]=\alpha^\vee$. For simplicity we choose $e_{\a_i}=e_i$ and $f_{\a_i}=f_i$ if $i\in\n$, and $e_{-\a}=f_\a$ and $f_{-\a}=e_\a$ if $\a\in\Dr_+$.
There is a unique injective homomorphism $\varphi_\a: \rm{SL_2(\C)} \rightarrow \bG$ satisfying
\[
    \varphi_\a \begin{pmatrix}
                        1   &   c \\
                        0   &   1 \\
                    \end{pmatrix}
            = \exp (c e_\a)
    \quad\text{and}\quad
    \varphi_\a    \begin{pmatrix}
                        1   &   0 \\
                        c   &   1 \\
                    \end{pmatrix}
             = \exp (c f_\a),
            \fa
            c\in \C.
\]
Its image $\boldsymbol{SL}_2^{(\a)}$ is the subgroup of $\bG$ generated by $\bU_\a$ and $\bU_{-\a}$.
For $c\in \C^\times$ we set
\[
    n_\a(c) :=  \varphi_\a \begin{pmatrix}
                0       &   c \\
                -c^{-1} &   0 \\
            \end{pmatrix} .
 \]
Then
\begin{eqnarray*}
    n_\a(c) = \exp (ce_\a) \exp(-c^{-1} f_\a)\exp(ce_\a) =  \exp (-c^{-1} f_\a) \exp(c e_\a)\exp(-c^{-1}f_\a) ,
\end{eqnarray*}
and
\begin{equation*}
    t_{\a^\vee}(c) =  n_\a(c)n_\a(1)^{-1} = \varphi_\a  \begin{pmatrix}
            c   &   0 \\
            0   &   c^{-1} \\
         \end{pmatrix},\quad c\in\C^\times.
\end{equation*}
We write $n_i(c)$ for $n_{\a_i}(c)$, and write $n_i$ for $n_{\a_i}(1)$, $i=1,\ldots, m$.
We denote by $\bN$ the subgroup of $\bG$ generated by the elements of the torus $\bT$ and $n_\a(1)$, $\a\in\Dr$.

The system $(\bG, \bT, (\bU_\a)_{\a\in\Dr})$ is a root group data system of type $(W,S)$ with associated twin BN-pairs $(\bB^\pm, \bN)$. These structures are explained  in \cite[Chapter 8]{AB08}. In particular,
\begin{eqnarray*}
    \bigcap_{\a\in\D^{re}}N_{\bG}(\bU_\a)=\bT \quad\text{ and }\quad \bB^+\cap\bB^-=\bB^+\cap \bN=\bB^-\cap \bN=\bT
\end{eqnarray*}
where $N_{\bG}(\bU_\a)$ denotes the normalizer of $\bU_\a$ in $\bG$. We denote by $\bU^\pm$ the subgroup of $\bG$ generated by the root groups $\bU_\a$, $\a\in\D^{re}_\pm$.

The Coxeter system given by the Weyl group $\bN/\bT$ and the set of simple reflections ${\boldsymbol S} =\{n_i{\boldsymbol T}\mid i\in\n\}$ is isomorphic to $(W,S)$. We identify these Coxeter systems. If $w\in W$ is represented by $n_w\in \bN$ then for every $\a\in\Dr$ we have $n_w \bU_\a n_w^{-1} =\bU_{w\a}$. For every admissible $\g$-module $V$ we have
\begin{eqnarray}\label{NTAction}
  n_w V_\eta= V_{w\eta} \quad\text{ for all }\quad\eta\in P(V).
\end{eqnarray}

The Kac-Moody group $\bG$ has the Birkhoff and Bruhat decompositions
\[
    \bG = \bigsqcup_{w\in W} \bB^- w \bB^+ =  \bigsqcup_{w\in W} \bB^+w \bB^-
    \quad\text{and}\quad
    \bG = \bigsqcup_{w\in W} \bB^+  w \bB^+ = \bigsqcup_{w\in W} \bB^- w \bB^-.
\]

Let $I\subseteq \Pi$. The standard parabolic subgroups ${\boldsymbol P}_I ^+= \bB^+ W_I \bB^+$ and ${\boldsymbol P}_I^- = \bB^-W_I \bB^-$ admit the Levi decompositions
\begin{equation*}
   {\boldsymbol  P}_I ^\pm = \bL_I \ltimes \bU^I_\pm .
\end{equation*}
Here $\bL_I = {\boldsymbol P}_I^+\cap {\boldsymbol P}_I^-$ is the subgroup of $\bG$ generated by $\bT$ and the root groups $\bU_\a$,  $\a\in W_I I$. Furthermore,
$\bU^I_\pm = \cap_{w\in W_I}w\bU^\pm w^{-1}$ are the normal subgroups of $\bU^\pm$ generated by the root groups $\bU_\a$, $\a\in\D^{re}_\pm\setminus W_I I$. In particular,
\begin{eqnarray*}
 \bB^\pm = \bT \ltimes \bU^\pm.
\end{eqnarray*}

The center of the Kac-Moody group ${\boldsymbol G}$ is
\[
    Z(\boldsymbol{G}) = \bigg\{ \prod_{i=1}^{2m-l}t_{i}(c_i)\in\bT \biggm |  \prod_{i=1}^{2m-l}c_i^{\la\a, ~\av_i\ra}=1\text{ for all }\a\in\Dr \bigg\} \subseteq\bT.
\]

An irreducible highest weight representation $(V,\rho)$ of $\g$ with highest weight $\mu$ is admissible if and only if $\mu\in P_+$. We call the corresponding representation $(V,\br)$ of $\bG$ an irreducible highest weight representation of $\bG$ of highest weight $\mu$.
%
%
%
%
%
%
The following Lemma describes in two cases the kernel $\ker(\br)= \br^{-1}(\text{\rm id}_V)$ and the normal subgroup $\br^{-1}(\C^\times \text{\rm id}_V)$ of $\bG$, where $\text{\rm id}_V$ denotes the identity map on $V$. To prove part (b), \cite[Chapter IV, \S 2.7, Lemma 2]{BBK02} is useful.
\begin{lemma}\label{kernel}
Let $(V, \br)$ be an irreducible highest weight representation of $\bG$ of highest weight $\mu\in P_+$. Let $J_0$ be the type of $\mu$.
\begin{enumerate}
\item[{\rm(a)}] If $J_0=\Pi$ then $\br^{-1}(\C^\times \text{\rm id}_V)= \bG$ and $\ker(\br)\supseteq \bG'$.
\item[{\rm(b)}]  If no connected component of $\Pi$ is contained in $J_0$ then $\br^{-1}(\C^\times \text{\rm id}_V)= Z(\bG)$ and
        \[
            \ker(\br) = \bigg\{ \prod_{i=1}^{2m-l}t_{i}(c_i)\in Z({\boldsymbol{G}}) \biggm | \prod_{i=1}^{2m-l}c_i^{\la\mu, ~\av_i\ra}=1 \bigg\}\subseteq Z({\boldsymbol{G}}) \subseteq \bT.
        \]
\end{enumerate}
\end{lemma}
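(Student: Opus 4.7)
Part (a) is immediate: $J_0=\Pi$ forces $\la\mu,\aiv\ra=0$ for every $i\in\n$, so integrability together with the highest-weight identity gives $e_i v_\mu = f_i v_\mu = 0$; hence $\C v_\mu$ is already a $\g$-submodule, and irreducibility yields $V=\C v_\mu$. Then $\br(\bG)\subseteq\C^\times\text{\rm id}_V$, so $\br^{-1}(\C^\times\text{\rm id}_V)=\bG$, and since the image is abelian, $\bG'\subseteq\ker(\br)$.

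For part (b), I would first dispose of the easy inclusion $Z(\bG)\subseteq\br^{-1}(\C^\times\text{\rm id}_V)$: any $t=\prod_{i=1}^{2m-l}t_i(c_i)\in Z(\bG)$ satisfies $\prod c_i^{\la\alpha_j,\aiv\ra}=1$ for every $j\in\n$ (applying the defining condition to $\alpha_j\in\Pi\subseteq\Dr$), so the map $\eta\mapsto\prod c_i^{\la\eta,\aiv\ra}$ is constant on $P(V)\subseteq\mu-\Qp$ equal to $\prod c_i^{\la\mu,\aiv\ra}$, yielding $\br(t)=\bigl(\prod c_i^{\la\mu,\aiv\ra}\bigr)\text{\rm id}_V$. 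For the reverse inclusion, set $N:=\br^{-1}(\C^\times\text{\rm id}_V)$, which is a normal subgroup of $\bG$. Writing $g=b_1 n_w b_2\in N$ with respect to $\bG=\bigsqcup_{w\in W}\bB^+w\bB^+$, using that $\bU^+$ fixes $v_\mu$ and $\bT$ preserves $V_\mu$, together with (\ref{NTAction}), the image $\br(g)v_\mu$ carries a nonzero component in $V_{w\mu}$; for it to lie in $V_\mu$ one needs $w\mu=\mu$, i.e., $w\in W_{J_0}$, and so $g\in{\boldsymbol P}_{J_0}^+$.

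Normality of $N$ then gives $N\subseteq\bigcap_{h\in\bG}h\,{\boldsymbol P}_{J_0}^+h^{-1}$. Decomposing $\bG=\prod_l\bG_l$ along indecomposable components $\Pi=\bigsqcup_l\Pi_l$ and correspondingly ${\boldsymbol P}_{J_0}^+=\prod_l{\boldsymbol P}_{J_0\cap\Pi_l}^+$, the hypothesis that no component of $\Pi$ is contained in $J_0$ makes every $J_0\cap\Pi_l$ a proper subset of $\Pi_l$, so each ${\boldsymbol P}_{J_0\cap\Pi_l}^+$ is a proper parabolic of $\bG_l$. The cited Bourbaki lemma supplies the Coxeter-theoretic input $\bigcap_{w\in W_l}w\,W_{J_0\cap\Pi_l}w^{-1}=\{1\}$, which lifts via the twin BN-pair structure of $\bG_l$ to $\bigcap_h h\,{\boldsymbol P}_{J_0\cap\Pi_l}^+h^{-1}=Z(\bG_l)$; taking products yields $N\subseteq\prod_l Z(\bG_l)=Z(\bG)$. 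The kernel description is then immediate: for $t\in Z(\bG)$ the formula above gives $\br(t)=\prod c_i^{\la\mu,\aiv\ra}\text{\rm id}_V$, whence $t\in\ker(\br)\Leftrightarrow\prod c_i^{\la\mu,\aiv\ra}=1$.

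The main obstacle is the intersection-of-conjugates step: while the Bruhat reduction $N\subseteq{\boldsymbol P}_{J_0}^+$ is clean, identifying $\bigcap_h h{\boldsymbol P}_{J_0}^+h^{-1}$ with $Z(\bG)$ rests on the subtle combination of the Coxeter-theoretic Bourbaki lemma with the BN-pair structure of indecomposable Kac-Moody groups, and the hypothesis on connected components is indispensable, since otherwise in some factor the parabolic could coincide with the whole group and the argument would collapse.
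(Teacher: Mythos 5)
The paper itself offers no written proof of this lemma beyond the remark that Bourbaki, Ch.~IV, \S 2.7, Lemma~2 is useful for part (b), so your proposal can only be measured against the argument that reference is meant to supply. Your part (a) is correct, and so are the two easy pieces of part (b): the computation showing that an element of $Z(\bG)$ acts by the scalar $\prod_i c_i^{\la\mu,\aiv\ra}$ (because the character $\eta\mapsto\prod_i c_i^{\la\eta,\aiv\ra}$ is trivial on $Q$ and $P(V)\subseteq\mu-\Qp$), and the Bruhat/highest-weight-vector argument trapping $N:=\br^{-1}(\C^\times\text{\rm id}_V)$ inside $\boldsymbol{P}_{J_0}^+$. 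This is certainly the intended architecture.

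The gap is in the passage from ``$N$ is normal and contained in $\boldsymbol{P}_{J_0}^+$'' to ``$N\subseteq Z(\bG)$''. You reduce it to the claim $\bigcap_h h\boldsymbol{P}_{J_0}^+h^{-1}=Z(\bG)$ and justify that by the Coxeter-level statement $\bigcap_{w}wW_{J_0\cap\Pi_l}w^{-1}=\{1\}$, ``lifted via the twin BN-pair structure''. That lifting is exactly the nontrivial content and does not follow formally: conjugation in $\bG$ does not respect Bruhat cells, so triviality of the normal core of $W_{J_0}$ in $W$ gives no direct control on the normal core of $\boldsymbol{P}_{J_0}^+$ in $\bG$. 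The group-level fact the Bourbaki citation supplies is that for a normal subgroup $H$ of a group with a BN-pair, $\bB^+H\bB^+$ is a standard parabolic $\boldsymbol{P}_X^+$ with $X$ separated from $\Pi\setminus X$, i.e.\ a union of connected components of the diagram; applied to $N$ itself this gives $X\subseteq J_0$, hence $X=\emptyset$ by hypothesis, hence $N\subseteq\bB^+$, and then $N=N^\Cai\subseteq\bB^-$ yields $N\subseteq\bT$. Even at that point you are not finished: to conclude $N\subseteq Z(\bG)$ you must show that an element $\prod_i t_i(c_i)$ of $\bT$ acting by a scalar satisfies $\prod_i c_i^{\la\alpha_j,\aiv\ra}=1$ for every $j\in\n$, which requires that each simple root $\alpha_j$ occur as a difference of two weights of $V$. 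This is a second, independent use of the hypothesis that no component of $\Pi$ lies in $J_0$ (it is essentially Lemma \ref{add weight and root}, which the paper deduces \emph{from} the present lemma, so it must be proved directly here via the support of the irreducible module); your proposal never addresses it, and it is silently buried in the unproven identity $\bigcap_h h\boldsymbol{P}_{J_0}^+h^{-1}=Z(\bG_l)$. Finally, the decomposition $\bG=\prod_l\bG_l$ with $Z(\bG)=\prod_l Z(\bG_l)$ is not literally available, since $\bT$ has rank $2m-l$ and does not split along the indecomposable components; this is repairable, and in fact unnecessary once one argues with $X$ a union of components of $\Pi$ directly.
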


The Chevalley anti-involution $\Cai$ of $\g$ induces an anti-involution of the group $\bG$, denoted still by $\Cai$, defined by
\begin{equation*}
    t^\Cai = t \quad\text{and}\quad (\exp(x_\a))^\Cai = \exp((x_\a)^\Cai)  \quad\for t\in \bT, ~x_\a\in\g_\a, ~\a\in\Dr.
\end{equation*}
If $(V,\br)$ is an irreducible highest weight representation of $\bG$ and  $(\, \mid \,)$ a contravariant nondegenerate symmetric bilinear form on $V$, then
\begin{equation*}
    (gx \,|\, y) = (x \,|\, g^\Cai y)\quad\text{ for all } x, y\in V, ~g\in \bG.
\end{equation*}

\subsection{The group $\C^\times\br(\bG)$}
%
%
%

In this subsection $(V, \br)$ is an irreducible highest weight representation of $\bG$ of highest weight $\mu\in P_+$, such that no connected component of $\Pi$ is contained in the type  $J_0$ of $\mu$. Furthermore, $(\, \mid \,)$ is a contravariant nondegenerate symmetric bilinear form on $V$.

Viewing each $c\in\C^\times$ as the natural scalar multiplication by $c$ on $V$, the product
\[
    G:=\C^\times\br(\bG)
\]
is a subgroup of End$(V)$. This group is the unit group of the monoid $M(\br)$ defined in Section \ref{TheMonoid}.
The adjoints of the elements of $G$ with respect to the nondegenerate bilinear form $(\, \mid \,)$ exist, and are contained in $G$. In this way we get an anti-involution $\Cai$ on $G$, which can be described by
\begin{equation*}
    (c\br(g))^\Cai = c\br(g^\Cai), \quad c\in\C^\times, \,g\in\bG.
\end{equation*}
We call this anti-involution the Chevalley anti-involution of $G$.

Because of Lemma \ref{kernel} (b) the group $G$ inherits most structures from the Kac-Moody group $\bG$. The following Corollary is easy to see.
\begin{corollary}\label{twinned BN for G}
{\rm (a)} We obtain a root group data system of $G$ of type $(W,S)$ by
\[
   T : = \C^\times \br({\boldsymbol{T}}) \quad\text{and}\quad U_\a  : = \br({\boldsymbol{U}_\a}), ~ \a\in\Dr.
\]
In particular, $ \bigcap_{\a\in\D^{re}}N_{G}(U_\a)= T $ where $N_{G}(U_\a)$ denotes the normalizer of $U_\a$ in $G$.

{\rm (b)} The associated twin BN-pairs are given by
\[
  B^\pm:=\C^\times \br(\boldsymbol{B}^\pm)   \quad\text{and}\quad  N := \C^\times \br({\boldsymbol{N}}) .
\]
In particular, $ B^+\cap B^-= B^+\cap N= B^-\cap N= T$.
The Coxeter system given by the Weyl group $N/T$ and the set of simple reflections $\{\br(n_i) T\mid i\in\n\}$ is isomorphic to the Coxeter system $(W, S)$.
\end{corollary}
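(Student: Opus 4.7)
The plan is to exploit Lemma~\ref{kernel}(b), which forces $\ker(\br)\subseteq Z(\bG)\subseteq\bT$, to transfer the root group data and twin BN-pair structures of $\bG$ to $G$. Setting $Z:=\C^\times\,\mathrm{id}_V\subseteq T$, the lemma yields a canonical isomorphism $\bG/Z(\bG)\xrightarrow{\sim}G/Z$. I would start with the injectivity observations that underpin both parts. For each $\a\in\Dr$, $\br|_{\bU_\a}$ is injective because $\bU_\a\cap\bT=\{1\}$, so $U_\a\cong(\C,+)$; moreover $U_\a\cap Z=\{1\}$, since a nonidentity element of $U_\a$ shifts weights of $V$ by $\a\ne 0$ and therefore cannot be scalar. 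Likewise $\bN\cap\ker(\br)\subseteq\bN\cap\bT=\bT$, so $N/T\cong\bN/\bT\cong W$, with simple reflections $\br(n_i)T\leftrightarrow n_i\bT\leftrightarrow r_i$; this already disposes of the final assertion of~(b).

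For (a), the defining axioms of the root group data system $(\bG,\bT,(\bU_\a))$ of type $(W,S)$---nontriviality of each $\bU_\a$, prenilpotent commutator relations $[\bU_\a,\bU_\b]\subseteq\langle\bU_\gamma\rangle$, existence of reflection-implementing elements $m_\a(u)\in\bU_{-\a}u\bU_{-\a}$, and the separation $\bU_\a\not\subseteq\bU^+$ for $\a\in\Drm$---are group-theoretic identities that propagate through $\br$ and survive multiplication by the central subgroup $Z$, with nontriviality preserved by the injectivity just observed. For the normalizer identity, $T\subseteq\bigcap_{\a\in\Dr}N_G(U_\a)$ is immediate. For the reverse, given $g=c\br(h)\in\bigcap_\a N_G(U_\a)$, I would read $\br(h\bU_\a h^{-1})=\br(\bU_\a)$ as $h\bU_\a h^{-1}\subseteq\bU_\a Z(\bG)$; passing to Lie algebras and using $Z(\bG)\subseteq\bT$ gives $\mathrm{Ad}(h)\g_\a\subseteq\g_\a+\h$, and since $\g_\a$ is the full $\a$-weight space under $\mathrm{ad}(\h)$, this forces $\mathrm{Ad}(h)\g_\a=\g_\a$ and hence $h\bU_\a h^{-1}=\bU_\a$. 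Doing this for every $\a$ yields $h\in\bigcap_\a N_{\bG}(\bU_\a)=\bT$ and therefore $g\in T$.

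For (b), the standard reconstruction of the twin BN-pairs from the root group data gives $B^\pm=T\ltimes U^\pm$, which coincides with $\C^\times\br(\bB^\pm)$ since $\bB^\pm=\bT\ltimes\bU^\pm$; similarly $N=\C^\times\br(\bN)$. The intersections $B^+\cap B^-=B^+\cap N=B^-\cap N=T$ then follow from the corresponding identities in $\bG$ by taking preimages through $G\to G/Z\cong\bG/Z(\bG)$ and using $Z\subseteq T$. I expect the main technical obstacle to be precisely the descent step for the normalizer, where the use of $Z(\bG)\subseteq\bT$ is essential: it lets the weight-$0$ part that appears when reading the conjugation relations modulo $Z(\bG)$ be ruled out on Lie-algebra grounds, so that normalization-modulo-$Z(\bG)$ upgrades to genuine normalization.
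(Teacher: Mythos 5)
Your proposal is correct and follows essentially the same route as the paper, which offers no written proof beyond the remark that the corollary is an easy consequence of Lemma \ref{kernel}(b); the facts you isolate --- $\ker(\br)\subseteq Z(\bG)\subseteq\bT$, $\br^{-1}(\C^\times\mathrm{id}_V)=Z(\bG)$, and $\bU_\a\cap\bT=\{1\}$ --- are exactly what makes every piece of the RGD/twin-BN structure descend from $\bG$ to $G=\C^\times\br(\bG)$. The one step worth tightening is the normalizer descent: the inference from $\mathrm{Ad}(h)\g_\a\subseteq\g_\a+\h$ to $\mathrm{Ad}(h)\g_\a=\g_\a$ is not forced by weight considerations alone and needs, for instance, the local $\mathrm{ad}$-nilpotence of the elements of $\mathrm{Ad}(h)\g_\a$ (or the observation that $Z(\bG)$ acts trivially on every root space in every admissible representation, so that normalizing $U_\a=\br(\bU_\a)$ already forces $h$ to normalize $\bU_\a$ in $\bG$), but this is a repairable detail rather than a flaw in the strategy.
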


We identify $N/T$ and $W$, by identifying $\br(n_i) T$ and $r_i$ for all $i\in\n$. If $w\in W$ is represented by $n_w\in N$ then $ n_w U_\a n_w^{-1} = U_{w\a}$ for all $\a\in\Dr$, and
\begin{eqnarray}\label{NTAction2}
  n_w V_\eta= V_{w\eta} \quad\text{ for all }\eta\in P(V).
\end{eqnarray}

To investigate the subgroup $T$ of $G$ we need the following result from the theory of free abelian groups. We add its proof for convenience.
\begin{lemma}\label{restriction tori} Let $L$ be a free abelian group of finite rank. Every subgroup $L_1$ of $L$ is a free abelian group of finite rank, and the restriction map $\text{\rm Hom}(L,\C^\times) \to  \text{\rm Hom}(L_1,\C^\times)$ is a surjective morphism of tori.
\end{lemma}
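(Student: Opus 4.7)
The plan is to reduce to the Smith normal form for the inclusion $L_1 \hookrightarrow L$, which will simultaneously give freeness of $L_1$, explicit coordinates for the tori, and a convenient formula for the restriction map in those coordinates.

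First I would recall (or quickly re-derive by induction on the rank of $L$) the standard structural fact that any subgroup of a finitely generated free abelian group is itself free of smaller or equal rank. Combined with the elementary divisor theorem, this gives a basis $e_1,\ldots,e_n$ of $L$, an integer $r\le n$, and positive integers $d_1\mid d_2\mid\cdots\mid d_r$ such that $d_1 e_1,\ldots, d_r e_r$ is a basis of $L_1$. This handles the freeness statement and fixes coordinates I can use for the rest.

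Next I would identify $\mathrm{Hom}(L,\C^\times)$ with $(\C^\times)^n$ via $\chi\mapsto(\chi(e_1),\ldots,\chi(e_n))$ and $\mathrm{Hom}(L_1,\C^\times)$ with $(\C^\times)^r$ via $\chi_1\mapsto(\chi_1(d_1 e_1),\ldots,\chi_1(d_r e_r))$. In these coordinates the restriction map becomes
\[
 (c_1,\ldots,c_n)\;\longmapsto\;(c_1^{d_1},\,c_2^{d_2},\,\ldots,\,c_r^{d_r}),
\]
which is manifestly a morphism of algebraic tori (its coordinate components are monomials in the $c_i$). This is also where the "morphism of tori" part of the claim is disposed of.

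Finally, for surjectivity, given any element of $\mathrm{Hom}(L_1,\C^\times)$ corresponding to $(a_1,\ldots,a_r)\in(\C^\times)^r$, I choose $d_i$-th roots $b_i\in\C^\times$ of $a_i$ for $i=1,\ldots,r$ (possible because $\C^\times$ is divisible) and set $b_{r+1}=\cdots=b_n=1$; then $(b_1,\ldots,b_n)$ maps to $(a_1,\ldots,a_r)$. The main (minor) obstacle is simply keeping the bookkeeping straight between the abstract groups $L,L_1$ and their torus realizations; once the Smith basis is in hand there is no conceptual difficulty, since divisibility of $\C^\times$ makes the extension step free.
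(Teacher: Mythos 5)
Your proposal is correct and follows essentially the same route as the paper: both invoke the elementary divisor theorem to get a basis $x_1,\ldots,x_n$ of $L$ with $d_1x_1,\ldots,d_rx_r$ a basis of $L_1$, and then use divisibility of $\C^\times$ to extract $d_i$-th roots and extend a character of $L_1$ to $L$. The only (harmless) difference is that you make the coordinate description of the restriction map explicit, which the paper leaves implicit.
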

\begin{proof} The Lemma holds trivially for $L_1=\{0\}$. Let $L_1\neq \{0\}$. Clearly, the restriction map is a morphism of tori. By \cite[Theorem II 1.6]{Hu74} there exists a base $x_1,\ldots,x_n$ of $L$, and $d_1,\ldots,d_r\in\Z_>$ where $r\leq n$, such that $d_1|d_2|\cdots|d_r$ and $d_1x_1,\ldots, d_rx_r$ is a base of $L_1$. In particular, $L_1$ is a free abelian group.
Let $\gamma\in  \text{Hom}(L_1,\C^\times)$. We choose $\xi_1,\ldots,\xi_r\in\C^\times$ such that $\xi_1^{d_1}=\gamma(d_1 x_1)$, \ldots, $\xi_r^{d_r}=\gamma(d_r x_r)$, and define $\beta\in \text{Hom}(L,\C^\times)$ by $\beta(x_1) := \xi_1$, \ldots,  $\beta(x_r):= \xi_r $, and $\beta(x_k):=1$ for all $ k\in \{1,\ldots,n\}\setminus\{1,\ldots, r\}$. Then $\beta$ restricts to $\gamma$ on $L_1$.
\hfill$\Box$\end{proof}

The following easy lemma is also useful.
\begin{lemma}\label{add weight and root} For every $\a\in\Delta^{re}$ there exists $\eta\in P(V)$ such that $\eta + \a \in P(V)$.
\end{lemma}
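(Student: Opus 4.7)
The plan is to reduce the problem to simple roots via the $W$-invariance of $P(V)$, and then to produce a suitable weight by a weight-string induction starting from the highest weight $\mu$, using the $\mu$-connectedness hypothesis to walk along the Dynkin diagram in the delicate case $\alpha=\alpha_i\in J_0$.

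Write $\alpha=w\alpha_i$ with $w\in W$ and $i\in\n$. Since $P(V)$ is $W$-stable, any pair $\eta',\,\eta'+\alpha_i\in P(V)$ yields $\eta:=w\eta'$ with $\eta,\,\eta+\alpha\in P(V)$, so it suffices to treat the case $\alpha=\alpha_i$. If $\alpha_i\notin J_0$, then $\langle\mu,\alpha_i^\vee\rangle>0$, and the $\alpha_i$-weight string through $\mu$ has $q=0$ and $p=\langle\mu,\alpha_i^\vee\rangle\geq 1$, so $\eta':=\mu-\alpha_i\in P(V)$ and $\eta'+\alpha_i=\mu$ does the job.

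When $\alpha_i\in J_0$, I would use the hypothesis that no connected component of $\Pi$ lies in $J_0$ to pick a shortest path $\alpha_i=\alpha_{i_0},\alpha_{i_1},\ldots,\alpha_{i_k}$ of pairwise adjacent simple roots in the Dynkin diagram with $\alpha_{i_0},\ldots,\alpha_{i_{k-1}}\in J_0$ and $\alpha_{i_k}\notin J_0$, and then show by induction on $j=0,1,\ldots,k$ that
\[
\mu_j := \mu-\alpha_{i_k}-\alpha_{i_{k-1}}-\cdots-\alpha_{i_{k-j}}\in P(V).
\]
The base case $j=0$ is the previous paragraph applied to $\alpha_{i_k}$; for the inductive step I would compute
\[
\langle\mu_{j-1},\alpha_{i_{k-j}}^\vee\rangle \;=\; -\sum_{l=0}^{j-1}\langle\alpha_{i_{k-l}},\alpha_{i_{k-j}}^\vee\rangle \;\geq\; -\langle\alpha_{i_{k-j+1}},\alpha_{i_{k-j}}^\vee\rangle \;>\; 0,
\]
using $\langle\mu,\alpha_{i_{k-j}}^\vee\rangle=0$ (since $\alpha_{i_{k-j}}\in J_0$ for $1\leq j\leq k$), the non-positivity of off-diagonal entries of the generalized Cartan matrix, and $\langle\alpha_{i_{k-j+1}},\alpha_{i_{k-j}}^\vee\rangle<0$ from the adjacency in the Dynkin diagram. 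The weight-string identity $p-q=\langle\mu_{j-1},\alpha_{i_{k-j}}^\vee\rangle$ then forces $p\geq 1$, giving $\mu_j=\mu_{j-1}-\alpha_{i_{k-j}}\in P(V)$. Taking $\eta':=\mu_k$ yields $\eta'+\alpha_i=\mu_{k-1}\in P(V)$, which completes the argument. The only non-formal ingredient is the path selection in the Dynkin diagram, which is exactly what the $\mu$-connectedness hypothesis guarantees; I do not anticipate any obstacle beyond the bookkeeping in the induction.
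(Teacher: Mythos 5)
Your proof is correct, but it takes a genuinely different route from the paper's. The paper argues by contradiction at the level of the Kac--Moody group: if $\eta+\a\notin P(V)$ for every $\eta\in P(V)$, then every root vector in $\g_\a$ kills every weight vector, so $\bU_\a\subseteq\ker(\br)$; Lemma~\ref{kernel}(b) then places $\ker(\br)\subseteq\bT$, contradicting $\bT\cap\bU_\a=\{1\}$. This is two lines once one has Lemma~\ref{kernel}(b) in hand, and it is where the hypothesis that no component of $\Pi$ lies in $J_0$ enters (that hypothesis is precisely what makes Lemma~\ref{kernel}(b) available). Your argument instead stays entirely inside the representation theory of the Lie algebra: you reduce to simple $\a_i$ by $W$-invariance of $P(V)$, handle $\a_i\notin J_0$ by the $\a_i$-string through $\mu$, and for $\a_i\in J_0$ walk along a shortest Dynkin-diagram path from $\a_i$ to $J_>$ (whose existence is exactly the $\mu$-connectedness hypothesis), showing inductively via $p-q=\la\mu_{j-1},\a_{i_{k-j}}^\vee\ra>0$ that each step stays inside $P(V)$. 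The estimate in your inductive step is sound: the distinctness of vertices on a shortest path gives $\la\a_{i_{k-l}},\a_{i_{k-j}}^\vee\ra\le 0$ for $l<j$, adjacency gives strict negativity for $l=j-1$, and $\la\mu,\a_{i_{k-j}}^\vee\ra=0$ since $\a_{i_{k-j}}\in J_0$. What your approach buys is a constructive, purely Lie-algebraic proof that exhibits an explicit weight $\eta=\mu_k$, avoiding any appeal to the group $\bG$ or to Lemma~\ref{kernel}; what it costs is length and bookkeeping compared with the paper's one-line contradiction.
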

\begin{proof} Suppose that $\eta+\a\not \in P(V)$ for all $\eta\in P(V)$. Then $\boldsymbol{U}_\a$ is contained in the kernel of $\br$, which is contained in $\bT$ by Lemma \ref{kernel} (b). This contradicts $\bT\cap \bU_\a=\{1\}$.
\hfill$\Box$\end{proof}

In the following theorem we show that the center $Z(G)$ is a one-dimensional torus. We describe $T$ as a direct product of the center $Z(G)$ and an $m$-dimensional torus.
\begin{theorem}\label{description of center and T}
\begin{itemize}
\item[\rm (a)] The center of $G$ is $Z(G)=\C^\times id_V $.
\item[\rm (b)] We get an isomorphism of groups $t: \C^\times \times \text{\rm Hom}(Q,\C^\times)\to T$, if we define for $c\in\C^\times$ and $\gamma\in \text{\rm Hom}(Q,\C^\times)$ the endomorphism $t(c,\gamma)$ of $V$ by
\begin{eqnarray*}
  t(c,\gamma) v_\eta := c\gamma(\eta-\mu) v_\eta
\end{eqnarray*}
for all $v_\eta\in V_\eta$, $\eta\in P(V)$.
In particular, we have $t(\C^\times \times \{1_Q\})= Z(G)$, where $1_Q$ denotes the identity of $\text{\rm Hom}(Q,\C^\times)$.
\end{itemize}
\end{theorem}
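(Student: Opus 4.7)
The plan is to establish (b) first and then deduce (a) quickly from it.

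\textbf{For (b).} The formula defining $t(c,\gamma)$ clearly preserves each weight space and is linear, so it is a well-defined element of $\text{\rm End}(V)$, and the homomorphism property follows from a direct calculation on an arbitrary weight vector. To see that $t(c,\gamma)$ actually lies in $T$, I would lift $\gamma \in \text{\rm Hom}(Q,\C^\times)$ to a character $\chi \in \text{\rm Hom}(P,\C^\times)$ via Lemma~\ref{restriction tori}; the element $c\chi(\mu)^{-1}\br(\boldsymbol{t}(\chi)) \in T$ then acts on $V_\eta$ as $c\chi(\eta-\mu) = c\gamma(\eta-\mu)$, so it coincides with $t(c,\gamma)$. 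Surjectivity runs the same computation in reverse: any element $c'\br(\boldsymbol{t}(\chi)) \in T$ acts on $V_\eta$ as $c'\chi(\mu)\chi|_Q(\eta-\mu)$, hence equals $t(c'\chi(\mu),\chi|_Q)$. For injectivity, suppose $t(c,\gamma) = \text{\rm id}_V$. Evaluating on $V_\mu$ forces $c = 1$, after which $\gamma(\eta-\mu)=1$ for every $\eta \in P(V)$. For each simple root $\alpha_i$, Lemma~\ref{add weight and root} provides some $\eta\in P(V)$ with $\eta+\alpha_i \in P(V)$, and so $\gamma(\alpha_i) = \gamma(\eta+\alpha_i-\mu)\gamma(\eta-\mu)^{-1} = 1$; since $\{\alpha_1,\ldots,\alpha_m\}$ is a $\mathbb{Z}$-basis of $Q$, we conclude $\gamma = 1_Q$.

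\textbf{For (a).} The inclusion $\C^\times \text{\rm id}_V \subseteq Z(G)$ is immediate. Conversely, let $\phi \in Z(G)$. Since scalars lie in the center of $G$, $\phi$ commutes with all of $\br(\bG)$, and in particular with $T$. Using the description in (b), for any two distinct weights $\eta_1, \eta_2 \in P(V)$ one can pick $\gamma \in \text{\rm Hom}(Q,\C^\times)$ with $\gamma(\eta_1-\mu) \neq \gamma(\eta_2-\mu)$, because characters of the free abelian group $Q$ separate points; so $T$ has distinct characters on the weight spaces $V_{\eta_1}$ and $V_{\eta_2}$. Hence $\phi$ preserves every $V_\eta$, and in particular acts by some scalar $c$ on the one-dimensional space $V_\mu$. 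The subspace $V' := \ker(\phi - c\,\text{\rm id}_V)$ is then $\br(\bG)$-stable and contains $V_\mu$, so $V' = V$ by the irreducibility of $(V,\br)$; therefore $\phi = c\,\text{\rm id}_V$. The residual identity $t(\C^\times \times \{1_Q\}) = Z(G)$ in (b) is now visible from $t(c,1_Q) = c\,\text{\rm id}_V$ together with (a).

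\textbf{Main obstacle.} The only genuinely non-mechanical step is the injectivity in (b): forcing $\gamma = 1_Q$ requires hitting every simple root, which is exactly what Lemma~\ref{add weight and root} delivers, and which in turn uses the standing assumption that no connected component of $\Pi$ lies in $J_0$ (through Lemma~\ref{kernel}(b)). Everything else reduces to weight-space bookkeeping and the lifting of characters from $Q$ to $P$.
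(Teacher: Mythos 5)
Your proof is correct and follows essentially the same route as the paper's: part (b) is proved identically, lifting characters from $Q$ to $P$ via Lemma~\ref{restriction tori} and getting injectivity from Lemma~\ref{add weight and root} applied to each simple root. The only divergence is in (a), where the paper places a central element inside $T$ via $Z(G)\subseteq\bigcap_{\a\in\D^{re}}N_G(U_\a)=T$ (Corollary~\ref{twinned BN for G}), whereas you deduce that a central element preserves each weight space because it commutes with $T$ and characters of $Q$ separate distinct weights; both arguments then finish with the same scalar-on-$V_\mu$-plus-irreducibility step, so the difference is cosmetic.
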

\begin{proof} Clearly, $\C^\times id_V \subseteq Z(G)$. To show the reverse inclusion let $z\in Z(G)$. Since the representation $\br$ is irreducible, $V$ is spanned by $\br(\bG) V_\mu =G V_\mu$. From  Corollary \ref{twinned BN for G} (a) it follows that $z\in Z(G)\subseteq \bigcap_{\a\in\D^{re}}N_{G}(U_\a)= T$. Therefore, $z$ is the multiplication by a scalar $c\in\C^\times$ on $V_\mu$, $GV_\mu$, and $V$.

For the proof of (b) we denote by $r:\text{Hom}(P,\C^\times)\to\text{Hom}(Q,\C^\times)$ the restriction morphism. For $c\in\C^\times$ and $\beta\in \text{Hom}(P,\C^\times)$ we have
\begin{eqnarray*}
  t(c, r(\beta)) v_\eta = c\beta (\eta-\mu) v_\eta = c\beta (-\mu)\beta(\eta) v_\eta = c\beta (-\mu)  \br(\boldsymbol{t}(\beta) ) v_\eta
\end{eqnarray*}
for all $v_\eta\in V_\eta$, $\eta\in P(V)$. Hence,
\begin{eqnarray*}
   t(c, r(\beta) )) = c\beta (-\mu) \br( \boldsymbol{t} (\beta)) .
\end{eqnarray*}
The map  $r:\text{Hom}(P,\C^\times)\to\text{Hom}(Q,\C^\times)$ is surjective by Lemma \ref{restriction tori}. For $\beta\in \text{Hom}(P,\C^\times)$ the map $\C^\times\to\C^\times$ given by multiplication by $\beta (-\mu) $ is bijective. We conclude that the map $t: \C^\times \times \text{Hom}(Q,\C^\times)\to T$ is well defined and surjective.

Clearly, the map $t$ is a morphism of groups. To show that it is injective let $c\in\C^\times$, $\gamma\in \text{Hom}(Q,\C^\times)$ such that $t(c,\gamma)=id_V$. Evaluating both sides at $v_\mu\in V_\mu\setminus\{0\}$ we get $c=1$.
Now let $\a\in\Pi$. By Lemma \ref{add weight and root} there exists $\eta\in P(V)$ such that $\eta+\a\in P(V)$. Evaluating both sides of $t(1,\gamma) =id_V$ at $v_\eta\in V_\eta\setminus\{0\}$ and $v_{\eta+\a}\in V_{\eta+\a}\setminus\{0\}$ we find
\begin{eqnarray*}
 \gamma(\eta-\mu)=1 \quad \text{ and }\quad  \gamma(\eta-\mu)\gamma(\a) =\gamma(\eta+\a -\mu)=1 .
\end{eqnarray*}
It follows that $\gamma(\a)=1$. Since the lattice $Q$ is spanned by $\Pi$, the homomorphism $\gamma$ is the identity of $ \text{Hom}(Q,\C^\times)$.
\hfill$\Box$\end{proof}

The Birkhoff and Bruhat decompositions of $G$, which correspond to the twinned BN-pairs of Corollary \ref{twinned BN for G} (b), are
\begin{equation*}
    G = \bigsqcup_{w\in W} B^- w B^+ =  \bigsqcup_{w\in W} B^+ w B^-
    \quad\text{and}\quad
    G = \bigsqcup_{w\in W} B^+ w B^+ = \bigsqcup_{w\in W} B^- w B^-.
\end{equation*}

We refer to \cite[Chapter 6.2]{AB08} for the definition and properties of the parabolic subgroups of $G$, which correspond to the twinned BN-pairs of Corollary \ref{twinned BN for G} (b).
For $I\subseteq \Pi$ the standard parabolic subgroups
\begin{equation*}
     P_I^+ := B^+ W_I B^+  \quad \text{ and }\quad P_I^- := B^-W_I B^-
\end{equation*}
coincide with $\C^\times \rho(\boldsymbol{P}_I^+)$ and $\C^\times\rho(\boldsymbol{P}_I^-)$, respectively. From the Levi decompositions of $\boldsymbol{P}_I^+$, $\boldsymbol{P}_I^+$, and from Lemma \ref{kernel} (b) we obtain the following Corollary.
We denote by $ U^\pm$ the subgroup of $G$ generated by the groups $U_\a$, $\a\in\D^{re}_\pm$. It coincides with $\br({\boldsymbol{U}^\pm})$.
\begin{corollary}
The standard parabolic subgroups $P_I^\pm$ admit the Levi decompositions
\begin{equation*}
    P_I ^\pm = L_I \ltimes U^I_\pm ,
\end{equation*}
where the standard Levi subgroup $L_I = P_I^+\cap P_I^-$ is the subgroup of $G$ generated by $T$ and $U_\a$,  $\a\in W_I I$. Furthermore,
$U^I_\pm = \cap_{w\in W_I}wU^\pm w^{-1}$, which are the normal subgroups of $U^\pm$ generated by $U_\a$, $\a\in\D^{re}_\pm\setminus W_I I$. In particular, we have $ B^\pm = T \ltimes U^\pm$.
\end{corollary}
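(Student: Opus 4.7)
The plan is to transfer the Kac-Moody Levi decomposition $\bP_I^\pm = \bL_I \ltimes \bU^I_\pm$ to $G$ via the map $g \mapsto \C^\times\br(g)$, with Lemma \ref{kernel} (b) as the key controlling input: since $\ker(\br) \subseteq Z(\bG) \subseteq \bT \subseteq \bL_I$, the kernel sits entirely inside the Levi factor of every standard parabolic, so all decomposition data transports cleanly. Defining $L_I := \C^\times\br(\bL_I)$ and $U^I_\pm := \br(\bU^I_\pm)$, the product $P_I^\pm = \C^\times\br(\bL_I)\br(\bU^I_\pm) = L_I U^I_\pm$ and the normality of $U^I_\pm$ in $P_I^\pm$ follow directly from the corresponding facts for $\bG$, using that the scalars $\C^\times$ are central in $G$ by Theorem \ref{description of center and T}.

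The two refinements $L_I = P_I^+ \cap P_I^-$ and $L_I \cap U^I_\pm = \{\mathrm{id}_V\}$ are both obtained by the same trick. For the first: if $g = c^+\br(p^+) = c^-\br(p^-)$ with $p^\pm \in \bP_I^\pm$, then $\br(p^+(p^-)^{-1}) \in \C^\times\,\mathrm{id}_V$, so by Lemma \ref{kernel} (b), $p^+(p^-)^{-1} \in Z(\bG) \subseteq \bT \subseteq \bL_I \subseteq \bP_I^-$, giving $p^+ \in \bP_I^+\cap\bP_I^- = \bL_I$ and hence $g\in L_I$. For the second: $c\br(\ell) = \br(u)$ with $\ell \in \bL_I$, $u \in \bU^I_\pm$ yields $\ell^{-1}u \in Z(\bG) \subseteq \bL_I$, so $u \in \bL_I \cap \bU^I_\pm = \{1\}$ and both sides equal $\mathrm{id}_V$. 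Together these give the semidirect product $P_I^\pm = L_I \ltimes U^I_\pm$.

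The generator descriptions transfer at once: $L_I$ is generated by $T$ (which absorbs the scalars) and the $U_\a = \br(\bU_\a)$, $\a \in W_I I$, because $\bL_I$ has the analogous generators and $\br$ is a homomorphism; and $U^I_\pm$ is the normal subgroup of $U^\pm$ generated by $U_\a$, $\a \in \D^{re}_\pm \setminus W_I I$, since surjective homomorphisms map normal closures to normal closures. The intersection identity $U^I_\pm = \bigcap_{w \in W_I} wU^\pm w^{-1}$ has an easy inclusion $\subseteq$ inherited from $\bG$. The reverse inclusion is the one delicate point and constitutes the main obstacle: given $g \in \bigcap_{w\in W_I} wU^\pm w^{-1}$, one lifts uniquely to $u \in \bU^\pm$ (injectivity of $\br$ on $\bU^\pm$ holds because $\ker(\br) \cap \bU^\pm \subseteq \bT \cap \bU^\pm = \{1\}$), and for each $w \in W_I$ obtains $n_w^{-1}un_w \in \bU^\pm \cdot \ker(\br) \subseteq \bU^\pm \cdot Z(\bG)$; the uniqueness of the $\bT$-$\bU^\pm$ factorization in $\bB^\pm$ combined with the fact that the central correction must be trivial then forces $n_w^{-1}un_w \in \bU^\pm$, so $u \in \bigcap_{w\in W_I} w\bU^\pm w^{-1} = \bU^I_\pm$ and $g \in U^I_\pm$. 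Alternatively, since by Corollary \ref{twinned BN for G} (a) the triple $(G,T,(U_\a)_{\a\in\Dr})$ is itself a root group data system of type $(W,S)$, this entire identification follows from the general Levi theory for such systems, as in \cite[Chapter 6.2]{AB08}. The special case $I = \emptyset$ yields $B^\pm = T \ltimes U^\pm$.
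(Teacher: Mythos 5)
Your proposal is correct and matches the paper's approach: the paper states only that the corollary follows from the Kac-Moody Levi decompositions $\bP_I^\pm = \bL_I \ltimes \bU^I_\pm$ together with Lemma \ref{kernel}~(b) (which places $\ker(\br)$ inside $\bT\subseteq\bL_I$), and your argument is exactly that transfer, spelled out. The one spot where you gesture rather than prove---asserting that ``the central correction must be trivial'' when showing $n_w^{-1}un_w\in\bU^\pm$---does need a short extra step (e.g.\ decomposing $u$ via $\bU^\pm=(\bU^\pm\cap n_w\bU^\pm n_w^{-1})(\bU^\pm\cap n_w\bU^\mp n_w^{-1})$ and using $\bU^\mp\cap\bB^\pm=\{1\}$), but your alternative via Corollary \ref{twinned BN for G}~(a) and the root group data theory of \cite[Chapter 6.2]{AB08} is complete and is exactly what the paper's reference to \cite{AB08} intends.
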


It is often convenient to identify the subset $I\subseteq \Pi$ with the set of indices of the simple roots in $I$. We will do so as needed.
The torus $T$ can be written as a product of subgroups
\begin{equation}\label{T-decomposition 2}
    T = \C^\times T_I T^I
\end{equation}
where
\begin{equation*}
  T_I  :=  \bigg\{  \prod_{i\in I} \br (t_i(c_i) \,\big|\, c_i\in\C^\times \bigg\} \quad \text{ and }\quad
  T^I  :=  \bigg\{\prod_{i \in\{1, \,\ldots, \,2m-l\} \setminus I} \br( t_i(c_i))  \,\big|\,  c_i\in\C^\times \bigg\}.
\end{equation*}
It is convenient to work with (\ref{T-decomposition 2}), although, in general, the product is not direct.

The standard Levi subgroup $L_I$ and its derived group $G_I:=L_I'$ have the Birkhoff and Bruhat decompositions
\begin{equation*}
    L_I = U_I^\pm TN_I U_I^\pm \quad\text{ and }\quad   G_I = U_I^\pm N_I U_I^\pm ,
\end{equation*}
where $U_I^\pm=L_I\cap U^\pm = G_I\cap U^\pm$ is the subgroup of $U^\pm$ generated by $U_\a$, $\a\in W_I I \cap \D^{re}_\pm$, and $N_I=G_I\cap N $ is the subgroup of $N$ generated by $T_I$ and $\br(n_i)$ for $i\in I$. Note also that $G_I$ is generated by the root groups $U_{\pm\a}$, $\a\in I$, and $L_I = T G_I = \C^\times T^I G_I$.

We often omit the index ``$+$" of the groups $B^+$, $U^+$, $P_I^+$, $U_I^+$, $U^I_+$, writing $B$, $U$, $P_I$, $U_I$, $U^I$ instead. \vspace*{1ex}

Classically, when $\bG$ is a semisimple simply connected group, we have $\br(\bG)\subsetneqq G$. Investigating $G$ for some examples of Kac-Moody groups $\bG$, it catches one's eye that $\br(\bG) = G$ is also possible. We call the sublattice of $P$ defined by
\begin{eqnarray*}
   Q^{sat} := \{\eta\in P \mid \text{ there exists }n\in\Z_> \text{ such that } n\eta\in Q \}
\end{eqnarray*}
the saturation of the sublattice $Q$ in $P$.

\begin{lemma}\label{criterium br(G)=G} The following are equivalent:
\begin{itemize}
\item[\rm (a)] $\mu\notin Q^{sat}$.
\item[\rm (b)] $\br(\bT) = T$.
\item[\rm (c)] $\br(\bG) =  G$.
\end{itemize}
\end{lemma}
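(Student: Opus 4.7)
The strategy is to prove (b) $\Leftrightarrow$ (c) by a short centre argument using Lemma~\ref{kernel}(b), and then (a) $\Leftrightarrow$ (b) by identifying $\br|_\bT$ through the tori isomorphisms already at hand. For (b) $\Leftrightarrow$ (c), since $G = \C^\times \br(\bG)$ and $T = \C^\times \br(\bT)$, each of the two equalities amounts to the inclusion $\C^\times \text{\rm id}_V \subseteq \br(\bG)$ resp.\ $\C^\times \text{\rm id}_V \subseteq \br(\bT)$. The implication (b) $\Rightarrow$ (c) is then immediate from $\br(\bT) \subseteq \br(\bG)$. For the converse, given $c \in \C^\times$ I would write $c \cdot \text{\rm id}_V = \br(g)$ with $g \in \bG$ and apply Lemma~\ref{kernel}(b): since $\br(g) \in \C^\times \text{\rm id}_V$, one has $g \in Z(\bG) \subseteq \bT$, so $c \in \br(\bT)$.

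For (a) $\Leftrightarrow$ (b), I would identify $\bT$ with $\text{Hom}(P, \C^\times)$ via $\boldsymbol{t}$ and $T$ with $\C^\times \times \text{Hom}(Q, \C^\times)$ via the isomorphism $t$ of Theorem~\ref{description of center and T}(b). Using $P(V) \subseteq \mu + Q$, a direct weight-vector computation yields
\[
  \br(\boldsymbol{t}(\chi)) v_\eta \;=\; \chi(\eta)\, v_\eta \;=\; \chi(\mu)\,\chi(\eta - \mu)\, v_\eta \;=\; t\bigl(\chi(\mu),\, \chi|_Q\bigr) v_\eta ,
\]
so $\br|_\bT$ corresponds to the map
\[
  \Psi \colon \text{Hom}(P, \C^\times) \;\to\; \C^\times \times \text{Hom}(Q, \C^\times), \qquad \chi \;\mapsto\; \bigl(\chi(\mu),\, \chi|_Q\bigr) ,
\]
and condition (b) becomes the surjectivity of $\Psi$.

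If $\mu \notin Q^{sat}$, then $Q + \Z\mu = Q \oplus \Z\mu$ inside the free abelian group $P$. Given $(c, \gamma) \in \C^\times \times \text{Hom}(Q, \C^\times)$, the character $\chi'$ of $Q \oplus \Z\mu$ defined by $\chi'|_Q = \gamma$ and $\chi'(\mu) = c$ extends by Lemma~\ref{restriction tori} to some $\chi \in \text{Hom}(P, \C^\times)$ with $\Psi(\chi) = (c, \gamma)$, so $\Psi$ is surjective. Conversely, if $n\mu \in Q$ for some $n \in \Z_>$, then for every $\chi$ one has $\chi(\mu)^n = \chi(n\mu) = \chi|_Q(n\mu)$, which forces the first coordinate of $\Psi(\chi)$ into the finite set of $n$-th roots of $\chi|_Q(n\mu)$; hence $\Psi$ is not surjective. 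The main step requiring care is the explicit formula $\br(\boldsymbol{t}(\chi)) = t(\chi(\mu), \chi|_Q)$, which pins down how $\mu$ enters the description of $\br(\bT)$ inside $T$; after that, the dichotomy characterising (a) reduces to the elementary lattice extension argument provided by Lemma~\ref{restriction tori}.
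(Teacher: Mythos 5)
Your proposal is correct and follows essentially the same route as the paper: the reduction of (b) and (c) to the inclusion $\C^\times \mathrm{id}_V \subseteq \br(\bT)$ resp.\ $\br(\bG)$ via Lemma~\ref{kernel}(b), the identity $\br(\boldsymbol{t}(\chi)) = t(\chi(\mu), \chi|_Q)$ from Theorem~\ref{description of center and T}(b), and the extension argument using $Q \oplus \Z\mu$ and Lemma~\ref{restriction tori} are exactly the ingredients of the paper's proof. The only difference is organizational (pairwise equivalences and the surjectivity of $\Psi$ versus the paper's cycle (c)$\Rightarrow$(b)$\Rightarrow$(a)$\Rightarrow$(c)), which does not change the substance.
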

\begin{proof} Note that $\br(\bT) = T$ is equivalent to $\C^\times id_V\subseteq \br(\bT)$, and $\br(\bG) = G$ is equivalent to $\C^\times id_V\subseteq \br(\bG)$.

We first show that (c) implies (b). From Lemma \ref{kernel} (b) we get $\bT\supseteq \br^{-1}(\C^\times id_V)$. By (c) we obtain
$\br(\bT)\supseteq \br(\br^{-1}(\C^\times id_V)) = \br(G)\cap \C^\times id_V  =\C^\times id_V$, which is (b).

We now show that (b) implies (a). We denote by $r:\text{Hom}(P,\C^\times)\to\text{Hom}(Q,\C^\times)$ the restriction morphism. For $\beta\in \text{Hom}(P,\C^\times)$ we find
\begin{equation*}
   t(\beta(\mu), r(\beta) ) = \br({\boldsymbol t}(\beta) )
\end{equation*}
from the definition of $t(\beta(\mu), r(\beta) )$ in Theorem \ref{description of center and T} (b). If $\C^\times id_V\subseteq \br(\bT)$ then there exists $\beta\in \text{Hom}(P,\C^\times)$, such that
\begin{equation*}
         t(\beta(\mu), r(\beta) )= \br({\boldsymbol t}(\beta) ) = 2\, id_V= t(2, 1_Q).
\end{equation*}
Since the map $t:\C^\times \times \text{Hom}(Q,\C^\times)\to T$ is bijective, $\beta(\mu)=2$ and $\beta(q)=1$ for all $q\in Q$.  For every $n\in \Z_>$ we obtain $\beta(n\mu)=\beta(\mu)^n=2^n\neq 1$. Thus, $\mu\notin Q^{sat}$.

It remains to show that (a) implies (c). If $\mu\notin Q^{sat}$ then $\mathbb{Z}\mu+Q=\mathbb{Z}\mu\oplus Q$. The restriction morphism $\text{Hom}(P,\C^\times)\to \text{Hom}(\mathbb{Z}\mu+Q,\C^\times)$ is surjective by Lemma \ref{restriction tori}. Therefore, for every $c\in \C^\times$ we can choose $\beta_c\in \text{Hom}(P,\C^\times)$, such that $\beta_c(\mu)=c$ and $\beta_c(q)=1$ for all $q\in Q$. It follows that $c\, id_V=\br({\boldsymbol t}(\beta_c))\in\br(\bG)$.
\hfill$\Box$\end{proof}

\begin{corollary}\label{nondeg br(g) neq G} If the generalized Cartan matrix $A$ is nondegenerate, then $\br(\bG)\subsetneqq G$. If $A$ is degenerate then there exists $\mu$ such that $\br(\bG)= G$.
\end{corollary}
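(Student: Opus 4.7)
The plan is to reduce everything to Lemma \ref{criterium br(G)=G}, which asserts $\br(\bG)=G$ if and only if $\mu\notin Q^{sat}$. Recall that $Q=\bigoplus_{i=1}^m\Z\a_i$ is a free abelian group of rank $m$, while $P=\bigoplus_{i=1}^{2m-l}\Z\mu_i$ is a free abelian group of rank $2m-l$, where $l$ is the rank of $A$. By construction, $Q^{sat}$ is a saturated sublattice of $P$ containing $Q$, so it has the same rank as $Q$, namely $m$, and $P/Q^{sat}$ is free abelian of rank $(2m-l)-m = m-l$.

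\smallskip
\emph{Nondegenerate case.} If $A$ is nondegenerate, then $l=m$, so $P$ and $Q^{sat}$ both have rank $m$. The quotient $P/Q^{sat}$ is therefore a finitely generated torsion-free abelian group of rank $0$, hence trivial. Consequently $Q^{sat}=P$, so the fixed dominant weight $\mu$ of the subsection automatically lies in $Q^{sat}$. Lemma \ref{criterium br(G)=G} then yields $\br(\bG)\subsetneqq G$.

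\smallskip
\emph{Degenerate case.} If $A$ is degenerate, then $m-l\geq 1$, so $Q^{sat}\subsetneqq P$ and we may pick some $\eta\in P\setminus Q^{sat}$. The plan is to perturb $\eta$ by a large positive multiple of $\rho:=\mu_1+\cdots+\mu_m$ in order to land in $P_+$ while keeping the class in $P/Q^{sat}$ nonzero. Concretely, for $N\in\Z_>$ consider $\mu_N:=\eta+N\rho$. Since $\la\mu_N,\av_i\ra = \la\eta,\av_i\ra + N$ for $i=1,\dots,m$, we have $\mu_N\in P_+$ for all $N$ large enough, and moreover $\la\mu_N,\av_i\ra\geq 1$ for each $i\in\n$, so the type $J_0$ of $\mu_N$ is empty. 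In particular no connected component of $\Pi$ is contained in $J_0$, so $\mu_N$ satisfies the standing assumption of the subsection.

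\smallskip
It remains to arrange $\mu_N\notin Q^{sat}$. Writing $\bar{\eta},\bar{\rho}$ for the images in the torsion-free quotient $P/Q^{sat}$, the condition $\mu_N\in Q^{sat}$ is equivalent to $\bar{\eta}=-N\bar{\rho}$. If $\bar{\rho}=0$ this forces $\bar{\eta}=0$, contradicting the choice of $\eta$, so every $N$ works. If $\bar{\rho}\neq 0$, then since $P/Q^{sat}$ is torsion-free, at most one value of $N$ satisfies $\bar{\eta}=-N\bar{\rho}$; we simply pick $N$ large enough to make $\mu_N$ dominant and also different from that exceptional value. The resulting $\mu=\mu_N$ lies in $P_+\setminus Q^{sat}$ and has no connected component of $\Pi$ in its type, and Lemma \ref{criterium br(G)=G} gives $\br(\bG)=G$ for the associated irreducible highest weight representation. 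The only place care is required is in the last paragraph, where one must simultaneously ensure dominance and a nontrivial class in $P/Q^{sat}$; this is the main technical step, but it is resolved by the torsion-freeness of $P/Q^{sat}$ together with the unbounded supply of admissible $N$.
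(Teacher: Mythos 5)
Your proof is correct. The nondegenerate case coincides with the paper's: both reduce to showing $Q^{sat}=P$ when $\operatorname{rank}Q=\operatorname{rank}P=m$ (the paper cites the structure theorem for subgroups of free abelian groups, you argue via the torsion-free quotient $P/Q^{sat}$ of rank $0$ — the same fact). In the degenerate case, however, you take a genuinely different route. The paper works concretely: it picks a degenerate connected component $\Pi_0$, takes a nonzero tuple $(r_j)_{j\in\Pi_0}$ in the kernel of the corresponding submatrix, sets $\mu:=\mu_i+\sum_{k\in\Pi\setminus\Pi_0}\mu_k$ for some $i$ with $r_i\neq 0$, and certifies $\mu\notin Q^{sat}$ directly by evaluating $n\mu=q$ against the linear functional $\sum_j r_j\a_j^\vee$, which kills $Q$ but not $\mu$. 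You instead argue abstractly: $\operatorname{rank}Q^{sat}=m<2m-l=\operatorname{rank}P$ forces $Q^{sat}\subsetneqq P$, and then you push an arbitrary $\eta\in P\setminus Q^{sat}$ into the interior of the dominant cone by adding $N\rho$, using torsion-freeness of $P/Q^{sat}$ to exclude at most one bad $N$. Both arguments are complete. Yours is shorter and shows that such $\mu$ are plentiful (all but finitely many points on a ray), and it produces a weight of type $J_0=\emptyset$, so the standing hypothesis on $\mu$ is trivially met; the paper's construction buys an explicit weight with a prescribed nontrivial type $\Pi_0\setminus\{i\}$ and avoids any appeal to the rank of $Q^{sat}$, certifying non-membership by an explicit functional instead.
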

\begin{proof} If $A$ is nondegenerate then $Q$, $P$ are lattices of rank $m$, and $Q\subseteq P$. From \cite[Theorem II 1.6]{Hu74} it follows that $Q^{sat}=P$. Hence $\br(\bG)\subsetneqq G$ by Lemma \ref{criterium br(G)=G}.

If $A$ is degenerate, then there exists a connected component $\Pi_0$ of $\Pi$, such that the corresponding generalized Cartan submatrix of $A$ is degenerate. Hence there exists a nonzero tuple $(r_j)_{j\in\Pi_0}\in\mathbb{R}^{|\Pi_0|}$, such that
\begin{eqnarray}\label{eq kernel}
   \a_k( \sum_{j\in\Pi_0 } r_j\a^\vee_j ) =  \sum_{j\in\Pi_0}  r_j a_{j k} = 0 \quad \text{ for all }\quad k \in \Pi.
\end{eqnarray}
Choose $i\in \Pi_0$ such that $r_i\neq 0$. Then $\mu:=\mu_i + \sum_{k\in\Pi\setminus\Pi_0}\mu_k $ is a dominant weight of type $\Pi_0\setminus \{i\}$; moreover, $\Pi_0\setminus \{i\}$ contains no connected component of $\Pi$.

Now assume that $\mu\in Q^{sat}$. Then there exist $n\in\Z_>$ and $q\in Q$, such that $n\mu=q$. Evaluating both sides at $\sum_{j\in\Pi_0} r_j\a^\vee_j $, using (\ref{eq kernel}), we get $n r_i=0$, which contradicts $r_i\neq 0$. Hence $\mu\notin Q^{sat}$. From Lemma \ref{criterium br(G)=G} we find $ \br(\bG) = G $.
\hfill$\Box$\end{proof}

\begin{corollary} Let the generalized Cartan matrix $A$ be indecomposable.
\begin{itemize}
\item[\rm (a)]  If $A$ is of finite type, then $ \br(\bG)\subsetneqq G $.
\item[\rm (b)]  If $A$ is of affine type, then $ \br(\bG) = G $.
\item[\rm (c)]  If $A$ is nondegenerate of indefinite type, then $ \br(\bG)\subsetneqq G $.
\item[\rm (d)]  If $A$ is degenerate of indefinite type, then there exists $\mu$ such that $ \br(\bG)\subsetneqq G $, and there exists $\mu$ such that $ \br(\bG) = G $.
\end{itemize}
\end{corollary}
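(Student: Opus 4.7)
The strategy is to reduce each statement to the criterion of Lemma \ref{criterium br(G)=G}, namely that $\br(\bG)=G$ if and only if $\mu\notin Q^{sat}$. Three of the four claims will follow immediately from the preceding Corollary \ref{nondeg br(g) neq G} together with the Vinberg-Kac classification of indecomposable generalized Cartan matrices \cite[Chapter 4]{Kac90}. Indeed, finite type means $A$ is positive definite and in particular nondegenerate, so Corollary \ref{nondeg br(g) neq G} gives $\br(\bG)\subsetneqq G$ in part (a); part (c) is the nondegenerate case of the same corollary applied verbatim; and the first half of (d) is the second assertion of that corollary, since $A$ is degenerate by hypothesis.

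For part (b), the plan is to show that every admissible $\mu$ satisfies $\mu\notin Q^{sat}$. In the affine case the Vinberg-Kac theorem supplies positive integers $a_i^\vee$, $i\in\n$, such that $K:=\sum_i a_i^\vee\a_i^\vee$ satisfies $\la\a_j,K\ra=0$ for all $j\in\n$, and therefore $\la q,K\ra=0$ for every $q\in Q$. If $\mu\in Q^{sat}$ then $n\mu\in Q$ for some $n\in\Z_>$, hence $\la\mu,K\ra=0$. Since every $a_i^\vee>0$ and every $\la\mu,\a_i^\vee\ra\in\Z_+$, this forces $\la\mu,\a_i^\vee\ra=0$ for all $i\in\n$, i.e.\ $J_0=\Pi$; but since $A$ is indecomposable, $\Pi$ is itself a connected component of $\Pi$, contradicting the standing hypothesis that no connected component of $\Pi$ lies in $J_0$. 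Hence $\mu\notin Q^{sat}$ and Lemma \ref{criterium br(G)=G} gives $\br(\bG)=G$.

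For the remaining half of (d), I will exhibit an admissible $\mu$ lying already in $Q\subseteq Q^{sat}$. By the Vinberg-Kac trichotomy applied in the indefinite case there exists $u\in\R^m$ with $u>0$ componentwise and $Au<0$, so that $A(-u)>0$ componentwise. Approximating $-u$ by a rational vector sufficiently close to preserve strict positivity of $A(\cdot)$ and then clearing denominators produces $c\in\Z^m$ with $Ac>0$ componentwise. Set $\mu:=\sum_j c_j\a_j\in Q$. Then $\la\mu,\a_i^\vee\ra=(Ac)_i>0$ for every $i\in\n$, so $\mu\in P_+$ with $J_0=\emptyset$, which trivially contains no connected component of $\Pi$. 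Since $\mu\in Q\subseteq Q^{sat}$, Lemma \ref{criterium br(G)=G} yields $\br(\bG)\subsetneqq G$.

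The main technical input throughout is the Vinberg-Kac trichotomy, invoked in the two complementary forms needed: a strictly positive null coroot in affine type and a vector $u>0$ with $Au<0$ in indefinite type. Once it is available, each case reduces to a short arithmetic manipulation modelled on the proof of Corollary \ref{nondeg br(g) neq G}, so I anticipate no substantive obstacle beyond bookkeeping.
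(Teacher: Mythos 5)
Your proposal is correct and follows essentially the paper's own route: everything is reduced to Lemma \ref{criterium br(G)=G} and Corollary \ref{nondeg br(g) neq G}, the affine case is settled by showing $\mu\in Q^{sat}$ would force $J_0=\Pi$ via the Kac classification theorem (you use the strictly positive left null vector $K=\sum_i a_i^\vee\a_i^\vee$ where the paper uses the clause ``$Av\geq 0$ implies $Av=0$''), and the remaining statement of (d) is settled by exhibiting a dominant weight in $Q$ (you build it from $u>0$, $Au<0$ by rational approximation, where the paper cites $Q^-\cap C\neq\emptyset$ from Kac's Theorem 5.6 c) — equivalent uses of the same input). One cosmetic slip: the degenerate case of Corollary \ref{nondeg br(g) neq G} yields the \emph{second} assertion of (d) (existence of $\mu$ with $\br(\bG)=G$), not the first; since your explicit construction then supplies the assertion with $\br(\bG)\subsetneqq G$, both halves are in fact covered and nothing is missing.
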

\begin{proof} Because of Corollary \ref{nondeg br(g) neq G} it remains to show (b), and the first statement of (d).

Let $A$ be of affine type and assume that $\mu\in Q^{sat}$. Then there exist $n\in\Z_>$, $n_1,\ldots, n_m\in\Z$ such that $n\mu=\sum_{i=1}^m n_i\alpha_i$. We obtain
\begin{eqnarray*}
  0\leq n \mu(\alpha^\vee_j)=\sum_{i=1}^m a_{ji}n_i
\end{eqnarray*}
for all $j\in \{1,\ldots,m\}$. From \cite[ Theorem 4.3 (Aff)]{Kac90} we find $\mu(\alpha^\vee_j)=0$ for all $j\in \{1,\ldots,m\}$. Thus $J_0=\Pi$, which contradicts our assumption on $\mu$. Now (b) follows from Lemma \ref{criterium br(G)=G}.

Let $A$ be degenerate of indefinite type. Then $Q^- \cap C \neq\emptyset $ by \cite[Theorem 5.6 c)]{Kac90}. We may choose $\mu\in Q^- \cap C$ since its type is $\emptyset$. Thus $ \br(\bG)\subsetneqq G $ by Lemma \ref{criterium br(G)=G}.
\hfill$\Box$\end{proof}

\subsection{Convex geometry}
We collect from \cite{Ro96} some basic results of convex geometry, which will be used to describe the facial structure of the convex hull of certain Weyl group orbits in $\h_\R^*$.
For a convex set $K$ in $\h_\R^*$ we denote by $\text{aff}(K)$ its affine hull, and by $\text{ri}(K)$ its relative interior.

\begin{lemma}{\rm(\cite[Section 1, and Theorem 6.6]{Ro96})} \label{relativeInteriorProperties}
Let $\sigma$ be a linear transformation from $\hR^*$ to $\hR^*$. If $K$ is a convex set in $\hR^*$, then $\text{\rm aff}(\sigma K) = \sigma \,\text{\rm aff} (K)$, and $\text{\rm ri}(\sigma K) = \sigma \,\text{\rm ri}(K)$.
\end{lemma}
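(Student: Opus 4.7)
The plan is to handle the two equalities separately: the affine hull identity follows directly from linearity of $\sigma$, and the relative interior identity is then deduced via the line-segment characterization of $\text{ri}(\cdot)$. We may assume $K$ is nonempty, the empty case being trivial.

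For the affine hull equality, since $\sigma$ is linear it preserves affine combinations. Any affine combination $\sum \lambda_i x_i$ of points $x_i \in K$ maps to the affine combination $\sum \lambda_i \sigma(x_i)$ of points in $\sigma K$, giving $\sigma(\text{aff}(K)) \subseteq \text{aff}(\sigma K)$. Conversely, $\sigma(\text{aff}(K))$ is itself an affine subspace (being the image of an affine subspace under a linear map) and contains $\sigma K$, so by minimality of $\text{aff}(\sigma K)$ we obtain $\text{aff}(\sigma K) \subseteq \sigma(\text{aff}(K))$.

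For the relative interior equality, I would use the standard line-segment characterization: $x \in \text{ri}(K)$ if and only if for every $y \in K$ there exists $\mu > 1$ with $\mu x + (1-\mu) y \in K$. The forward inclusion $\sigma(\text{ri}(K)) \subseteq \text{ri}(\sigma K)$ is immediate: given $x \in \text{ri}(K)$ and an arbitrary $\sigma y \in \sigma K$, applying $\sigma$ to $\mu x + (1-\mu) y \in K$ yields $\mu \sigma x + (1-\mu)\sigma y \in \sigma K$, so $\sigma x \in \text{ri}(\sigma K)$.

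The main obstacle is the reverse inclusion $\text{ri}(\sigma K)\subseteq \sigma(\text{ri}(K))$, where finite-dimensionality is crucial. Let $x' \in \text{ri}(\sigma K)$. Since $K$ is a nonempty convex set in the finite-dimensional real vector space $\hR^*$, the set $\text{ri}(K)$ is nonempty; fix $x_0 \in \text{ri}(K)$, so by the forward inclusion $\sigma x_0 \in \text{ri}(\sigma K)$. If $\sigma x_0 = x'$ we are done. Otherwise, applying the characterization to $x'$ and $\sigma x_0$ yields $\mu > 1$ such that $z' := \mu x' + (1-\mu)\sigma x_0 \in \sigma K$. Write $z' = \sigma z$ for some $z \in K$ and set $y_0 := \tfrac{1}{\mu} z + \tfrac{\mu-1}{\mu}\, x_0$. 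Then
\begin{equation*}
   \sigma(y_0) \;=\; \tfrac{1}{\mu}\, z' + \tfrac{\mu-1}{\mu}\, \sigma x_0 \;=\; x',
\end{equation*}
and by Rockafellar's line-segment principle (a convex combination of a point in $\text{ri}(K)$ with a point in $K$, using strictly positive weight on the relative interior point, lies in $\text{ri}(K)$) we have $y_0 \in \text{ri}(K)$. This exhibits $x'$ as an element of $\sigma(\text{ri}(K))$ and completes the proof.
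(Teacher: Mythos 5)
Your proof is correct. The paper states this lemma as a citation of Rockafellar (Section 1 and Theorem 6.6) and gives no proof of its own; your argument is essentially Rockafellar's proof of Theorem 6.6 — the affine hull identity from preservation of affine combinations plus minimality, and the relative interior identity from the line-segment (prolongation) characterization together with the nonemptiness of $\text{ri}(K)$ in finite dimension and the line segment principle — so there is nothing to add.
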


Let ${\F}(H)$ be the set of all faces of a convex set $H$. We give ${\F}(H)$ the partial order
\[
    F \le F' \quad \Leftrightarrow\quad F \subseteq F'.
\]
We agree that any subset of ${\F}(H)$ inherits this order. The set ${\F}(H)$ is a complete lattice, called the {\it face lattice} of $H$, where the meet of two faces is their intersection, and the join is the smallest face containing both faces.

\begin{lemma}{\rm(\cite[Theorem 6.2, Corollary 18.12, and Theorem 18.2]{Ro96})} Let $H$ be a nonempty convex set in $\hR^*$. Then
\begin{equation*}
  H= \bigsqcup_{F\in\F(H)\setminus\{\emptyset\}}  \text{\rm ri}(F),
\end{equation*}
and $\text{\rm ri}(F)\neq\emptyset$ for all $F\in\F(H)\setminus\{\emptyset\}$.
\end{lemma}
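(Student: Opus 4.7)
The plan is to combine two classical facts from finite-dimensional convex analysis: every nonempty convex set in $\hR^*$ has nonempty relative interior, and each point of $H$ lies in the relative interior of a unique face of $H$. The lemma is the conjunction of these two statements, which together yield both the disjointness and the covering assertions.

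First I would prove the nonemptiness assertion. Any face $F\in\F(H)\setminus\{\emptyset\}$ is itself a nonempty convex subset of $\hR^*$, so it suffices to show that a nonempty convex set $C$ in a finite-dimensional real vector space has $\text{ri}(C)\neq\emptyset$. The standard argument picks an affine basis $x_0,x_1,\ldots,x_d\in C$ of $\text{aff}(C)$, constructed inductively: start with any $x_0\in C$, and at each step, if the affine span of the already chosen points is a proper subset of $\text{aff}(C)$, then $C$ must contain some point outside this span (else $\text{aff}(C)$ would be smaller). The barycenter $\bar x:=\frac{1}{d+1}\sum_{i=0}^d x_i$ then lies in $\text{ri}(C)$, because every point of $\text{aff}(C)$ close enough to $\bar x$ expresses as an affine combination of $x_0,\ldots,x_d$ with strictly positive coefficients, hence as a convex combination, hence lies in $C$.

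Next I would prove the existence of the decomposition. For each $x\in H$ I would construct the (necessarily unique) smallest face of $H$ containing $x$, for instance as
\begin{equation*}
  F(x):=\{x\}\cup\{y\in H\mid \text{there exist } z\in H \text{ and } t\in(0,1) \text{ with } x=ty+(1-t)z\}.
\end{equation*}
One checks directly that $F(x)$ is convex, that it is a face of $H$ (any open segment in $H$ meeting $F(x)$ has both endpoints in $F(x)$, by chaining the witnesses through $x$), and that $x\in\text{ri}(F(x))$. The last point is the crux: for each $y\in F(x)$ with witness $z$, the open segment $(y,z)$ lies entirely in $F(x)$ and contains $x$ as an interior point, which lets one absorb small perturbations of $x$ within $\text{aff}(F(x))$ while remaining in $F(x)$. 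This is the step where I expect the most care to be required, since one must simultaneously verify the face axiom and the relative-interior property.

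Finally, disjointness. Suppose $x\in\text{ri}(F_1)\cap\text{ri}(F_2)$ for two faces $F_1,F_2\in\F(H)\setminus\{\emptyset\}$. Given any $y\in F_1$, because $x\in\text{ri}(F_1)$ the segment from $y$ through $x$ extends slightly past $x$ while staying inside $F_1\subseteq H$; the face property of $F_2$, applied to this open segment through the interior point $x\in F_2$, forces $y\in F_2$. Hence $F_1\subseteq F_2$, and by symmetry $F_1=F_2$. Combined with the existence step, this gives the claimed disjoint union. The main obstacle, as noted, is the simultaneous verification that $F(x)$ is a face of $H$ and that $x$ lies in its relative interior; once that is in place, the remaining arguments are formal.
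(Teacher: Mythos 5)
Your proof is correct. Note that the paper does not actually prove this lemma: it is quoted directly from Rockafellar (Theorem 6.2 for the nonemptiness of relative interiors, Theorem 18.2 for the partition of a convex set into the relative interiors of its nonempty faces), so there is no in-paper argument to compare against, and what you have written is essentially the standard textbook proof of those cited results --- the affine-basis/barycenter argument for $\text{ri}(C)\neq\emptyset$, the construction of the minimal face $F(x)$ through a given point, and the ``extend the segment past $x$'' argument for disjointness. The only genuinely thin spot is the step you yourself flag, namely that $x\in\text{ri}(F(x))$. To close it cleanly: choose an affine basis $y_0,\ldots,y_d$ of $\text{aff}(F(x))$ consisting of points of $F(x)$; for each $i$ your construction provides $z_i$ and $t_i\in(0,1)$ with $x=t_iy_i+(1-t_i)z_i$, and each $z_i$ lies in $F(x)$ (not merely in $H$) because $y_i$ serves as its witness in turn; averaging these $d+1$ identities exhibits $x$ as a convex combination with strictly positive coefficients of finitely many points of $F(x)$ that affinely span $\text{aff}(F(x))$, and such a combination lies in the relative interior of their convex hull, which is a relatively open subset of $\text{aff}(F(x))$ contained in $F(x)$; hence $x\in\text{ri}(F(x))$. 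Alternatively one may invoke the line-segment characterization of relative interior points (Rockafellar, Theorem 6.4), whose hypothesis is exactly what your witnesses supply. With that step completed, the covering, disjointness, and nonemptiness assertions all follow as you describe.
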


\begin{lemma}{\rm(\cite[Section 18]{Ro96})}\label{faceProperties2}
Let $F$ be a face of a convex set $H$, and let $F'$ be a subset of $F$. Then $F'$ is a face of $H$ if and only if $F'$ is a face of $F$.
\end{lemma}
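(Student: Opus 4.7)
The plan is to reduce both directions to the definition of a face. Recall that a convex subset $F\subseteq H$ is a face of $H$ precisely when, for every $x,y\in H$ and every $t\in(0,1)$, the condition $tx+(1-t)y\in F$ forces $x,y\in F$. Both implications of the lemma will then come from a short transitivity argument applied to this condition.

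For the direction $(\Rightarrow)$, assume that $F'$ is a face of $H$. Since $F'\subseteq F$ and $F'$ is convex, it remains to verify the face property relative to $F$. So take $x,y\in F$ and $t\in(0,1)$ with $tx+(1-t)y\in F'$. Because $F\subseteq H$, the points $x,y$ lie in $H$, and by the assumption that $F'$ is a face of $H$ we conclude $x,y\in F'$. Hence $F'$ is a face of $F$.

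For the direction $(\Leftarrow)$, assume that $F'$ is a face of $F$. Again $F'$ is convex and $F'\subseteq F\subseteq H$. Take $x,y\in H$ and $t\in(0,1)$ with $z:=tx+(1-t)y\in F'$. Since $F'\subseteq F$ and $F$ is a face of $H$, applying the face property of $F$ to the point $z\in F$ gives $x,y\in F$. Now, with $x,y\in F$ and $z\in F'$, the assumption that $F'$ is a face of $F$ yields $x,y\in F'$. Thus $F'$ is a face of $H$.

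There is no real obstacle here; the lemma is a transitivity-of-the-face-condition argument, invoked once in the forward direction and twice (first for $F\le H$ and then for $F'\le F$) in the backward direction. The only thing to be careful about is to record at the outset that $F'$ is convex and contained in the appropriate set, which is immediate in both directions because $F'\subseteq F\subseteq H$ and convexity of $F'$ is part of being a face.
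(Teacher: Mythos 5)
Your proof is correct; the paper itself gives no proof here, merely citing Rockafellar (Section 18), and your two-step transitivity argument from the standard definition of a face is exactly the standard way this fact is established. Nothing is missing: you correctly note convexity of $F'$ and the inclusions $F'\subseteq F\subseteq H$ in both directions, and the backward direction properly applies the face condition twice.
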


\begin{lemma}{\rm(\cite[Theorem 18.3]{Ro96})}\label{generatingProperties} Let $F$ be a face of a convex set $H$. If $H$ is generated by $K$ as a convex set then $F$ is generated by  $K\cap F$ as a convex set.
\end{lemma}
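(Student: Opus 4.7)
The plan is to show the two inclusions $\text{conv}(K\cap F) \subseteq F$ and $F\subseteq \text{conv}(K\cap F)$. The first is immediate: $K\cap F\subseteq F$, and $F$ is convex, so any convex combination of points of $K\cap F$ lies in $F$.

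For the reverse inclusion, I would take an arbitrary $x\in F$. Since $F\subseteq H$ and $H$ is by hypothesis the convex hull of $K$, we can write
\begin{equation*}
  x = \sum_{i=1}^n \lambda_i k_i, \quad \lambda_i>0,\; \sum_{i=1}^n \lambda_i =1,\; k_i\in K.
\end{equation*}
The key step is to show that each $k_i$ actually lies in $F$. This is the content of the defining property of a face: if a point of $F$ is expressed as a (strict) convex combination of points of $H$, then all those points must already belong to $F$. I would establish this by induction on $n$. The base case $n=2$ is precisely the standard definition of a face of a convex set (a convex subset $F\subseteq H$ such that whenever an interior point of a line segment with endpoints in $H$ lies in $F$, both endpoints lie in $F$). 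For the inductive step, set $\mu := \sum_{i=2}^n \lambda_i = 1-\lambda_1 \in (0,1)$ and write
\begin{equation*}
  x = \lambda_1 k_1 + \mu\, y, \quad \text{where } y := \frac{1}{\mu}\sum_{i=2}^n \lambda_i k_i \in H.
\end{equation*}
The $n=2$ case applied to $x=\lambda_1 k_1+\mu y$ yields $k_1\in F$ and $y\in F$; then the inductive hypothesis applied to $y=\sum_{i=2}^n (\lambda_i/\mu) k_i$ gives $k_2,\ldots,k_n\in F$.

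Hence each $k_i\in K\cap F$, so $x\in \text{conv}(K\cap F)$, completing the proof. The only real obstacle is the inductive argument reducing the multi-point case to the two-point definition of a face, but this is a very standard and routine reduction; essentially no novel difficulty arises, which is why the lemma is simply cited from \cite{Ro96} in the text.
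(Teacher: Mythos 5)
Your proof is correct. The paper does not prove this lemma at all --- it simply cites \cite[Theorem 18.3]{Ro96} --- and your argument is the standard one: the inclusion $\text{co}(K\cap F)\subseteq F$ is immediate from convexity of $F$, and the reverse inclusion follows by writing a point of $F$ as a strict convex combination of points of $K$ and using induction to reduce to the two-point case, which is exactly the defining property of a face. No gap.
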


\section{Geometry of $W$-Orbits\label{s3}}
%
%
In this section we fix an element $\m$ of the fundamental chamber $\overline{C}$ of the Tits cone $X$ of a Kac-Moody algebra $\g(A)$. We set
\begin{eqnarray*}
   J_0: =\{\a\in\Pi\mid \la\mu, \a^\vee \ra = 0\} \quad\text{ and }\quad  J_>:=\Pi\setminus J_0 = \{\a\in\Pi\mid \la\mu, \a^\vee \ra > 0\} ,
\end{eqnarray*}
and call $J_0$ the type of $\mu$.
We call the convex hull
\begin{eqnarray*}
   H:=\text{co}(W\mu)\subseteq \h_\R^*
\end{eqnarray*}
of the Weyl group orbit $W\mu$ in $\h_\R^*$ the {\it orbit hull} of $\mu$. Clearly, $H$ is $W$-invariant.

The action of $W$ on $H$ induces an action of $W$ on the face lattice $\F(H)$ of $H$ by lattice isomorphisms. For each face $F\in \F(H)$ we denote by \begin{equation*}\label{stabWF1}
           W(F) := \{ w\in W \mid w F = F \}
\end{equation*}
its {\it isotropy group}, and by
\begin{equation*}\label{stabWF2}
    W_*(F) := \{ w\in W \mid w\eta = \eta \fall \eta\in F \}
\end{equation*}
its {\it stabilizer}, which is a normal subgroup of $W(F)$. For $w\in W$ we have $  W(wF)=wW(F)w^ {-1}$ and $W_*(wF)=wW_*(F)w^ {-1} $.

A face is called {\it fundamental} if it is either empty or if its relative interior intersects the fundamental chamber $\overline{C}$. The vertex $\{\m\}$ and the orbit hull $H$ are fundamental faces. We set
\begin{equation*}
    \mathcal F := \{F \in {\F}(H) \mid \overline{C}\cap\, \ri F\ne\emptyset \} \cup \{\emptyset\}.
\end{equation*}

We give three examples which illustrate $H$ and its faces for finite, affine, and strongly hyperbolic Kac-Moody algebras. For indefinite, not strongly hyperbolic Kac-Moody algebras the situation is more complicated as can be seen by Corollaries \ref{closed-orbit-hull} and \ref{finite-edges}.
\begin{exam}\label{A2}{\rm
Let $A=
                        \begin{pmatrix}
                            2 & -1 \\
                            -1 & 2
                        \end{pmatrix}.
                        $
Then the Kac-Moody algebra $\g(A)$ is the simple Lie algebra of type $A_2$. Its Tits cone is the whole space $\h_\R^*=\R^2$. Let $\mu=3\m_1+2\m_2\in\overline{C}$. The Weyl group orbit
\[
    W\m = \{\mu, \, \mu-3\a_1,\, \mu-5\a_1-2\a_2, \,\mu-5\a_1-5\a_2, \,\mu-3\a_1-5\a_2, \,\mu-2\a_2 \},
\]
and its convex hull $H$ are indicated in the following picture:\\
\begin{center}
\begin{tikzpicture}[scale=0.6]
%
%
%
%
\draw[dashed] (0,-6) -- (0, 6);
\draw[dashed] (-10.4 , 6) -- (10.4,-6);
\draw[dashed] (-10.4,-6) -- (10.4,6);
\fill[ultra nearly transparent] (0,0)--(0,6)--(10.4,6);
\node at (5.1,4.7){$\overline{C}$};
%
%
%
\draw[very thick, ->] (0,0) -- (1.732, 0);
\draw[very thick, ->] (0,0) -- (-0.866, 1.5);
%
%
%
\node[right] at  (1.732 , 0) {$\alpha_1$};
\node[above left] at  (-0.866 , 1.5) {$\alpha_2$};
%
%
%
\fill  (2.598, 3.5) circle (5pt)  (4.330, 0.5) circle (5pt) (1.732,-4)  circle (5pt)  (-1.732 ,-4)   circle (5pt)  (-4.330, 0.5)  circle (5pt)  (-2.598, 3.5)  circle (5pt) ;
\node[above right] at  (2.598,3.5) {$\mu$};
\node[right] at (4.330, 0.5)  {$\;r_2\mu=\mu-2\alpha_2$};
\node[below right] at (1.732, -4)   {$\mu-3\alpha_1 - 5\alpha_2\;$};
\node[below left] at (-1.732, -4)   {$\mu-5\alpha_1 - 5\alpha_2$};
\node[left] at (-4.330, 0.5)  {$\mu-5\alpha_1 - 2\alpha_2\;$};
\node[above left, fill = white] at  (-2.8, 3.5)  {$r_1\mu=\mu-3 \alpha_1$};
%
%
%
\draw[thick ] (2.598,3.5) -- (4.330, 0.5) -- (1.732, -4) -- (-1.732, -4) --  (-4.330, 0.5) -- (-2.598, 3.5) -- (2.598, 3.5);
\fill[very nearly transparent] (2.598,3.5) -- (4.330, 0.5) -- (1.732, -4) -- (-1.732, -4) --  (-4.330, 0.5) -- (-2.598, 3.5) -- (2.598, 3.5);
\end{tikzpicture}
\end{center}
The Weyl group orbit $W\mu$ is contained in a circle. Its convex hull $H$ has 14 faces: The empty face, 6 vertices, 6 edges, and $H$. The set of fundamental faces is
\[
    \mathcal F = \{\emptyset,\, \{\mu\}, \,\overline{\mu\,r_1\mu}, \,\overline{\mu \,r_2\mu} , \,H\},
\]
where $\overline{\mu \,r_i\mu}$ is the closed line segment between $\mu$ and $r_i\mu$, $i=1, 2$.

}
\end{exam}
\begin{exam}\label{A11}
{\rm Let
$A=
                        \begin{pmatrix}
                            2 & -2 \\
                            -2 & 2
                        \end{pmatrix}
                        $.
Then the Kac-Moody algebra $\g(A)$ is the affine Lie algebra of type $A_1^{(1)}$. Its Tits cone in $\h_\R^*=\R^3$ consists of the line $\R(\a_1+\a_2)$ and an open half space bounded by $\R\a_1+\R a_2$. The Tits cone is subdivided by the reflection planes like an infinite open book by its pages.
Let $\mu=\mu_1\in\overline{C}$. The Weyl group orbit
\[
    W\mu = \{\mu - n^2\a_1  - n(n+1)\a_2 \mid n \in \Z\},
\]
and its convex hull $H$ are contained in the affine plane $\pi=\mu+\R\a_1+\R\a_2$, which is contained in the Tits cone,  \cite[Appendix]{MP95}.  A part of this affine plane is indicated in the following picture.
\begin{center}
\begin{tikzpicture}[scale=0.75]
%
%
%
\draw[dashed] (-7,-6) -- (-7, 6);
\draw[dashed] (-6,-6) -- (-6, 6);
\draw[dashed] (-5,-6) -- (-5, 6);
\draw[dashed] (-4,-6) -- (-4, 6);
\draw[dashed] (-3,-6) -- (-3, 6);
\draw[dashed] (-2,-6) -- (-2, 6);
\draw[dashed] (-1,-6) -- (-1, 6);
\draw[dashed] (0,-6) -- (0, 6);
\draw[dashed] (1,-6) -- (1, 6);
\draw[dashed] (2,-6) -- (2, 6);
\draw[dashed] (3,-6) -- (3, 6);
\draw[dashed] (4,-6) -- (4, 6);
\draw[dashed] (5,-6) -- (5, 6);
\draw[dashed] (6,-6) -- (6, 6);
\draw[dashed] (7,-6) -- (7, 6);
\draw[dashed] (8,-6) -- (8, 6);
\node[fill=white,inner sep=1pt] at (0.5,5.3){$\overline{C}\cap\pi$};
\fill[ultra nearly transparent] (0,-6)--(0,6)--(1,6) -- (1, -6);

%
%
%
\fill  (-6, -3) circle (3pt) ;
\fill  (-4, 0.666) circle (3pt) ;
\fill  (-2, 3) circle (3pt) ;
\fill  (0, 4) circle (3pt) ;
\fill  (2, 3.666) circle (3pt) ;
\fill  (4,  2) circle (3pt) ;
\fill  (6, -1) circle (3pt) ;
\fill  (8, -5.333) circle (3pt) ;
%
%
%
\node[above left, fill=white] at  (-6.2,-3) {$\mu-9\a_1-12\a_2$};
\node[above left, fill=white] at  (-4.2, 0.666) {$\mu-4\a_1-6\a_2$};
\node[above left, fill=white] at  (-2.2, 3) {$\mu-\a_1-2\a_2$};
\node[above right] at  (0,4) {$\mu$};
\node[above right, fill=white] at  (2.2, 3.666)  {$r_1\mu=\mu-\a_1$};
\node[above right, fill=white] at  (4.2, 2)  {$\mu-4\a_1-2\a_2$};
\node[above right, fill=white] at  (6.2, -1)  {$\mu-9\a_1-6\a_2$};
%
%
%
%
%

%
%
\draw[thick ] (-7,-5.333) -- (-6, -3) -- (-4, 0.666) -- (-2, 3) -- (0, 4) -- (2, 3.666) -- (4,  2) -- (6, -1) -- (8, -5.333) ;
\fill[very nearly transparent] (-7,-6) -- (-7,-5.333) -- (-6, -3) -- (-4, 0.666) -- (-2, 3) -- (0, 4) -- (2, 3.666) -- (4,  2) -- (6, -1) -- (8, -5.333) -- (8,-6) -- (-7,-6);

\end{tikzpicture}
\end{center}
The Weyl group orbit $W\mu$ is contained in a parabola. Its convex hull $H$ has infinitely many faces. The set of fundamental faces is
\[
   \F = \{ \emptyset,\,\{\mu\}, \,\overline{\mu\,r_ 1\mu} ,\,H \} .
\]
}
\end{exam}

\begin{exam}\label{Aab}
{\rm Let $A=    \begin{pmatrix}
                            2 & -2\\
                            -3 & 2
                        \end{pmatrix} $.
Then $\g(A)$ is a Kac-Moody algebra of indefinite, strongly hyperbolic type. The form of its Tits cone in $\h_\R^*=\R^2$ is an open wedge with the additional point $\{0\}$. Let $\mu= \mu_1 + \mu_2 \in\overline{C}$. For $n\in \Z$ set
\begin{eqnarray*}
  g(n) = f(n)^2+  \sqrt{\frac{2}{3}\,} f(n)f(n-1)\quad \text{and }\quad  h(n) = f(n)^2 + \sqrt{\frac{3}{2}\,} f(n)f(n+1)
\end{eqnarray*}
where
\begin{eqnarray*}
  f(n) = \frac{\sinh(n\theta)}{\sinh(\theta)}  \quad \text{ with }\quad\theta=\text{Arcosh}(\frac{1}{2}\sqrt{6}).
\end{eqnarray*}
The Weyl  group orbit
\[
   W\mu = \{\mu- g(n) \a_1 - h(n) \a_2 ,  \;\mu- g(n+1) \a_1 - h(n)\a_2 \mid n\in\Z \},
\]
\cite[Lemma 2.6]{F14}, and its convex hull $H$ are indicated in the following picture:
\begin{center}
\begin{tikzpicture}[scale=0.9]
%
%
%
\draw[dotted] (0,0)-- (-8,-8);
\draw[dotted] (0,0)-- (8,-8);
%
%
%
%
%
%
%
%
%
\draw[dashed] (0,0)-- (-7.8422,-8);
\draw[dashed] (0,0)-- (-7.4267,-8);
\draw[dashed] (0,0)-- (-6.051,-8);
\draw[dashed] (0,0)-- (2.5427, -8);
\draw[dashed] (0,0)-- (-2.5427,-8);
\draw[dashed] (0,0)-- (6.051,-8);
\draw[dashed] (0,0)-- (7.4267,-8);
\draw[dashed] (0,0)-- (7.8422,-8);
\fill[ultra nearly transparent] (0,0) -- (-2.5427, -8) -- (2.5427, -8) --  (0,0) ;
\node at (0,-5){$\overline{C}$};
\node[above] at (0,0+0.05) {$0$};
%
%
%
%

\fill (-4.630*1.5,-4.824*1.5 ) circle (3pt);
\fill (-2.433*1.5,-2.785*1.5 ) circle (3pt);
\fill (-0.905*1.5,-1.628*1.5 ) circle (3pt);
\fill (-0.043*1.5,-1.354*1.5 ) circle (3pt);
\fill (1.0112*1.5,-1.690*1.5 ) circle (3pt);
\fill (2.259*1.5,-2.634*1.5 ) circle (3pt);
\fill (4.949*1.5,-5.131*1.5 ) circle (3pt);
%
%
%
\node[above left, fill = white, inner sep=1pt] at (-2.433*1.5-0.4,-2.785*1.5) {$\mu-\alpha_1- 4\alpha_2$};
\node[above left, fill=white, inner sep=0pt] at (-0.905*1.5-0.3,-1.628*1.5+0.1)  {$r_2\mu=\mu- \alpha_2$};
\node[above] at (-0.043*1.5,-1.354*1.5 +0.1) {$\mu$};
\node[above right, fill=white, inner sep=0pt]at (1.0112*1.5+0.3,-1.690*1.5+0.1)  {$r_1\mu=\mu-\alpha_1$} ;
\node[above right, fill=white, inner sep=1pt] at (2.259*1.5+0.4,-2.634*1.5)   {$\mu- 3\alpha_1- \alpha_2$};
%
%
%
\draw[thick ] (-7.71,-8) -- (-4.630*1.5,-4.824*1.5 ) -- (-2.433*1.5,-2.785*1.5 ) -- (-0.905*1.5,-1.628*1.5 ) --  (-0.043*1.5,-1.354*1.5 )  --  (1.0112*1.5,-1.690*1.5 ) -- (2.259*1.5,-2.634*1.5 ) -- (4.949*1.5,-5.131*1.5 ) -- (7.73,-8) ;
\fill[very nearly transparent] (-7.71,-8) -- (-4.630*1.5,-4.824*1.5 ) -- (-2.433*1.5,-2.785*1.5 ) -- (-0.905*1.5,-1.628*1.5 ) --  (-0.043*1.5,-1.354*1.5 )  --  (1.0112*1.5,-1.690*1.5 ) -- (2.259*1.5,-2.634*1.5 ) -- (4.949*1.5,-5.131*1.5 ) -- (7.73,-8) ;
\end{tikzpicture}
\end{center}
The Weyl group orbit $W\mu$ is contained in a hyperbola. Its convex hull has infinitely many faces. The set of fundamental faces is
\[
    \mathcal F = \{\emptyset,\, \{\mu\}, \,\overline{\mu\,r_1\mu}, ,\overline{\mu\,r_2\mu}, \,H\}.
\]
}
\end{exam}

\vspace*{1ex} Classically, for finite Weyl groups, the face lattices of orbit hulls have been investigated by several people. A detailed literature survey can be found in \cite[Section 2]{Kh16}.
The description of $\F$ in Corollary \ref{dfct}, the cross section lattice property of $\F$ in Corollaries \ref{ff-cs} and \ref{ff-sl}, and the descriptions of  $W_*(F)$ and $W(F)$, $F\in\F$, in Theorem \ref{faceIsotropyGroup} have been obtained for finite Weyl groups by W. A. Casselman in \cite[Sections 3 and 4]{Ca}, building on some work of A. Borel, J. Tits in \cite[Sections 12.14  --  12.17]{BT}, and of I. Satake in \cite[Section 2.3]{Sa}.
For orbit hulls of integral weights these results have been shown independently by E. B. Vinberg in \cite[Section 3.1]{V91}.
Equivalent versions have been obtained independently in the theory of $\mathcal J$-irreducible reductive linear algebraic monoids
by M. S. Putcha and L. E. Renner in \cite[Section 4]{PR88}.

The classical approaches for the investigation of the face lattice $\F(H)$ do not generalize to infinite Weyl groups. Instead we combine some ideas of \cite[Section 4]{Li15}, \cite[Section 4]{Mo15b}, and \cite[Section 2]{Mo09}. In addition to the results mentioned above we obtain a description of the relative interiors of the faces of $H$, which even for finite Weyl groups seems to be new. We obtain characterizations for $H\cap\overline{C}$ to be closed, and for $H$ to have finitely many edges containing $\mu$.
We reach a description of the lattice operations of the face lattice $\F(H)$, which even for finite Weyl groups seems to be new.

For $I\subseteq \Pi$ we denote by
\begin{equation*}
  F_I:=\text{co}(W_I\mu)\subseteq \h_\R^*
\end{equation*}
the convex hull of $W_I\mu$ in $\h_\R^*$. To prove that these sets are fundamental faces we use two Lemmas. The first can be found for example in \cite[Proposition 1.11]{Loo}.
\begin{lemma}\label{VinbergLemma}
We have $H\subset \mu - \mathbb R_+\Pi$.
\end{lemma}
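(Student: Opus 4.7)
The plan is to reduce the statement to the well-known fact that every Weyl group orbit element $w\mu$ satisfies $\mu - w\mu \in Q_+ \subseteq \mathbb{R}_+\Pi$, then appeal to convexity. Since the set $\mu - \mathbb{R}_+\Pi$ is convex, and $H = \operatorname{co}(W\mu)$ is by definition the smallest convex set containing $W\mu$, it suffices to prove $W\mu \subseteq \mu - \mathbb{R}_+\Pi$; equivalently, $\mu - w\mu \in \mathbb{R}_+\Pi$ for every $w \in W$.

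I would establish this by induction on the length $\ell(w)$ in the Coxeter system $(W,S)$. The base case $\ell(w)=0$, i.e.\ $w=1$, gives $\mu-w\mu=0\in\mathbb{R}_+\Pi$. For the inductive step, assume the statement holds for all elements of length less than $\ell(w)$, and choose a simple reflection $s_i \in S$ with $\ell(s_iw) = \ell(w)-1$. Write $w' := s_iw$, so that $w = s_iw'$ and $\ell(w')<\ell(w)$. Then
\begin{eqnarray*}
 \mu - w\mu \;=\; \mu - s_i w'\mu \;=\; (\mu - w'\mu) \,+\, \langle w'\mu,\,\alpha_i^\vee\rangle\,\alpha_i .
\end{eqnarray*}
By the induction hypothesis, $\mu - w'\mu \in \mathbb{R}_+\Pi$, so it remains to check that $\langle w'\mu,\alpha_i^\vee\rangle \geq 0$.

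For this I would use the length condition together with the $W$-equivariance of the bijection $\mbox{}^\vee\colon \Delta^{re}\to(\Delta^{re})^\vee$. The relation $\ell(s_i w') = \ell(w')+1$ is equivalent, by the standard exchange/strong-exchange property of Coxeter groups, to $(w')^{-1}\alpha_i \in \Delta_+^{re}$, hence $(w')^{-1}\alpha_i^\vee$ is a positive real coroot. Because $\mu\in\overline{C}$, we have $\langle\mu,\beta^\vee\rangle\geq 0$ for every positive real coroot $\beta^\vee$ (this follows from the definition of $\overline{C}$ together with the fact that every positive real coroot is a non-negative integer combination of $\Pi^\vee$). Therefore $\langle w'\mu,\alpha_i^\vee\rangle = \langle \mu,(w')^{-1}\alpha_i^\vee\rangle \geq 0$, which finishes the induction.

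No step presents a real obstacle: the result is entirely structural and the only mildly delicate point is the passage from $\ell(s_iw')>\ell(w')$ to $(w')^{-1}\alpha_i\in\Delta_+^{re}$, which is a standard Coxeter-theoretic fact available in the references cited in Subsection~2.1 (e.g.\ \cite{Kac90, Ku02}). Once $W\mu \subseteq \mu - \mathbb{R}_+\Pi$ is established, the convexity of $\mu - \mathbb{R}_+\Pi$ yields $H = \operatorname{co}(W\mu) \subseteq \mu - \mathbb{R}_+\Pi$, as required.
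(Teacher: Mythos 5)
Your proof is correct. Note that the paper does not actually prove this lemma; it simply cites \cite[Proposition 1.11]{Loo}, so there is no internal argument to compare against. Your route is the standard self-contained one: reduce to $W\mu\subseteq\mu-\mathbb{R}_+\Pi$ using the convexity of the cone $\mu-\mathbb{R}_+\Pi$, then induct on $\ell(w)$ via the identity $\mu-s_iw'\mu=(\mu-w'\mu)+\langle w'\mu,\alpha_i^\vee\rangle\alpha_i$. The two points you need are both available in the paper's setup: $\ell(s_iw')>\ell(w')$ forces $(w')^{-1}\alpha_i\in\Delta^{re}_+$ (the standard Coxeter fact, e.g.\ \cite[Lemma 3.11]{Kac90}), and the $W$-equivariance of $\alpha\mapsto\alpha^\vee$ together with the statement ``$\alpha>0$ if and only if $\alpha^\vee>0$'' from Subsection 2.1 gives $\langle w'\mu,\alpha_i^\vee\rangle=\langle\mu,((w')^{-1}\alpha_i)^\vee\rangle\geq 0$ since $\mu\in\overline{C}$. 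This is exactly the argument one would extract from the cited reference, and it works for an arbitrary $\mu$ in the closed fundamental chamber, which is the generality the paper needs.
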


The second Lemma generalizes \cite[Lemma 4.2]{Li15}, as well as the fifth paragraph of \cite[Section 3.1]{V91} for orbit hulls where the  Weyl group is finite. M. Dyer obtains  independently the same result by a different proof in \cite[Lemma 2.4 (d)]{Dy13}.

\begin{lemma} \label{zhuoLemma} Let $w\in W$ and $I\subseteq \Pi$ with $w\mu\in \mu-\R_+ I$. Then there exists $u\in W_I$ such that $w\mu=u\mu$.
\end{lemma}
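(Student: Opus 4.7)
The plan is to induct on $\ell(w)$, assuming without loss of generality that $w$ is the minimum-length representative of its coset $wW_{J_0}$ in $W/W_{J_0}$, where $W_{J_0}$ is the stabilizer of $\mu$. If $\ell(w)=0$ then $w=1$ and one takes $u=1$. Otherwise $w\mu\neq\mu$, hence $w\mu\notin\overline{C}$ (each $W$-orbit meets $\overline{C}$ in at most one point), and there is a simple root $\alpha_i$ with $\langle w\mu,\alpha_i^\vee\rangle<0$.

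The crucial observation, which drives the whole proof, is that such an $\alpha_i$ must lie in $I$. Writing $\mu-w\mu=\sum_{\alpha\in I}c_\alpha\alpha$ with $c_\alpha\geq 0$ and pairing with $\alpha_i^\vee$ gives
\[
    0 \;<\; \langle\mu,\alpha_i^\vee\rangle-\langle w\mu,\alpha_i^\vee\rangle \;=\; \sum_{\alpha\in I}c_\alpha\,\langle\alpha,\alpha_i^\vee\rangle;
\]
if $\alpha_i\notin I$, every term on the right is $\leq 0$ by the generalized Cartan matrix condition, a contradiction.

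Next I would show that $r_iw\mu$ still lies in $\mu-\R_+ I$. A direct calculation gives $\mu-r_iw\mu=\mu-w\mu+\langle w\mu,\alpha_i^\vee\rangle\alpha_i\in\R I$ (using $\alpha_i\in I$), while Lemma \ref{VinbergLemma} applied to $r_iw\mu\in H$ yields $\mu-r_iw\mu\in\R_+\Pi$. Linear independence of $\Pi$ then forces $\R_+\Pi\cap\R I=\R_+ I$, so $\mu-r_iw\mu\in\R_+ I$ as desired.

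For the length decrease: from $\langle w\mu,\alpha_i^\vee\rangle<0$ and $\mu\in\overline{C}$, the real coroot $w^{-1}\alpha_i^\vee$ cannot be positive (since every positive real coroot is a nonnegative combination of simple coroots, against which $\mu$ pairs nonnegatively), hence is negative, so $w^{-1}\alpha_i\in\Delta^-$ and $\ell(r_iw)=\ell(w)-1$. Replacing $r_iw$ by the minimum-length representative $w'$ of $r_iwW_{J_0}$ gives $\ell(w')<\ell(w)$ and $w'\mu=r_iw\mu\in\mu-\R_+ I$; the induction hypothesis produces $u'\in W_I$ with $r_iw\mu=u'\mu$, and $u:=r_iu'\in W_I$ finishes the proof. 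The main conceptual obstacle is the observation that $\alpha_i\in I$; once that is established the rest is routine manipulation of roots and convex sets, though one should note that the identity $\R_+\Pi\cap\R I=\R_+ I$ is precisely what permits the argument to work for arbitrary $\mu\in\overline{C}$ without appealing to integrality, so that a single length-induction handles all cases uniformly.
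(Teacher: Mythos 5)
Your proof is correct, and it takes a genuinely different route from the paper's. The paper decomposes $w = w^I w_I$ with $w^I\in W^I$, $w_I\in W_I$, picks the last letter $\gamma_k$ of a reduced expression for $w^I$, and reflects in the (generally non-simple) real root $\a = w^I\gamma_k$; this forces a case split on the sign of $\la w\mu,\a^\vee\ra$, and in the nonzero case the containment $r_\a\in W_I$ is derived by comparing coefficients in $r_\a w\mu\in\mu-\R_+\Pi$. You instead normalize $w$ to be the minimum-length representative of $wW_{J_0}$, pick a \emph{simple} root $\alpha_i$ with $\la w\mu,\alpha_i^\vee\ra<0$, and observe directly that $\alpha_i\in I$ by a one-line sign argument on $\la\mu-w\mu,\alpha_i^\vee\ra$; the rest is then a single uniform descent step (your fact $\R_+\Pi\cap\R I=\R_+ I$ replaces the paper's coefficient argument, and the length decrease follows from the standard descent criterion). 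What your approach buys is economy: one case instead of two, a simple reflection instead of a reflection in a non-simple root, and a more elementary justification for why the reflecting root lies in $I$. What the paper's approach buys is that it never invokes the stabilizer $W_{J_0}$ or the uniqueness of the orbit representative in $\overline{C}$, working entirely with the $W_I$-coset decomposition of $w$; both routes rely essentially on Lemma~\ref{VinbergLemma}.
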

\begin{proof} We use induction on the length $l(w)$. It is clear that the result holds for $l(w)=0$. Now suppose that $l(w)\ge 1$.
By \cite[Proposition 2.20]{AB08} there exist $w^I\in W^I$, $w_I\in W_I$ such that $w =w^I w_I$ and $l(w) =l(w^I)+l(w_I)$.
If $w^I=1$ then $w\in W_I$, and the proof is complete. If $w^I\ne 1$, let $w^I=r_{\gamma_1}\dots r_{\gamma_k}$ be a reduced expression. Then $\gamma_k\notin I$. Moreover, $\a:=w^I\gamma_k<0$ and $w_I^{-1}\gamma_k >0$ by  \cite[Lemma 3.11]{Kac90} and \cite[Lemma 2.15]{AB08}. Since  $\a^\vee =w^I\gamma_k^\vee$ and $w_I^{-1}\gamma_k^\vee =(w_I^{-1}\gamma_k)^\vee >0$ we obtain $\la w\mu, \a^\vee\ra =\la w_I\mu, \gamma_k^\vee\ra = \la\mu,w_I^{-1}\gamma_k^\vee\ra \geq 0$.

If $\la w_I\mu,\ \gamma_k^\vee\ra=0$, then $r_{\gamma_k}w_I\mu =w_I\mu$. Hence,
$
    w\mu= r_{\gamma_1}\dots r_{\gamma_{k-1}}w_I\mu.
$
From the induction hypothesis, there exists an element $u\in W_I$ such that $r_{\gamma_1}\dots r_{\gamma_{k-1}}w_I\mu=u\mu$. Thus, $w\mu=u\mu$.

If $\la w_I\mu,\ \gamma_k^\vee\ra=\la w\mu, \a^\vee\ra > 0$ we write $w\mu=\mu-\sum_{\beta\in I}a_\beta \beta$ with $a_\beta\in\R_+$. By Lemma \ref{VinbergLemma} we have
\[
    r_\a w\mu = \mu-\sum_{\beta\in I}a_\beta \beta  - \la w\mu,\ \a^\vee\ra \a \subseteq  \mu - \R_+ \Pi,
\]
since $r_\a w\mu\in H$. But $\a$ is a negative root, so it is a linear combination of simple roots from $I$. Thus $r_\a\in W_I$ and
$ r_{\alpha}w\mu \in \mu-\R_+I$. Moreover, the length of $ r_{\alpha}w= (w^I r_{\gamma_k})w_I$ is smaller than the length of $w$. From the induction hypothesis, there exists $v\in W_I$ such that $r_\a w\mu = v\mu$. Hence $w\mu= r_\a v\mu$ with $r_\a v\in W_I$.
 \hfill $\Box$\end{proof}

\begin{theorem}\label{fiface}  Let $I\subseteq \Pi$. Then $F_I$ is an exposed fundamental face of H, and
\begin{eqnarray*}
  F_I = H\cap (\mu-\R_+I)= W_I (F_I\cap \overline{C} ) .
\end{eqnarray*}
Moreover, $W_I\mu = F_I\cap W\mu$.
\end{theorem}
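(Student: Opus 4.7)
The plan is to build a supporting hyperplane of $H$ that exposes $F_I$, identify the exposed face via Lemma \ref{zhuoLemma}, and then extract the $W_I$-decomposition from convexity of the Tits cone of the sub-Coxeter system $(W_I, I)$.

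Since $\Pi$ is linearly independent in $\hR^*$, I choose $h\in\hR$ with $\la\a,h\ra = 0$ for $\a\in I$ and $\la\a,h\ra > 0$ for $\a\in\Pi\setminus I$. Lemma \ref{VinbergLemma} gives $W\mu\subseteq \mu-\R_+\Pi$, so pairing with $h$ yields $\la w\mu,h\ra \le \la\mu,h\ra$ with equality precisely when $w\mu\in \mu-\R_+ I$. Thus $E := \{x\in H\mid \la x,h\ra = \la\mu,h\ra\}$ is an exposed face of $H$, and one checks directly that $E = H\cap(\mu-\R_+ I)$. By Lemma \ref{generatingProperties}, $E$ is the convex hull of $E\cap W\mu$; by Lemma \ref{zhuoLemma}, $E\cap W\mu\subseteq W_I\mu$, whence $E\subseteq F_I$. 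Conversely $W_I\mu\subseteq E$ from Lemma \ref{VinbergLemma} applied to the sub-system of type $I$ (which places $W_I\mu$ in $\mu-\R_+ I$), so $F_I\subseteq E$. This simultaneously gives $F_I = E = H\cap(\mu-\R_+ I)$, exposedness, and the identity $W_I\mu = F_I\cap W\mu$ (the nontrivial inclusion is once more Lemma \ref{zhuoLemma} applied to any $w\mu\in F_I \subseteq \mu-\R_+ I$).

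For $F_I = W_I(F_I\cap\overline{C})$, the inclusion $\supseteq$ is immediate from the $W_I$-invariance of $F_I$. For $\subseteq$, I invoke that the Tits cone of $(W_I, I)$ is convex and contains $W_I\mu$, hence contains $F_I$. Therefore each $x\in F_I$ admits $w\in W_I$ with $y := w^{-1}x$ satisfying $\la y,\a^\vee\ra\ge 0$ for all $\a\in I$. Writing $y = \mu - \sum_{\a_i\in I} c_i\a_i$ with $c_i\ge 0$ (from $F_I\subseteq \mu-\R_+ I$), for each $\a_j\in\Pi\setminus I$ the nonpositivity of the off-diagonal entries $a_{ji}$ together with $\la\mu,\a_j^\vee\ra\ge 0$ forces $\la y,\a_j^\vee\ra\ge 0$. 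Hence $y\in F_I\cap\overline{C}$ and $x = wy$.

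Fundamentality then follows easily: $F_I\ni\mu$ is nonempty, so $\ri F_I\neq\emptyset$; for any $z\in\ri F_I$ the preceding step gives $z = wy$ with $w\in W_I$ and $y\in F_I\cap\overline{C}$, and since $w$ acts as a linear bijection of $F_I$ onto itself it preserves $\ri F_I$, yielding $y\in\ri F_I\cap\overline{C}$. The step I expect to require the most care is the identification $E\subseteq F_I$: it rests on reducing $E$ to the convex hull of its extreme points via Lemma \ref{generatingProperties} and then applying Lemma \ref{zhuoLemma} to each $w\mu$ in $E\cap W\mu$, which is the bridge from the convex-geometric exposed face back to the orbit-theoretic object $W_I\mu$.
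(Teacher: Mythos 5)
Your proof is correct and rests on the same backbone as the paper's — Lemma \ref{VinbergLemma}, Lemma \ref{generatingProperties}, and Lemma \ref{zhuoLemma} — but it diverges in two places worth noting.

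For exposedness, you explicitly build a vector $h\in\h_\R$ vanishing on $I$ and strictly positive on $\Pi\setminus I$ and then cut $H$ by the hyperplane $\la\cdot,h\ra=\la\mu,h\ra$. The paper instead observes that $\mu-\R_+I$ is an exposed face of the cone $\mu-\R_+\Pi\supseteq H$ and uses that exposedness passes to the intersection with $H$. These are the same idea in different dress; your version is more concrete.

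The genuine difference is in the proof of $F_I\subseteq W_I(F_I\cap\overline{C})$. The paper writes $\eta_1\in F_I$ as $\eta_1=w\eta$ with $w\in W$, $\eta\in\overline{C}$ arbitrary (using the decomposition of the full Tits cone $X$), then invokes Lemma \ref{VinbergLemma} a second time with $\eta$ in place of $\mu$ to deduce $\eta_1\in\eta-\R_+I$, and finally applies Lemma \ref{zhuoLemma} again with $\eta$ in place of $\mu$ to replace $w$ by some $w_1\in W_I$. You instead appeal to the convexity of the Tits cone $X_I$ of the Coxeter subsystem $(W_I,I)$ to find $w\in W_I$ directly with $y:=w^{-1}x$ in the $I$-chamber $\{\la\cdot,\a^\vee\ra\ge 0,\ \a\in I\}$, and then use the already-established inclusion $y\in F_I\subseteq\mu-\R_+I$ together with nonpositivity of the off-diagonal Cartan entries to verify $\la y,\a_j^\vee\ra\ge 0$ also for $\a_j\in\Pi\setminus I$, i.e. $y\in\overline{C}$. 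Your route avoids re-applying Lemma \ref{zhuoLemma} with a different base point, at the cost of invoking the Tits cone of the subsystem (a standard Coxeter-theoretic fact, though not recorded in the paper's preliminaries). Both are sound; yours is slightly shorter, the paper's stays closer to the machinery it has already set up.

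Two small points to make explicit if you write this up: (i) the existence of $h$ uses that $\Pi$ is linearly independent in $\h_\R^*$ (which has dimension $2m-l\ge m$), so a linear functional on $\h_\R^*\cong\h_\R^{**}$ with prescribed values on $\Pi$ exists; (ii) the step $y=w^{-1}x\in F_I$ uses $W_I$-invariance of $F_I$, which you use implicitly — worth stating, since it is what licenses the subsequent use of $F_I\subseteq\mu-\R_+I$.
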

\begin{proof} As  $\mu-\mathbb R_+I$ is an exposed face of $\mu-\mathbb R_+\Pi$ we obtain that $H\cap (\mu-\mathbb R_+I)$ is an exposed face of  $H\cap (\mu-\mathbb R_+\Pi)=H$. Now Lemma \ref{generatingProperties} shows that $ H\cap (\mu-\mathbb R_+I)$ is generated by $W\mu\cap (\mu-\mathbb R_+I)$ as a convex set, which coincides with $W_I\mu$ by Lemma \ref{zhuoLemma}. So $ H\cap (\mu-\mathbb R_+I)=F_I$.

Since $F_I$ is the convex hull of $W_I\mu$ we have $W_I F_I=F_I$. Clearly, $W_I (F_I\cap \overline{C})\subseteq F_I$. To show the reverse inclusion let $\eta_1\in F_I$. Then there exist $w\in W$ and $\eta\in\overline{C}$ such that $\eta_1=w\eta$. By Lemma \ref{VinbergLemma} we have  $\eta=w^{-1}\eta_1\in H\subseteq \mu-\R_+ \Pi $. Applying Lemma \ref{VinbergLemma} to $\eta$ in place of $\mu$ we obtain
\begin{eqnarray*}
  \eta_1 = w\eta =\eta-\sum_{\a\in\Pi} a_\a \a = \mu + (\eta-\mu) -\sum_{\a\in\Pi} a_\a \a = \mu -\sum_{\a\in\Pi} b_\a \a  -\sum_{\a\in\Pi} a_\a \a
\end{eqnarray*}
with $a_\a, b_\a\in\R_+$, $\a\in\Pi$. Since $\eta_1 \in F_I = H\cap ( \mu-\R_+I) $ we find that $a_\a=b_\a=0$ for all $\a\in \Pi\setminus I$.
Hence $\eta_1=w\eta\in\eta-\R_+I$. By Lemma \ref{zhuoLemma}, applied for $\eta$ in place of $\mu$, there exists $w_1\in W_I$ such that $\eta_1=w_1\eta$. We also get $\eta=w_1^{-1}\eta_1\in W_I F_I=F_I$. Thus, $\eta\in F_I\cap \overline{C}$, and hence $\eta_1\in W_I(F_I\cap \overline{C})$.

We next show that $F_I$ is fundamental. Choose some $\gamma\in \text{ri}(F_I)$. Because of $F_I=W_I(F_I\cap \overline{C})$ there exists some $w\in W_I$ such that $w\gamma \in F_I\cap\overline{C}$. Moreover, $w\gamma\in w\,  \text{ri}(F_I)= \text{ri}(w F_I)=\text{ri}(F_I)$.
\hfill$\Box$\end{proof}

A nonempty subset $I\subseteq\Pi$ is called {\it  $\mu$-connected} if every connected component of $I$ intersects $J_>$. Note that $I$ is $\mu$-connected if and only if all its connected components are $\mu$-connected. We agree that the empty set is $\mu$-connected.
The next result is obvious.
\begin{proposition}\label{pmuc} An arbitrary union of $\mu$-connected sets is again $\mu$-connected. For $I\subseteq \Pi$ there exists a biggest $\mu$-connected set $I^*$ contained in $I$, which we call the $\mu$-connected part of $I$.

Ordered partially by inclusion, the $\mu$-connected sets form a lattice. The lattice meet and lattice join of two $\mu$-connected sets $I_1$, $I_2$ are given by
\begin{eqnarray*}
   I_1\wedge I_2= (I_1\cap I_2)^* \quad \text{ and }\quad I_1\vee I_2 =I_1\cup I_2.
\end{eqnarray*}
Moreover, $\emptyset$ is the smallest, and $\Pi^*$ is the biggest $\mu$-connected set.
\end{proposition}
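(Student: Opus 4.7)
The plan is to prove the four assertions in order, with the union claim doing the real work; everything else is formal. The key observation is that a connected component of a union of connected-in-$\Pi$ pieces swallows the component of each piece that overlaps it.

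First, for the arbitrary-union statement, let $\{I_k\}_{k\in K}$ be $\mu$-connected subsets of $\Pi$ and set $I := \bigcup_{k\in K} I_k$. If $I=\emptyset$ we are done by the convention. Otherwise, let $C$ be any connected component of $I$ and pick $\alpha \in C$. Then $\alpha \in I_k$ for some $k$; let $C_k$ be the connected component of $I_k$ containing $\alpha$. Because $C_k$ is a connected subset of $\Pi$ contained in $I$ and meeting $C$, we have $C_k \subseteq C$. Since $I_k$ is $\mu$-connected, $C_k \cap J_> \neq \emptyset$, whence $C \cap J_> \neq \emptyset$. Thus every connected component of $I$ meets $J_>$, so $I$ is $\mu$-connected.

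Second, given any $I \subseteq \Pi$, set $I^* := \bigcup \{J \subseteq I \mid J \text{ is } \mu\text{-connected}\}$. The family is nonempty (it contains $\emptyset$), and by the first part $I^*$ is $\mu$-connected and contained in $I$; by construction it contains every $\mu$-connected subset of $I$, so it is the unique biggest one.

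Third, for the lattice structure, inclusion is a partial order with smallest element $\emptyset$. For two $\mu$-connected sets $I_1,I_2$ the union $I_1\cup I_2$ is $\mu$-connected by the first part and is clearly the least upper bound in the poset of $\mu$-connected sets. For the meet, $(I_1\cap I_2)^*$ is $\mu$-connected and contained in both $I_1$ and $I_2$; conversely, any $\mu$-connected set contained in both $I_1$ and $I_2$ lies in $I_1\cap I_2$, and hence in $(I_1\cap I_2)^*$ by the maximality property of the $\mu$-connected part. Finally, $\Pi^*$ is by definition the biggest $\mu$-connected subset of $\Pi$ and therefore the biggest element of the lattice. The only subtle step is the component argument in the first paragraph; everything else follows formally.
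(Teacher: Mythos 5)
Your proof is correct. The paper offers no argument here (it labels the proposition ``obvious''), so there is nothing to compare against; you have simply supplied the details the authors left implicit. The crucial step is exactly the one you isolate: if $C$ is a connected component of $I=\bigcup_k I_k$ and $\alpha\in C\cap I_k$, then the component $C_k$ of $I_k$ through $\alpha$ is a connected subset of $I$ meeting $C$, hence $C_k\subseteq C$, and $C_k\cap J_>\neq\emptyset$ forces $C\cap J_>\neq\emptyset$. The empty-set convention handles the empty union, and the remaining assertions about $I^*$, the lattice operations, and the extreme elements $\emptyset$ and $\Pi^*$ follow formally from closure under unions, exactly as you say.
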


For $I\subseteq \Pi$ we set
\begin{equation}
    I_* := \{\a \in J_0 \setminus I^*\mid r_\a r_\beta= r_\beta r_\a \text{ for all } \beta \in I^*\}.
\end{equation}
Note also that for $\a, \beta\in\Pi$, $\a\neq\beta$, we have $ r_\a r_\beta= r_\beta r_\a $ if and only if $\la\beta,\a^\vee \ra=0$, if and only if $\la\a,\beta^\vee \ra=0$.

The $\mu$-connected part $I^*$ is the union of the connected components of $I$ that intersect $J_>$. Let $I_r$ be the union of the connected components of $I$ that do not intersect $J_>$. Then
\begin{equation}
   I= I^*\sqcup I_r \quad \text{ and }\quad I_r\subseteq I_*.
\end{equation}

The following Proposition shows that for our investigations it is sufficient to consider the faces $F_I$ for $\mu$-connected sets $I$.
\begin{proposition}\label{fiiswic}
Let $I$ be a subset of $\Pi$. Then $F_I = F_{I^*}$.
\end{proposition}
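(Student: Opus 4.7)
The plan is to reduce the equality of the two convex hulls to the equality of the two orbits $W_I\mu$ and $W_{I^*}\mu$, and then exploit the separation of connected components of the Dynkin diagram.

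First I would decompose $I = I^* \sqcup I_r$ as indicated in the paragraph just before the proposition, where $I_r$ is the union of the connected components of $I$ that do \emph{not} intersect $J_>$, i.e.\ whose vertices all lie in $J_0$. Since $I_r$ and $I^*$ are unions of different connected components of $I$, no simple root of $I_r$ is linked in the Dynkin diagram to a simple root of $I^*$; in particular $\langle \alpha, \beta^\vee\rangle = \langle \beta, \alpha^\vee\rangle = 0$ for all $\alpha\in I_r$, $\beta\in I^*$, so the subgroups $W_{I^*}$ and $W_{I_r}$ commute elementwise and $W_I = W_{I^*}\, W_{I_r} = W_{I^*}\times W_{I_r}$.

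Next I would observe that $W_{I_r}$ fixes $\mu$ pointwise: every $\alpha\in I_r$ lies in $J_0$, so $\langle \mu,\alpha^\vee\rangle = 0$ and hence $r_\alpha\mu = \mu$; since $W_{I_r}$ is generated by such reflections, it fixes $\mu$. Combined with the decomposition above, this gives
\begin{equation*}
  W_I\mu \;=\; W_{I^*} W_{I_r}\mu \;=\; W_{I^*}\mu.
\end{equation*}
Taking convex hulls of both sides immediately yields $F_I = F_{I^*}$.

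There is no real obstacle here; the only thing to be a bit careful about is the \emph{separation} of $I_r$ from $I^*$, which is what makes the factorization $W_I = W_{I^*}\times W_{I_r}$ hold and lets us move $W_{I_r}$ past $W_{I^*}$ so that its action on $\mu$ is trivial. The inclusion $F_{I^*}\subseteq F_I$ is obvious from $I^*\subseteq I$, and the argument above establishes the reverse inclusion.
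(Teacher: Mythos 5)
Your proof is correct and follows exactly the paper's argument: the paper likewise writes $W_I\mu = W_{I^*}W_{I_r}\mu = W_{I^*}\mu$ and takes convex hulls, leaving implicit the separation of $I_r$ from $I^*$ and the fact that $W_{I_r}$ fixes $\mu$, which you spell out. No issues.
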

\begin{proof} We have $W_I\mu = W_{I^*}W_{I_r}\mu = W_{I^*}\mu$. Hence $F_I=F_{I^*}$.
\hfill$\Box$\end{proof}

Let $I$ be a $\mu$-connected set. Next we determine the affine hull of the face $F_I$. We obtain an interior point of $F_I$, whose isotropy group coincides, as we will see later, with the stabilizer $W_*(F_I)$ of the whole face $F_I$. Such points are useful for some proofs.

The way to do this is to construct a simplex of maximal dimension contained in $F_I$, which is formulated by the following technical lemma.
\begin{lemma}\label{Simplex} Let $I$ be a nonempty $\mu$-connected subset of $\Pi$.

{\rm (a)} There exists a linear order on $I$ with the following property: For every $\beta\in I$ there exists a chain $\gamma_0<\gamma_1<\cdots <\gamma_l=\beta $ in $I$ such that $\gamma_0\in J_>$ and $\gamma_{i-1}$, $\gamma_i$ are adjacent for $i=1,\ldots, l $.

{\rm (b)} Let $ I=\{\beta_1,\ldots,\beta_k\}$ such that $\beta_1<\beta_2<\cdots<\beta_k$ is a linear order as in {\rm (a)}. Then the convex hull $D$ of
\begin{eqnarray*}
   \eta_0:=\mu,\quad \eta_1:=r_{\beta_1}\mu,\quad\eta_2:=r_{\beta_2}r_{\beta_1}\mu,\quad\ldots,\quad \eta_k:=r_{\beta_k}\cdots r_{\beta_1}\mu
\end{eqnarray*}
is a $k$-dimensional simplex, and its affine hull is $\mu + \R I$. Furthermore,
\begin{equation}\label{ri-pt-simplex}
  \text{\rm ri}(D)  \cap  (\mu-\R_{>}I )\cap C_{I_*}  \neq \emptyset .
\end{equation}

\end{lemma}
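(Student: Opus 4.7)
My plan is to build the order in (a) by a rooted-tree traversal of the Dynkin subdiagram on $I$, then use the resulting chains to run an induction in (b) that forces strictly positive coefficients on the simple roots when expanding the $\eta_i$'s, and finally to choose convex weights with geometric decay along that ordering to place a point of $\text{ri}(D)$ inside $C_{I_*}$.

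For part (a), in each connected component $C$ of $I$ I fix some $\gamma_0^C \in C \cap J_>$ (nonempty by $\mu$-connectedness), pick a spanning tree of $C$ inside the Dynkin diagram rooted at $\gamma_0^C$, and list the vertices of $C$ by a preorder traversal; I then concatenate the lists over the components to obtain a linear order on $I$. For any $\beta \in I$, the unique path in the spanning tree from the root of its component to $\beta$ consists of pairwise adjacent simple roots, and preorder guarantees the vertices appear in increasing order along this path, which is exactly the required chain.

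For the simplex claim in (b), writing $I=\{\beta_1<\cdots<\beta_k\}$ and $\eta_i=\mu-\sum_{j=1}^i c_j\beta_j$, the recursion $\eta_i=\eta_{i-1}-\langle\eta_{i-1},\beta_i^\vee\rangle\beta_i$ leaves the first $i-1$ coefficients unchanged and yields $c_i=\langle\mu,\beta_i^\vee\rangle-\sum_{j<i}c_j\langle\beta_j,\beta_i^\vee\rangle$. I prove $c_i>0$ by induction on $i$: if $\beta_i\in J_>$, then $\langle\beta_j,\beta_i^\vee\rangle\le 0$ for all $j\ne i$ gives $c_i\ge\langle\mu,\beta_i^\vee\rangle>0$; if $\beta_i\in J_0$, the chain from (a) supplies an index $j<i$ with $\beta_j$ adjacent to $\beta_i$, so $\langle\beta_j,\beta_i^\vee\rangle<0$ and the inductive hypothesis $c_j>0$ force $c_i>0$. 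Since $\beta_1,\ldots,\beta_k\in\Pi$ are linearly independent and the transition matrix from $(\eta_i-\eta_0)_{i=1}^k$ to $(\beta_j)_{j=1}^k$ is lower triangular with strictly negative diagonal, these displacement vectors are linearly independent, so $D$ is a $k$-simplex with affine hull $\mu+\mathbb{R}I$.

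For the relative interior statement, a generic point $\eta=\sum_{i=0}^k t_i\eta_i$ of $\text{ri}(D)$ (with $t_i>0$ and $\sum t_i=1$) has the form $\eta=\mu-\sum_j a_j\beta_j$ where $a_j=c_j\sum_{i\ge j}t_i>0$, so $\eta\in\mu-\mathbb{R}_>I$ automatically. For $\alpha\in I_*$, each $\beta_j\in I$ commutes with $r_\alpha$ and $\langle\mu,\alpha^\vee\rangle=0$, hence $\langle\eta,\alpha^\vee\rangle=0$. For $\alpha\in\Pi\setminus I_*$ with $\alpha\notin I$, positivity of $\langle\eta,\alpha^\vee\rangle$ follows either from $\langle\mu,\alpha^\vee\rangle>0$ (if $\alpha\in J_>$) or from the existence of some $\beta_j\in I$ with $\langle\beta_j,\alpha^\vee\rangle<0$ (if $\alpha\in J_0\setminus I_*$, by definition of $I_*$). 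The delicate case is $\alpha=\beta_i\in I$, where
\begin{equation*}
  \langle\eta,\beta_i^\vee\rangle = \langle\mu,\beta_i^\vee\rangle - 2a_i + \sum_{j\neq i}a_j|\langle\beta_j,\beta_i^\vee\rangle|,
\end{equation*}
and for $\beta_i\in J_0$ one must dominate $2a_i$ by the adjacency contribution of some earlier $\beta_{j_0}$. I resolve this by setting $t_i:=\varepsilon^i\big/\sum_{l=0}^k\varepsilon^l$ for a small $\varepsilon>0$, so that $a_j/a_i$ is of order $\varepsilon^{j-i}$; for each $\beta_i\in J_0$ the chain from (a) supplies some $j_0<i$ with $\beta_{j_0}$ adjacent to $\beta_i$, whence $a_{j_0}|\langle\beta_{j_0},\beta_i^\vee\rangle|$ dominates $2a_i$ as $\varepsilon\to 0$, while all $a_j\to 0$ preserves positivity at $\beta_i\in J_>$. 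The main obstacle is precisely this balancing at elements of $I\cap J_0$, which is exactly what dictates the use of the ordering from (a) and the $\mu$-connectedness of $I$.
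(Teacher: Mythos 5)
Your proposal is correct and follows essentially the same route as the paper: a root-first ordering of each component for (a), the same induction showing the coefficients $c_i$ (the paper's $b_t$) are strictly positive via adjacency to an earlier element when $\beta_i\in J_0$, and triangularity of the transition to the simple roots for the simplex claim. The only cosmetic difference is in producing the interior point: you fix the convex weights as a geometric sequence $t_i=\varepsilon^i/\sum_l\varepsilon^l$ and let $\varepsilon\to 0$, whereas the paper chooses the coefficients $\epsilon_1,\ldots,\epsilon_k$ one at a time by induction; both hinge on the same domination of $a_i$ by the contribution of an earlier adjacent $\beta_{j_0}$.
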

\begin{proof} To show (a) first assume that $I$ is connected. For $\alpha,\alpha'\in I$ we define the distance $d(\alpha,\alpha')$ to be the minimum of the length $l$ over the set of all chains $\alpha=\gamma_0,\gamma_1,\ldots,\gamma_l=\alpha'$ such that $\gamma_{j-1}, \gamma_j$ are adjacent for $j=1,\ldots ,l$. Since $I$ is $\mu$-connected, the intersection $I\cap J_>$ is not empty. Choose $\gamma_0\in I\cap J_>$, and define $I(p):=\{\gamma\in I \mid d(\gamma_0,\gamma)=p\}$ for $p\in\Z_+$. Then $I(0)=\{\gamma_0\}$ and $I(p)=\emptyset$ at least for $p\geq |I|$. Ordering the elements in each of $I(0), I(1), I(2),\ldots$ linearly according to their indices, and defining $I(0)<I(1)<I(2)<\cdots $, we obtain a linear order on $I$ with the property of (a).

Now let $I$ have the connected components $I_1,I_2,\ldots, I_p$ with $p>1$. Since these components are also $\mu$-connected there exists on every component a linear order with the property of (a). We obtain a linear order on $I$ with the property of (a) by defining $I_1<I_2<\cdots< I_p$.

Next we prove the first part of (b). Set $b_t:=\la \eta_{t-1}, \beta_t^\vee\ra $ for $t=1,\ldots, k$. We first show
\begin{equation}\label{ZhenhengEq}
   \eta_t = \mu -b_1\beta_1 - b_2 \beta_2- \cdots - b_t \beta_t \quad \text{ and }\quad b_1,\,b_2,\,\ldots,\,b_t\in\R_>
\end{equation}
for all $t=1,\ldots, k$ by induction on $t$. If $t=1$, then we have $r_{\beta_1}\mu = \mu - b_1 \beta_1$ with $b_1=\la\mu, \beta_1^\vee\ra>0$ since $\beta_1\in J_>$. Now let $2\leq t\leq k$ and suppose that (\ref{ZhenhengEq}) holds for $t-1$. Then we obtain
\begin{equation*}
   \eta_t= r_{\beta_t}\eta_{t-1}=\eta_{t-1}- \la \eta_{t-1}, \beta_t^\vee\ra  \beta_t= \mu -b_1\beta_1 - \cdots - b_{t-1} \beta_{t-1}-b_t\beta_t ,
\end{equation*}
and
\begin{equation*}
  b_t =\la \eta_{t-1}, \beta_t^\vee\ra =  \la  \mu -b_1\beta_1 - b_2 \beta_2- \cdots - b_{t-1} \beta_{t-1} ,\beta_t^\vee  \ra \geq 0
\end{equation*}
since $\la\mu,\ \beta_t^\vee\ra\ge 0$ and $ -b_j\la \beta_j,\ \beta_t^\vee\ra\geq 0$ for all $j=1,\ldots, t-1$. If $\beta_t\in J_>$ then we have $\la\mu,\ \beta_t^\vee\ra> 0$. If $\beta_t\not\in J_>$ then there exists $s<t$ such that $\beta_s$, $\beta_t$ are adjacent, from which we get $ -b_s\la \beta_s,\ \beta_t^\vee\ra> 0$. Thus $b_t>0$.

Denote by $\text{lin}(D)$ the translation space of the affine hull $\text{aff}(D)$ of $D$. From (\ref{ZhenhengEq}) we obtain $b_t\beta_t=\eta_{t-1}-\eta_t\in  \mbox{lin}(D)\subseteq \R I$ and $b_t\neq 0$ for all $t=1,\ldots, k$. It follows that $ \mbox{lin}(D)= \R I$, and $ \mbox{aff}(D)=\mu+\R I$. In particular, $D$ is a $k$-dimensional simplex.

We now prove the second part of (b) by showing that there exist $\epsilon_1,\ldots \epsilon_k\in \R_>$ such that $ x_t:=  \eta_0 +  \epsilon_1 \eta_1  + \cdots + \epsilon_t \eta_t $ is contained in
\begin{equation}
    \{\,\eta\in (1 +\epsilon_1+\cdots+\epsilon_t) \mu - \R_>\{ \beta_1,\ldots, \beta_t\}   \mid   \la\eta,\beta_j^\vee\ra >0 \text{ for } j=1,\ldots, t   \,\}
 \end{equation}
for $t=1,\ldots , k$. We use induction on $t$. Let $t=1$. Since $\beta_1\in J_>$ we have  $\la \mu,\beta_1^\vee\ra>0 $. Therefore, there exists $\epsilon_1\in\R_>$ such that
\begin{eqnarray*}
  \la \eta_0+\epsilon_1\eta_1,\beta_1^\vee \ra =   \la \mu, \beta_1^\vee\ra +\epsilon_1\la \eta_1,\beta_1^\vee \ra  >0  .
\end{eqnarray*}
We find from  (\ref{ZhenhengEq}) that
\begin{eqnarray*}
    \eta_0+\epsilon_1\eta_1= (1+\epsilon_1)\mu - \epsilon_1b_1\beta_1 \in (1+\epsilon_1)\mu -\R_>\{ \beta_1\}.
\end{eqnarray*}
Now let $2\leq t\leq k$. By induction hypothesis we have
\begin{equation}\label{riSEq2}
   x_{t-1} \in (1+\epsilon_1+\cdots+\epsilon_{t-1})  \mu - \R_> \{\beta_1,  \ldots, \beta_{t-1}\} .
\end{equation}
Here $\la\mu,\beta_t^\vee \ra\geq 0$ and $\la\beta_j,\beta_t^\vee \ra\leq 0$ for all $j<t$. If $\beta_t\in J_>$ then $\la\mu,\beta_t^\vee \ra>0$. If $\beta_t\not\in J_>$ then there exists $s<t$ such that $\beta_s$, $\beta_t$ are adjacent, from which we get $\la \beta_s,\ \beta_t^\vee\ra <0$. We conclude that $\la x_{t-1},\beta_t^\vee\ra>0$. By induction hypothesis we also have $\la x_{t-1},\beta_j^\vee\ra>0$ for all $j<t$. Hence we can choose $\epsilon_t\in\R_>$ such that
\begin{eqnarray*}
   \la x_t ,\beta_j^\vee\ra = \la x_{t-1},\beta_j^\vee\ra +\epsilon_t  \la\eta_t,\beta_j^\vee\ra > 0
\end{eqnarray*}
for all $j=1,\ldots, t$. Furthermore, from (\ref{riSEq2}) and (\ref{ZhenhengEq}) we find that $ x_t=x_{t-1}+\epsilon_t\eta_t \in
 (1+\epsilon_1+\cdots+\epsilon_t)  \mu - \R_> \{\beta_1,  \ldots, \beta_t \}$.

Set  $\epsilon:= 1+\epsilon_1+\cdots+\epsilon_k$. We have shown that
\begin{eqnarray*}
   x_k=  \eta_0 +  \epsilon_1 \eta_1  + \cdots + \epsilon_k \eta_k = \epsilon\mu - c_1\beta_1 - \cdots - c_k\beta_k
\end{eqnarray*}
for some $c_1,\ldots , c_k\in \R_>$, and that $\la x_k, \a^\vee\ra>0$ for all $\a\in I$. For $\a\in\Pi\setminus I$ we find
\begin{eqnarray*}
 \la x_k,\a^\vee\ra  = \epsilon\la\mu,\a^\vee \ra  -  c_1 \la\beta_1,\a^\vee \ra -\cdots - c_k \la\beta_k,\a^\vee \ra\geq 0
\end{eqnarray*}
since $\la\mu,\a^\vee \ra\geq 0$ and  $ \la\beta_j,\a^\vee \ra\leq 0$ for $j=1,\ldots, k$. Furthermore,  $\la x_k,\a^\vee\ra =0$ if and only if $\la\mu,\a^\vee \ra= 0$ and  $\la\beta_j,\a^\vee \ra= 0$ for $j=1,\ldots, k$. This is equivalent to $\a\in I_*$. Hence $\frac{1}{\epsilon}x_k\in \text{ri}(D)  \cap  (\mu-\R_{>}I )\cap C_{I_*}$.
\hfill$\Box$\end{proof}

In the following we set $\R_>\emptyset:=\{0\}$.
\begin{corollary}\label{point-ri-hull}
Let $I\subseteq \Pi$ be $\mu$-connected. Then the face $F_I$ is fundamental with
\begin{eqnarray}\label{ri-pt-face}
  \text{\rm ri}( F_I) \cap  (\mu-\R_{>}I )\cap  C_{I_*} \neq \emptyset .
\end{eqnarray}
The affine hull of $F_I$ is $\mu+ \R I $. In particular, $\dim F_I = |I|$.
\end{corollary}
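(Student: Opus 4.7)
The plan is to deduce the corollary almost directly from Lemma \ref{Simplex}, treating the trivial case $I=\emptyset$ separately and then combining the simplex construction with Theorem \ref{fiface} (or Lemma \ref{VinbergLemma}) to pin down the affine hull.

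First I would dispose of $I=\emptyset$. Here $I^*=\emptyset$, so $I_*=J_0$ and $C_{I_*}=C_{J_0}$ is the open facet containing $\mu$. Then $F_\emptyset=\{\mu\}$, $\mathrm{ri}(F_\emptyset)=\{\mu\}$, and $\mu-\R_>\emptyset=\{\mu\}$, so the intersection equals $\{\mu\}$ and contains $\mu$. The affine hull is $\{\mu\}=\mu+\R\emptyset$, and $\dim F_\emptyset=0=|\emptyset|$, as required.

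For nonempty $\mu$-connected $I$, I would apply Lemma \ref{Simplex}: choose a linear order on $I$ as in part (a), and let $D$ be the simplex built in part (b) with vertices $\eta_0,\eta_1,\ldots,\eta_k\in W_I\mu$. By construction $\mathrm{aff}(D)=\mu+\R I$, and
\[
   \mathrm{ri}(D)\cap (\mu-\R_>I)\cap C_{I_*}\neq\emptyset.
\]
Since every vertex $\eta_j$ lies in $W_I\mu\subseteq F_I$ and $F_I$ is convex, we have $D\subseteq F_I$; in particular $\mathrm{aff}(F_I)\supseteq\mathrm{aff}(D)=\mu+\R I$, so $\dim F_I\geq |I|$.

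For the reverse containment of affine hulls, Theorem \ref{fiface} gives $F_I=H\cap(\mu-\R_+I)\subseteq \mu-\R_+I$, hence $\mathrm{aff}(F_I)\subseteq\mu+\R I$. Combining both inclusions yields $\mathrm{aff}(F_I)=\mu+\R I$ and $\dim F_I=|I|$. Because $D$ and $F_I$ share this affine hull and $D\subseteq F_I$, any point of $\mathrm{ri}(D)$ has an $\mathrm{aff}(F_I)$-neighborhood contained in $D\subseteq F_I$, and therefore lies in $\mathrm{ri}(F_I)$; that is, $\mathrm{ri}(D)\subseteq \mathrm{ri}(F_I)$. Consequently,
\[
   \mathrm{ri}(F_I)\cap(\mu-\R_>I)\cap C_{I_*}\supseteq \mathrm{ri}(D)\cap(\mu-\R_>I)\cap C_{I_*}\neq\emptyset.
\]
Finally, since $C_{I_*}\subseteq\overline{C}$, this intersection is in particular contained in $\mathrm{ri}(F_I)\cap\overline{C}$, which shows $F_I$ is fundamental. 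The only nonroutine input is Lemma \ref{Simplex}; there is no real obstacle in the corollary itself beyond noting carefully that equality of affine hulls is needed to transfer relative interior points from $D$ to $F_I$.
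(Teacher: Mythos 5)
Your proof is correct and matches the paper's proof essentially step for step: the $I=\emptyset$ case is handled trivially, then Lemma \ref{Simplex} provides the simplex $D$, Theorem \ref{fiface} pins down the affine hull, and the containment $\mathrm{ri}(D)\subseteq\mathrm{ri}(F_I)$ transfers the interior point. The only difference is that you spell out the (standard) justification for $\mathrm{ri}(D)\subseteq\mathrm{ri}(F_I)$ and add a closing remark on fundamentality that the paper had already established in Theorem \ref{fiface}.
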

\begin{proof} The Corollary holds for $I=\emptyset$, where $F_\emptyset=\{\mu\}$ and $\emptyset_*=J_0$. Let $I\neq\emptyset$. Choose a simplex $D$ as in Lemma \ref{Simplex}. Then $D\subseteq F_I$ and $\mu+ \R I=\mbox{aff}(D)\subseteq \mbox{aff}(F_I) $. Furthermore, $\mbox{aff}(F_I) \subseteq \mu+\R I$ by Theorem \ref{fiface}. Since $D$ and $F_I$ have the same affine hull $\mu+\R I$, we obtain $\mbox{ri}(D)\subseteq \mbox{ri}(F_I)$. Thus (\ref{ri-pt-face}) follows from (\ref{ri-pt-simplex}).
\hfill$\Box$\end{proof}

\begin{corollary}\label{mu-con-inj}The map $I \mapsto F_I$  from the set of $\mu$-connected subsets to the set $\F\setminus\{\emptyset\}$ of nonempty fundamental faces  is an isomorphism of partially ordered sets onto its image.
\end{corollary}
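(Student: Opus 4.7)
The plan is to verify three things: that the map is well-defined, that it is injective, and that it is order-preserving in both directions. Well-definedness is already in hand: for any $\mu$-connected $I\subseteq\Pi$, Theorem \ref{fiface} shows that $F_I\in\F(H)$ and Corollary \ref{point-ri-hull} shows that $\ri(F_I)$ meets $\overline{C}$, so $F_I\in\F\setminus\{\emptyset\}$.

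The key tool for the remaining two assertions is the identification of the affine hull of $F_I$ established in Corollary \ref{point-ri-hull}, namely $\text{aff}(F_I)=\mu+\R I$. For injectivity, suppose $I,I'\subseteq\Pi$ are $\mu$-connected with $F_I=F_{I'}$. Taking affine hulls gives $\mu+\R I=\mu+\R I'$, hence $\R I=\R I'$, and the linear independence of $\Pi\subseteq\h^*$ forces $I=I'$.

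For the order-preserving property, the forward implication is immediate: if $I\subseteq I'$, then $W_I\subseteq W_{I'}$, hence $W_I\mu\subseteq W_{I'}\mu$, and taking convex hulls yields $F_I\subseteq F_{I'}$. For the reverse implication, suppose $F_I\subseteq F_{I'}$. Then $\text{aff}(F_I)\subseteq\text{aff}(F_{I'})$, i.e., $\mu+\R I\subseteq\mu+\R I'$, so $\R I\subseteq\R I'$. Again by the linear independence of the simple roots in $\Pi$, this forces $I\subseteq I'$.

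I don't expect a real obstacle here, since the two nontrivial ingredients (the description $\text{aff}(F_I)=\mu+\R I$ for $\mu$-connected $I$, and the linear independence of $\Pi$) are both already available from the preceding development. The only subtlety worth noting is that the identification of the affine hull really does require $I$ to be $\mu$-connected: for an arbitrary $I\subseteq\Pi$ we only know $F_I=F_{I^*}$ by Proposition \ref{fiiswic}, so the map would fail to be injective on the larger domain. Restricting to $\mu$-connected subsets is what makes both injectivity and the order reflection work.
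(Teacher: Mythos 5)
Your proof is correct and follows essentially the same route as the paper: the forward direction by taking convex hulls of $W_I\mu\subseteq W_{I'}\mu$, and the reverse direction (which also gives injectivity) by comparing the affine hulls $\mu+\R I$ from Corollary \ref{point-ri-hull} and using the linear independence of $\Pi$. The only difference is that you spell out the linear-independence step and the well-definedness explicitly, which the paper leaves implicit.
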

\begin{proof} Let $I_1,I_2$ be $\mu$-connected. Clearly, $I_1\subseteq I_2$ implies $F_{I_1}=\text{co}(W_{I_1}\mu)\subseteq \text{co}(W_{I_2}\mu)=F_{I_2}$. If $F_{I_1}\subseteq F_{I_2}$ then by Corollary \ref{point-ri-hull} we obtain $\mu+\R I_1\subseteq\mu+\R I_2$. Hence $I_1\subseteq I_2$.
\hfill$\Box$\end{proof}

We next describe the relative interiors of the nonempty fundamental faces. For $\emptyset\neq K\subseteq \Pi$ we denote by $K^0$ the union of the connected components of $K$ that are of finite type. We set $\emptyset^0=\emptyset$.
\begin{theorem}\label{ri-FI} Let $I$ be $\mu$-connected. Then $ \text{\rm ri}(F_I)  = W_I \,(\text{\rm ri}(F_I) \cap \overline{C} )$ and
\begin{eqnarray*}
 \text{\rm ri}(F_I) \cap \overline{C} =  (\mu-\R_>I)\cap  \overline{C}
   = \bigsqcup_{I_f\subseteq I ,\, (I_f)^0 =I_f} \underbrace{ (\mu-\R_>I)\cap C_{I_*\cup I_f}   }_{\neq\emptyset}\; .
\end{eqnarray*}
\end{theorem}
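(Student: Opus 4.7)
The identity $\text{ri}(F_I)=W_I(\text{ri}(F_I)\cap\overline{C})$ is immediate from Theorem~\ref{fiface} ($F_I=W_I(F_I\cap\overline{C})$) together with the $W_I$-invariance of $\text{ri}(F_I)$: each $w\in W_I$ acts on $\text{aff}(F_I)=\mu+\R I$ as a linear automorphism preserving $F_I$, hence preserves the relative interior by Lemma~\ref{relativeInteriorProperties}. What remains is the pair of equalities $\text{ri}(F_I)\cap\overline{C}=(\mu-\R_>I)\cap\overline{C}=\bigsqcup_{I_f}(\mu-\R_>I)\cap C_{I_*\cup I_f}$.

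\textbf{Easy inclusion.} For $\eta\in\text{ri}(F_I)\cap\overline{C}$, write $\eta=\mu-\sum_{\beta\in I}c_\beta\beta$ with $c_\beta\geq 0$ using Theorem~\ref{fiface}. If some $c_{\beta_0}=0$, then $\eta\in H\cap(\mu-\R_+(I\setminus\{\beta_0\}))=F_{(I\setminus\{\beta_0\})^*}$ by Theorem~\ref{fiface} and Proposition~\ref{fiiswic}. This is a face of $F_I$ (Lemma~\ref{faceProperties2}), proper because its affine hull $\mu+\R(I\setminus\{\beta_0\})^*$ is strictly smaller than $\mu+\R I$ (Corollary~\ref{point-ri-hull}), contradicting $\eta\in\text{ri}(F_I)$. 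Hence all $c_\beta>0$.

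\textbf{Facet decomposition and non-emptiness.} Given $\eta\in(\mu-\R_>I)\cap\overline{C}$ of type $K$, a direct computation of $\la\eta,\alpha^\vee\ra=\la\mu,\alpha^\vee\ra-\sum_{\beta\in I}c_\beta\la\beta,\alpha^\vee\ra$ shows: for $\alpha\in\Pi\setminus I$, dominance of $\mu$ and $\la\beta,\alpha^\vee\ra\leq 0$ yield $\la\eta,\alpha^\vee\ra\geq 0$ with equality iff $\alpha\in I_*$. Hence $K=I_*\cup I_f$ with $I_f:=K\cap I$. To prove $(I_f)^0=I_f$, suppose a connected component $I'$ of $I_f$ has non-finite Cartan submatrix $A_{I'}$. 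The relations $\la\eta,\alpha^\vee\ra=0$ for $\alpha\in I'$, combined with $I'$ being separated from $I_f\setminus I'$, reduce to $A_{I'}c|_{I'}\geq 0$ with $c|_{I'}>0$. By \cite[Theorem~4.3]{Kac90}, this is impossible when $A_{I'}$ is of indefinite type; when $A_{I'}$ is affine it forces $A_{I'}c|_{I'}=0$, which pins $I'\subseteq J_0$ and separates $I'$ from $I\setminus I'$ within $I$, making $I'$ a full component of $I$ lying in $J_0$ --- contradicting $\mu$-connectedness. Non-emptiness of each piece follows dually: fix $c|_{I\setminus I_f}>0$ sufficiently small and solve $A_fc|_{I_f}=\la\mu,\cdot^\vee\ra|_{I_f}+\Lambda$ for a nonnegative correction $\Lambda$; positivity of $A_f^{-1}$ (direct sum of finite-type inverses) together with $\mu$-connectedness yields $c|_{I_f}>0$ on every component, while sufficiently small $c|_{I\setminus I_f}$ keeps $\la\eta,\alpha^\vee\ra>0$ for $\alpha\notin I_*\cup I_f$.

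\textbf{Hard inclusion (main obstacle).} To prove $(\mu-\R_>I)\cap\overline{C}\subseteq\text{ri}(F_I)$, I plan to construct, for each such $\eta$, a full-dimensional simplex $D_\eta\subseteq F_I$ with $\eta\in\text{ri}(D_\eta)$; this directly yields $\eta\in\text{ri}(F_I)$, because the relative interior of $D_\eta$ is a relatively open subset of $\text{aff}(D_\eta)=\text{aff}(F_I)=\mu+\R I$ contained in $F_I$. The construction extends Lemma~\ref{Simplex}: for a chain-ordering $\beta_1<\cdots<\beta_k$ of $I$, the vertices $\mu,r_{\beta_1}\mu,\ldots,r_{\beta_k}\cdots r_{\beta_1}\mu$ take the form $\mu-\sum_{i\leq j}b_i\beta_i$ with $b_i>0$, and a convex combination with positive weights $\lambda_0,\ldots,\lambda_k$ summing to $1$ equals $\mu-\sum_i b_iS_i\beta_i$ with $S_i:=\sum_{j\geq i}\lambda_j$ a strictly decreasing sequence $1>S_1>\cdots>S_k>0$. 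Realising the prescribed $\eta=\mu-\sum c_\beta\beta$ thus amounts to choosing an ordering for which the ratios $c_{\beta_i}/b_i$ automatically form such a decreasing sequence, after which setting $S_i:=c_{\beta_i}/b_i$ and $\lambda_{i-1}:=S_{i-1}-S_i$, $\lambda_k:=S_k$ provides the convex combination. The principal technical hurdle I expect is selecting this ordering --- using the flexibility granted by $\mu$-connectedness together with the facet type $I_*\cup I_f$ of $\eta$ --- so that the required decreasing inequalities hold for every admissible $\eta$.
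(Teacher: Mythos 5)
Your equivariance step is fine, and your easy inclusion ($\text{ri}(F_I)\cap\overline{C}\subseteq(\mu-\R_>I)\cap\overline{C}$) is correct but more roundabout than the paper's: the paper observes $F_I\subseteq\mu-\R_+I$ with equal affine hulls $\mu+\R I$ and concludes directly that $\text{ri}(F_I)\subseteq\text{ri}(\mu-\R_+I)=\mu-\R_>I$. Your facet-type computation mirrors the paper's argument. The non-emptiness sketch, though compressed and leaning on an inverse-Cartan positivity argument that needs some care, is plausible; the paper's version is cleaner, starting from a known point in $(\mu-\R_>I)\cap C_{I_*}$ (Corollary~\ref{point-ri-hull}) and averaging its $W_{I_f}$-orbit.

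The genuine gap is in the hard inclusion $(\mu-\R_>I)\cap\overline{C}\subseteq\text{ri}(F_I)$, and no choice of ordering can repair it. Each chain-ordering of the finite set $I$ produces exactly one simplex $D$ via Lemma~\ref{Simplex}; there are only finitely many such orderings, so the union of the resulting simplices is compact. But when $W_I$ is infinite, $(\mu-\R_>I)\cap\overline{C}$ is typically unbounded. In Example~\ref{A11} ($A_1^{(1)}$, $\mu=\mu_1$, $I=\Pi$), for $\eta=\mu-a\alpha_1-b\alpha_2$ one computes $\la\eta,\alpha_1^\vee\ra=1-2a+2b$ and $\la\eta,\alpha_2^\vee\ra=2a-2b$, so $(\mu-\R_>\Pi)\cap\overline{C}=\{\mu-a\alpha_1-b\alpha_2 : a,b>0,\ b\le a\le b+\tfrac{1}{2}\}$, an unbounded strip in $\mu+\R\Pi$; no finite union of simplices from Lemma~\ref{Simplex} can cover it. The paper handles this step with an entirely different, non-compact strategy: it first proves $(\mu-\R_>I)\cap C_{I_*}\subseteq F_I$ by a line-segment compactness argument reaching a contradiction (using the convex hull $\text{co}(\eta_+,r_\a\eta_+\mid\a\in I)\subseteq F_I$), then promotes this to $\subseteq\text{ri}(F_I)$ by exploiting that $(\mu-\R_>I)\cap C_{I_*}$ is relatively open in $\text{aff}(F_I)$, and finally covers the other facets $C_{I_*\cup I_f}$, $I_f\neq\emptyset$, by averaging over the finite group $W_{I_f}$. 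You will need a device of that kind --- one whose building blocks can reach arbitrarily far from $\mu$ --- to close the argument.
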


\begin{proof} The theorem holds trivially for $I=\emptyset$ since $F_\emptyset=\{\mu\}$, $\text{ri}(F_\emptyset)=\{\mu\}$, $W_\emptyset=\{1\}$, $\R_>\emptyset=\{0\}$, and $\emptyset_*=J_0$. We divide the proof for $I\neq\emptyset$ into several parts.

(i) We prove $\text{ri}(F_I)=W_I (\text{ri}(F_I)\cap\overline{C})$. By Theorem \ref{fiface} we have $F_I=W_I(F_I\cap \overline{C})$, from which we get $w F_I=F_I$ for all $w\in W_I$. By Lemma \ref{relativeInteriorProperties} we conclude that $w\,\text{ri}(F_I) = \text{ri}(F_I)$ for all $w\in W_I$. Thus $W_I (\text{ri}(F_I)\cap\overline{C})\subseteq \text{ri}(F_I)$. To show the reverse inclusion let $\eta\in \text{ri}(F_I)$. Then there exists $w\in W_I$ such that $w^{-1}\eta\in w^{-1}\,\text{ri}(F_I)\cap \overline{C}=\text{ri}(F_I)\cap\overline{C}$.

(ii) We show that
\begin{equation*}
      (\mu- \R_> I) \cap \overline{C} =  \bigsqcup_{I_f\subseteq I ,\, (I_f)^0=I_f} (\mu-\R_>I)\cap C_{I_*\cup I_f}  .
\end{equation*}
The union on the right is disjoint and contained in the set on the left. To show the reverse inclusion let $\eta=\mu- \sum_{\a\in I} m_\a \a \in \overline{C}$ with $m_\a\in\R_>$ for $\a\in I$. If $\beta\in\Pi\setminus I$ then
\begin{eqnarray*}
     \la \eta , \beta^\vee\ra  = \underbrace{ \la \mu, \beta^\vee \ra}_{\geq 0}- \sum_{\a\in I}m_\a \underbrace{\la\a, \beta^\vee\ra}_{\leq 0} =0
\end{eqnarray*}
if and only if $\la \mu, \beta^\vee \ra=0$ and $\la\a, \beta^\vee\ra=0$ for all $\a\in I$. This is equivalent to $\beta\in I_*$.

Let $K$ be a component of $\{\beta\in I \mid   \la \eta , \beta^\vee\ra =0 \}$. For every $\beta\in K$ we have
\begin{equation}\label{C-partition-eq1}
     \sum_{\a\in K} m_\a \la\a, \beta^\vee\ra  = \underbrace{ \la \mu, \beta^\vee \ra}_{\geq 0}- \sum_{\a\in I\setminus K }m_\a \underbrace{\la\a, \beta^\vee\ra}_{\leq 0} \geq 0 .
\end{equation}
From \cite[Theorem 4.3]{Kac90} it follows that either $K$ is of finite type, or that $K$ is of affine type and in (\ref{C-partition-eq1}) we have equality for all $\beta\in K$. In the latter case we conclude that $\la \mu, \beta^\vee \ra=0$ and $\la\a, \beta^\vee\ra=0$ for all $\a\in I\setminus K$ and $\beta\in K$. Hence $K$ is a component of $I$, which is contained in $J_0$. However, this is impossible because every component of $I$ is $\mu$-connected.

(iii) We show that $ (\mu- \R_> I) \cap C_{I_*}\subseteq F_I $ by contradiction. Let $\eta \in  (\mu- \R_> I) \cap C_{I_*}$ and suppose that $\eta\notin F_I$. By Corollary \ref{point-ri-hull} there exists
\begin{eqnarray*}
 \eta_1\in  \mbox{ri}( F_I) \cap (\mu-\R_{>}I )\cap C_{I_*}.
\end{eqnarray*}
Since $\mu\in F_I$ and $I_*\subseteq J_0$ also
\begin{eqnarray*}
 \eta_s:=s\eta_1+ (1-s)\mu \in  \mbox{ri}( F_I) \cap (\mu-\R_{>}I ) \cap C_{I_*}
\end{eqnarray*}
for all $0< s\leq 1$. We choose some $s$ such that $\eta_s - \eta\in\R_> I$, and consider the line segment
\begin{equation*}
\overline{\eta_s \eta}\subseteq  (\mu+\R I)\cap  C_{I_*} \subseteq \{ \,\eta' \in \h_\R^* \mid   \la\eta',\a^\vee\ra>0 \text{ for all } \a\in I\} .
\end{equation*}
Since $\eta_s\in\text{ri}(F_I)\subseteq F_I$, $\eta\notin F_I$, and $F_I$ is convex, there exists some $\eta_b\in \overline{\eta_s \eta}$, $\eta_b\neq \eta_s$, such that $\overline{\eta_s \eta_b}\setminus\{\eta_b\}\subseteq F_I$ and  $(\overline{\eta_b\eta}\setminus \{\eta_b\})\cap F_I=\emptyset$.

Since $\overline{\eta_s \eta}$ is compact and $I$ is finite there exists $\epsilon>0$ such that $\la\eta',\a^\vee\ra\geq \epsilon$ for all $\eta'\in \overline{\eta_s \eta}$ and $\a\in I$. There exist $\eta_+ \in  \overline{\eta_s \eta_b}\setminus\{\eta_b\}$ and $\eta_-\in \overline{\eta_b\eta}\setminus F_I$ such that
\begin{eqnarray*}
   \eta_+ - \eta_- \in \Big\{ \sum_{\a\in I} t_\a \a \mid  0\leq t_\a \leq \frac{\epsilon}{|I|}\Big\}  \subseteq \Big\{ \sum_{\a\in I} s_\a\la\eta_+,\a^\vee\ra\a\mid  0\leq s_\a\leq \frac{1}{|I|} \Big\} .
\end{eqnarray*}
But then we get
\begin{eqnarray*}
    \eta_- \in \eta_+ - \Big\{ \sum_{\a\in I} s_\a\la\eta_+,\a^\vee\ra\a\mid  s_\a\in \R^+, \sum_{\a\in I} s_\a\leq 1 \Big\}
               =  \text{co}(\eta_+,r_\a\eta_+\mid\a\in I)   \subseteq F_I,
\end{eqnarray*}
which contradicts $\eta_-\notin F_I$.

(iv) We refine (iii) by showing that $ (\mu- \R_> I) \cap C_{I_*}\subseteq \text{ri}(F_I )$. Let $\eta_0\in  (\mu- \R_> I) \cap C_{I_*}$. Let $\eta\in F_I$, $\eta\neq\eta_0$. For $\epsilon \in \R_+$ we set $  \eta_\epsilon := \eta_0+ \epsilon (\eta_0-\eta) $. By Corollary \ref{point-ri-hull} and the definition of $I_*$ we get
\begin{equation*}
    \eta_0 - \eta \in  \R I  =  \R I \cap \{ \,\eta'\in \h_\R^* \;\mid \la\eta',\a^\vee\ra=0 \text{ for all } \a\in I_*\} .
\end{equation*}
Since $\mu-\R_>I$ is open in $\mu+\R I$, and $C_{I_*}$ is open in $\{ \,\eta'\in \h_\R^* \;\mid \la\eta',\a^\vee\ra=0 \text{ for all } \a\in I_*\}$ there exists $\epsilon_0>0$ such that $ \eta_\epsilon\in  (\mu- \R_> I) \cap C_{I_*}$ for all $0\leq\epsilon\leq\epsilon_0$. From \cite[Theorem 3.5]{B83} it follows that $\eta_0\in \text{ri}(F_I)$.

(v) Let $\emptyset\neq I_f\subseteq I$ such that $(I_f)^0=I_f$. We show that $ (\mu- \R_> I) \cap C_{I_*\cup I_f }\subseteq \text{ri}(F_I)$. Let $\eta_0\in  (\mu- \R_> I) \cap C_{I_*\cup I_f }$. From \cite[Theorem 4.3 (Fin)]{Kac90} it follows that there exists $\gamma\in \R_> I_f$ such that $\la\gamma,\a^\vee\ra>0$ for all $\a\in I_f$. For $t\in\R_+$ we set $\eta_t:=\eta_0+t\gamma $. By \cite[Lemma 3.50]{Mo15b} there exists $t_0 >0$ such that
\begin{eqnarray*}
  \eta_t \in C_{I_*} \quad\text{and}\quad \frac{1}{|W_{I_f}|}\sum_{w\in W_{I_f}} w\eta_t=\eta_0
\end{eqnarray*}
for all $0<t\leq t_0$. Since $\eta_0\in\mu-\R_> I$ and $I_f\subseteq I$ we can choose $0<t\leq t_0$ such that $\eta_t=\eta_0+t\gamma\in \mu-\R_> I$. By (iv) we obtain $\eta_t\in\text{ri}(F_I)$. Since $w\eta_t\in w \,\text{ri}(F_I)=\text{ri}(F_I)$ for all $w\in W_{I_f}$ by (i), we find
\begin{eqnarray*}
 \eta_0=\frac{1}{|W_{I_f}|}\sum_{w\in W_{I_f}} w\eta_t\in \text{ri}(F_I).
\end{eqnarray*}

(vi) Let $I_f\subseteq I$ such that $(I_f)^0=I_f$. We show that $  (\mu- \R_> I) \cap C_{I_*\cup I_f} \neq\emptyset$. By Corollary \ref{point-ri-hull} there exists $ \eta\in (\mu-\R_{>}I )\cap  C_{I_*} $. For $w\in W_{I_f}$ we get $w\eta\in \eta-\R_+ I_f\subseteq \mu-\R_>I$. From \cite[Lemma 3.50]{Mo15b} it follows that
\begin{eqnarray*}
 \frac{1}{|W_{I_f}|}\sum_{w\in W_{I_f}} w\eta \in     (\mu- \R_> I) \cap C_{I_*\cup I_f} .
\end{eqnarray*}

(vii) From (ii) and (iv), (v) we get $ (\mu-\R_> I)\cap \overline{C} \subseteq  \text{ri}(F_I)\cap \overline{C}$. To prove the reverse inclusion it is sufficient to show $\text{ri}(F_I)\subseteq \mu-\R_> I $.
From Theorem \ref{fiface} it follows that $F_I\subseteq \mu-\R_+I$. By Corollary \ref{point-ri-hull}, the affine hull of $F_I$ is $\mu+\R I$, which coincides with the affine hull of $\mu-\R_+I$. Hence $\text{ri}(F_I)\subseteq \text{ri}(\mu-\R_+I)=\mu-\R_> I $.
\hfill$\Box$\end{proof}

The following Proposition has been obtained when $\mu$ is an integral dominant weight and $I=\Pi^*$ in \cite[Lemma 11.2]{Kac90} by Lie theoretic methods.
\begin{proposition}\label{H-dec-mu-c} Let $I$ be $\mu$-connected. We have
\begin{equation}\label{H-dec-mu-con}
      F_I\subseteq \bigsqcup_{\mu\text{-connected } K\subseteq I} \mu-\R_> K .
\end{equation}
\end{proposition}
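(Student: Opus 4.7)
My plan is a short induction on $|I|$, hinged on the linear independence of $\Pi$. I will first observe that by Theorem \ref{fiface}, $F_I \subseteq \mu - \R_+ I$, and since $I \subseteq \Pi$ is linearly independent, every $\eta \in F_I$ admits a unique expansion $\eta = \mu - \sum_{\a \in I} c_\a \a$ with $c_\a \in \R_+$. This makes the support $K(\eta) := \{\a \in I \mid c_\a > 0\}$ well-defined, and it gives disjointness of the union on the right of (\ref{H-dec-mu-con}) for free: distinct $\mu$-connected sets $K_1 \ne K_2$ cannot both coincide with $K(\eta)$, so the sets $\mu - \R_> K$ are pairwise disjoint.

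For the base case $I = \emptyset$, I will note that both sides of (\ref{H-dec-mu-con}) equal $\{\mu\}$ (with the convention $\R_> \emptyset = \{0\}$), and that $\emptyset$ is $\mu$-connected. For the inductive step, I will assume the proposition for all $\mu$-connected subsets of $\Pi$ of cardinality less than $|I|$ and fix $\eta \in F_I$ with $K := K(\eta) \subseteq I$. If $K = I$, then $\eta \in \mu - \R_> I$ and the $\mu$-connected set $I$ itself witnesses membership in the right-hand side. If $K \subsetneq I$, then $\eta \in H$ and $\eta \in \mu - \R_+ K$, so by Theorem \ref{fiface} we have $\eta \in H \cap (\mu - \R_+ K) = F_K$, which Proposition \ref{fiiswic} rewrites as $\eta \in F_{K^*}$, where $K^*$ is the $\mu$-connected part of $K$ and satisfies $|K^*| \leq |K| < |I|$. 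The induction hypothesis applied to $K^*$ then produces a $\mu$-connected $K'' \subseteq K^* \subseteq I$ with $\eta \in \mu - \R_> K''$; by uniqueness of the expansion we actually have $K'' = K$, which incidentally confirms that the support $K$ was $\mu$-connected to begin with.

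I do not anticipate a serious obstacle. The only nontrivial input is Theorem \ref{fiface}'s identity $F_K = H \cap (\mu - \R_+ K)$ for arbitrary $K \subseteq \Pi$ (no $\mu$-connectedness required), which is precisely what enables the inductive descent when $K(\eta)$ is a proper subset of $I$; everything else is routine bookkeeping with the support of $\eta$.
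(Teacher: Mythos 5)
Your argument is correct, but it takes a different route from the paper, and carries an inductive superstructure that the argument does not actually need.

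What the paper does: it first observes that, by convexity, it suffices to verify the inclusion on the orbit $W_I\mu$ (a convex combination of points from $\mu-\R_>K_1,\ldots,\mu-\R_>K_p$ lands in $\mu-\R_>(K_1\cup\cdots\cup K_p)$, and a union of $\mu$-connected sets is $\mu$-connected by Proposition~\ref{pmuc}). Then for $w\in W_I$ it writes $w\mu\in\mu-\R_>K$ and invokes Lemma~\ref{zhuoLemma} directly to produce $w_K\in W_K$ with $w\mu=w_K\mu$; decomposing $w_K=w^*w_*$ with $w^*\in W_{K^*}$, $w_*\in W_{K_r}\subseteq W_{K_*}$ gives $w\mu=w^*\mu\in\mu-\R_+K^*$, whence $K\subseteq K^*$ and $K$ is $\mu$-connected. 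You instead work with an arbitrary $\eta\in F_I$, read off its support $K$, and feed $\eta\in H\cap(\mu-\R_+K)=F_K=F_{K^*}$ through Theorem~\ref{fiface} and Proposition~\ref{fiiswic}. Both proofs turn on the same pivot---forcing $K\subseteq K^*$ by unique expansion in the linearly independent set $\Pi$---but the paper reaches it via Lemma~\ref{zhuoLemma} applied to orbit points, while you reach it via the identity $F_K=H\cap(\mu-\R_+K)$ for arbitrary $K\subseteq\Pi$, which packages Lemma~\ref{zhuoLemma}'s content and lets you skip the convexity reduction.

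One remark on economy: the induction on $|I|$ is superfluous. Once you have $\eta\in F_{K^*}$, a second application of Theorem~\ref{fiface} (now to $K^*$) gives $\eta\in\mu-\R_+K^*$ directly; comparing with $\eta\in\mu-\R_>K$ and using linear independence of $\Pi$ yields $K\subseteq K^*$, hence $K=K^*$ and $K$ is $\mu$-connected. This is exactly the chain of facts your inductive step exploits, but without appealing to the proposition for smaller sets, so the whole argument collapses to a direct one. The base case and the inductive bookkeeping can be discarded.
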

\begin{proof} Clearly, the union is disjoint. If  $K_1,\ldots ,K_p$ are $\mu$-connected and $r_1,\ldots, r_p\in \R_>$ such that $r_1+\cdots+r_p=1$ then $r_1(\mu-\R_>K_1) +\cdots+ r_p(\mu-\R_>K_p)\subseteq \mu-\R_>(K_1\cup\cdots\cup K_p)$, and $K_1\cup\cdots\cup K_p\subseteq I$ is $\mu$-connected by Proposition \ref{pmuc}. Therefore, it is sufficient to show that $W_I\mu$ is contained in the union of (\ref{H-dec-mu-con}).

Let $w\in W_I$. Then $w\mu\in  \mu-\R_> K$ for some $K\subseteq I$. By Lemma \ref{zhuoLemma} there exists $w_K\in W_K$ such that $w\mu=w_K\mu$. Inserting the decomposition $w_K=w^* w_*$ with $w^*\in W_{K^*}$ and $w_*\in W_{K_r}\subseteq W_{K_*}$, we get $w\mu=w_K\mu=w^*\mu\in\mu -\R_+ K^*$. It follows that $K\subseteq K^*$. Hence $K$ is $\mu$-connected.
\hfill$\Box$\end{proof}

The decompositions of the next corollary are key for the proofs of several of our results. When $\mu$ is an integral dominant weight and $I=\Pi^*$, the second decomposition of $H\cap\overline{C}$ implies the description \cite[Propositions 11.2 a)]{Kac90} of the weights of an irreducible highest weight module of highest weight $\mu$, given by V. Kac and D. Peterson. The proof of \cite[Proposition 11.2 a)]{Kac90} uses Lie theoretic and integral methods, and does not generalize to our situation.

\begin{corollary}\label{C-partition2} Let $I$ be $\mu$-connected. Then
\begin{equation*}
   F_I\cap \overline{C}= \bigsqcup_{\mu\text{-connected }K\subseteq I} \text{ri}(F_K)\cap \overline{C} =  \bigsqcup_{\mu\text{-connected }K\subseteq I} (\mu-\R_> K)\cap \overline{C} .
\end{equation*}
\end{corollary}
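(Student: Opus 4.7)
The plan is to note first that the second equality is already given to us: for every $\mu$-connected $K\subseteq I$, Theorem \ref{ri-FI} states $\text{ri}(F_K)\cap\overline{C}=(\mu-\R_>K)\cap\overline{C}$, so once the first equality is established, the second comes for free by substituting term by term. Thus the work lies in proving
\[
    F_I\cap\overline{C}=\bigsqcup_{\mu\text{-connected }K\subseteq I}\text{ri}(F_K)\cap\overline{C}.
\]

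First I would check disjointness of the union. If $K_1\ne K_2$ are $\mu$-connected subsets of $I$, then the sets $\mu-\R_>K_1$ and $\mu-\R_>K_2$ are disjoint in $\h^*_\R$, because the simple roots are linearly independent and the $\R_>$-coefficients in a representation $\mu-\sum_{\a}c_\a\a$ uniquely record the support. By Theorem \ref{ri-FI}, $\text{ri}(F_{K_j})\cap\overline{C}\subseteq \mu-\R_>K_j$, so the sets $\text{ri}(F_{K_j})\cap\overline{C}$ are disjoint.

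Next I would prove the inclusion ``$\supseteq$''. For any $\mu$-connected $K\subseteq I$, Corollary \ref{mu-con-inj} gives $F_K\subseteq F_I$, hence $\text{ri}(F_K)\cap\overline{C}\subseteq F_K\cap\overline{C}\subseteq F_I\cap\overline{C}$.

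For the inclusion ``$\subseteq$'', the key tool is Proposition \ref{H-dec-mu-c}: any $\eta\in F_I$ lies in $\mu-\R_>K$ for some (unique) $\mu$-connected $K\subseteq I$. If in addition $\eta\in\overline{C}$, then $\eta\in(\mu-\R_>K)\cap\overline{C}=\text{ri}(F_K)\cap\overline{C}$ by Theorem \ref{ri-FI}, which places $\eta$ in the asserted union. No step is expected to be a serious obstacle here, since the heavy lifting has been done in Proposition \ref{H-dec-mu-c} and Theorem \ref{ri-FI}; the corollary is essentially an assembly of those two results with a short disjointness check.
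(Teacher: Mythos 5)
Your proof is correct and follows essentially the same route as the paper: both directions rest on Proposition \ref{H-dec-mu-c} for the inclusion $F_I\cap\overline{C}\subseteq\bigsqcup(\mu-\R_>K)\cap\overline{C}$, on Theorem \ref{ri-FI} to identify $(\mu-\R_>K)\cap\overline{C}$ with $\text{ri}(F_K)\cap\overline{C}$, and on the trivial containment $\text{ri}(F_K)\cap\overline{C}\subseteq F_K\cap\overline{C}\subseteq F_I\cap\overline{C}$ for the reverse inclusion. The only difference is that you spell out the disjointness and monotonicity steps that the paper leaves implicit.
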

\begin{proof} From Proposition \ref{H-dec-mu-c} and Theorem \ref{ri-FI} it follows that
\begin{equation*}
      F_I \cap \overline{C} \subseteq \bigsqcup_{\mu\text{-connected }K\subseteq I} (\mu-\R_> K)\cap \overline{C} = \bigsqcup_{\mu\text{-connected }K\subseteq I} \text{ri}(F_K)\cap \overline{C} \subseteq F_I\cap \overline{C}.
\end{equation*}
\hfill$\Box$\end{proof}

Equation (\ref{eq-KPL}) below has been obtained for $I=\Pi^*$ and $(J_0)^0=J_0$ by E. Looijenga in \cite[Corollary 1.14, Proposition 2.4]{Loo} by a direct, involved proof. When $\mu$ is in addition an integral dominant weight, this equation implies the description \cite[Propositions 11.2 b)]{Kac90} of the weights of an irreducible highest weight module of highest weight $\mu$, given by V. Kac and D. Peterson. Our proof follows that of \cite[Proposition 11.2 b)]{Kac90} with more details.
\begin{corollary}\label{full-face-riX} Let $I$ be $\mu$-connected. If $(I\cap J_0)^0=I\cap J_0$ then
\begin{eqnarray}\label{eq-KPL}
    F_I\cap \overline{C} =  (\mu-\R_+ I)\cap \overline{C}  =  (\mu-\R_+ I)\cap \overline{C}_{I_*}.
\end{eqnarray}
\end{corollary}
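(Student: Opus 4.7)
The plan is to derive both equalities from Corollary~\ref{C-partition2}, using the hypothesis $(I\cap J_0)^0=I\cap J_0$ to upgrade the $\R_>$-decomposition over $\mu$-connected subsets of $I$ into an $\R_+$-description on all of $I$. The inclusions $F_I\cap\overline{C}\subseteq(\mu-\R_+ I)\cap\overline{C}$ and $(\mu-\R_+ I)\cap\overline{C}_{I_*}\subseteq(\mu-\R_+ I)\cap\overline{C}$ are immediate (from Theorem~\ref{fiface} and trivially), so only two reverse inclusions need attention.

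For the first reverse inclusion, I would take $\eta\in(\mu-\R_+ I)\cap\overline{C}$ and write $\eta=\mu-\sum_{\a\in K}m_\a\a$ with $K\subseteq I$ and $m_\a>0$ for all $\a\in K$. By Corollary~\ref{C-partition2} it suffices to show $K$ is $\mu$-connected, since then $\eta\in(\mu-\R_> K)\cap\overline{C}\subseteq F_I\cap\overline{C}$. Assume to the contrary that some connected component $K_0$ of $K$ lies in $J_0$. Then $K_0$ is a connected subset of $I\cap J_0$, hence sits inside a single component of $I\cap J_0$, which is of finite type by hypothesis; therefore $K_0$ itself is of finite type (principal subdiagrams of finite-type Dynkin diagrams are of finite type). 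For any $\beta\in K_0$ one has $\la\mu,\beta^\vee\ra=0$, and $\la\a,\beta^\vee\ra=0$ for $\a\in K\setminus K_0$ since $K_0$ is a component of $K$. The condition $\eta\in\overline{C}$ then gives
\[
0\le\la\eta,\beta^\vee\ra=-\sum_{\a\in K_0}m_\a\la\a,\beta^\vee\ra,
\]
which says the strictly positive vector $m=(m_\a)_{\a\in K_0}$ satisfies $A_{K_0}m\le 0$, where $A_{K_0}$ is the finite-type Cartan submatrix on $K_0$. Applying \cite[Theorem~4.3~(Fin)]{Kac90} to the transpose $A_{K_0}^T$ (also of finite type) provides $v>0$ with $A_{K_0}^T v>0$, and the pairing $0\ge v^T(A_{K_0}m)=(A_{K_0}^T v)^T m>0$ delivers a contradiction. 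Hence $K$ is $\mu$-connected.

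For the second reverse inclusion, note that $I$ being $\mu$-connected gives $I^*=I$, so the definition of $I_*$ yields $\la\a,\beta^\vee\ra=0$ for all $\a\in I$ and all $\beta\in I_*$. Combined with $\la\mu,\beta^\vee\ra=0$ (since $I_*\subseteq J_0$), this forces $\la\eta,\beta^\vee\ra=0$ for every $\eta\in\mu-\R_+ I$ and $\beta\in I_*$, whence $(\mu-\R_+ I)\cap\overline{C}\subseteq(\mu-\R_+ I)\cap\overline{C}_{I_*}$. The main subtle point is the finite-type contradiction in the previous paragraph: the hypothesis $(I\cap J_0)^0=I\cap J_0$ enters precisely to prevent $K_0$ from being of affine type, in which case a nonzero null vector $m\ge 0$ with $A_{K_0}m=0$ would obstruct the argument — the same phenomenon that appears in step~(ii) of the proof of Theorem~\ref{ri-FI}, but there eliminated by a structural constraint on $I$ that is no longer available here.
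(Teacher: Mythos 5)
Your proposal is correct and follows essentially the same route as the paper's proof: it reduces the first equality to showing that any $K\subseteq I$ supporting $\eta\in(\mu-\R_+I)\cap\overline{C}$ is $\mu$-connected, derives a contradiction from a would-be component $K_0\subseteq J_0$ via the finite-type classification in Kac's Theorem 4.3, and settles the second equality directly from the definition of $I_*$. The only stylistic difference is that the paper applies Theorem 4.3 in contrapositive form to conclude the offending component is of nonfinite type, whereas you first use the hypothesis to force $K_0$ to be finite type and then unfold the contradiction explicitly with the transpose inequality — logically the same use of the same classification result.
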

\begin{proof} From Corollary \ref{C-partition2} we get
\begin{eqnarray*}
  F_I\cap\overline{C}= \bigsqcup_{\mu\text{-connected } K\subseteq I} (\mu-\R_>K)\cap \overline{C}\subseteq (\mu-\R_+ I)\cap \overline{C}.
\end{eqnarray*}
To show the reverse inclusion let $\eta\in (\mu-\R_+ I)\cap\overline{C}$. Since $\mu\in F_I\cap\overline{C}$ we may assume $\eta\neq\mu$. There exists $\emptyset\neq K\subseteq I$ such that $\eta$ is of the form $\eta=\mu- \sum_{\a\in K}m_\a \a \in\overline{C}$ with $m_\a\in\R_>$ for all $\a\in K$.

Suppose that $K$ is not $\mu$-connected. Then there exists a connected component $L$ of $K$ such that $L\subseteq K\cap J_0\subseteq I\cap J_0$. For every $\beta\in L$ we find
\begin{equation*}
     0\leq\la \eta,\beta^\vee \ra= \underbrace{ \la \mu, \beta^\vee \ra}_{= 0} -  \sum_{\a\in L} m_\a \la\a, \beta^\vee\ra - \sum_{\a\in K\setminus L}m_\a \underbrace{\la\a, \beta^\vee\ra}_{= 0}  .
\end{equation*}
From \cite[Theorem 4.3]{Kac90} it follows that $L$ is of nonfinite type. This contradicts that $L$ is contained in $I\cap J_0$, which
is a union of components of finite type.

Obviously, $  (\mu-\R_+ I)\cap\overline{C} \supseteq (\mu-\R_+ I)\cap \overline{C}_{I_*}$. The reverse inclusion follows by the definition of $I_*$.
\hfill$\Box$\end{proof}

We describe the closeness of the intersection of the orbit hull with the closed fundamental chamber.
%
\begin{corollary}\label{closed-orbit-hull} The following are equivalent.
\begin{itemize}
\item[\rm (a)] $(\Pi^*\cap J_0)^0=\Pi^*\cap J_0$.
\item[\rm (b)] $H\cap \overline{C}$ is closed.
\end{itemize}

In particular, if the Kac-Moody algebra $\g(A)$ is of finite, affine, or strongly hyperbolic type, then $H\cap\overline{C}$ is closed. 
\end{corollary}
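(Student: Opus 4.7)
The plan for (a) $\Rightarrow$ (b) is to use $H = F_\Pi = F_{\Pi^*}$ from Proposition \ref{fiiswic}, then apply Corollary \ref{full-face-riX} with $I = \Pi^*$; its hypothesis is exactly condition (a), yielding $H \cap \overline{C} = (\mu - \R_+ \Pi^*) \cap \overline{C}$. Since $\Pi^*$ is finite, $\mu - \R_+ \Pi^*$ is a closed polyhedral set, and its intersection with the closed fundamental chamber $\overline{C}$ is closed.

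For the converse I would argue by contrapositive, assuming some connected component $L$ of $\Pi^* \cap J_0$ is not of finite type. Since $L$ is connected, the submatrix $A|_L$ is indecomposable, so by Kac's trichotomy \cite[Theorem 4.3]{Kac90} it is of affine or indefinite type. In either case I would extract a vector $\beta = \sum_{\a \in L} b_\a \a$ with all $b_\a \in \R_>$ satisfying $\la \beta, \a^\vee\ra \leq 0$ for every $\a \in L$: in the affine case $\beta$ is the minimal positive imaginary root $\delta_L$, for which these pairings vanish; in the indefinite case one uses the positive vector with strictly negative image under $A|_L$ provided by Kac's theorem. For any $t > 0$ a direct check then gives $\mu - t\beta \in \overline{C}$, using that $\la \mu, \a^\vee\ra = 0$ on $L$, $\la \mu, \a^\vee\ra \geq 0$ off $L$, and $\la \beta, \a^\vee\ra \leq 0$ on all of $\Pi$ (since $\beta \in \R_+ L$ and adjacent simple-root pairings are non-positive).

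Next I would realize $\mu - t\beta$ as a limit of points in $H \cap \overline{C}$ not belonging to $H$. Let $K$ be the connected component of $\Pi^*$ containing $L$; then $K$ is $\mu$-connected, and by Theorem \ref{ri-FI} one can choose $\eta_0 = \mu - \sum_{\a \in K} m_\a \a \in \text{ri}(F_K) \cap \overline{C}$ with all $m_\a \in \R_>$. Writing the convex combination $\eta_\epsilon := (1 - \epsilon)(\mu - t\beta) + \epsilon \eta_0$ out in terms of simple roots shows that its coefficients on all of $K$ are strictly positive for small $\epsilon > 0$, so $\eta_\epsilon \in (\mu - \R_> K) \cap \overline{C} \subseteq H \cap \overline{C}$ by Corollary \ref{C-partition2}, while $\eta_\epsilon \to \mu - t\beta$ as $\epsilon \to 0^+$. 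On the other hand, $\mu - t\beta \in \mu - \R_> L$ has full simple-root support exactly $L$; by Proposition \ref{H-dec-mu-c} every point of $H$ lies in $\mu - \R_> K'$ for some $\mu$-connected $K' \subseteq \Pi^*$, and these pieces are pairwise disjoint. Since $L \subseteq J_0$ is not $\mu$-connected, $\mu - t\beta \notin H$, hence $H \cap \overline{C}$ is not closed.

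The ``In particular'' clause is then straightforward: if $\g(A)$ is indecomposable of finite, affine, or strongly hyperbolic type, then every proper connected subdiagram of $\Pi$ is of finite type, and each component of $\Pi^* \cap J_0$ is such a proper subdiagram (since a component of $\Pi^*$ meets $J_>$ by definition of $\Pi^*$), so (a) holds. The main obstacle I foresee is producing the vector $\beta$ uniformly in the affine and indefinite cases with the required sign conditions; once $\beta$ is in hand, the conclusion $\mu - t\beta \notin H$ is forced almost immediately by the disjointness in Proposition \ref{H-dec-mu-c}, which is the precise leverage gained from $L$ just barely failing $\mu$-connectedness.
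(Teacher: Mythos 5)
Your proposal is correct and follows essentially the same route as the paper: (a)$\Rightarrow$(b) via Corollary \ref{full-face-riX} with $I=\Pi^*$, and (b)$\Rightarrow$(a) by contrapositive using the vector from \cite[Theorem 4.3]{Kac90} supported on the nonfinite-type component, a convex combination with a relative-interior point of $H\cap\overline{C}$ to exhibit $\mu-t\beta$ as a limit point, and the disjoint decomposition into pieces $\mu-\R_>K$ to see it is not in $H$. The only (immaterial) differences are that the paper perturbs toward a point of $\text{ri}(H)\cap\overline{C}$ rather than $\text{ri}(F_K)\cap\overline{C}$ and cites Corollary \ref{C-partition2} instead of Proposition \ref{H-dec-mu-c} for the non-membership.
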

\begin{proof} 
If we have $(\Pi^*\cap J_0)^0=\Pi^*\cap J_0$ then $H\cap \overline{C}=(\mu-\R_+ \Pi^*)\cap \overline{C}$ by Corollary \ref{full-face-riX}. Hence $H\cap \overline{C}$ is closed.

Now let $(\Pi^*\cap J_0)^0\neq\Pi^*\cap J_0$, and choose a component $K$ of $\Pi^*\cap J_0$ of nonfinite type. By \cite[Theorem 4.3]{Kac90} there exists $\gamma\in \R_> K$ such that $\la\gamma,\a^\vee\ra\leq 0$ for all $\a\in K$. Clearly,  $\la\gamma,\a^\vee\ra\leq 0$ for all $\a\in\Pi\setminus K$. By Theorem \ref{ri-FI} there exists $\eta\in  (\mu-\R_> \Pi^*)\cap\overline{C}=  \text{ri}(H)\cap\overline{C}$. For $0\leq t \leq 1$ we set $\tau_t:= (1-t) \mu + t \eta- \gamma$. Then
\begin{eqnarray*}
    \tau_t\in (\mu-\R_> \Pi^*)\cap\overline{C} =\text{ri}(H)\cap\overline{C} \subseteq H\cap \overline{C}
\end{eqnarray*}
for all $0<t\leq 1$, and $\tau_0=\mu-\gamma\in  (\mu-\R_> K)\cap \overline{C}$.  Since $K$ is not $\mu$-connected it follows from Corollary \ref{C-partition2} for $I=\Pi^*$ that $\tau_0$ is not contained in $H\cap \overline{C}$. So $H\cap \overline{C}$ is not closed.


Let $\g(A)$ be of finite, affine, or strongly hyperbolic type. If $J_0=\Pi$ then $\Pi^*=\emptyset$. Thus $\Pi^*\cap J_0=\emptyset$. If $J_0\neq\Pi$ then $\Pi^*\cap J_0$ is a proper subset of $\Pi$. Hence it is either empty or a union of components of finite type.
\hfill$\Box$\end{proof}

Now we can show that the map of Corollary \ref{mu-con-inj} is onto.
\begin{corollary}\label{dfct} The map $I \mapsto F_I$  from the set of all $\mu$-connected subsets to the set $\F\setminus\{\emptyset\}$ of all nonempty fundamental faces is an isomorphism of partially ordered sets.
\end{corollary}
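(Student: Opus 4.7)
The plan is to combine Corollary \ref{mu-con-inj}, which already establishes that $I\mapsto F_I$ is an order-preserving injection, with a surjectivity argument drawing on Corollary \ref{C-partition2}, Theorem \ref{ri-FI}, and the fact that the relative interiors of the nonempty faces of $H$ partition $H$ (the second lemma of Section 2.4). So the whole task reduces to proving surjectivity.

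For surjectivity, I would take an arbitrary $F\in\F\setminus\{\emptyset\}$ and choose some $\eta\in\text{ri}(F)\cap\overline{C}$, which exists by the very definition of $\F$. The first key observation is that $H$ is itself one of the $F_I$, namely $H=F_{\Pi^*}$: indeed $H=\text{co}(W\mu)=F_\Pi$, and $F_\Pi=F_{\Pi^*}$ by Proposition \ref{fiiswic}, while $\Pi^*$ is $\mu$-connected by construction. Hence Corollary \ref{C-partition2} applies with $I=\Pi^*$ and decomposes $H\cap\overline{C}$ as a disjoint union of the pieces $(\mu-\R_>K)\cap\overline{C}$ over $\mu$-connected $K\subseteq\Pi^*$, with the convention $\R_>\emptyset=\{0\}$ ensuring the corner case $\eta=\mu$ is recorded by $K=\emptyset$. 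Thus there is a unique $\mu$-connected $K\subseteq\Pi^*$ with $\eta\in(\mu-\R_>K)\cap\overline{C}$.

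Next I would invoke Theorem \ref{ri-FI} to identify this piece with $\text{ri}(F_K)\cap\overline{C}$, so that $\eta\in\text{ri}(F)\cap\text{ri}(F_K)$. Since the relative interiors of the nonempty faces of $H$ form a partition of $H$, two faces whose relative interiors meet must coincide, and we conclude $F=F_K$. Combined with Corollary \ref{mu-con-inj}, this gives the desired order isomorphism.

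I do not foresee any serious obstacle: the substantive work has been done in Theorem \ref{ri-FI} and Corollary \ref{C-partition2}, and the present argument only glues these together using the uniqueness of the face whose relative interior contains a given point. The only minor care needed is the corner case $\eta=\mu$, $F=\{\mu\}=F_\emptyset$, which is automatically absorbed by the convention $\R_>\emptyset=\{0\}$ already in effect.
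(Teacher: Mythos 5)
Your proof is correct and follows essentially the same route as the paper's: both reduce surjectivity to picking $\eta\in\text{ri}(F)\cap\overline{C}$, invoking Corollary \ref{C-partition2} (with $F_{\Pi^*}=H$) to place $\eta$ in $\text{ri}(F_K)\cap\overline{C}$ for some $\mu$-connected $K$, and concluding $F=F_K$ because distinct nonempty faces have disjoint relative interiors. The only cosmetic difference is that you pass through the $(\mu-\R_>K)\cap\overline{C}$ form and then re-invoke Theorem \ref{ri-FI} to identify it with $\text{ri}(F_K)\cap\overline{C}$, whereas the paper reads the $\text{ri}(F_K)\cap\overline{C}$ form directly from Corollary \ref{C-partition2}.
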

\begin{proof} By Corollary \ref{mu-con-inj} it remains to show that the map is surjective. If $F$ is a nonempty fundamental face, then $\text{ri}(F)\cap \overline{C}\neq\emptyset$. Since
\begin{eqnarray*}
   H\cap \overline{C}=\bigsqcup_{\mu\text{-connected } I}  \text{ri}(F_I) \cap \overline{C}
\end{eqnarray*}
by Corollary \ref{C-partition2}, there exists some $\mu$-connected set $I$ such that $\text{ri}(F)\cap \text{ri}(F_I)\neq\emptyset$. Thus $F=F_I$.
\hfill$\Box$\end{proof}

To avoid case distinctions between the empty fundamental face and nonempty fundamental faces in the following theorem on the stabilizers and isotropy groups, and in many of the results of Section \ref{TheMonoid} we introduce some notation. For $F\in\F$ we set
\begin{eqnarray}\label{typeF3}
         \lambda^*(F) :=\left\{   \begin{array}{ccl}
         I  & \text{if} & F=F_I ,\;I \;\mu\text{-connected}, \\
          \emptyset & \text{if} &F=\emptyset .
           \end{array} \right. \\
      \lambda_*(F) :=  \left\{   \begin{array}{ccl}
         I_*  & \text{if} & F=F_I,\;I \;\mu\text{-connected} ,\\
           \Pi & \text{if} &F=\emptyset .  \label{typeF2}
\end{array}\right.
\end{eqnarray}
Furthermore, for $F\in\F$ we set
\begin{equation}\label{typeF1}
        \lambda(F) := \lambda_*(F)\cup \lambda^*(F)=\left\{   \begin{array}{ccl}
         I\cup I_*  & \text{if} & F=F_I ,\;I \;\mu\text{-connected}, \\
           \Pi & \text{if} &F=\emptyset .
\end{array}\right.
\end{equation}
The map $\lambda: \mathcal F \rightarrow 2^\Pi$ is called the {\it type map}.

The next result describes the isotropy groups and stabilizers of the fundamental faces.
\begin{theorem}\label{faceIsotropyGroup} If $F\in\F$ then
\begin{equation*}
  W_*(F) = W_{\lambda_*(F)} \quad\text{ and }\quad  W(F) = W_{\lambda(F)} = W_{\lambda_*(F)} \times W_{\lambda^*(F)}.
\end{equation*}
In particular, the sets (\ref{typeF2}) and (\ref{typeF1}) can be given by
\begin{eqnarray*}
     \lambda_*(F) =  \{\a \in \Pi \mid r_\a \eta = \eta ~\fall \eta\in F\} \quad\text{ and }\quad  \lambda(F) = \{\a\in \Pi \mid r_\a  F = F\}  .
\end{eqnarray*}
\end{theorem}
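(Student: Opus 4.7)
The plan is to dispatch the trivial case $F=\emptyset$ first (where $W_*(\emptyset)=W(\emptyset)=W=W_\Pi$ vacuously), and then treat $F=F_I$ for a $\mu$-connected $I$. Throughout, a central tool will be an interior point $\eta\in \text{ri}(F_I)\cap(\mu-\R_>I)\cap C_{I_*}$, whose existence is guaranteed by Corollary \ref{point-ri-hull}. Its isotropy group in $W$ is exactly $W_{I_*}$ since $\eta\in C_{I_*}$, and it lies in $\overline{C}$, so it is the unique representative of its $W$-orbit in $\overline{C}$.

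For the stabilizer $W_*(F_I)=W_{I_*}$: the inclusion $W_{I_*}\subseteq W_*(F_I)$ follows because every $\a\in I_*$ satisfies $\a\in J_0$ (hence $r_\a\mu=\mu$) and $r_\a r_\beta=r_\beta r_\a$ for all $\beta\in I$ (hence $r_\a$ commutes with every $w\in W_I$); so $r_\a (w\mu)=w(r_\a\mu)=w\mu$ for all $w\in W_I$, meaning $r_\a$ fixes $W_I\mu$ pointwise, and by linearity also $F_I=\text{co}(W_I\mu)$. Conversely, if $w\in W_*(F_I)$ then $w\eta=\eta$, so $w\in W_{I_*}$.

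For the isotropy group $W(F_I)=W_I\cdot W_{I_*}$: the inclusion $\supseteq$ is clear, since $w F_I=\text{co}(wW_I\mu)=F_I$ for $w\in W_I$, and $W_{I_*}\subseteq W_*(F_I)\subseteq W(F_I)$. For $\subseteq$, let $w\in W(F_I)$. Then $w\eta\in wF_I=F_I=W_I(F_I\cap\overline{C})$ by Theorem \ref{fiface}, so there is $u\in W_I$ with $u^{-1}w\eta\in F_I\cap\overline{C}\subseteq\overline{C}$. Since $\eta$ is the unique point of its $W$-orbit in $\overline{C}$, we get $u^{-1}w\eta=\eta$, hence $u^{-1}w\in W_{I_*}$ and $w\in W_I W_{I_*}$. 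This gives $W(F_I)=W_IW_{I_*}=W_{I\cup I_*}$ (the last equality since $I\cap I_*=\emptyset$ and $W_{I\cup I_*}$ is generated by $I\cup I_*$). Moreover $I$ and $I_*$ are separated by the very definition of $I_*$, so the Coxeter diagram of $I\cup I_*$ splits into that of $I$ and that of $I_*$, yielding $W_{I\cup I_*}=W_I\times W_{I_*}$ as an internal direct product (with $W_{I_*}\subseteq W_*(F_I)$ normal in $W(F_I)$).

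Finally, the alternative descriptions of $\lambda_*(F)$ and $\lambda(F)$ follow by intersecting the groups $W_*(F)$ and $W(F)$ with the set $S$ of simple reflections and using the standard Coxeter-theoretic identity $W_J\cap S=J$ for $J\subseteq S$; the empty-face case is again vacuous. The main obstacle is the inclusion $W(F_I)\subseteq W_IW_{I_*}$, whose success hinges on locating the interior point $\eta$ simultaneously in $\text{ri}(F_I)$, in $C_{I_*}$, and in $\overline{C}$—all three properties supplied by Corollary \ref{point-ri-hull} and Theorem \ref{ri-FI}—combined with the orbit decomposition $F_I=W_I(F_I\cap\overline{C})$.
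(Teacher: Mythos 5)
Your proof is correct and follows essentially the same route as the paper's: the same interior point $\eta\in \text{ri}(F_I)\cap(\mu-\R_>I)\cap C_{I_*}$ from Corollary \ref{point-ri-hull} (whose isotropy group is $W_{I_*}$) gives $W_*(F_I)\subseteq W_{I_*}$, and the decomposition $F_I=W_I(F_I\cap\overline{C})$ from Theorem \ref{fiface} together with uniqueness of orbit representatives in $\overline{C}$ gives $W(F_I)\subseteq W_IW_{I_*}$, exactly as in the paper. The only addition is your explicit justification of the "in particular" statement via $W_J\cap S=J$, which the paper leaves implicit.
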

\begin{proof} Clearly, the theorem holds for $F=\emptyset$. Let $F=F_I$ where $I$ is a $\mu$-connected set. The elements of $W_I$ and $W_{I_*}$ commute because $I$ and $I_*$ are separated. Hence $W_{I\cup I_*} = W_{I} \times W_{I_*}$. Furthermore, since $F_I$ is the convex hull of $W_I\mu$, and the elements of $W_{I_*}\subseteq W_{J_0}$ fix $\mu$, we have $W_{I_*}\subseteq W_*(F_I)\subseteq W(F_I)$ and $W_I\subseteq W(F_I)$.

From Corollary \ref{point-ri-hull}, there exists $\eta \in \text{ri}( F_I) \cap  (\mu-\R_{>}I )\cap  C_{I_*}$. The isotropy group of $\eta$ is $W_{I_*}$ by \cite[Proposition 3.12 a)]{Kac90}. Hence $W_*(F_I)\subseteq W_{I_*}$.

Now let $w\in W(F_I)$. Then $w\eta\in F_I$. By Theorem \ref{fiface} there exist $w'\in W_I$ and $\eta'\in F_I\cap\overline{C}$ such that $w\eta= w'\eta'$. Since every Weyl group orbit intersects $\overline{C}$ in only one point we get $\eta'=\eta$. Thus $w\eta= w'\eta$, from which it follows that $w\in w'W_{I_*}\subseteq W_I W_{I_*}= W_{I\cup I_*}$.
\hfill$\Box$\end{proof}

Next, we describe the inclusion of nonempty faces.
\begin{proposition}\label{incl-ffaces} Let $I, I'$ be $\mu$-connected and $w,w'\in W$. Then $w F_I\subseteq w' F_{I'}$ if and only if $I\subseteq I'$ and $w^{-1}w'\in W_{I_*}W_{I'}=W(F_I)W(F_{I'})$.
\end{proposition}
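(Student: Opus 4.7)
The plan is to prove both directions by reducing the inclusion of faces to a statement about representative points and their isotropy groups. For the forward direction, I would set $u:=(w')^{-1}w$ so the hypothesis becomes $uF_I\subseteq F_{I'}$, and then pick a distinguished point $\eta\in\mathrm{ri}(F_I)\cap C_{I_*}$ (which exists by Corollary \ref{point-ri-hull}); the crucial feature of $\eta$ is that its $W$-isotropy equals $W_{I_*}$ by \cite[Proposition 3.12 a)]{Kac90}. Applying $u$ gives $u\eta\in F_{I'}$, and using $F_{I'}=W_{I'}(F_{I'}\cap\overline C)$ from Theorem \ref{fiface}, I can write $u\eta=v\eta'$ with $v\in W_{I'}$ and $\eta'\in F_{I'}\cap\overline C$. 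Since both $\eta$ and $\eta'$ lie in $\overline C$ and each $W$-orbit meets $\overline C$ in a single point, we must have $\eta'=\eta$. Hence $v^{-1}u$ fixes $\eta$, so $v^{-1}u\in W_{I_*}$, which gives $u\in W_{I'}W_{I_*}$ and therefore $w^{-1}w'=u^{-1}\in W_{I_*}W_{I'}$.

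To finish the forward direction I still need $I\subseteq I'$. Writing $u=v\cdot(v^{-1}u)$ and using Theorem \ref{faceIsotropyGroup} (which identifies $W_{I_*}=W_*(F_I)$), the factor $v^{-1}u\in W_{I_*}$ fixes $F_I$ pointwise, so $uF_I=vF_I\subseteq F_{I'}$; then $v\in W_{I'}\subseteq W(F_{I'})$ stabilizes $F_{I'}$, so $F_I\subseteq v^{-1}F_{I'}=F_{I'}$, and Corollary \ref{mu-con-inj} forces $I\subseteq I'$. The converse is now routine: given $I\subseteq I'$ and a factorization $w^{-1}w'=xy$ with $x\in W_{I_*}$, $y\in W_{I'}$, I would compute $w'F_{I'}=wxyF_{I'}=wxF_{I'}$ (since $y$ stabilizes $F_{I'}$), and then use $xF_I=F_I$ (as $x$ fixes $F_I$ pointwise) together with $F_I\subseteq F_{I'}$ (from Corollary \ref{mu-con-inj}) to conclude $wF_I=wxF_I\subseteq wxF_{I'}=w'F_{I'}$.

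For the set equality $W_{I_*}W_{I'}=W(F_I)W(F_{I'})$ under the assumption $I\subseteq I'$, I would unpack $W(F_I)=W_I\times W_{I_*}$ and $W(F_{I'})=W_{I'}\times W_{I'_*}$ from Theorem \ref{faceIsotropyGroup}. The inclusion $I\subseteq I'$ yields $W_I\subseteq W_{I'}$, absorbing the $W_I$ factor. To deal with $W_{I'_*}$, I would observe that any $\a\in I'_*$ lies in $J_0\setminus I'$ and commutes with every simple reflection of $I'$, hence in particular with those of $I\subseteq I'$; since $I$ is $\mu$-connected we have $\a\notin I$, so $\a\in I_*$ and therefore $W_{I'_*}\subseteq W_{I_*}$. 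Using that the pairs $(I,I_*)$ and $(I',I'_*)$ are each separated, the factors rearrange to give $W_I W_{I_*}W_{I'}W_{I'_*}=W_{I_*}W_{I'}$.

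The main obstacle is really the first paragraph: recognizing that $\mathrm{ri}(F_I)$ contains a point lying in the open facet $C_{I_*}$, so that the geometric inclusion $uF_I\subseteq F_{I'}$ can be converted into the group-theoretic factorization $u\in W_{I'}W_{I_*}$ by a one-point isotropy computation. Everything else reduces to the orbit/stabilizer structure already established in Theorems \ref{fiface} and \ref{faceIsotropyGroup} and Corollaries \ref{point-ri-hull} and \ref{mu-con-inj}.
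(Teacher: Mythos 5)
Your proof is correct and follows essentially the same route as the paper: both arguments choose the distinguished point $\eta\in\text{ri}(F_I)\cap(\mu-\R_>I)\cap C_{I_*}$ from Corollary \ref{point-ri-hull}, use $F_{I'}=W_{I'}(F_{I'}\cap\overline C)$ and the uniqueness of the $\overline C$-representative of a $W$-orbit to convert the face inclusion into the group factorization, and then invoke Theorem \ref{faceIsotropyGroup} together with the order-isomorphism $I\mapsto F_I$ to close the equivalence. The only cosmetic difference is organizational (you split off the derivation of $I\subseteq I'$ and the identity $W(F_I)W(F_{I'})=W_{I_*}W_{I'}$ into separate passes, whereas the paper packages them into an iff-chain), and your justification of $\a\notin I$ via $\mu$-connectedness, while valid, is more roundabout than simply noting $\a\notin I'\supseteq I$.
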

\begin{proof} If $I\subseteq I'$ then $I_*\supseteq I'_*$, and from Theorem \ref{faceIsotropyGroup} we obtain
\begin{equation*}
   W(F_I) W(F_{I'}) = W_{I\cup I_*}W_{I_*'}W_{I'} = W_{I\cup I_*}W_{I'} =  W_{I_*}W_I W_{I'} =  W_{I_*}W_{I'} .
\end{equation*}

By Corollary \ref{point-ri-hull} we find that there exists $\eta \in \text{ri}( F_I) \cap  (\mu-\R_{>}I )\cap  C_{I_*}$. If $wF_I\subseteq w' F_{I'}$ then $(w')^{-1}w \eta\in F_{I'}$. By Theorem \ref{fiface}  there exist $\eta'\in F_{I'}\cap\overline{C}$ and $w''\in W_{I'}$ such that $(w')^{-1}w \eta=w''\eta'$. Since every $W$-orbit intersects $\overline{C}$ in only one point we get $\eta=\eta'$. Hence $(w')^{-1}w \in w'' W_\eta\subseteq W_{I'}W_{I_*}$.

Now suppose that $(w')^{-1}w \in W_{I'}W_{I_*}$. Write $(w')^{-1}w =ab$ with $a \in  W_{I'}$ and $b\in W_{I_*}$. Then $(w')^{-1}w  F_I\subseteq  F_{I'}$ is equivalent to $F_I=b F_I\subseteq a^{-1}F_{I'}=F_{I'}$ by Theorem \ref{faceIsotropyGroup}, which in turn is equivalent to $I\subseteq I'$ by Corollary \ref{dfct}.
\hfill$\Box$\end{proof}

\begin{corollary}\label{ff-cs} The set of fundamental faces $\F$ is a cross-section for the action of $W$ on $\F(H)$, that is, every face of $H$ is $W$-equivalent to a unique fundamental face of $H$.
\end{corollary}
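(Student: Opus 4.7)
The plan is to prove existence and uniqueness separately. For existence, the first step is to observe that the orbit hull satisfies $H\subseteq X$: since $\mu\in\overline{C}\subseteq X$, $W$-invariance of $X$ gives $W\mu\subseteq X$, and because the Tits cone is convex, $H=\text{co}(W\mu)\subseteq X$. The empty face is fundamental by definition, so let $F\in\F(H)$ be nonempty and pick any $\eta\in\text{ri}(F)$. Because $\eta\in X$, there exists $w\in W$ with $w^{-1}\eta\in\overline{C}$. Since $W$ acts on $\F(H)$ by lattice isomorphisms, $w^{-1}F$ is again a face of $H$, and Lemma \ref{relativeInteriorProperties} yields $w^{-1}\eta\in w^{-1}\,\text{ri}(F)=\text{ri}(w^{-1}F)$. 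Thus $\text{ri}(w^{-1}F)\cap\overline{C}\neq\emptyset$, so $w^{-1}F\in\F$, which proves existence.

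For uniqueness, suppose $F,F'\in\F$ and $w\in W$ satisfy $F'=wF$. If one of the two faces is empty then so is the other, so assume both are nonempty. By Corollary \ref{dfct}, write $F=F_I$ and $F'=F_{I'}$ for uniquely determined $\mu$-connected sets $I,I'\subseteq\Pi$. The equality $wF_I=F_{I'}$ yields two inclusions. Applying Proposition \ref{incl-ffaces} to $F_{I'}=1\cdot F_{I'}\subseteq w\cdot F_I$ yields $I'\subseteq I$; applying it symmetrically to $w\cdot F_I\subseteq 1\cdot F_{I'}$ yields $I\subseteq I'$. Hence $I=I'$, and Corollary \ref{dfct} forces $F=F_I=F_{I'}=F'$.

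No serious obstacle arises. Both halves reduce directly to structural results already established: the convexity of the Tits cone together with the property that $\overline{C}$ meets each $W$-orbit in $X$ exactly once for existence, and the inclusion criterion of Proposition \ref{incl-ffaces} together with the injectivity of $I\mapsto F_I$ from Corollary \ref{dfct} for uniqueness.
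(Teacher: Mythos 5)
Your proof is correct. The uniqueness half is essentially the paper's argument: both reduce the equality $wF_I=F_{I'}$ to the two inclusions and apply Proposition \ref{incl-ffaces}, together with the injectivity of $I\mapsto F_I$ from Corollary \ref{dfct}. The existence half takes a slightly more direct route: the paper moves $\eta_0\in\text{ri}(F)$ into $\overline{C}$ and then invokes the decomposition $H\cap\overline{C}=\bigsqcup_{K}\text{ri}(F_K)\cap\overline{C}$ of Corollary \ref{C-partition2} to identify $wF$ with a specific $F_K$, whereas you observe that $\text{ri}(w^{-1}F)$ meets $\overline{C}$ and conclude $w^{-1}F\in\F$ straight from the definition of a fundamental face. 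Your version avoids Corollary \ref{C-partition2} entirely; the paper's version has the side benefit of exhibiting the fundamental representative explicitly as some $F_K$, reusing the same mechanism as in the surjectivity proof of Corollary \ref{dfct}. Both arguments need $H\subseteq X$ so that points of $\text{ri}(F)$ can be moved into $\overline{C}$ by $W$; you justify this explicitly via the convexity of the Tits cone, a standard fact that the paper leaves implicit.
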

\begin{proof} The face $\emptyset\in\F(H)$ is $W$-equivalent only to $\emptyset\in\F$. Let $F\in \F(H)\setminus\{\emptyset\}$ and $\eta_0\in \text{ri}(F)$. There exists $w\in W$ such that $w\eta_0\in\overline{C}$. Since
\begin{eqnarray*}
   H\cap \overline{C}=\bigsqcup_{\mu\text{-connected } K}  \text{ri}(F_K) \cap \overline{C}
\end{eqnarray*}
by Corollary \ref{C-partition2}, there exists some $\mu$-connected set $K$ such that $\text{ri}(F_K)\ni w\eta_0\in w\,\text{ri}(F)=\text{ri}(wF)$. Therefore, $wF=F_K$.

If also $w' F= F_{K'}$ for $w'\in W$ and some $\mu$-connected set $K'$, then $w^{-1}F_K= (w')^{-1}F_{K'}$. From Proposition \ref{incl-ffaces} we get $K=K'$.
\hfill$\Box$\end{proof}

We need also the following refinement which follows immediately from Proposition \ref{incl-ffaces} and Corollary \ref{ff-cs}.
\begin{corollary}\label{ffl2} Let $F$ be a fundamental face. Then every face contained in $F$ is $W(F)$-equivalent to a unique fundamental face contained in $F$.
\end{corollary}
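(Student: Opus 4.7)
The plan is to reduce directly to the two cited results. First, I would handle the degenerate cases: if $F = \emptyset$, then the only subface is $\emptyset$ itself, which is the unique fundamental face contained in $F$; and if $F' = \emptyset \subseteq F$, then $\emptyset$ is itself fundamental and $W(F)$-equivalent to itself. So assume $F = F_I$ for a $\mu$-connected $I$, and $F'$ is a nonempty face with $F' \subseteq F_I$.

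For existence, I would apply Corollary \ref{ff-cs} to write $F' = w F_K$ for some $w \in W$ and some $\mu$-connected $K$. Since $w F_K \subseteq 1 \cdot F_I$, Proposition \ref{incl-ffaces} forces $K \subseteq I$ together with $w \in W_{I_*}W_I \cdot W_{K_*} = W_I W_{K_*}$ (using $W_{I_*} \subseteq W(F_I)$ and rewriting so the $W_I$ factor comes first). Factor $w = w_1 w_2$ with $w_1 \in W_I$ and $w_2 \in W_{K_*}$. By Theorem \ref{faceIsotropyGroup}, $W_I \subseteq W(F_I) = W(F)$ and $W_{K_*} = W_*(F_K)$, so $w_2$ fixes $F_K$ setwise. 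Then $w_1^{-1} F' = w_1^{-1} w F_K = w_2 F_K = F_K$, and $F_K \subseteq F_I$ by Corollary \ref{mu-con-inj}. Hence $F'$ is $W(F)$-equivalent (via $w_1^{-1}$) to the fundamental face $F_K$ contained in $F$.

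For uniqueness, suppose $u F_K = F_{K'}$ for some $u \in W(F)$ and fundamental faces $F_K, F_{K'} \subseteq F$. Then $F_K$ and $F_{K'}$ are $W$-equivalent, so Corollary \ref{ff-cs} gives $F_K = F_{K'}$. This settles the corollary.

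I do not expect real obstacles here, since everything reduces to bookkeeping with the isotropy decomposition $W(F_I) = W_I \times W_{I_*}$ together with the inclusion criterion of Proposition \ref{incl-ffaces}; the only subtle point to get right is that the factor $W_{K_*}$ produced by the inclusion criterion automatically lies in the pointwise stabilizer of the smaller face $F_K$, so it can be absorbed without leaving $W(F)$.
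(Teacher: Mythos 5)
Your argument is correct and takes the same route the paper intends: it unwinds the one-line proof by first invoking Corollary \ref{ff-cs} to pin down the unique fundamental face $F_K$ in the $W$-orbit of $F'\subseteq F_I$, then applying Proposition \ref{incl-ffaces} with $w'=1$ to get $K\subseteq I$ and $w\in W_I W_{K_*}$, absorbing the $W_{K_*}$ factor into the pointwise stabilizer $W_*(F_K)$ so that the remaining $W_I$ piece lies in $W(F)$. The intermediate display $W_{I_*}W_I\cdot W_{K_*}$ is a small detour (the proposition already yields $w^{-1}\in W_{K_*}W_I$, hence $w\in W_I W_{K_*}$ directly), but it reduces to the same thing since $K\subseteq I$ gives $I_*\subseteq K_*$.
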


From the Corollaries \ref{ff-cs}, \ref{dfct}, and \ref{point-ri-hull} we obtain that the edges of $H$ containing the vertex $\mu$ are given by
\begin{equation*}
 \overline{\mu \, r_\a\mu} \quad \text{ where }\quad \a\in W_{J_0}J_>= \bigsqcup_{\beta\in J_>}W_{J_0}\beta  .
\end{equation*}

\begin{corollary}\label{finite-edges} The following are equivalent.
\begin{itemize}
\item[\rm (a)] $(\Pi^*\cap J_0)^0=\Pi^*\cap J_0$.
\item[\rm (b)] There are only finitely many edges of $H$ that contain the vertex $\mu$.
\end{itemize}

In particular, if the Kac-Moody algebra $\g(A)$ is of finite, affine, or strongly hyperbolic type, then there are only finitely many edges of $H$ which contain the vertex $\mu$
\end{corollary}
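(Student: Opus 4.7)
The plan is to parametrize the edges of $H$ through $\mu$ as a $W_{J_0}$-orbit of real roots, rewrite their count as the index of a standard parabolic in $W_{J_0}$, and then translate finiteness into condition (a) via a standard Coxeter-theoretic fact. By the display preceding the corollary, the edges through $\mu$ are the segments $\overline{\mu\,r_\a\mu}$ for $\a\in W_{J_0}J_>=\bigsqcup_{\beta\in J_>}W_{J_0}\beta$; these are pairwise distinct because $W_{J_0}J_>\subseteq\Drp$ (the standard fact that $W_J$ preserves the positive real roots outside $\Z J$) and because $\la\mu,\a^\vee\ra=\la\mu,\beta^\vee\ra>0$ for $\a\in W_{J_0}\beta$, so $r_\a\mu=r_{\a'}\mu$ would force $\a$ and $\a'$ to be positive proportional real roots, hence equal via $\Z\a\cap\Dr=\{\pm\a\}$. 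Thus (b) is equivalent to $|W_{J_0}\beta|<\infty$ for each of the finitely many $\beta\in J_>$.

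For fixed $\beta\in J_>$ I will identify the $W_{J_0}$-stabilizer of $\beta$ with the standard parabolic $W_{\{\beta\}_*}$. Any $w\in W_{J_0}$ with $w\beta=\beta$ fixes $\mu$ (since $W_{J_0}\subseteq W_\mu$) and $r_\beta\mu=wr_\beta\mu=r_{w\beta}w\mu=r_\beta\mu$, hence fixes the fundamental edge $F_{\{\beta\}}=\overline{\mu\,r_\beta\mu}$; by Theorem \ref{faceIsotropyGroup} and $r_\beta\notin W_{J_0}$, $w$ lies in $W_{J_0}\cap(W_{\{\beta\}_*}\times\{1,r_\beta\})=W_{\{\beta\}_*}$. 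Decomposing $J_0=\bigsqcup_k C_k$ into connected components and using $\{\beta\}_*=J_0\cap\beta^\perp=J_0\setminus N(\beta)$, with $N(\beta)$ the Dynkin-neighbors of $\beta$, one obtains $|W_{J_0}\beta|=\prod_k[W_{C_k}:W_{C_k\setminus N(\beta)}]$. Each factor is $1$ when $C_k\cap N(\beta)=\emptyset$, and otherwise is the index of a proper standard parabolic subgroup in the irreducible Coxeter group $W_{C_k}$. By the standard fact that a proper standard parabolic of an irreducible infinite Coxeter group has infinite index, this factor is finite iff $W_{C_k}$ is finite iff $C_k$ is of finite type. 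Invoking this Coxeter fact is the main obstacle of the proof; a concrete substitute for the affine case is that the central element $c_{C_k}=\sum_{\a\in C_k}a_\a^\vee\a^\vee\in\h$ pairs nontrivially with $\beta$, whence the translation subgroup of the affine Weyl group $W_{C_k}$ carries $\beta$ through infinitely many positions.

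To finish, I match the resulting criterion with (a). Each connected component of $J_0$ lies entirely inside $\Pi^*$ or entirely outside it (being contained in a unique $\Pi$-component), so the $J_0$-components in $\Pi^*$ are precisely the components of $\Pi^*\cap J_0$. Such a component $C\subseteq\Pi^*$ is adjacent in $\Pi$ to some $\beta\in J_>$: along a shortest path inside the common $\Pi$-component from $C$ to a $J_>$-node, the first $J_>$-node reached is adjacent to a node of $C$. Conversely, adjacency to $J_>$ places $C$ in $\Pi^*$. Hence (b) is equivalent to every component of $\Pi^*\cap J_0$ being of finite type, i.e.\ to (a). The final assertion follows immediately: if $\g(A)$ is of finite, affine, or strongly hyperbolic type and $J_0\subsetneq\Pi$, then every component of $\Pi^*\cap J_0$ is a proper connected subdiagram of the indecomposable $\Pi$, hence of finite type by the Dynkin classification in the finite and affine cases and by the definition of strongly hyperbolic in the third; and if $J_0=\Pi$ then $\Pi^*=\emptyset$ and (a) holds trivially.
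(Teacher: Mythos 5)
Your proof is correct, and at its core it runs parallel to the paper's: both reduce (b) to the finiteness of the orbits $W_{J_0}\beta$ for the finitely many $\beta\in J_>$, identify the relevant stabilizer as a \emph{standard} parabolic subgroup, and then invoke the fact that a proper standard parabolic subgroup of an infinite irreducible Coxeter group has infinite index --- the paper cites precisely this as \cite[Proposition 2.43]{AB08}, so the ``Coxeter fact'' you flag as the main obstacle is exactly the ingredient the authors use as well. The differences are in the bookkeeping. For the finite direction the paper shortcuts via $W_{J_0}J_>=W_{\Pi^*\cap J_0}J_>$ (since $\Pi_*$ is separated from $\Pi^*\supseteq J_>$) and notes that $W_{\Pi^*\cap J_0}$ is finite under (a); you instead compute $|W_{J_0}\beta|$ exactly as a product of indices $[W_{C_k}:W_{C_k\setminus N(\beta)}]$ over the components of $J_0$, which yields more information than needed but is valid. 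For the infinite direction the paper fixes one nonfinite-type component $K$ of $\Pi^*\cap J_0$, finds $\beta\in J_>$ adjacent to it, and identifies the isotropy group of $\beta$ in $W_K$ as the standard parabolic $W_L$, $L=\{\a\in K\mid r_\a\beta=\beta\}\subsetneqq K$, by observing that $-\beta$ lies in the fundamental chamber of the free root base $\{\a\mid\a\in K\}$; you reach the same identification by routing through Theorem \ref{faceIsotropyGroup} applied to the fundamental edge $F_{\{\beta\}}$, reusing the face machinery already developed. You are also more careful than the paper about the injectivity of $\a\mapsto\overline{\mu\,r_\a\mu}$ on $W_{J_0}J_>$ (which the paper leaves implicit); the argument via proportional real roots is fine. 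Your treatment of the final assertion coincides with the paper's.
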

\begin{proof} We have $\Pi=\Pi^*\cup \Pi_*$ and $\Pi^*$, $\Pi_*$ are separated with $J_>\subseteq \Pi^*$, $\Pi_*\subseteq J_0$. Hence
\begin{eqnarray*}
   W_{J_0} J_>= W_{(\Pi^*\cap J_0)\cup \Pi_*} J_>= W_{\Pi^*\cap J_0}W_{\Pi_*} J_>=  W_{\Pi^*\cap J_0} J_> .
\end{eqnarray*}
If $(\Pi^*\cap J_0)^0=\Pi^*\cap J_0$ then  $W_{\Pi^*\cap J_0}$ is finite. Thus $W_{\Pi^*\cap J_0} J_>$ is finite.
Now let $(\Pi^*\cap J_0)^0\neq\Pi^*\cap J_0$. We show that $ W_{\Pi^*\cap J_0} J_>$ is infinite.

There exists a component $K$ of $\Pi^* \cap J_0$ of nonfinite type. Let $K'$ be the component of $\Pi^*$ that contains $K$. Since $K'$ is connected and $\mu$-connected there exists $\beta\in K'\cap  J_>$ adjacent to some $\gamma \in K$. Now $\{\a^\vee\mid \a\in K\}\subseteq \h_\R$ and $\{\a\mid \a\in K\}\subseteq \h_\R^*$ is a free root base for the Weyl group $W_K$. Moreover, $-\beta$ is in its fundamental chamber in $\h_\R^*$, since $-\la\beta,\a^\vee\ra\geq 0$ for all $\a\in K$. Hence the isotropy group of $\beta$ in $W_K$ is given by  $W_L $ where $L=\{\a\in K\mid r_\a\beta=\beta \}$. Furthermore, $L\subsetneqq K$ since $\gamma\notin L$. By \cite[Proposition 2.43]{AB08} the set $W_K\beta$ is infinite.

Let $\g(A)$ be of finite, affine, or strongly hyperbolic type. If $J_0=\Pi$ then $\Pi^*=\emptyset$. Thus $\Pi^*\cap J_0=\emptyset$. If $J_0\neq\Pi$ then $\Pi^*\cap J_0$ is a proper subset of $\Pi$. Hence it is either empty or a union of components of finite type.
\hfill$\Box$\end{proof}

We define the map  $\text{red}:W\to\Pi$ as follows: We set $\text{red}(1):=\emptyset$. If $w\in W$ and $w=r_{i_1}r_{i_2}\cdots r_{i_k}$ is a reduced expression, we set $\text{red}(w):=\{\a_{i_1},\a_{i_2},\ldots,\a_{i_k}\}$; it is independent of the chosen reduced expression by \cite[Theorem 2.33]{AB08}.

\begin{theorem} \label{lattice-operations}
{\rm (a)} Let $I, I'$ be $\mu$-connected and $w\in \mbox{}^I W^{I'}$. Then
\begin{eqnarray*}
     F_I \cap w F_{I'} = \left\{\begin{array}{cll}
  F_{(I\cap w I')^*} & \text{if} &  w\in W_{J_0}, \\
   \emptyset & \text{if} &  w\notin W_{J_0} .
\end{array}\right.
\end{eqnarray*}

{\rm (b)} Let $I, I'$ be $\mu$-connected and $w\in \mbox{}^{I_*}W^{I'_*}$. Then $I\cup I'\cup \text{\rm red}(w)$ is $\mu$-connected and
\begin{eqnarray*}
    F_I\vee w F_{I'} = F_{I\cup I'\cup \text{\rm red}(w)}.
\end{eqnarray*}
\end{theorem}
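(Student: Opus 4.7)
The proof of both parts relies on the geometric description of fundamental faces (Theorem \ref{fiface} and Corollary \ref{dfct}) together with the inclusion criterion (Proposition \ref{incl-ffaces}), reducing the claims to Coxeter-theoretic manipulations controlled by the minimality condition on $w$.

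For part (a), the proof splits according to whether $w \in W_{J_0}$. If $w \in W_{J_0}$, then $\mu = w\mu$ lies in both $F_I$ and $wF_{I'}$, and the inclusion $\supseteq$ is easy: $F_{(I\cap wI')^*}\subseteq F_I$ by Proposition \ref{incl-ffaces}, while $w^{-1}F_{(I\cap wI')^*} = \text{co}(W_{w^{-1}(I\cap wI')}\mu) \subseteq F_{I'}$ since $w^{-1}(I\cap wI')\subseteq I'$. For $\subseteq$, any $\eta$ in the intersection satisfies $\mu - \eta \in \R_+ I \cap \R_+ wI'$, using $wI'\subseteq\Delta_+$ (from $w\in \mbox{}^I W^{I'}$). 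The key lemma is that this intersection equals $\R_+(I\cap wI')$: writing $\mu - \eta = \sum_\alpha a_\alpha \alpha = \sum_\beta b_\beta w\beta$ with nonnegative coefficients and matching $\Pi$-coefficients on each $\gamma\in \Pi\setminus I$ shows that every contributing $w\beta$ has support in $I$; expanding $\beta = w^{-1}(w\beta)$ and using $w^{-1}I\subseteq \Delta_+$ together with a further coefficient comparison then forces $w\beta$ to be simple, hence in $I\cap wI'$. Consequently, $\eta\in H\cap(\mu - \R_+(I\cap wI')) = F_{(I\cap wI')^*}$ by Theorem \ref{fiface} and Proposition \ref{fiiswic}. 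If $w\notin W_{J_0}$, I assume $\eta \in F_I\cap wF_{I'}$ and derive a contradiction: subtracting $\mu - \eta \in \R_+ I$ from $w\mu - \eta \in \R_+ wI' \subseteq \R_+\Delta_+$ and comparing with $\mu - w\mu \in \R_+\Pi$ (Lemma \ref{VinbergLemma}), coefficient analysis in the $\Pi$-basis forces $\mu - w\mu \in \R_+ I$; Lemma \ref{zhuoLemma} then yields $u\in W_I$ with $w\mu = u\mu$, and the symmetric argument applied to $w^{-1}\eta\in F_{I'}\cap w^{-1}F_I$ and $w^{-1}\in \mbox{}^{I'} W^I$ gives $u'\in W_{I'}$ with $w^{-1}\mu = u'\mu$. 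Hence both $u^{-1}w$ and $wu'^{-1}$ lie in $W_{J_0}\cap W_IwW_{I'}$, and the minimality of $w$ in its double coset will force $w\in W_{J_0}$, contradicting the case hypothesis.

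For part (b), I first verify that $J := I \cup I' \cup \text{red}(w)$ is $\mu$-connected. Any connected component of $J$ meeting $I$ (resp.\ $I'$) contains a full connected component of $I$ (resp.\ $I'$), which meets $J_>$ by the $\mu$-connectedness of $I$ (resp.\ $I'$). The only remaining possibility is a component $L\subseteq \text{red}(w)\setminus (I\cup I')$ contained in $J_0$. Such an $L$ is separated in the Dynkin diagram from $I$, $I'$, and from $\text{red}(w)\setminus L$, so $L\subseteq I_*\cap I'_*$ and the reflections in $L$ commute with every other reflection appearing in a reduced expression of $w$. Rearranging one such expression, I write $w = vu$ with $v\in W_L \subseteq W_{I_*}$ nontrivial (since $L\subseteq \text{red}(w)$) and $l(u)<l(w)$, contradicting $w\in \mbox{}^{I_*} W^{I'_*}$. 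For the face equality, $F_I\subseteq F_J$ and $wF_{I'}\subseteq F_J$ follow from Proposition \ref{incl-ffaces} (using $w\in W_{\text{red}(w)}\subseteq W_J$), giving $F_I\vee wF_{I'}\subseteq F_J$. For the converse, using Corollary \ref{dfct} I write $F_I\vee wF_{I'} = yF_K$ with $K$ $\mu$-connected and, since the join contains $\mu$, with representative $y\in W_{J_0}$. The inclusions $F_I\subseteq yF_K$ and $wF_{I'}\subseteq yF_K$ yield $I\cup I' \subseteq K$ and $w\in W_{I_*}W_KW_{I'_*}$, so $\text{red}(w)\subseteq I_*\cup K\cup I'_*$. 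The same rearrangement argument, applied to any $s\in\text{red}(w)\setminus K\subseteq I_*\cup I'_*$, produces a contradiction with the minimality $w\in \mbox{}^{I_*}W^{I'_*}$ unless $\text{red}(w)\subseteq K$; hence $J\subseteq K$, and combined with $K\subseteq J$ we get $K = J$. The main technical obstacle, both for the conclusion $w\in W_{J_0}$ in (a) and the conclusion $\text{red}(w)\subseteq K$ in (b), is the Coxeter-theoretic deduction that a minimal double coset representative lying in a product of parabolic subgroups must in fact lie in the ``middle'' factor; I expect to handle this by rearranging reduced expressions using the commutativity encoded in the definitions of $I_*$ and $I'_*$.
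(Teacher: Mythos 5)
Your outline reaches the correct conclusions and contains some genuinely nice ideas, but the Coxeter-theoretic step that you flag at the end (``a minimal double coset representative in a product of parabolics lies in the middle factor'') is not a routine rearrangement, and the commutativity you point to will not supply it in part (b).

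On the positive side: your argument for part (a) when $w\in W_{J_0}$ is a genuine alternative to the paper's. The paper shows the intersection face is generated by $W_I\mu\cap wW_{I'}\mu$ and then works in double cosets; you instead prove directly that $\R_+I\cap\R_+wI' = \R_+(I\cap wI')$ for $w\in\mbox{}^IW^{I'}$ (positive roots of $\Delta_I$ with preimage under $w$ positive must be simple), which is a clean observation and gives $F_I\cap wF_{I'}\subseteq H\cap(\mu-\R_+(I\cap wI'))=F_{(I\cap wI')^*}$ immediately by Theorem \ref{fiface}. Your $\mu$-connectedness argument in (b) is essentially the same as the paper's (they split $w=w^*w_*$ in the direct product $W_{J^*}\times W_{J_r}$; you split off the separated component $L$), just stated component by component. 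In part (a) with $w\notin W_{J_0}$, the remaining step you omit is actually mild: once you have $u^{-1}w\in W_{J_0}$ with $u\in W_I$, use $w\in\mbox{}^IW$ to get $l(u^{-1}w)=l(u^{-1})+l(w)$ and then the fact that every reduced expression of an element of $W_{J_0}$ uses only letters from $J_0$ (\cite[Theorem 2.33]{AB08}); this forces $\text{red}(w)\subseteq J_0$, contradiction.

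The real gap is the step $\text{red}(w)\subseteq K$ in part (b). From the inclusions you correctly extract $w\in W_{I_*}W_KW_{I'_*}$, i.e.\ $W_{I_*}wW_{I'_*}\cap W_K\neq\emptyset$, and you want $w\in W_K$. The rearrangement trick from the $\mu$-connectedness step used that $L$ was a \emph{connected component} of $J$, hence separated from every other letter of $\text{red}(w)$; that hypothesis is absent here. A letter $s\in\text{red}(w)\setminus K\subseteq I_*\cup I'_*$ commutes with $I$ (or $I'$) by construction, but not with the letters of $\text{red}(w)$ lying in $K\setminus(I\cup I')$, so you cannot move it to either end of a reduced expression. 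The statement you need is precisely \cite[Proposition~2.23]{AB08}: for $w\in\mbox{}^{I_*}W^{I'_*}$, every $y\in W_{I_*}wW_{I'_*}$ factors as $y=w_1ww_2$ with $w_1\in W_{I_*}$, $w_2\in W_{I'_*}$ and $l(y)=l(w_1)+l(w)+l(w_2)$. Taking $y\in W_{I_*}wW_{I'_*}\cap W_K$ and using again that reduced expressions of elements of $W_K$ use only letters from $K$, one reads off $\text{red}(w)\subseteq\text{red}(y)\subseteq K$. This is exactly what the paper invokes, in both parts; you should cite it rather than expecting commutativity to substitute for it.
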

\begin{remark}
{\rm
 The lattice intersection and lattice join of two arbitrary nonempty faces can be reduced to Theorem \ref{lattice-operations}, by using $\mbox{}^{I\cup I_*}W^{I'\cup I'_*}\subseteq \mbox{}^I W^{I'}$, $\mbox{}^{I\cup I_*}W^{I'\cup I'_*}\subseteq \mbox{}^{I_*}W^{I'_*}$, and Theorem \ref{faceIsotropyGroup}.
}
\end{remark}
\begin{proof} We first show (a). From Lemma \ref{generatingProperties} and Theorem \ref{fiface} we find that the face $F_I\cap w F_{I'}$ is generated by
\begin{equation*}
  (F_I\cap w F_{I'})\cap W\mu=   (F_I\cap W\mu)\cap (w F_{I'}\cap W\mu) = W_I\mu\cap w W_{I'}\mu .
\end{equation*}

Suppose that $F_I\cap w F_{I'}\neq\emptyset$. Let $w_1\in W_I$, $w_1'\in W_{I'}$ such that $w_1\mu=w w_1'\mu$. Then $(w_1)^{-1}w w_1'\in W_{J_0}$. From \cite[Proposition 2.23]{AB08}, there exist $w_2\in W_I$, $w_2'\in W_{I'}$ such that
\begin{equation*}
(w_2)^{-1}w w_2'=(w_1)^{-1}w w_1'\in W_{J_0} \quad \text{ and }\quad l((w_2)^{-1}w w_2')= l((w_2)^{-1})+l(w)+ l(w_2').
\end{equation*}
It follows that $w\in W_{J_0}$, $w_2\in W_{J_0}\cap W_I= W_{J_0\cap I}$, and $w_2'\in  W_{J_0}\cap W_{I'} = W_{J_0\cap I'}$. Therefore, we obtain
\begin{equation*}
  W_I \ni  w_2(w_1)^{-1}= w w_2'(w_1')^{-1}w^{-1}\in w W_{I'}w^{-1} .
\end{equation*}
Hence $ w_2(w_1)^{-1}\in W_{I\cap wI '}$ by \cite[Lemma 2.25]{AB08}, and $w_1\mu\in  W_{(I\cap wI ')}w_2\mu =W_{(I\cap wI ')}\mu $.
Thus $F_I\cap w F_{I'}\subseteq F_{I\cap wI'}=F_{(I\cap wI')^*}$.

We have $W_{I\cap wI '}=W_I\cap w W_{I'}w^{-1}$ by \cite[Lemma 2.25]{AB08}. If $w\in W_{J_0}$ then
\begin{eqnarray*}
  W_{I\cap wI '}\mu \subseteq  W_I\mu\cap w W_{I'}w^{-1}\mu  =   W_I\mu\cap w W_{I'}\mu .
\end{eqnarray*}
Hence, $F_{(I\cap wI')^*}=F_{I\cap wI'}\subseteq F_I\cap w F_{I'}$.

For the proof of (b) we first show that $J:= I\cup I'\cup \text{red}(w)$ is a $\mu$-connected set. Since $w\in W_J=W_{J^*}W_{J_r}$ with $J_r\subseteq J_*$ we can write $w=w^*w_*$ with $w^*\in W_{J^*}$ and $w_*\in W_{J_*}$. Thanks to $J_*\subseteq I'_*$, we get
\begin{equation*}
    W_{I_*} w W_{I'_*} = W_{I_*} w^*w_* W_{I'_*} =  W_{I_*} w^* W_{I'_*}.
\end{equation*}
Since $w$ is a minimal double coset representative we find $l(w^{*})\geq l(w)=l(w^*)+l(w_*)$. We conclude that $l(w_*)=0$ and  $w_*=1$. Thus $\text{red}(w)=\text{red}(w^*)\subseteq J^*$. Therefore $J= I\cup I' \cup \text{red}(w)\subseteq J^*$, which shows that $J$ is $\mu$-connected.

We now prove the second part of (b). By Corollary \ref{ff-cs} and Corollary \ref{dfct} there exist $w_1\in W$ and some $\mu$-connected set $K$ such that $F_I\vee wF_{I'}\ =w_1 F_K$.

Clearly, $F_I, wF_{I'}\subseteq F_{I\cup I'\cup\text{red}(w)}$. Hence $w_1F_K=F_I\vee wF_{I'}\subseteq F_{I\cup I'\cup\text{red}(w)}$. From Proposition \ref{incl-ffaces} we get $K\subseteq I\cup I'\cup\text{red}(w)$.

Since $F_I, wF_{I'}\subseteq w_1 F_K$ we find from Proposition \ref{incl-ffaces} that $I, I'\subseteq K$, and $w_1\in W_{I_*}W_K$, $w^{-1}w_1 \in W_{I'_*}W_K$. Eliminating $w_1$, we obtain $w\in W_{I_*}W_K W_{I'_*}$, and equivalently  $W_{I_*}w W_{I'_*}\cap W_K\neq \emptyset$. It follows from \cite[Proposition 2.23]{AB08}, similarly as in the proof of part (a), that $w\in W_K$. We conclude that $I\cup I'\cup\text{red}(w)\subseteq K$.

We have shown that $w_1 F_K\subseteq F_{I\cup I'\cup\text{red}(w)}$, and $K=I\cup I'\cup\text{red}(w)$, from which it follows that the dimension of both are equal. Therefore, $w_1 F_K=  F_{I\cup I'\cup\text{red}(w)}$.
\hfill$\Box$\end{proof}

Let the partially ordered set $(L,\leq)$ be a lattice. In this article a subset $L_1\subseteq L$ is called a sublattice of $L$, if the partially ordered set $(L_1,\leq)$ is a lattice with the same smallest and biggest elements, and the same lattice operations as in $L$.
\begin{corollary}\label{ff-sl} The set $\F$ of fundamental faces is a sublattice of the face lattice $\F(H)$.
\end{corollary}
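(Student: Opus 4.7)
The plan is to verify the three requirements for a sublattice directly: that $\F$ contains the smallest and largest elements of $\F(H)$, and that meets and joins taken in $\F(H)$ stay inside $\F$. The hard work has already been done in Proposition \ref{pmuc} (the $\mu$-connected subsets form a lattice), Corollary \ref{dfct} (the map $I\mapsto F_I$ is an order isomorphism between $\mu$-connected sets and $\F\setminus\{\emptyset\}$), and Theorem \ref{lattice-operations} (explicit formulas for $F_I\cap wF_{I'}$ and $F_I\vee wF_{I'}$). So the proof really reduces to specializing those formulas to the case $w=1$.

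First I would note that the smallest element of $\F(H)$ is $\emptyset$, which lies in $\F$ by definition, and that the largest element $H$ equals $F_{\Pi}=F_{\Pi^*}$ (using Proposition \ref{fiiswic} and the fact that $\Pi^*$ is $\mu$-connected by Proposition \ref{pmuc}), hence lies in $\F$ as well. Second, to compute meets and joins of two nonempty fundamental faces $F_I$ and $F_{I'}$ (with $I,I'$ $\mu$-connected) it suffices to use Theorem \ref{lattice-operations} with $w=1$: since $1\in{}^{I}W^{I'}\cap W_{J_0}$ and $1\in{}^{I_*}W^{I'_*}$ with $\text{red}(1)=\emptyset$, the theorem gives
\begin{equation*}
   F_I\cap F_{I'}=F_{(I\cap I')^*} \quad\text{and}\quad F_I\vee F_{I'}=F_{I\cup I'}.
\end{equation*}
By Proposition \ref{pmuc} both $(I\cap I')^*$ and $I\cup I'$ are $\mu$-connected, so both the meet and the join belong to $\F$.

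Finally, the edge case involving the empty face is immediate: $\emptyset\cap F=\emptyset\in\F$ and $\emptyset\vee F=F\in\F$ for every $F\in\F$. Combining these observations with the order isomorphism from Corollary \ref{dfct}, we conclude that $\F$ is closed under the lattice operations of $\F(H)$ and shares its extremal elements, which is exactly the definition of a sublattice recalled just before the statement. As a bonus, the formulas above show that $I\mapsto F_I$ is in fact a lattice isomorphism from the lattice of $\mu$-connected subsets of $\Pi$ (with the operations of Proposition \ref{pmuc}) onto $\F$. I do not anticipate any real obstacle here; the only subtlety is to remember that the correct meet on the $\mu$-connected side is $(I\cap I')^*$ rather than $I\cap I'$, which matches precisely the formula produced by Theorem \ref{lattice-operations}(a).
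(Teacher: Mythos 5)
Your proof is correct and follows essentially the same route as the paper: the paper's own argument also notes that $\emptyset,H\in\F$ trivially, invokes Theorem \ref{lattice-operations} (with $w=1$) to see that meets and joins of nonempty fundamental faces are again of the form $F_K$ for $K$ $\mu$-connected, and disposes of the empty-face case separately. Your write-up merely spells out the $w=1$ specialization and the identification $H=F_{\Pi^*}$ in more detail than the paper does.
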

\begin{proof} Trivially, $\emptyset, H\in\F $. Let $F_1, F_2\in \F$. We obtain from Theorem \ref{lattice-operations} that
\begin{equation}\label{ffl-gl}
F_1\cap F_2\in \F \quad\text{ and }\quad F_1\vee F_2\in \F
\end{equation}
for all $F_1, F_2\in \F\setminus\{\emptyset\}$. Clearly, (\ref{ffl-gl}) holds if $F_1=\emptyset$ or $F_2=\emptyset$.
\hfill$\Box$\end{proof}
\section{The monoid $M(\br)$\label{TheMonoid}}
In this section we fix an irreducible highest weight representation $(V, \rho)$ of the Kac-Moody algebra $\g$ of highest weight $\mu\in P_+$. We assume that $\Pi$ is $\mu$-connected, i.e., no connected component of $\Pi$ is contained in the type  $J_0$ of $\mu$. We denote by $(V, \br)$ the corresponding irreducible highest weight representation of the Kac-Moody group $\bG$. We fix a contravariant nondegenerate symmetric bilinear form $(\, \mid \,)$ on $V$.

\subsection{The definition of $M(\br)$}
The linear space $V$ decomposes into a direct sum of weight spaces
\[
    V = \bigoplus_{\eta\in P(V)}V_\eta.
\]
The weight hull $H$ of $\rho$ is the convex hull of the set of weights $P(V)\subset \h_\R^*$. By \cite[Proposition 11.3 a)]{Kac90} $H$ coincides with the orbit hull of $\mu$.

If $F$ is a face of $H$ we get a decomposition
\begin{equation}\label{ImageKernel}
  V = V_F \oplus V_F^\perp \quad\text{ with }\quad V_F := \bigoplus_{\eta\in F\cap P(V)} V_\eta \quad\text{ and }\quad V_F^\perp := \bigoplus_{\eta\in P(V)\setminus F} V_\eta
\end{equation}
which is also orthogonal with respect to the nondegenerate bilinear form $(\, \mid \,)$. We call $V_F$ the {\it face vector space} associated to the face $F$. We denote by $e(F)$ the corresponding linear projection defined by
\begin{equation}\label{idemDefinition}
    e(F)v = \bigg\{
                \begin{aligned}
                &v,   \iff v\in V_F,        \\
                &0,   \iff v\in V_F^\perp.  \\
                \end{aligned}
\end{equation}
The projection $e(F)$ is an idempotent of $\Endv$. Its adjoint $e(F)^\Cai$ with respect to the nondegenerate bilinear form $(\, \mid \,)$ exists and we have $e(F)^\Cai=e(F)$.

The set
\[
    E := \{e(F) \mid F\in {\F}(H) \}
\]
is a commutative submonoid of $\Endv$ consisting of idempotents. Its multiplication is given by
\begin{equation}\label{multE}
    e(F)e(F') = e(F\cap F')\quad\text{where}\quad F,\, F'\in  {\F}(H).
\end{equation}
Moreover, $e(\emptyset)$ is the zero and $e(H)$ is the identity of $E$ as well as of $\Endv$.

Recall that $G:=\C^\times\br(\bG)$. We define the monoid $M(\br)$ to be the submonoid of $\Endv$ generated by $G$ and $E$, that is,
\[
    M(\br) := \la G, ~E\ra .
\]
We often write $M$ instead of $M(\br)$.
The adjoints of the elements of $M$ with respect to the nondegenerate bilinear form $(\, \mid \,)$ exist, and are contained in $M$. In this way we get an anti-involution $\Cai$ on $M$, extending the Chevalley anti-involution on $G$, which we call the Chevalley anti-involution of $M$. \vspace*{1ex}

We define $\overline T$ to be the submonoid of $M$ generated by $T$ and $E$. We define $\overline N$ to be the submonoid of $M$ generated by $N$ and $E$. To describe these submonoids we need the following Lemma.

\begin{lemma} \label{NNormalizesE}
Let $F$ be a face of $H$, and let $n\in N$ represent $w\in W$. Then
\[
    ne(F)n^{-1} = e(wF).
\]
\end{lemma}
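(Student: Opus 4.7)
The plan is to prove this by a direct calculation on weight spaces, exploiting the key compatibility property of $N$ with the weight space decomposition given in equation (\ref{NTAction2}): for any $n\in N$ representing $w\in W$, we have $n V_\eta = V_{w\eta}$ for every $\eta\in P(V)$.

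First I would observe that since $H$ is $W$-invariant and the action of $W$ on $H$ induces an action on $\F(H)$ by lattice isomorphisms, $wF$ is indeed a face of $H$, so both sides of the asserted equality are well-defined elements of $\text{End}(V)$. Next, the basic set-theoretic fact $\eta\in wF\cap P(V) \iff w^{-1}\eta\in F\cap P(V)$ (using $W$-invariance of $P(V)$) translates via (\ref{NTAction2}) into the equality of subspaces $V_{wF} = n V_F$ and $V_{wF}^\perp = n V_F^\perp$, where $V_F$, $V_F^\perp$ are defined as in (\ref{ImageKernel}).

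Then I would check the desired identity on each weight space $V_\eta$ with $\eta\in P(V)$. Writing $V_\eta = n V_{w^{-1}\eta}$, we have
\begin{equation*}
  n\, e(F)\, n^{-1} V_\eta = n\, e(F)\, V_{w^{-1}\eta} = \begin{cases} n V_{w^{-1}\eta}=V_\eta & \text{if } w^{-1}\eta\in F, \\ 0 & \text{if } w^{-1}\eta\notin F, \end{cases}
\end{equation*}
which is precisely $e(wF) V_\eta$ by the defining formula (\ref{idemDefinition}) applied to $wF$. Since this agreement holds on every weight space and $V=\bigoplus_{\eta\in P(V)}V_\eta$, the two endomorphisms $n e(F) n^{-1}$ and $e(wF)$ coincide on all of $V$.

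No step should present real difficulty; the only thing to be careful about is handling the edge case $F=\emptyset$, where $V_F=\{0\}$ and $e(F)=0$, so that trivially $n\cdot 0\cdot n^{-1}=0=e(\emptyset)=e(w\emptyset)$. The main conceptual point is simply that (\ref{NTAction2}) already encodes all the compatibility between $N$-conjugation and the face-vector-space decompositions needed for the statement.
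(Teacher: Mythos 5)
Your proof is correct and follows exactly the route the paper intends: the paper's own proof just says the lemma is straightforward from (\ref{idemDefinition}) and the weight-space compatibility $nV_\eta=V_{w\eta}$, which is precisely the weight-space-by-weight-space computation you carry out. No issues.
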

\begin{proof}
This is straightforward from (\ref{idemDefinition}) and (\ref{NTAction}). \hfill$\Box$
\end{proof}

If $Y$ is a submonoid of $M$ we denote by $Y^\times$ its unit group, and by $E(Y)$ its set of idempotents.
\begin{proposition}\label{idempotentOfNbar}
{\rm (a)} $\T$ is a unit regular commutative monoid with $\T^\times=T$ and $E(\T)=E$. In particular, $\T=TE=ET$.

{\rm (b)} $\N$ is a unit regular inverse monoid with $\N^\times=N$ and $E(\N)=E$. In particular, $\N=NE=EN$.
\end{proposition}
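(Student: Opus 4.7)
The plan is to exploit two normalization facts. First, each $t \in T$ commutes with every $e(F) \in E$ because $t$ acts diagonally on weight spaces while $e(F)$ is the projection onto the sum of weight spaces indexed by $F \cap P(V)$. Second, by Lemma \ref{NNormalizesE}, $N$ normalizes $E$ with $ne(F) = e(wF)n$ whenever $n \in N$ represents $w \in W$. Combined with $e(F)e(F') = e(F \cap F')$ from (\ref{multE}), these relations let every word in the generators reduce to the canonical form $te(F)$ in $\overline{T}$ or $ne(F)$ in $\overline{N}$.

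For (a), commutativity of $T$ with $E$ immediately gives $t_1e(F_1) \cdot t_2e(F_2) = (t_1t_2)\,e(F_1 \cap F_2)$, so $TE$ is a commutative submonoid equal to $\overline{T}$. For the idempotents, from $x = te(F)$ and $x^2 = x$ one obtains $t^2 e(F) = te(F)$; invertibility of $t$ on $V_F$ forces $t|_{V_F} = \mathrm{id}$, hence $x = e(F) \in E$. For units, if $x^{-1} = t'e(F') \in \overline{T}$ then $xx^{-1} = (tt')e(F \cap F') = \mathrm{id}_V$ forces $F \cap F' = H$ (so $F = F' = H$) and $t' = t^{-1}$, giving $x \in T$. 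Unit regularity is immediate from $x = te(F) = (te(F)) \cdot t^{-1} \cdot (te(F))$.

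For (b), the normalization $ne(F) = e(wF)n$ yields $NE = EN$ and closure under multiplication
\begin{equation*}
(n_1e(F_1))(n_2e(F_2)) = (n_1n_2)\,e(w_2^{-1}F_1 \cap F_2),
\end{equation*}
where $w_2$ denotes the image of $n_2$ in $W$; hence $\overline{N} = NE = EN$. For idempotents, I take $x = ne(F)$ with $x^2 = x$ and rewrite $x^2 = e(wF)n \cdot e(wF)n = e(wF \cap w^2F)\,n^2$, so that $x^2 = x$ becomes $e(wF \cap w^2F)\,n = e(wF)$. Matching the images and kernels of these two endomorphisms of $V$ yields $V_{wF \cap F} = V_{wF}$, so $wF \cap W\mu \subseteq F$, and Lemma \ref{generatingProperties} applied to $H = \mathrm{co}(W\mu)$ gives $wF = \mathrm{co}(wF \cap W\mu) \subseteq F$.

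The subtle step, which I expect to be the main obstacle, is to promote this to $wF = F$: by Lemma \ref{faceProperties2} $wF$ is a face of $F$, and since $w$ is a linear automorphism of $\h_\R^*$ we have $\dim wF = \dim F < \infty$, so the standard convex-geometric fact that a proper face has strictly smaller dimension than its ambient convex set forces $wF = F$. The defining relation then collapses to $e(F)n = e(F)$, from which I read off that $n$ fixes $V_F$ pointwise and preserves $V_F^\perp$, hence $ne(F) = e(F) \in E$. The unit group and unit regularity are handled analogously to part (a): if $x = ne(F)$ is a unit in $\overline{N}$, then the image and domain of $x$ must both equal $V$, forcing $F = H$ and $x \in N$; and $x \cdot n^{-1} \cdot x = x$ witnesses unit regularity. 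Finally, $\overline{N}$ being regular with commuting idempotents (by (\ref{multE})) makes it an inverse monoid.
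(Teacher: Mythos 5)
Your proposal is correct and its overall architecture matches the paper's: reduce everything to the canonical forms $te(F)$ and $ne(F)$ via Lemma \ref{NNormalizesE} and (\ref{multE}), then identify the units and idempotents by direct computation, and finish with the commuting-idempotents criterion for inverse monoids. The one place where you take a genuinely different route is the identification of $E(\N)$. The paper evaluates the relation $e(F)=e(F)n_we(F)$ on a nonzero weight vector $v_\eta$ with $\eta\in F$ and uses the fact that $n_wv_\eta$ is again a weight vector (homogeneous) to conclude $n_wv_\eta=v_\eta$ in one stroke; this simultaneously gives $wF=F$ and the pointwise fixing. You instead compare kernels to get $V_{wF\cap F}=V_{wF}$, pass through Lemmas \ref{generatingProperties} and \ref{faceProperties2} and a dimension count to obtain $wF=F$, and only then conclude. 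Your convex-geometric detour is valid (the paper itself uses the same ``faces of equal dimension'' argument in Theorems \ref{isostabN} and \ref{RennerMonoidStructure}), just heavier than needed here. One small point to make explicit at the end: from $e(F)n=e(F)$ alone, a vector $v\in V_F$ only satisfies $nv-v\in V_F^\perp$; to ``read off'' that $n$ fixes $V_F$ pointwise you still need $nV_\eta=V_{w\eta}$ together with $wF=F$ to force the $V_F^\perp$-component of $nv$ to vanish. That is exactly the homogeneity fact the paper's shorter argument is built on, so it is a one-line addition, not a real gap.
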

\begin{proof} From Lemma \ref{NNormalizesE} it follows that $\T$ is commutative. Hence $\T=TE=ET$. Again from Lemma  \ref{NNormalizesE} we also find that $\N=NE=EN$.

Clearly, $T\subseteq \T^\times$. Conversely, $x\in\T^\times$ is of the form $x=te$ for some $t\in T$ and $e\in E$. Therefore, $e=t^{-1}x\in E(\T)\cap T^\times=\{1\}$. Thus $x=t\in T$.  Similarly, we get $\N^\times =N$.

We have $E\subseteq E(\T)\subseteq E(\N)$. Conversely, any idempotent $x\in E(\N)$ can be written in the form $x=n_we(F)$ for some $n_w\in N$ projecting to $w\in W$, and some face $F$ of $H$. From $x=x^2$ we get $e(F)=e(F)n_we(F)$. Evaluating at $v_\eta\in V_\eta\setminus\{0\}$ where $\eta\in F$ we find  $0\neq v_\eta = e(F)n_w v_\eta $. Since $n_wv_\eta$ is homogeneous it follows that $v_\eta=n_w v_\eta $. Hence  $xv_\eta = n_w v_\eta = v_\eta$. For $v_\eta\in V_\eta$ where $\eta\notin F$ we obtain $xv_\eta =0$. We conclude that $x=e(F)\in E$.

Clearly, $\T$ and $\N$ are unit regular monoids. The idempotents of $\N$ commute. From \cite[Theorem 5.1.1]{Ho95} it follows that $\N$ is an inverse monoid.
\hfill$\Box$\end{proof}

Our aim below is to establish the unit regularity, and the Bruhat and Birkhoff decompositions for $M$. To this end, we need intensive preparations. We
proceed similarly as in \cite[Chapter 2]{Mo05}.
\subsection{Weight strings}
Let $F$ be a face of $H$. The weights in $P(V)\cap F$ are called {\it $F$-weights}. We investigate explicitly the $\a$-weight string through an $F$-weight where $\a$ is a real root.

Recall from Section \ref{s3} the definition and the properties of the parabolic subgroups $W(F)$, $W_*(F)$. We define
\[
\begin{aligned}
    \D(F)  &            := \{ \a\in\Dr \mid r_\a\in W(F)\}        \, \;=\{\a\in\D^{re}\mid r_\a F=F\}  ,\\
    \D_*(F)&          := \{ \a\in\Dr \mid r_\a\in W_*(F)\}    \, =\{\a\in\D^{re}\mid r_\a \eta=\eta \text{ for all }\eta\in F\} ,\\
     \D^*(F) &         :=\D(F)\setminus \D_*(F)   .
\end{aligned}
\]
Note that if $F=wF'$ with $w\in W$ and $F'$ a fundamental face then
\begin{eqnarray*}
\begin{aligned}
  \D(F)&=w\D(F')=w W_{\lambda(F')}\lambda(F'), \\
   \D_*(F)&=w\D_*(F')=w W_{\lambda_*(F')}\lambda_*(F'), \\
   \D^*(F)&=w\D^*(F')=w W_{\lambda^*(F')}\lambda^*(F') .
\end{aligned}
\end{eqnarray*}
Since the isotropy group $W(F')$ of $F'$ leaves $\Dr_\pm\setminus \D(F')$ invariant we can define
\begin{equation*}
  \D_p(F):=w(\Dr_+\setminus \D(F')) \quad\text{ and }\quad   \D_n(F):=w(\Dr_-\setminus \D(F'))   .
\end{equation*}

In particular, we have $\D(\emptyset)=\D_*(\emptyset)=\Dr$, $\D^*(\emptyset)=\D_p(\emptyset)=\D_n(\emptyset)=\emptyset$, and
$\D(H)=\D^*(H)=\Dr$, $\D_*(H)=\D_p(H)=\D_n(H)=\emptyset$. Note that
\begin{equation*}
 \D^{re} = \D(F) \sqcup \D_p(F) \sqcup \D_n(F)  =  \D_*(F) \sqcup \D^*(F)\sqcup \D_p(F) \sqcup \D_n(F) .
\end{equation*}

\begin{lemma}\label{alphaInDLowerStar}
Let $F$ be a face of $H$, and $w\in W$.

{\rm (a)} $w\in W(F)$ if and only $w( P(V)\cap F)=P(V)\cap F$.

{\rm (b)} $w\in W_*(F)$ if and only $w\eta=\eta$ for all $\eta\in P(V)\cap F$.

\end{lemma}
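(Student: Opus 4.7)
The plan is to reduce both statements to the single observation that every face $F$ of $H$ is the convex hull of the $F$-weights $P(V)\cap F$. Indeed, by \cite[Proposition~11.3~a)]{Kac90} cited at the start of Section~\ref{TheMonoid}, the weight hull coincides with the orbit hull, so $H=\text{co}(P(V))$; hence Lemma~\ref{generatingProperties} (applied with $K:=P(V)$) yields $F=\text{co}(P(V)\cap F)$ for every $F\in\F(H)$.

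For part (a), the forward implication is immediate: if $wF=F$, then applying $w$ to $P(V)\cap F$ gives $wP(V)\cap wF$, and since the set of weights of an integrable module is $W$-invariant we obtain $wP(V)=P(V)$ and hence $w(P(V)\cap F)=P(V)\cap F$. For the converse, assuming $w(P(V)\cap F)=P(V)\cap F$, I apply $w$ to the equality $F=\text{co}(P(V)\cap F)$ and use that $w$ acts linearly on $\h_\R^*$ so it commutes with taking convex hulls, obtaining $wF=\text{co}(w(P(V)\cap F))=\text{co}(P(V)\cap F)=F$, i.e.\ $w\in W(F)$.

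For part (b), the forward direction is again trivial from the definition of $W_*(F)$. For the converse, suppose $w\eta=\eta$ for every $\eta\in P(V)\cap F$. Any $\tau\in F$ can be written as a convex combination $\tau=\sum_{i=1}^n t_i\eta_i$ with $\eta_i\in P(V)\cap F$ since $F=\text{co}(P(V)\cap F)$, and then linearity of $w$ gives $w\tau=\sum_{i=1}^n t_i w\eta_i=\sum_{i=1}^n t_i\eta_i=\tau$, so $w\in W_*(F)$.

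There is no real obstacle here beyond invoking the generating property; the only subtlety worth flagging is the verification that $H=\text{co}(P(V))$, which is not an internal fact of the present paper but is imported from \cite{Kac90}, and the use of $W$-invariance of $P(V)$ in the forward direction of (a), which follows from $V$ being an integrable (in particular admissible) $\g$-module.
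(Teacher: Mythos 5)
Your proof is correct and follows essentially the same route as the paper: both arguments come down to the single fact $F=\text{co}(P(V)\cap F)$, after which (a) and (b) follow from the linearity of $w$ and the $W$-invariance of $P(V)$. The only (harmless) difference is how that fact is obtained --- the paper reduces to a fundamental face and uses the sandwich $W_{\lambda^*(F)}\mu\subseteq P(V)\cap F\subseteq F=\text{co}(W_{\lambda^*(F)}\mu)$, whereas you apply Lemma \ref{generatingProperties} to $H=\text{co}(P(V))$, which handles all faces at once.
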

\begin{proof} The Lemma holds trivially for $F=\emptyset$. Since $P(V)$ is $W$-invariant it is sufficient to show the Lemma for a nonempty fundamental face $F$. From
\begin{eqnarray*}
   W_{\lambda^*(F)}\mu\subseteq P(V)\cap F\subseteq F=\text{co}(W_{\lambda^*(F)}\mu)
\end{eqnarray*}
we get $F=\text{co}(P(V)\cap F)$, from which the Lemma follows immediately.
\hfill$\Box$\end{proof}

\begin{theorem}\label{wsF} Let $F$ be a nonempty face of $H$.

{\rm (a)} If $\a \in \D(F)$ then for every $F$-weight $\eta$ the $\a$-weight string through $\eta$ lies completely in $F$.

{\rm (b)} If $\a \in \D_*(F)$ then for every $F$-weight $\eta$ the $\a$-weight string through $\eta$ has only one element. In particular, $\la\eta,\a^\vee\ra=0$.

{\rm (c)} If $\a \in \D^*(F)$ then there exists an $F$-weight $\eta$ such that the $\a$-weight string through $\eta$ has more than one element. In particular, there exist F-weights $\eta_+,\,\eta_- $ such that $\la\eta_+,\av\ra>0$ and $\la\eta_-,\av\ra<0$.
\end{theorem}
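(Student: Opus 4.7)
The plan is to treat (a) first, derive (b) as a quick corollary, and then get (c) by applying Lemma \ref{alphaInDLowerStar}(b) together with (a). Throughout, let the $\a$-string through an $F$-weight $\eta$ be $\eta - p\a, \ldots, \eta, \ldots, \eta + q\a$ with $p - q = \la\eta,\av\ra$, and set $\eta_- := \eta - p\a$, $\eta_+ := \eta + q\a$. All weights lie in $P(V)\subseteq H$, so the issue is only showing they lie in $F$, after which convexity of $F$ handles the intermediate weights automatically.

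For (a), the key observation is that $r_\a$ swaps the endpoints: a direct computation using $p - q = \la\eta,\av\ra$ gives $r_\a(\eta_-) = \eta_+$. I then split into cases on the sign pattern of $(p,q)$. If $p,q\geq 1$, then $\eta$ is in the relative interior of the segment $[\eta_-,\eta_+]$, both of whose endpoints lie in $H$; the defining face property then forces $\eta_\pm\in F$. If instead $q = 0$ (resp. $p=0$), then $\eta$ equals one endpoint and $r_\a\in W(F)$ sends it to the other; since $r_\a F = F$, the other endpoint is in $F$ as well. In every case both endpoints lie in $F$, so the convex hull $[\eta_-,\eta_+]\subseteq F$ contains every weight of the string.

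For (b), part (a) applies since $\D_*(F)\subseteq \D(F)$, so every weight $\eta + k\a$ of the string lies in $F$. Because $\a\in \D_*(F)$, the reflection $r_\a$ fixes $F$ pointwise; applied to $\eta + k\a$ this yields $\eta+k\a = r_\a(\eta+k\a)=\eta - \la\eta,\av\ra\a - k\a$. Taking $k=0$ gives $\la\eta,\av\ra = 0$, after which $2k\a = 0$ forces $k=0$, so the string has only the one element $\eta$.

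For (c), the definition $\D^*(F)=\D(F)\setminus\D_*(F)$ puts $r_\a$ in $W(F)\setminus W_*(F)$. Lemma \ref{alphaInDLowerStar}(b) then yields an $F$-weight $\eta$ with $r_\a\eta\neq\eta$, i.e., $\la\eta,\av\ra\neq 0$; replacing $\a$ by $-\a$ if necessary, assume $\la\eta,\av\ra > 0$, so the string through $\eta$ contains $\eta - \a$ and thus has more than one element. Taking $\eta_+:=\eta$ and $\eta_- := r_\a\eta$, part (a) shows $\eta_-\in F\cap P(V)$ (since $\a \in \D(F)$), while $\la\eta_-,\av\ra = -\la\eta,\av\ra < 0$ gives the required sign pair. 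The main delicate point of the whole argument is part (a) when $\eta$ coincides with an endpoint of the string, because then the face-interior argument does not apply directly and one must use $r_\a\in W(F)$ to reflect the single known endpoint into its partner.
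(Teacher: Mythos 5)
Your proposal is correct and follows essentially the same route as the paper's proof: the case split in (a) on whether $\eta$ is an endpoint of the string (using $r_\a F=F$ for the endpoint case and the face property of $F$ in $H$ for the interior case), the pointwise-fixing argument in (b), and Lemma \ref{alphaInDLowerStar}(b) combined with (a) in (c). The only cosmetic difference is that in (c) the paper simply takes $\eta_\pm$ to be the two ends of the string, avoiding your sign normalization, but this changes nothing of substance.
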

\begin{proof}
To prove (a) let $\eta$ be an $F$-weight. If the $\a$-weight string through $\eta$ has only one element, it is contained in $F$. Suppose that the string has more than one element. If $\eta$ is one of the two ends of the weight string, then $r_\a\eta$ is the other end. Moreover, $r_\a\eta$ is an $F$-weight because of $\a \in \D(F)$. Since $F$ is convex, the full string is in $F$. If $\eta$ is not an end of the $\a$-weight string through $\eta$, then the string is of the form
\[
    \eta - p\a, ~\ldots~, ~\eta, ~\ldots~, ~\eta + q\a, \quad  \with p, ~q \ge 1.
\]
Since $F$ is a face of the weight hull $H$ the whole string is contained in $F$.

To prove (b) let $\eta$ be an $F$-weight. The $\a$-weight string through $\eta$ is contained in $F$ by (a), and the reflection $r_\a$ interchanges its ends. Since $\a\in \D_*(F)$, the reflection $r_\a$ also fixes its ends. Hence the $\a$-weight string through $\eta$ has only one element.

We now prove (c). Since $r_\a\notin W_*(F)$ it follows from Lemma \ref{alphaInDLowerStar} (b) that $r_\a\eta\neq\eta$ for some $F$-weight $\eta$. Hence the $\a$-weight string through $\eta$ has more than one element, and is contained in $F$ by (a). Choose $\eta_+$ to be the end of the string with $\la\eta_+,\av\ra>0$, and $\eta_-$ to be the end of the string with $\la\eta_-,\av\ra<0$.
$\hfill\Box$\end{proof}

\begin{corollary}\label{wsNotF}
Let $F$ be a nonempty face of $H$ and $\a \in \D(F)$. If $\eta\in P(V)$ is not an $F$-weight then no weight in the $\a$-weight string through $\eta$ is an $F$-weight.
\end{corollary}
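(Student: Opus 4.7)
The plan is to argue by contradiction, reducing the statement directly to part (a) of Theorem \ref{wsF}. Recall that the $\a$-weight string through $\eta$ is by definition the set $P(V) \cap (\eta+\Z\a)$, so it coincides with the $\a$-weight string through any of its members. This symmetry is what makes the corollary follow almost for free.

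Concretely, I would suppose for contradiction that some weight $\eta'$ in the $\a$-weight string through $\eta$ is an $F$-weight. Since $\eta' \in \eta+\Z\a$, we have $\eta-\eta' \in \Z\a$, and hence the $\a$-weight string through $\eta'$ equals the $\a$-weight string through $\eta$, and in particular contains $\eta$. Now apply Theorem \ref{wsF}(a) to the $F$-weight $\eta'$ and the root $\a \in \Delta(F)$: the entire $\a$-weight string through $\eta'$ lies in $F$. Therefore $\eta$ itself lies in $F\cap P(V)$, i.e.\ $\eta$ is an $F$-weight, contradicting the hypothesis.

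I expect no real obstacle here; the only thing to be careful about is making the ``same string'' observation explicit, since the corollary is stated in terms of a string through $\eta$ that may not itself be an $F$-weight, whereas Theorem \ref{wsF}(a) takes as input an $F$-weight. Once one notes that the relation ``lies in the $\a$-weight string through'' is symmetric among elements of $P(V)\cap(\eta+\Z\a)$, the corollary is essentially the contrapositive of part (a).
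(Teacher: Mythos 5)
Your argument is correct and is exactly the intended one: the paper states this corollary without proof precisely because it is the contrapositive of Theorem \ref{wsF}(a) once one notes that the $\a$-weight string through $\eta'$ coincides with that through $\eta$. Your explicit remark about the symmetry of the ``same string'' relation is the only content of the proof, and you have it right.
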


\begin{theorem}\label{WeightString1} Let $F$ be a nonempty face of $H$.

{\rm (a)} If $\a\in \D_p(F)$ and $\eta$ is an $F$-weight then $\la \eta, \av \ra \ge 0$. The $\a$-weight string through $\eta$ is
\[
    \eta, ~\eta-\a,  ~\ldots~, ~r_\a(\eta)=\eta-  \la \eta, \av \ra \a,
\]
and $\eta$ is the only $F$-weight in the string. In particular, $\eta+\a$ is not a weight. There exists an $F$-weight $\eta_+$ such that $\la\eta_+, \av\ra > 0$.

{\rm (b)} If $\a \in \D_n(F)$ and $\eta$ is an $F$-weight then $\la \eta, \av \ra \le 0$. The $\a$-weight string through $\eta$ is
  \[
    \eta, ~\eta+\a,  ~\ldots~, ~r_\a(\eta)=\eta-  \la \eta, \av \ra \a,
  \]
and $\eta$ is the only $F$-weight in the string. In particular, $\eta-\a$ is not a weight. There exists an $F$-weight $\eta_-$ such that $\la\eta_-, \av\ra < 0$.
\end{theorem}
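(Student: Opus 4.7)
The plan is to reduce to the case where $F = F_I$ is a nonempty fundamental face, using $W$-equivariance: if $F = wF'$ with $F'$ fundamental, then $\D(F) = w\D(F')$, $\D_p(F) = w(\Drp \setminus \D(F'))$, the pairing is $W$-invariant, $P(V)$ is $W$-invariant, and weight strings transform by $w$. Part (b) will follow from part (a) applied to $-\a$, since $\D_n(F_I) = -\D_p(F_I)$. So I focus on part (a) for fundamental $F_I$ with $\a \in \Drp \setminus \D(F_I)$.

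First I would establish that $\a \notin \R I$, the translation space of $\text{aff}(F_I) = \mu + \R I$ (Corollary \ref{point-ri-hull}). Since $I$ and $I_*$ are separated and $\lambda(F_I) = I \cup I_*$, the real roots of $W(F_I) = W_{I\cup I_*}$ split as $\Dr_{I\cup I_*} = \Dr_I \sqcup \Dr_{I_*}$, so $\a \notin \Dr_I$. Combined with the standard fact that a real root lying in $\Z I$ must lie in $\Dr_I$, this forces $\a \notin \R I$ (using $\a \in Q = \bigoplus_{\beta\in\Pi}\Z\beta$).

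Now take any $F_I$-weight $\eta$; by Theorem \ref{fiface}, $\mu - \eta \in \R_+ I$. Write $\a = \sum_{\gamma \in \Pi} a_\gamma \gamma$ with $a_\gamma \in \Z_+$; since $\a \notin \R I$, there exists $\gamma_0 \in \Pi \setminus I$ with $a_{\gamma_0} > 0$. Suppose $q \geq 1$. Then $\eta + q\a$ is a weight, so $\mu - (\eta + q\a) \in \R_+ \Pi$ by Lemma \ref{VinbergLemma}. However, the coefficient of $\gamma_0$ in $(\mu - \eta) - q\a$ is $-qa_{\gamma_0} < 0$, a contradiction. Hence $q = 0$, which yields $\la\eta, \av\ra = p - q = p \geq 0$, the string $\eta, \eta - \a, \ldots, r_\a\eta$, and that $\eta + \a$ is not a weight. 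If some $\eta - k\a$ with $1 \leq k \leq p$ in the string were also an $F_I$-weight, then $k\a$ would be the difference of two points of $F_I$, hence in $\R I$, again contradicting $\a \notin \R I$; so $\eta$ is the only $F_I$-weight in the string.

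For the existence of $\eta_+$ with $\la\eta_+, \av\ra > 0$, note that $F_I = \text{co}(W_I\mu)$ and $W_I\mu \subseteq P(V) \cap F_I$, so $F_I = \text{co}(F_I \cap P(V))$. If $\la\eta, \av\ra = 0$ for every $F_I$-weight $\eta$, then by convexity $\la y, \av\ra = 0$ for every $y \in F_I$, so $r_\a$ fixes $F_I$ pointwise. This yields $r_\a \in W_*(F_I) = W_{I_*}$, hence $\a \in \Dr_{I_*} \subseteq \D(F_I)$, contradicting $\a \in \D_p(F_I)$. Combined with $\la\eta, \av\ra \geq 0$ for all $F_I$-weights, some $\eta_+$ yields strict positivity. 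The main obstacle is the interplay between the positivity of $\a$ and the face equation $F_I = H \cap (\mu - \R_+ I)$; once the coefficient comparison at $\gamma_0$ is in place, the rest is bookkeeping.
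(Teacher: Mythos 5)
Your proof is correct, but it takes a genuinely different route from the paper. The paper first proves $\la\eta,\a^\vee\ra\geq 0$ for \emph{all} $\eta\in F$ by reducing to the vertices $\eta=w_1\mu$ with $w_1\in W_{\lambda(F)}$, noting that $w_1^{-1}\a$ is a positive real root (because $W(F)$ preserves $\Drp\setminus\D(F)$) and invoking dominance of $\mu$; the weight--string shape and uniqueness of the $F$-weight are then extracted from the face property of $F$ by a short case analysis on the sign of $\la\eta,\a^\vee\ra$. You instead observe that $\a\notin\R I$ (since $\D(F_I)=W_I I\sqcup W_{I_*}I_*$, and a real root in $\Z I$ would lie in $W_I I$), then exploit the explicit containments $F_I=H\cap(\mu-\R_+I)$ from Theorem~\ref{fiface} and $H\subseteq\mu-\R_+\Pi$ from Lemma~\ref{VinbergLemma}: the coefficient of some $\gamma_0\in\Pi\setminus I$ with positive coefficient in $\a$ would go negative if $\eta+q\a$ were a weight, forcing $q=0$; and uniqueness of the $F_I$-weight in the string follows from $\text{aff}(F_I)=\mu+\R I$ and $\a\notin\R I$. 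Your route is more combinatorial (coefficient bookkeeping) where the paper's is more Lie-theoretic (Weyl group action plus dominance), and yours has the nice feature of deriving everything from the affine hull and face descriptions of Section~3. The only thing to flag: you silently use the standard fact that $\Dr\cap\Z I=W_I I$, which is correct but is not stated in the paper and deserves a citation (it follows, e.g., from the fact that imaginary roots of the Levi subalgebra $\g_I$ coincide with the imaginary roots of $\g$ lying in $\Z I$). The existence of $\eta_+$ and the reduction to fundamental faces and to part~(a) are handled essentially as in the paper.
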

\begin{proof} We only prove (a); the proof of (b) is similar. We have $F=wF'$ for some $w\in W$ and a nonempty fundamental face $F'$, and $\eta$, $\a$ can be written as $\eta=w\eta'$ with $\eta'\in F'$, and $\a=w\a'$ with $\a'\in \Drp \setminus \D(F')$. Then $r_{\a}\eta=w (r_{\a'}\eta')$, and the $\a$-weight string through $\eta$ and the $\a'$-weight string through $\eta'$ are related by
\begin{eqnarray*}
 P(V)\cap (\eta+\Z\a) =  w( P(V)\cap (\eta'+\Z\a') ).
\end{eqnarray*}
Moreover, $\la \eta, \a^\vee\ra=\la \eta', w^{-1}\a^\vee\ra=\la \eta, (w^{-1}\a)^\vee\ra=\la \eta',\a'^\vee\ra$. Therefore, it sufficient to prove (a) for a fundamental face $F$.

We first show that $\la\eta,\a^\vee\ra\geq 0$ for all $\eta\in F$. Since $F=\text{co}(W_{\lambda(F)} \mu)$, it suffices to show this for all $\eta\in W_{\lambda(F)}\mu$. Let $w_1\in W_{\lambda(F)}$. Then $w_1^{-1}\a \in w_1^{-1} (\D^{re}_+\setminus \D(F))\subseteq \D^{re}_+$ and $(w_1^{-1}\a)^\vee>0$. Hence
\begin{eqnarray*}
  \la w_1\mu,\a^\vee\ra =  \la \mu, w_1^{-1}\a^\vee\ra =   \la \mu, (w_1^{-1}\a)^\vee\ra\geq 0.
\end{eqnarray*}

We now describe the $\a$-weight string through the $F$-weight $\eta$ when $\la \eta, \av \ra > 0$. Suppose that $\eta$ is not an end of the string. Since $F$ is a face of the weight hull $H$, the whole string is contained in $F$. In particular $r_\a\eta$ is an $F$-weight. But this is not possible since $\la r_\a\eta, \a^\vee\ra=- \la\eta,\a^\vee\ra<0$. So the weight string is of the form
\[
    \eta, ~\eta-\a,  ~\ldots~, ~r_\a(\eta) = \eta - \la \eta, \a^\vee\ra \a.
\]
In particular, $\eta+\a$ is not a weight.

We next describe the $\a$-weight string through the $F$-weight $\eta$ when $\la \eta, \av \ra = 0$. Clearly, $r_\a(\eta) = \eta\in F\cap r_\a F$. Since $F\subseteq \{ \eta'\in\h_\R^*\mid \la\eta',\a^\vee\ra\geq 0\}$ the face $F_1:=F\cap r_\a F$ is fixed pointwise by $r_\a$. Hence $\alpha\in\Delta_*(F_1)$. By Theorem \ref{wsF} (b) the $\alpha$-weight string through $\eta$ consists of $\eta$ only.

Suppose that no $F$-weight $\eta_+$ satisfies $\la \eta_+, \av \ra > 0$. Then  $\la \eta, \av \ra = 0$ for all $F$-weights $\eta$. Lemma \ref{alphaInDLowerStar} (b) implies that $\a\in \D_*(F)$, which contradicts $\a\in \D_p(F)=\Delta^{re}_+\setminus\Delta(F)$.
\hfill$\Box$\end{proof}

\subsection{Isotropy monoids, isotropy groups, and stabilizers}
Let $X$ be temporally a linear subspace of $V$, and let $Y$ be a subgroup of $G$. The {\it left isotropy monoid} of $X$ in $Y$, which is a submonoid of $Y$, is defined by
\[
        N_Y^\subseteq(X) := \{y\in Y\mid y X \subseteq X\}.
\]
The {\it isotropy group} of $X$ in $Y$, which is a subgroup of $Y$, is defined by
\[
        N_Y(X) := \{y\in Y\mid yX = X\}.
\]
The {\it stabilizer} of $X$ in $Y$, which is a normal subgroup of $N_Y(X)$, is defined by
\[
        Z_Y(X) := \{y\in Y\mid yv = v \,\text{ for all }\, v\in X\}.
\]

We have $N_Y(X)= N_Y^\subseteq(X)\cap  N_Y^\subseteq(X)^{-1}$. If $ N_Y^\subseteq(X)$ is a group then $N_Y(X)= N_Y^\subseteq(X)$.
We denote by $X^\perp$ the biggest subspace of $V$ orthogonal to $X$ with respect to $(\,\mid\,)$. It is easy to see that if $V=X\oplus X^\perp$ then
\begin{equation}\label{orthisostab}
   N_Y^\subseteq(X^\perp) =N_{Y^\Cai}^\subseteq(X)^\Cai \quad \text{ and }\quad  N_Y (X^\perp) =N_{Y^\Cai}(X)^\Cai .
\end{equation}
For $g\in G$ we have $ N_Y^\subseteq(gX) = g N^\subseteq_{g^{-1}Yg}(X) g^{-1} $, and $N_Y (gX) = g N_{g^{-1}Yg}(X) g^{-1}$, and $Z_Y(gX) =g Z_{g^{-1}Yg}(X) g^{-1}$. Trivially, we have
\begin{eqnarray}\label{isostabtrivialcases}
     Z_Y(\{0\})=N_Y(\{0\})= N_Y^\subseteq(\{0\})=Y \quad \text{and}\quad   Z_Y(V)=\{1\},\; N_Y (V)= N_Y^\subseteq(V)=Y .
\end{eqnarray}

One of the reasons to introduce these concepts is the following proposition, which describes how to perform certain calculations with the idempotents in $E$. These are basic for the investigation of $M$.
\begin{proposition}\label{YYOmega}
    Let $F, F'$ be faces of $H$. Let $Y$ be a subgroup of $G$, and $y\in Y$. Then
\begin{eqnarray*}
      {\rm (a)} \quad y e(F)  \:= &e(F)ye(F)  &\Leftrightarrow \quad y\in N_Y^\subseteq(V_F).\\
      {\rm (b)} \quad e(F)y  \:= &e(F)ye(F)  &\Leftrightarrow \quad y\in N_{Y^\Cai}^\subseteq(V_F)^\Cai.
\end{eqnarray*}
In addition,
\begin{eqnarray*}
      {\rm (c)} \quad e(F) \,= &ye(F)y^{-1} &\Leftrightarrow \quad y\in  N_Y^\subseteq(V_F)\cap N^\subseteq_{Y^\Cai}(V_F)^\Cai = N_Y(V_F)\cap N_{Y^\Cai}(V_F)^\Cai .
\end{eqnarray*}
Furthermore, we have
    \begin{eqnarray*}
         {\rm (d)} \quad ye(F)\:= &e(F')  \quad~ &\Leftrightarrow \quad F' = F \quad\text{and}\quad y\in Z_Y(V_F).\\
         {\rm (e)} \quad e(F)y \:= &e(F') \quad~ &\Leftrightarrow \quad F' = F \quad\text{and}\quad y\in Z_{Y^\Cai}(V_F) ^\Cai.
\end{eqnarray*}

\end{proposition}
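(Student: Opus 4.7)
The approach is to reduce everything to the block decomposition $V = V_F \oplus V_F^\perp$, on which $e(F)$ acts as the projection onto the first summand, and then to obtain (b) and (e) from (a) and (d) by applying the Chevalley anti-involution, exploiting that $e(F)^\Cai = e(F)$.

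For (a), I would simply compute: the identity $ye(F) = e(F)ye(F)$ holds iff, for every $v$, $ye(F)v\in V_F$. On $V_F^\perp$ both sides are $0$, while on $V_F$ the equation reduces to $yv \in V_F$. Hence (a) amounts to $y(V_F) \subseteq V_F$, i.e.\ $y \in N_Y^\subseteq(V_F)$. For (b), take the Chevalley adjoint of the equation $e(F)y = e(F)ye(F)$: since $e(F)^\Cai = e(F)$, it becomes $y^\Cai e(F) = e(F)y^\Cai e(F)$. Applying (a) to $y^\Cai \in Y^\Cai$ yields $y^\Cai \in N_{Y^\Cai}^\subseteq(V_F)$, equivalently $y \in N_{Y^\Cai}^\subseteq(V_F)^\Cai$.

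For (c), note that $e(F) = ye(F)y^{-1}$ is the same as $ye(F) = e(F)y$, which by conservation $e(F)ye(F)$ on both sides is the conjunction of (a) and (b). This gives the first equality $N_Y^\subseteq(V_F)\cap N_{Y^\Cai}^\subseteq(V_F)^\Cai$. For the second equality, I would invoke the orthogonal decomposition identities~(\ref{orthisostab}): $N_{Y^\Cai}^\subseteq(V_F)^\Cai = N_Y^\subseteq(V_F^\perp)$. Thus the condition becomes $y(V_F)\subseteq V_F$ \emph{and} $y(V_F^\perp)\subseteq V_F^\perp$. Since $y$ is invertible and $V = V_F \oplus V_F^\perp$, both inclusions must be equalities, giving $y\in N_Y(V_F)\cap N_Y(V_F^\perp) = N_Y(V_F)\cap N_{Y^\Cai}(V_F)^\Cai$ by~(\ref{orthisostab}) again.

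For (d), assume $ye(F) = e(F')$. Comparing kernels (and using invertibility of $y$) gives $V_F^\perp = V_{F'}^\perp$, hence $V_F = V_{F'}$; comparing images then gives $y(V_F) = V_{F'} = V_F$; and comparing the action on $V_F$ gives $y|_{V_F} = \operatorname{id}$, i.e.\ $y\in Z_Y(V_F)$. The converse is immediate. The only subtle step is concluding $F = F'$ from $V_F = V_{F'}$, equivalently from $F\cap P(V) = F'\cap P(V)$: since $W\mu\subseteq P(V)$, Lemma~\ref{generatingProperties} gives $F = \operatorname{co}(F\cap W\mu)\subseteq\operatorname{co}(F\cap P(V))\subseteq F$, so $F = \operatorname{co}(F\cap P(V))$, and the analogous equality for $F'$ together with $F\cap P(V) = F'\cap P(V)$ forces $F = F'$. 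Finally, (e) follows from (d) by applying the Chevalley anti-involution to $e(F)y = e(F')$, which yields $y^\Cai e(F) = e(F')$; by (d) applied to $y^\Cai \in Y^\Cai$ this is equivalent to $F' = F$ and $y^\Cai \in Z_{Y^\Cai}(V_F)$, i.e.\ $y\in Z_{Y^\Cai}(V_F)^\Cai$. The mildest obstacle throughout is the identification $F = F'$ from weight-space equality, which rests on the convex-hull generation statement mentioned above; the rest is bookkeeping on the direct sum decomposition.
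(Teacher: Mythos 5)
Your proposal is correct and follows essentially the same route as the paper: the pointwise computation on $V=V_F\oplus V_F^\perp$ for (a), the Chevalley anti-involution (using $e(F)^\Cai=e(F)$) to derive (b) from (a) and (e) from (d), the ``two projections agree iff their images and kernels agree'' argument together with (\ref{orthisostab}) for (c), and kernel comparison for (d). Your added justification that $V_F=V_{F'}$ forces $F=F'$ (via $F=\operatorname{co}(F\cap P(V))$, which the paper establishes in the proof of Lemma \ref{alphaInDLowerStar}) fills in a step the paper's proof leaves implicit.
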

\begin{proof} It is straightforward to see that (b) can be obtained from (a) by applying the Chevalley anti-involution. Indeed, $e(F)y=e(F)ye(F)$ if and only if $y^\Cai e(F)=e(F) y^\Cai e(F)$, if and only if $y^\Cai\in N_{Y^\Cai}(V_F)$ by (a) for the group $Y^\Cai$. Similarly, (e) is obtained from (d).

We now prove (a). We have $ye(F) =e(F) ye(F)$ if and only if $ye(F)v =e(F) ye(F)v$ for all $v\in V$, if and only if $yv=e(F)yv$ for all $v\in V_F$, if and only if $yv\in V_F$ for all $v\in V_F$, in other words, $y\in N^\subseteq_Y(V_F)$.

We next prove (c). By (a) and (b) we obtain $y\in  N_Y^\subseteq(V_F)\cap N^\subseteq_{Y^\Cai}(V_F)^\Cai$ if and only if $ye(F) =e(F) ye(F)$ and $e(F)y =e(F) ye(F)$, if and only if $ye(F) =e(F) y$.

Two linear projections coincide if their images and their kernels coincide. Therefore, $ye(F)y^{-1}=e(F)$ if and only if $y V_F = V_F$ and $y V_F^\perp = V_F^\perp$, if and only if $y\in N_Y(V_F)$ and $y\in N_Y(V_F^\perp) = N_{Y^\Cai}(V_F)^\Cai$ by (\ref{orthisostab}).

To prove (d) let $ye(F)=e(F')$. Comparing the kernels we get $V_F^\perp = V_{F'}^\perp$, from which it follows that $F = F'$. We have  $ye(F) = e(F)$ if and only if $ye(F)v=e(F)v$ for all $v\in V$, if and only if $yv=v$ for all $v\in V_F$, that is, $y\in Z_Y(V_F)$.
\hfill$\Box$\end{proof}

Our next aim is to determine $N_G^\subseteq (V_F)$, $N_G(V_F)$, and $Z_G(V_F)$, which is not straightforward. As intermediate steps we determine $N_Y^\subseteq (V_F)$, $N_Y(V_F)$, and $Z_Y(V_F)$ for $Y=U_\a$, $\a\in\Dr$, for $Y=T$, and for $Y=N$.

For $v=\sum_\eta v_\eta \in V$ with $v_\eta\in V_\eta$, $\eta\in P(V)$, we define
\begin{equation*}
    \text{supp}(v):=\{\eta\in P(V) \mid v_\eta\neq 0\} .
\end{equation*}
In addition to the results on the weight strings of Theorems \ref{wsF} and \ref{WeightString1} we need the following Lemma to determine $N_{U_\a}^\subseteq (V_F)$, $N_{U_\a}(V_F)$, and $Z_{U_\a}(V_F)$, $\a\in\Dr$.
\begin{lemma}\label{suppandstring} Let $\eta\in P(V)$ and  $\a\in\Dr$.
\begin{itemize}
\item[\rm (a)] For all $v_\eta\in V_\eta$ and $u\in U_\a$ we have $\text{\rm supp}(uv_\eta)\subseteq P(V)\cap (\eta+\Z_+\a)$.

\item[\rm (b)] There exists a weight vector $v_\eta\in V_\eta$ such that for all $u\in U_\a\setminus\{1\}$ we have $\text{\rm supp}(uv_\eta)=P(V)\cap (\eta+\Z_+\a)$.
\end{itemize}
\end{lemma}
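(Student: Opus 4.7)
The plan is to reduce everything to the action of the $\mathfrak{sl}_2$-triple $\{e_\a,\av,f_\a\}$ on $V$. Part (a) is immediate from the definitions: since $\a\in\Dr$, the root space $\g_\a=\C e_\a$ is one-dimensional, so every $u\in U_\a$ has the form $u=\exp(c\rho(e_\a))$ for some $c\in\C$, with $u=1$ precisely when $c=0$. Because $\rho(e_\a)$ is locally nilpotent on $V$ (by integrability) and raises weights by $\a$, the finite sum
\[
    uv_\eta \;=\; \sum_{k\ge 0}\frac{c^k}{k!}\,\rho(e_\a)^k v_\eta
\]
has each term lying in $V_{\eta+k\a}$, whence $\text{\rm supp}(uv_\eta)\subseteq P(V)\cap(\eta+\Zp\a)$.

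For part (b), I would exploit the structure of the $\a$-weight string through $\eta$. Write this string as $\eta-p\a,\ldots,\eta+q\a$ with $p,q\ge 0$ and $p-q=\la\eta,\av\ra$. The subspace $\bigoplus_{k\in\Z} V_{\eta+k\a}$ is a finite-dimensional integrable module for the $\mathfrak{sl}_2$-triple $\{e_\a,\av,f_\a\}$. Pick any nonzero $w\in V_{\eta+q\a}$; since the string terminates on the right, $\rho(e_\a)w=0$, so $w$ is a highest weight vector generating an $\mathfrak{sl}_2$-irreducible of highest weight $N:=\la\eta+q\a,\av\ra=p+q$. I then set $v_\eta:=\rho(f_\a)^q w\in V_\eta$, which is nonzero since $q\le N$.

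The key computation, a standard $\mathfrak{sl}_2$ identity inside an irreducible of highest weight $N$, is
\[
    \rho(e_\a)^k\,\rho(f_\a)^q w \;=\; \frac{q!}{(q-k)!}\,\prod_{j=1}^{k}(N-q+j)\;\rho(f_\a)^{q-k} w \qquad (0\le k\le q),
\]
and the scalar on the right is nonzero because $N-q+1=p+1\ge 1$. Consequently, for any $u=\exp(c\rho(e_\a))\in U_\a\setminus\{1\}$ (so $c\ne 0$), the expansion
\[
    uv_\eta \;=\; \sum_{k=0}^{q}\frac{c^k}{k!}\,\rho(e_\a)^k v_\eta
\]
has a nonzero component in $V_{\eta+k\a}$ for every $0\le k\le q$. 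These components lie in distinct weight spaces, so they are linearly independent, and together with part (a) this yields $\text{\rm supp}(uv_\eta)=\{\eta,\eta+\a,\ldots,\eta+q\a\}=P(V)\cap(\eta+\Zp\a)$.

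I do not anticipate any serious obstacle; the only subtle point is the choice of $v_\eta$. A generic vector in $V_\eta$ need not satisfy $\rho(e_\a)^q v_\eta\ne 0$ when $V_\eta$ picks up contributions from $\mathfrak{sl}_2$-components of highest weight strictly below $p+q$. Pushing down from the top of the string via $\rho(f_\a)^q$ isolates one such "tallest" component and guarantees the required nonvanishing. The edge case $q=0$ (i.e.\ $\eta+\a\notin P(V)$) is automatic, since then $\rho(e_\a)v_\eta=0$ for every $v_\eta\in V_\eta$, so $uv_\eta=v_\eta$ and both sides of (b) equal $\{\eta\}$.
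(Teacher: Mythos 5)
Your proof is correct and follows essentially the same route as the paper: both parts reduce to the action of the $\mathfrak{sl}_2$-triple $\{e_\a,\av,f_\a\}$ on the finite $\a$-weight string through $\eta$. The only difference is cosmetic: where the paper invokes complete reducibility of the integrable module $V$ into irreducible ${\bf s}_\a$-modules and the existence of a component whose weights fill the entire string, you construct such a component explicitly by applying $\rho(f_\a)^q$ to a top vector $w$ of the string and verify the nonvanishing of each graded component via the standard $e^k f^q$ identity.
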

\begin{proof} We first prove (a). If $v_\eta\in V_\eta$ and $u = \br(\exp x_\a)$ for some $x_\a\in\g_\a$ then
\begin{equation*}\label{uv}
    uv_\eta =  \br(\exp x_\a) v_\eta = \sum_{j\in\Z_+,\,\eta+j\a\in P(V)} v_{\eta + j\a} \quad \text{ with }\quad v_{\eta + j\a} = \frac{1}{j!} \br(x_\a)^j v_\eta \in V_{\eta+ j\a}.
\end{equation*}
Hence, $\text{supp}(uv_\eta)\subseteq P(V)\cap (\eta+\Z_+\a)$.

We now prove (b). Choose $x_\a\in\g_\a$, $x_{-\a}\in\g_{-\a}$ such that $[x_\a,x_{-\a}]=\a^\vee$. Then ${\bf s}_\a:=\C x_\a+\C \a^\vee +\C x_{-\a}$ is a Lie subalgebra of $\g$ isomorphic to $sl(2,\C)$. Since $V$ is integrable, it decomposes into a direct sum of irreducible finite-dimensional ${\bf s}_\a$-modules, whose $\C \a^\vee$-weight spaces are also $\h$-weight spaces. The $\a$-weight string through $\eta$ is finite. Hence, among these modules exists a module $D$, whose set $P(D)$ of $\h$-weights coincides with $P(V)\cap (\eta+\Z\a)$, the $\a$-weight string through $\eta$. Choose $v_\eta\in D_\eta\setminus\{0\}$. Then $\text{supp}(\br(\exp(c x_\a)) v_\eta)=P(V)\cap (\eta+\Z_+\a)$ for all $c\in\C^\times$.
\hfill$\Box$\end{proof}

\begin{theorem}\label{isostabUa} Let $F$ be a face. Then
\[
    \begin{aligned}
         {\rm (a)} \quad &N_{U_\a}^\subseteq(V_{F}) = N_{U_\a}(V_{F}) = \bigg\{
                                \begin{aligned}
                                    &U_\a,   \quad\text{if } \a\in \D_p(F)\cup\D(F), \\
                                    &\{1\},      ~\text{ otherwise.}
                                \end{aligned}\\
         {\rm (b)} \quad &Z_{U_\a}(V_{F})    = \bigg\{
                                \begin{aligned}
                                    &U_\a,   \quad\text{if }\a\in \D_p(F) \cup \D_*(F) , \\
                                    &\{1\},      ~\text{ otherwise.}
                                \end{aligned}\\
    \end{aligned}
\]
\end{theorem}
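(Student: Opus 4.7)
The plan is to exploit the disjoint decomposition $\Dr = \D_*(F) \sqcup \D^*(F) \sqcup \D_p(F) \sqcup \D_n(F)$ and treat each class in turn, feeding the weight-string descriptions of Theorems \ref{wsF} and \ref{WeightString1} into the elementary Lemma \ref{suppandstring}. Throughout, I would write $U_\a = \{\br(\exp(x_\a))\mid x_\a\in\g_\a\}$ so that the action on a weight vector $v_\eta\in V_\eta$ is the finite sum $\sum_{k\geq 0}\frac{1}{k!}\br(x_\a)^k v_\eta$, whose $k$-th term lies in $V_{\eta+k\a}$. In this way all three questions reduce to deciding which shifts $\eta+k\a$ are $F$-weights, are weights outside $F$, or fail to be weights at all.

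For part (a), when $\a\in\D(F)=\D_*(F)\cup\D^*(F)$, Theorem \ref{wsF}(a) says the entire $\a$-weight string through any $F$-weight stays in $F$; hence $U_\a V_F\subseteq V_F$, giving $N^\subseteq_{U_\a}(V_F)=U_\a$. Since $U_\a$ is a group, the equality $N_{U_\a}(V_F)=U_\a$ follows automatically. Conversely, for $\a\in\D_n(F)$, Theorem \ref{WeightString1}(b) produces an $F$-weight $\eta_-$ with $\la\eta_-,\av\ra<0$ such that $\eta_-+\a$ is a weight \emph{outside} of $F$. Feeding $\eta_-$ into Lemma \ref{suppandstring}(b) yields a $v_{\eta_-}\in V_{\eta_-}\subseteq V_F$ whose image under any $u\in U_\a\setminus\{1\}$ has a nonzero component at $\eta_-+\a\notin F$; hence $uv_{\eta_-}\notin V_F$, forcing $N^\subseteq_{U_\a}(V_F)=\{1\}$ and thus $N_{U_\a}(V_F)=\{1\}$.

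For part (b), the two cases where all of $U_\a$ fixes $V_F$ pointwise are $\a\in\D_*(F)$ and $\a\in\D_p(F)$, and both submit to one uniform argument: Theorem \ref{wsF}(b) (for $\D_*(F)$) and Theorem \ref{WeightString1}(a) (for $\D_p(F)$) both imply $\eta+k\a\notin P(V)$ for every $F$-weight $\eta$ and every $k\geq 1$, so $\br(x_\a)^k v_\eta=0$ for $k\geq 1$ and the exponential reduces to the identity on $V_F$. For $\a\in\D_n(F)$ the inclusion $Z_{U_\a}(V_F)\subseteq N^\subseteq_{U_\a}(V_F)=\{1\}$ already established in part (a) settles the case.

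The main obstacle is the remaining case $\a\in\D^*(F)$, where $U_\a$ stabilizes $V_F$ setwise and one must show no nontrivial element fixes it pointwise. Here Theorem \ref{wsF}(c) supplies an $F$-weight $\eta_+$ with $\la\eta_+,\av\ra>0$, and Theorem \ref{wsF}(a) places $\eta':=\eta_+-\a$ in $P(V)\cap F$ with $\eta'+\a=\eta_+\in P(V)$. Applying Lemma \ref{suppandstring}(b) to $\eta'$ yields a $v_{\eta'}\in V_{\eta'}\subseteq V_F$ whose image under any $u\in U_\a\setminus\{1\}$ has a nonzero component at $\eta_+$, hence differs from $v_{\eta'}$; this forces $Z_{U_\a}(V_F)=\{1\}$. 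The boundary cases $F=\emptyset$ and $F=H$ are consistent with the formulas via (\ref{isostabtrivialcases}) and the conventions $\D_*(\emptyset)=\Dr$, $\D^*(H)=\Dr$, so they require no separate treatment.
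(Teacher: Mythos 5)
Your proposal is correct and follows essentially the same route as the paper: the same four-way decomposition $\Dr=\D_*(F)\sqcup\D^*(F)\sqcup\D_p(F)\sqcup\D_n(F)$, the same inputs (Theorems \ref{wsF} and \ref{WeightString1} on weight strings fed into Lemma \ref{suppandstring}), and the same witnesses in the negative cases, with only cosmetic differences (your choice of $\eta_+-\a$ in the $\D^*(F)$ case of (b), and deriving the $\D_p(F)$ case of (a) from the pointwise-fixing statement of (b)).
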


\begin{proof} The theorem holds for $F=\emptyset$ by (\ref{isostabtrivialcases}). Let $F\neq\emptyset$. We make repeated use of Lemma \ref{suppandstring} in the proof without mentioning it further. Let $u\in U_\a$ and $v_\eta\in V_\eta$ with $\eta\in F$. If $\a\in \D_p(F) \cup  \D_*(F) $ then
\begin{eqnarray*}
  \text{supp}(uv_\eta)\subseteq P(V)\cap(\eta+\Z_+\a)=\{\eta\}
\end{eqnarray*}
by Theorem \ref{WeightString1} (a) and Theorem \ref{wsF} (b). Therefore, $uv_\eta=v_\eta$. Hence, $U_\a\subseteq Z_{U_\a}(V_F)\subseteq N_{U_\a}(V_F)\subseteq N^\subseteq_{U_\a}(V_F)\subseteq U_\a$.
If $\a\in\D^*(F) $ then
\begin{eqnarray*}
  \text{supp}(uv_\eta)\subseteq P(V)\cap(\eta+\Z_+\a)\subseteq F
\end{eqnarray*}
by Theorem \ref{wsF} (a). Therefore, $uv_\eta\in V_F$. Hence, $N^\subseteq_{U_\a}(V_F)= U_\a$. Since $N^\subseteq_{U_\a}(V_F)$ is a group, we have $N_{U_\a}(V_F)= N^\subseteq_{U_\a}(V_F)$.

Let $\a\in \D_n(F)$ and $u\in U_\a\setminus\{1\}$. From Theorem \ref{WeightString1} (b) there exists an $F$-weight $\eta$ such that $P(V)\cap(\eta+\Z_+\a)\not\subseteq F$. Moreover, there exists $v_\eta\in V_\eta$ such that
\begin{eqnarray*}
  \text{supp}(uv_\eta)= P(V)\cap(\eta+\Z_+\a)\not\subseteq F.
\end{eqnarray*}
Thus $uv_\eta\notin V_F$. It follows that $\{1\}\subseteq Z_{U_\a}(V_F)\subseteq N_{U_\a}(V_F)\subseteq N^\subseteq_{U_\a}(V_F)\subseteq \{1\}$.
Let $\a\in \D^*(F)$ and $u\in U_\a\setminus\{1\}$. It follows from Theorem \ref{wsF} (c) that there exists an $F$-weight $\eta$ such that $P(V)\cap(\eta+\Z_+\a)\neq \{\eta\}$. Moreover, there exists $v_\eta\in V_\eta$ such that
\begin{eqnarray*}
  \text{supp}(uv_\eta)= P(V)\cap(\eta+\Z_+\a)\neq\{\eta\}.
\end{eqnarray*}
Hence $uv_\eta\neq v_\eta$. We have shown that $ Z_{U_\a}(V_F)=\{1\}$.
\hfill$\Box$
\end{proof}

\begin{theorem}\label{isostabT} Let $F$ be a fundamental face. Then $N^\subseteq_T(V_{F}) = N_T(V_{F})=T$. Moreover, if $F$ is nonempty then
\begin{equation*}
  Z_T(V_F)=\bigg\{ \prod_{j=1}^{2m-l}c_j^{- \la\mu, \a_j^\vee\ra}\br(t_j(c_j))   ~\bigg | ~ c_1,\ldots, c_{2m-l}\in\C^\times,\,\prod_{j=1}^{2m-l}c_j^{\la\ai, \,\a_j^\vee\ra}=1 \text{ for } i\in \lambda^*(F) \bigg\} ,
\end{equation*}
and $Z_T(V_\emptyset)=T$. In particular, $T_{\lambda_*(F)}\subseteq Z_T(V_F)$.
\end{theorem}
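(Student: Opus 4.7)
The first claim $N_T^\subseteq(V_F) = N_T(V_F) = T$ is immediate: every $t \in T = \C^\times \br(\boldsymbol{T})$ acts by a scalar on each weight space $V_\eta$ and therefore preserves every direct sum of weight spaces, in particular $V_F$. The case $F = \emptyset$ reduces to $V_\emptyset = \{0\}$, so $Z_T(V_\emptyset) = T$ by (\ref{isostabtrivialcases}).

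Now fix a nonempty fundamental face $F = F_I$ with $I := \lambda^*(F)$, and set $L := \{\eta - \mu \mid \eta \in P(V) \cap F\} \subseteq Q$. I would parametrize an arbitrary $t \in T$ as $t = c \prod_{j=1}^{2m-l} \br(t_j(c_j))$ with $c, c_j \in \C^\times$, so that $t v_\eta = c \prod_j c_j^{\langle \eta, \alpha_j^\vee\rangle} v_\eta$ for every weight vector $v_\eta$. Since $\mu \in F$, the condition $t v_\mu = v_\mu$ forces $c = \prod_j c_j^{-\langle \mu, \alpha_j^\vee\rangle}$ and rewrites the action as $t v_\eta = \prod_j c_j^{\langle \eta - \mu, \alpha_j^\vee\rangle} v_\eta$. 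Because $q \mapsto \prod_j c_j^{\langle q, \alpha_j^\vee\rangle}$ is a group homomorphism $Q \to \C^\times$, the condition $t \in Z_T(V_F)$ is equivalent to this homomorphism being trivial on the subgroup $\Z L$ of $Q$ generated by $L$.

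The main step is then the identity $\Z L = \Z I$. The inclusion $\Z L \subseteq \Z I$ is immediate from Theorem \ref{fiface}, which gives $F \subseteq \mu - \R_+ I$ and hence $L \subseteq Q \cap \R I = \Z I$. The reverse inclusion, $\alpha_i \in \Z L$ for every $i \in I$, is the heart of the proof; here the $\mu$-connectedness of $I$ is indispensable, because for $i \in I \cap J_0$ the weight $\mu - \alpha_i$ is not in $P(V)$, so one cannot simply write $\alpha_i = \mu - (\mu - \alpha_i)$. My plan is, for each $i \in I$, to choose a path $i = i_0, i_1, \ldots, i_s$ in $I$ with $i_s \in J_>$ and consecutive vertices adjacent, as in Lemma \ref{Simplex}(a). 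Setting $\eta^{(t)} := \mu - \alpha_{i_s} - \alpha_{i_{s-1}} - \cdots - \alpha_{i_t}$, I would show by downward induction on $t$ that $\eta^{(t)} \in P(V)$: for $t = s$ this uses $\langle \mu, \alpha_{i_s}^\vee\rangle > 0$ together with the weight-string formula recalled in Section~2.1, while for the induction step the adjacency of $i_t$ with $i_{t+1}$ forces
\[
\langle \eta^{(t+1)}, \alpha_{i_t}^\vee\rangle = \langle \mu, \alpha_{i_t}^\vee\rangle - \sum_{r=t+1}^{s} \langle \alpha_{i_r}, \alpha_{i_t}^\vee\rangle \geq -\langle \alpha_{i_{t+1}}, \alpha_{i_t}^\vee\rangle > 0,
\]
and the same formula yields $\eta^{(t)} = \eta^{(t+1)} - \alpha_{i_t} \in P(V)$. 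Since $\eta^{(t)} \in \mu - \Z_+ I$ and $\eta^{(t)} \in H$, Theorem \ref{fiface} shows $\eta^{(t)} \in H \cap (\mu - \R_+ I) = F$, so $\eta^{(t)} - \mu \in L$. Telescoping, $\alpha_{i_t} = (\eta^{(t+1)} - \mu) - (\eta^{(t)} - \mu) \in \Z L$, and in particular $\alpha_i = \alpha_{i_0} \in \Z L$. Substituting $\Z L = \Z I$ into the condition of the previous paragraph gives exactly the stated description of $Z_T(V_F)$.

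For the last inclusion $T_{\lambda_*(F)} \subseteq Z_T(V_F)$: any $t = \prod_{i \in I_*} \br(t_i(c_i)) \in T_{I_*}$ acts on $v_\eta$ as $\prod_{i \in I_*} c_i^{\langle \eta, \alpha_i^\vee\rangle} v_\eta$, and for $i \in I_*$ the simple reflection $r_i$ lies in $W_*(F_I)$, hence $\alpha_i \in \Delta_*(F)$; Theorem \ref{wsF}(b) then gives $\langle \eta, \alpha_i^\vee\rangle = 0$ for every $F$-weight $\eta$, so $tv_\eta = v_\eta$ and $t \in Z_T(V_F)$. The genuine obstacle of the whole argument is the descent in the third paragraph: producing the simple roots $\alpha_i$ for $i \in I \cap J_0$ as integer combinations of differences $\eta - \mu$ with $\eta \in P(V) \cap F$ is precisely where the $\mu$-connected path and the weight-string chain inside the integrable module $V$ must be combined.
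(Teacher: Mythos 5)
Your proof is correct, and its skeleton coincides with the paper's: parametrize $t=c\prod_j\br(t_j(c_j))$, evaluate on weight vectors, use $\mu\in P(V)\cap F$ to pin down the scalar $c$, and then show that triviality of the character on $\{\eta-\mu\mid\eta\in P(V)\cap F\}$ is equivalent to triviality on the simple roots in $\lambda^*(F)$. Where you genuinely diverge is in the key step, producing, for each $\a\in\lambda^*(F)$, a pair of $F$-weights differing by $\a$. The paper gets this in one line from Theorem \ref{wsF}(c) and (a): there is an $F$-weight $\eta$ with $\la\eta,\a^\vee\ra<0$ whose entire $\a$-string lies in $F$, so $\eta+\a$ is again an $F$-weight. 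You instead build such pairs by hand, descending from $\mu$ along a $\mu$-connected path supplied by Lemma \ref{Simplex}(a) and using only the elementary string formula $p-q=\la\eta,\a^\vee\ra$ for integrable modules together with Theorem \ref{fiface} to see the intermediate weights stay in $F=H\cap(\mu-\R_+ I)$. Your route is longer but more self-contained (it bypasses the face--weight-string machinery of Theorem \ref{wsF}(a),(c) entirely), and the reformulation $\Z L=\Z I$ cleanly isolates exactly what must be proved; the paper's route is shorter because Theorem \ref{wsF} has already packaged the needed weight-string information. For the final inclusion $T_{\lambda_*(F)}\subseteq Z_T(V_F)$ you argue via $\a_i\in\D_*(F)$ and Theorem \ref{wsF}(b), while the paper reads it off the explicit formula; both are fine.
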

\begin{proof} The theorem holds for $F=\emptyset$ by (\ref{isostabtrivialcases}). Let $F\neq\emptyset$. We have $N^\subseteq_T(V_{F}) =T$ since $V_F$ is a direct sum of  $T$-invariant weight spaces. Since  $N^\subseteq_T(V_{F})$ is a group we get $N_T(V_{F}) = N^\subseteq_T(V_{F}) $.

Let $t=c\prod_{j=1}^{2m-l}\br(t_j(c_j))\in T$. If $\eta\in P(V)$ and $v_\eta\in V_\eta$ then $ t v_\eta = c\prod_{j=1}^{2m-l} c_j^{\la\eta,\a_j^\vee\ra} v_\eta$. Therefore, $t\in Z_T(V_F)$ if and only if
\begin{equation}\label{eq-ZT-face}
   c\prod_{j=1}^{2m-l} c_j^{\la\eta,\a_j^\vee\ra} = 1 \quad \text{ for all }\quad \eta\in P(V)\cap F.
\end{equation}
Since $\mu\in P(V)\cap F$ it follows from (\ref{eq-ZT-face}) that
\begin{equation}\label{eq-ZT-hw}
   c\prod_{j=1}^{2m-l} c_j^{\la\mu,\a_j^\vee\ra} = 1.
\end{equation}
Let $\a\in\lambda^*(F)$. By Theorem \ref{wsF} (c) and (a) there exists an $F$-weight $\eta$ such that $\la\eta,\a^\vee\ra < 0$, and the whole $\a$-string through $\eta$ is contained in $P(V)\cap F$. In particular, $\eta+\a\in P(V)\cap F$. Inserting into (\ref{eq-ZT-face}), we find
\begin{equation*}
   c\prod_{j=1}^{2m-l} c_j^{\la\eta,\a_j^\vee\ra} = 1 \quad \text{ and }\quad   c\prod_{j=1}^{2m-l} c_j^{\la\eta+\a,\a_j^\vee\ra} = 1  .
\end{equation*}
We conclude that
\begin{equation}\label{eq-ZT-roots}
     \prod_{j=1}^{2m-l} c_j^{\la\a,\a_j^\vee\ra} = 1  \quad\text{for all}\quad \a\in \lambda^*(F).
\end{equation}
Conversely, (\ref{eq-ZT-hw}) and (\ref{eq-ZT-roots}) imply (\ref{eq-ZT-face}) because we have $P(V)\cap F\subseteq \mu-\R^+\lambda^*(F)$ by Theorem \ref{fiface}. We have shown that $t=c\prod_{j=1}^{2m-l}\br(t_j(c_j))\in Z_T(V_F)$ if and only if (\ref{eq-ZT-hw}) and (\ref{eq-ZT-roots}) hold.
From
\begin{align*}
    \lambda_*(F) =&\;\{ j\in J_0\mid \la \a_i,\a_j^\vee \ra= 0 \text{ for all }i\in \lambda^*(F)\}\\
               =&\;\{ j\in \n \mid \la \mu,\a_j^\vee \ra= 0 \text{ and } \la \a_i,\a_j^\vee \ra= 0 \text{ for all }i\in \lambda^*(F)\}
\end{align*}
we get $T_{\lambda_*(F)}\subseteq Z_T(V_F)$.
\hfill $\Box$\end{proof}

\begin{theorem}\label{isostabN} If $F$ is a fundamental face then
\begin{equation*}
  N^\subseteq_N(V_F) =N_N(V_F) = T N_{\lambda(F)}  \quad\text{ and }\quad  Z_N(V_F) = Z_T(V_F) N_{\lambda_*(F)}.
\end{equation*}
\end{theorem}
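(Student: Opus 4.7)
The plan is to reduce both equalities, via Lemma \ref{alphaInDLowerStar}, to statements about how $w\in W$ acts on the set $P(V)\cap F$ and then on the face $F$ itself. The bridge between these two viewpoints is the identity $F=\text{co}(P(V)\cap F)$, which holds because the vertex set $W_{\lambda^*(F)}\mu$ of $F=F_{\lambda^*(F)}$ is contained in $P(V)\cap F$. The case $F=\emptyset$ is immediate from $V_F=\{0\}$ together with $\lambda(\emptyset)=\lambda_*(\emptyset)=\Pi$, so I assume $F\neq\emptyset$ throughout.

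For $N^\subseteq_N(V_F)=N_N(V_F)=TN_{\lambda(F)}$, the inclusion $\supseteq$ is immediate: $T$ preserves every weight space, and any $n\in N_{\lambda(F)}$ represents some $w\in W_{\lambda(F)}=W(F)$ by Theorem \ref{faceIsotropyGroup}, so $wF=F$ and Lemma \ref{alphaInDLowerStar}(a) yields $nV_F=V_F$. For the reverse inclusion, if $n$ represents $w$ and $nV_F\subseteq V_F$, then $w(P(V)\cap F)\subseteq P(V)\cap F$, and taking convex hulls gives $wF\subseteq F$. The key step, and the only delicate point of the whole argument, will be to promote this inclusion to an equality. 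Since $wH=H$, the map $w$ permutes the face lattice $\F(H)$, so $wF$ is a face of $H$ of dimension equal to $\dim F$; Lemma \ref{faceProperties2} then makes $wF$ a face of $F$ of full dimension, which forces $wF=F$. Hence $w\in W(F)=W_{\lambda(F)}$ and $n\in TN_{\lambda(F)}$.

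For $Z_N(V_F)=Z_T(V_F)N_{\lambda_*(F)}$, the inclusion $\supseteq$ uses Theorem \ref{isostabT} (which already gives $T_{\lambda_*(F)}\subseteq Z_T(V_F)$) together with the claim $\br(n_i)\in Z_N(V_F)$ for every $i\in\lambda_*(F)$. The latter will follow from Theorem \ref{wsF}(b): for $\eta\in P(V)\cap F$ and $i\in\lambda_*(F)$ the $\alpha_i$-weight string through $\eta$ is the singleton $\{\eta\}$, so $e_iV_\eta=f_iV_\eta=0$ and $\boldsymbol{SL}_2^{(\alpha_i)}$ acts trivially on $V_\eta$, whence $\br(n_i)|_{V_\eta}=\mathrm{id}_{V_\eta}$. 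For the reverse inclusion, given $n\in Z_N(V_F)$ representing $w$, the equalities $w\eta=\eta$ for $\eta\in P(V)\cap F$ combined with Lemma \ref{alphaInDLowerStar}(b) and Theorem \ref{faceIsotropyGroup} force $w\in W_*(F)=W_{\lambda_*(F)}$. Choosing any representative $\tilde n\in N_{\lambda_*(F)}$ of $w$ and writing $n=\tilde n\,t$ with $t\in T$, both $\tilde n^{-1}$ and $n$ act trivially on $V_F$, so $t=\tilde n^{-1}n$ does as well, whence $t\in Z_T(V_F)$. Finally, rewriting $n=(\tilde n\,t\,\tilde n^{-1})\tilde n$ and observing that all three factors of the conjugate $\tilde n\,t\,\tilde n^{-1}\in T$ act trivially on $V_F$ places $n$ in $Z_T(V_F)N_{\lambda_*(F)}$, completing the proof.
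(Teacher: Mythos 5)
Your argument is correct and follows essentially the same route as the paper's proof: both reverse inclusions rest on Lemma \ref{alphaInDLowerStar}, the identity $F=\text{co}(P(V)\cap F)$, the equal-dimension argument forcing $wF=F$, and the factorization of $n$ as a torus element times a representative in $N_{\lambda_*(F)}$. The only cosmetic difference is that you verify $\br(n_i)\in N_N(V_F)$ (resp.\ $Z_N(V_F)$) directly from the weight-string statements rather than by citing Theorem \ref{isostabUa} together with $n_i\in U_{\a_i}U_{-\a_i}U_{\a_i}$, which is equivalent.
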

\begin{proof} For $i\in\n$ we have
\begin{equation*}
    \br(n_i)= \br(\exp(e_i)\exp(-f_i)\exp(e_i)) \in U_{\a_i}U_{-\a_i}U_{\a_i}.
\end{equation*}
From Theorem \ref{isostabUa} it follows that $\br(n_i)\in N_N(V_F)$ for all $i\in\lambda(F)$, and $\br(n_i)\in Z_N(V_F)$ for all $i\in \lambda_*(F)$. Hence
\begin{eqnarray*}
    T N_{\lambda(F)} = N_T(V_F) N_{\lambda(F)}\subseteq N_N(V_F) \subseteq N_N^\subseteq (V_F)\quad \text{ and }\quad Z_T(V_F) N_{\lambda_*(F)}\subseteq  Z_N(V_F).
\end{eqnarray*}

Let $n_w\in  N^\subseteq_N(V_{F})$ project to $w\in W$. From $n_w V_F\subseteq V_F$ and (\ref{NTAction2}) it follows that $w(P(V) \cap F)\subseteq F$. In the proof of Theorem \ref{alphaInDLowerStar} we have seen that $F=\text{co}(P(V)\cap F)$. Therefore $wF=\text{co}(w(P(V)\cap F))\subseteq F$. Since $wF$ and $F$ are faces of $H$ of the same dimension we obtain $wF=F$. Hence, $w\in W(F)$ and $n_w\in TN_{\lambda(F)}$.

Let $n_w\in  Z_N(V_{F})$ project to $w\in W$. From $ n_w v_\eta = v_\eta$ for all $\eta\in P(V)\cap F$ and (\ref{NTAction2}) we get $w\eta=\eta$ for all $\eta\in P(V)\cap F$.  Hence, $w\in W_{\lambda_*(F)}$ by Lemma \ref{alphaInDLowerStar} (b), and we can write $n_w=tn$ with $t\in T$ and $n\in N_{\lambda_*(F)}\subseteq Z_N(V_F)$. We conclude that $t=n_w n^{-1}\in Z_N(V_F)\cap T=Z_T(V_F)$. Thus, $n_w\in Z_T(V_F)N_{\lambda_*(F)}$.
\hfill $\Box$\end{proof}

\begin{theorem}\label{isostabG} If $F$ is a fundamental face then
\begin{equation*}
     N^\subseteq_G(V_{F}) =N_G(V_F)   = P_{\lambda(F)}  \quad\text{ and }\quad  Z_G(V_{F})=G_{\lambda_*(F)}Z_T(V_{F})\ltimes U^{\lambda(F)}.
\end{equation*}
\end{theorem}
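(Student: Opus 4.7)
My plan has three phases.

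\textbf{Phase 1 (easy inclusions).} The parabolic $P_{\lambda(F)} = L_{\lambda(F)} \ltimes U^{\lambda(F)}$ is generated by $T$ and the root groups $U_\a$ for $\a \in \D(F) \cup \D_p(F)$. Each generator normalizes $V_F$: $T$ by Theorem~\ref{isostabT}, and $U_\a \subseteq N_{U_\a}(V_F) = U_\a$ for $\a \in \D(F) \cup \D_p(F)$ by Theorem~\ref{isostabUa}(a). Hence $P_{\lambda(F)} \subseteq N_G(V_F) \subseteq N_G^\subseteq(V_F)$. For the stabilizer direction, $G_{\lambda_*(F)}$ is generated by $U_{\pm\a}$, $\a \in \lambda_*(F) \subseteq \D_*(F)$, and $U^{\lambda(F)}$ by $U_\a$, $\a \in \D_p(F)$; in both cases Theorem~\ref{isostabUa}(b) gives $U_\a \subseteq Z_{U_\a}(V_F)$. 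Combined with $Z_T(V_F) \subseteq Z_G(V_F)$ by definition, and with the Levi decomposition of $P_{\lambda(F)}$, this yields $G_{\lambda_*(F)} Z_T(V_F) \ltimes U^{\lambda(F)} \subseteq Z_G(V_F)$.

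\textbf{Phase 2 (normalizer equality).} To prove $N_G^\subseteq(V_F) \subseteq P_{\lambda(F)}$, I invoke the parabolic Bruhat decomposition
\[
G = \bigsqcup_{w \in {}^{\lambda(F)}W^{\lambda(F)}} P_{\lambda(F)}\, n_w\, P_{\lambda(F)}.
\]
For $g \in N_G^\subseteq(V_F)$ write $g = p_1 n_w p_2$ with $w$ a minimal double-coset representative. Since $p_1, p_2 \in P_{\lambda(F)} \subseteq N_G(V_F)$ by Phase~1, the inclusion $gV_F \subseteq V_F$ reduces to $n_w V_F \subseteq V_F$, i.e., $V_{wF} \subseteq V_F$ by Lemma~\ref{NNormalizesE}. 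Hence $P(V) \cap wF \subseteq F$. Since $wF = \text{co}(w W_{\lambda^*(F)}\mu) \subseteq \text{co}(P(V) \cap wF) \subseteq F$ by convexity of $F$, and $\dim(wF) = \dim(F)$ by Corollary~\ref{point-ri-hull}, we conclude $wF = F$, so $w \in W(F) = W_{\lambda(F)}$, forcing $w = 1$.

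\textbf{Phase 3 (stabilizer equality, main obstacle).} From $Z_G(V_F) \subseteq N_G(V_F) = P_{\lambda(F)} = L_{\lambda(F)} \ltimes U^{\lambda(F)}$ and $U^{\lambda(F)} \subseteq Z_G(V_F)$, it suffices to prove $Z_{L_{\lambda(F)}}(V_F) = G_{\lambda_*(F)} Z_T(V_F)$. Because $\lambda_*(F)$ and $\lambda^*(F)$ are separated, $G_{\lambda_*(F)}$ commutes elementwise with $G_{\lambda^*(F)}$, and since $T$ normalizes $G_{\lambda_*(F)}$, we get $L_{\lambda(F)} = G_{\lambda_*(F)} \cdot L_{\lambda^*(F)}$ with $L_{\lambda^*(F)} := T\, G_{\lambda^*(F)}$. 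For $l \in Z_{L_{\lambda(F)}}(V_F)$, write $l = g_* m$ with $g_* \in G_{\lambda_*(F)} \subseteq Z_G(V_F)$ and $m \in L_{\lambda^*(F)}$; then $m \in Z_{L_{\lambda^*(F)}}(V_F)$. It remains to show $m \in T$, whence Theorem~\ref{isostabT} yields $m \in Z_T(V_F)$. I do this by Bruhat-decomposing $m = u_1 t\, n_w\, u_2$ inside $L_{\lambda^*(F)}$ (with $u_1, u_2 \in U_{\lambda^*(F)}^+$, $t \in T$, $w \in W_{\lambda^*(F)}$) and testing $m v_\eta = v_\eta$ first on $v_\mu$, which forces $w\mu = \mu$ and hence $w \in W_{\lambda^*(F) \cap J_0}$, and then on $v_{r_\beta\mu}$ for $\beta \in \lambda^*(F) \cap J_>$ (which exist because $\lambda^*(F)$ is $\mu$-connected). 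The main obstacle is this weight-by-weight analysis: combining the weight-string results of Theorems~\ref{wsF} and~\ref{WeightString1} with the Bruhat cell structure inside $L_{\lambda^*(F)}$ to trivialize $u_1$, $u_2$, and $w$ in turn.
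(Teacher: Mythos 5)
Phases~1 and~2 of your proposal are sound. Phase~1 matches the paper. Phase~2 uses the parabolic Bruhat decomposition $G = \bigsqcup_{w}P_{\lambda(F)}\,n_w\,P_{\lambda(F)}$, whereas the paper writes $g = unu_1$ with $u,u_1\in U$, $n\in N$, and invokes Theorem~\ref{isostabN} directly; your version re-derives the relevant content of that theorem via Lemma~\ref{alphaInDLowerStar}(a) and the dimension argument, so the two routes are equivalent and both valid.

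The genuine gap is in Phase~3. First, the specific tests you list are insufficient to force $w=1$. Knowing $w\mu=\mu$ gives only $w\in W_{\lambda^*(F)\cap J_0}$, and knowing $wr_\beta\mu = r_\beta\mu$ for $\beta\in\lambda^*(F)\cap J_>$ only kills the $w$ that move a root \emph{adjacent} to $J_>$. A concrete counterexample: let $\lambda^*(F)=\{\a_1,\a_2,\a_3\}$ be a type~$A_3$ chain with $J_>\cap\lambda^*(F)=\{\a_1\}$. Then $w=r_{\a_3}$ fixes $\mu$ and fixes $r_{\a_1}\mu$ (since $\a_3$ is not adjacent to $\a_1$), so it passes both your tests yet is nontrivial. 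The correct test is $m v_\eta=v_\eta$ for \emph{all} $F$-weights $\eta$ — then the lowest-weight component argument gives $w\eta=\eta$ for all $F$-weights, hence $w\in W_*(F)\cap W_{\lambda^*(F)}=\{1\}$. Second, and more importantly, after $w=1$ you are left with $m\in TU_{\lambda^*(F)}$ and no argument to kill the unipotent part. Individual root groups $U_\a$ with $\a\in\D^*(F)$ have trivial pointwise stabilizer by Theorem~\ref{isostabUa}(b), but that does not immediately give $Z_{U_{\lambda^*(F)}}(V_F)=\{1\}$ because $U_{\lambda^*(F)}$ is only \emph{generated} by those root groups. The paper sidesteps this by a different device: it rearranges $x=u_1 n u$ (with $u V_F=V_F$) into $u_1 n v_\eta = u^{-1}v_\eta$ for all $F$-weights $\eta$, compares lowest weight components to get $n\in Z_T(V_F)$ directly, and then runs the same argument with the \emph{negative} Bruhat decomposition $TG_{\lambda^*(F)}=U^-_{\lambda^*(F)}TN_{\lambda^*(F)}U^-_{\lambda^*(F)}$, concluding $x\in Z_T(V_F)U_{\lambda^*(F)}\cap Z_T(V_F)U^-_{\lambda^*(F)}=Z_T(V_F)$. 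This ``intersect the two opposite Bruhat decompositions'' trick is what kills the unipotent part cleanly, and it is missing from your plan. As written, your ``weight-by-weight analysis'' is not a proof.
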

\begin{proof} From Theorem \ref{isostabUa} (a) it follows that $U\subseteq N_G(V_F)$. Let $g=unu_1$ where $n\in N$ and $u, u_1\in U$. From Theorem \ref{isostabN} we observe
\begin{equation*}
    gV_F \subseteq V_F  \;\Leftrightarrow\;  nu_1 V_F \subseteq u^{-1}V_F
                \;\Leftrightarrow\;  nV_F \subseteq V_F
               \;\Leftrightarrow \; n \in TN_{\lambda(F)}.
\end{equation*}
Thus $N^\subseteq_G(V_F) = U TN_{\lambda(F)} U = P_{\lambda(F)}$. Since  $N^\subseteq_G(V_{F})$ is a group, we have $N_G(V_{F}) = N^\subseteq_G(V_{F}) $.

The group $G_{\lambda_*(F)}$ is generated by the root groups $U_{\a}$, $\a\in \D_*(F)$, which stabilize $V_F$ pointwise by Theorem \ref{isostabUa} (b). Hence  $G_{\lambda_*(F)} \subseteq Z_G(V_{F})$. From Theorem \ref{isostabUa} (b) we obtain that $U_\a\subseteq Z_G(V_F)$ for all $\a\in \Drp \setminus \D(F) $. Moreover, $Z_G(V_F)$ is a normal subgroup of $N_G(V_F)$, and the group $N_G(V_F)$ contains $U$. Since $U^{\lambda(F)}$ is the normal subgroup of $U$ generated by $U_\a$, $\a\in\Drp\setminus\D(F)$, we have $U^{\lambda(F)}\subseteq Z_G(V_F)$. Clearly, $Z_T(V_F)\subseteq Z_G(V_{F})$. So, $G_{\lambda_*(F)}Z_T(V_F) U^{\lambda(F)}\subseteq Z_G(V_{F})$.

Now let $g\in Z_G(V_F)\subseteq N_G(V_F)=P_{\lambda(F)}$. Since $P_{\lambda(F)}= TG_{\lambda^*(F)} G_{\lambda_*(F)} U^{\lambda(F)}$ we can write $g$ as a product $g=xh$ for some $x\in TG_{\lambda^*(F)}$ and $h\in G_{\lambda_*(F)} U^{\lambda(F)}\subseteq Z_G(V_F)$.
From the Bruhat decomposition $TG_{\lambda^*(F)} = U_{\lambda^*(F)} T N_{\lambda^*(F)} U_{\lambda^*(F)}$ we can write $x=u_1nu$ with $u_1,u\in U_{\lambda^*(F)}$ and $n\in TN_{\lambda^*(F)}$. Since $x=g h^{-1}\in Z_G(V_F)$ and $u^{-1}V_F=V_F$ we find
\begin{eqnarray*}
   u_1n v_\eta = u_1nu u^{-1}v_\eta= x u^{-1}v_\eta = u^{-1}v_\eta \quad\text{ for all }\quad v_\eta\in V_\eta,\;\eta\in P(V)\cap F.
\end{eqnarray*}
Comparing the components of smallest weight, we obtain $nv_\eta=v_\eta$ for all $v_\eta\in V_\eta$, $\eta\in P(V)\cap F$. Hence, by Theorem \ref{isostabN},
\begin{equation*}
    n\in Z_N(V_F)\cap  TN_{\lambda^*(F)} = Z_T(V_{F}) N_{\lambda_*(F)}\cap  T N_{\lambda^*(F)}\subseteq Z_T(V_F)T_{\lambda_*(F)} = Z_T(V_F).
\end{equation*}
It follows that $x\in Z_T(V_F)U_{\lambda^*(F)}$. With $TG_{\lambda^*(F)} = U_{\lambda^*(F)}^- T N_{\lambda^*(F)} U_{\lambda^*(F)}^-$ we get similarly $x\in Z_T(V_F)U_{\lambda^*(F)}^-$.
We conclude that $x\in  Z_T(V_F)U_{\lambda^*(F)}\cap  Z_T(V_F)U_{\lambda^*(F)}^-= Z_T(V_F)$ and $g=xh\in Z_T(V_F) G_{\lambda_*(F)} U^{\lambda(F)}$.
\hfill$\Box$\end{proof}

As in the theory of reductive linear algebraic monoids the left and right centralizers of $e\in E(M)$ in $G$ are defined by
\begin{eqnarray*}
C^{\,l}_G(e):=\{g\in G\mid ge=ege\} \quad\text{ and  }\quad C^{\,r}_G(e):=\{g\in G\mid eg=ege\}.
\end{eqnarray*}
The centralizer of $e\in E(M)$ in $G$ is defined by
\begin{eqnarray*}
    C_G(e):=\{g\in g\mid ge=eg\}=C^{\,l}_G(e)\cap C^{\,r}_G(e).
\end{eqnarray*}
From Proposition \ref{YYOmega} (a), (b), (c) and Theorem \ref{isostabG}, we obtain the following corollary, which is one of the main results of this section.
\begin{corollary}\label{cG1} Let $F$ be a fundamental face. Then
\begin{itemize}
\item[\rm (a)] $C_G^{\,l}(e(F))=P_{\lambda(F)}$ and $\,C_G^{\,r}(e(F))=P_{\lambda(F)}^-$.
\item[\rm (b)] $C_G(e(F))=L_{\lambda(F)}$.
\end{itemize}
\end{corollary}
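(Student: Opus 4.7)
The proof should follow at once from combining the translations in Proposition \ref{YYOmega} with the explicit description of the isotropy and stabilizer subgroups in Theorem \ref{isostabG}; there is essentially no further combinatorics, and the corollary is a formal consequence of results already proved.

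For the first identity in (a), the defining condition $ge(F)=e(F)ge(F)$ of $C_G^{\,l}(e(F))$ is, by Proposition \ref{YYOmega}(a), equivalent to $g\in N_G^\subseteq(V_F)$, and by Theorem \ref{isostabG} this set equals $P_{\lambda(F)}$. For the second identity, Proposition \ref{YYOmega}(b) identifies $C_G^{\,r}(e(F))$ with $N_{G^\Cai}^\subseteq(V_F)^\Cai$. Since $\Cai$ is an anti-involution of $G$ we have $G^\Cai=G$, so this equals $N_G^\subseteq(V_F)^\Cai=P_{\lambda(F)}^\Cai$. The one small check to carry out is $P_I^\Cai=P_I^-$: the Chevalley anti-involution sends $U_\a$ to $U_{-\a}$ for $\a\in\Dr$ and fixes $T$ pointwise, so it preserves $L_I$ (which is generated by $T$ and the $U_{\pm\a}$ with $\a\in W_I I$) and swaps $U^I_+$ with $U^I_-$; the Levi decomposition $P_I^+=L_I\ltimes U^I_+$ then yields $(P_I^+)^\Cai=L_I\ltimes U^I_-=P_I^-$.

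Part (b) follows at once. Either write $C_G(e(F))=C_G^{\,l}(e(F))\cap C_G^{\,r}(e(F))=P_{\lambda(F)}\cap P_{\lambda(F)}^-=L_{\lambda(F)}$, using the already-recorded identity $L_I=P_I^+\cap P_I^-$ from the Levi decomposition; or, more directly, observe that $ge(F)=e(F)g$ is equivalent to $ge(F)g^{-1}=e(F)$, and apply Proposition \ref{YYOmega}(c) together with Theorem \ref{isostabG} to obtain
\[
  C_G(e(F))=N_G^\subseteq(V_F)\cap N_G^\subseteq(V_F)^\Cai=P_{\lambda(F)}\cap P_{\lambda(F)}^-=L_{\lambda(F)}.
\]

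The only step that is not completely formal is the verification $P_I^\Cai=P_I^-$; since $\Cai$ is already known to preserve $T$ and to send $U_\a$ to $U_{-\a}$, tracking these generators through the Levi decomposition makes the identity immediate. Thus the proof as a whole is little more than a bookkeeping exercise on top of Proposition \ref{YYOmega} and Theorem \ref{isostabG}, with no genuinely hard step.
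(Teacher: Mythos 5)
Your proposal is correct and is essentially the paper's own argument: the paper derives the corollary exactly by combining Proposition \ref{YYOmega} (a), (b), (c) with Theorem \ref{isostabG}, and your verification that $P_{\lambda(F)}^{\Cai}=P_{\lambda(F)}^{-}$ via the action of the Chevalley anti-involution on $T$ and the root groups is the right (and only) bookkeeping step needed.
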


As in the theory of reductive linear algebraic monoids the left and right stabilizers of $e\in E(M)$ in $G$ are defined by
\begin{eqnarray*}
   S^{\,l}_G(e) := \{g\in G\mid ge=e\} \quad \text{ and } \quad S^{\,r}_G(e) :=\{g\in G\mid eg=e\}.
\end{eqnarray*}
The stabilizer of $e\in E(M)$ in $G$ is defined by
\begin{eqnarray*}
 S_G(e):=\{g\in G\mid ge=eg=e\}= S^{\,l}_G(e)\cap S^{\,r}_G(e).
\end{eqnarray*}
Combining Proposition \ref{YYOmega} (d), (e) and Theorem \ref{isostabG}, we get the corollary below, which is another main result of this section. Note also that the group $Z_T(V_F)$ has been described explicitly in Theorem \ref{isostabT}.
\begin{corollary}\label{cG2} Let $F$ be a fundamental face. Then
\begin{itemize}
\item[\rm (a)] $S_G^{\,l}(e(F))= G_{\lambda_*(F)}Z_T(V_{F})\ltimes U^{\lambda(F)}$ and $\,S_G^{\,r}(e(F))= U^{\lambda(F)}_-\rtimes G_{\lambda_*(F)}Z_T(V_{F})$.
\item[\rm (b)] $S_G(e(F))= G_{\lambda_*(F)}Z_T(V_{F}) $.
\end{itemize}
\end{corollary}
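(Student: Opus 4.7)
The plan is to apply the preceding Proposition and Theorem almost mechanically. For part (a), I first use Proposition \ref{YYOmega}(d) with $Y=G$ and $F'=F$ to identify
\[
  S_G^{\,l}(e(F)) = \{g\in G\mid ge(F)=e(F)\} = Z_G(V_F),
\]
then substitute the explicit description from Theorem \ref{isostabG} to obtain $G_{\lambda_*(F)}Z_T(V_F)\ltimes U^{\lambda(F)}$. For the right stabilizer I use Proposition \ref{YYOmega}(e), which gives $S_G^{\,r}(e(F))=Z_{G^\Cai}(V_F)^\Cai = Z_G(V_F)^\Cai$ since the Chevalley anti-involution $\Cai$ preserves $G$. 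Applying $\Cai$ to the factorization from Theorem \ref{isostabG} and remembering that it reverses the order of products, I would check: $\Cai$ fixes $T$ pointwise and hence fixes $Z_T(V_F)$; it stabilizes $G_{\lambda_*(F)}$ because this group is generated by the root groups $U_{\pm\a}$, $\a\in\lambda_*(F)$, and $(U_\a)^\Cai=U_{-\a}$; and it sends $U^{\lambda(F)}$ to $U^{\lambda(F)}_-$ since the latter is generated by the root groups of $\D^{re}_-\setminus W_{\lambda(F)}\lambda(F) = -(\D^{re}_+\setminus W_{\lambda(F)}\lambda(F))$. This yields $U^{\lambda(F)}_-\rtimes G_{\lambda_*(F)}Z_T(V_F)$, as claimed.

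For part (b) I would observe $S_G(e(F)) = S_G^{\,l}(e(F))\cap S_G^{\,r}(e(F))$. Since $S_G^{\,l}(e(F))\subseteq P_{\lambda(F)}$ and $S_G^{\,r}(e(F))\subseteq P_{\lambda(F)}^-$ by Corollary \ref{cG1}, the intersection lies in $P_{\lambda(F)}\cap P_{\lambda(F)}^- = L_{\lambda(F)}$. Given $g$ in the intersection, the decomposition $g=hu$ with $h\in G_{\lambda_*(F)}Z_T(V_F)\subseteq L_{\lambda(F)}$ and $u\in U^{\lambda(F)}$ coming from part (a), combined with the uniqueness of the Levi decomposition $P_{\lambda(F)}=L_{\lambda(F)}\ltimes U^{\lambda(F)}$ and the fact that $g\in L_{\lambda(F)}$, forces $u=1$, hence $g\in G_{\lambda_*(F)}Z_T(V_F)$. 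The reverse inclusion is immediate since $G_{\lambda_*(F)}Z_T(V_F)$ sits inside both $S_G^{\,l}(e(F))$ and $S_G^{\,r}(e(F))$ by the two formulas of part (a).

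There is no substantive obstacle here: all the real work has already been done in Theorems \ref{isostabUa}--\ref{isostabG}, and the only subtlety is bookkeeping, namely that applying $\Cai$ to a semidirect product reverses the order of its factors, which explains why the unipotent radical appears on the left in the expression for $S_G^{\,r}(e(F))$.
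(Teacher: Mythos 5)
Your proposal is correct and is essentially the paper's own argument: the paper proves this corollary precisely by combining Proposition \ref{YYOmega} (d), (e) with Theorem \ref{isostabG}, with the right stabilizer obtained by applying the Chevalley anti-involution (reversing the semidirect factors) and part (b) by intersecting inside $P_{\lambda(F)}\cap P_{\lambda(F)}^-=L_{\lambda(F)}$. Your bookkeeping of how $\Cai$ acts on $Z_T(V_F)$, $G_{\lambda_*(F)}$, and $U^{\lambda(F)}$ is accurate, so there is nothing to add.
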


From Corollaries \ref{cG1} and \ref{cG2}, or alternatively, from Proposition \ref{YYOmega} and Theorem \ref{isostabUa} we get the following properties of the root groups, which we often use.
\begin{corollary}\label{RennerLemma} Let $F$ be a face and set $e=e(F)$. Let $\a\in\D^{re}$ and $u\in U_\a\setminus\{1\}$.

{\rm (a)} If $\a \in \D_*(F)$ then $u e = e u=e$.

{\rm (b)} If $\a \in \D^*(F)$ then $u e = e u\neq e$.

{\rm (c)} If $\a \in \D_p(F) $ then $u e = e\neq eu$.

{\rm (d)} If $\a \in \D_n(F)$ then $eu = e\neq ue$.
\end{corollary}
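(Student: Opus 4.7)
The plan is to derive all four assertions mechanically from Proposition \ref{YYOmega} applied to $Y=U_\a$, using Theorem \ref{isostabUa} to evaluate the relevant isotropy and stabilizer subgroups. The one preliminary observation I would record is the behaviour of the four sets $\D_*(F)$, $\D^*(F)$, $\D_p(F)$, $\D_n(F)$ under $\a\mapsto -\a$: the first two are symmetric because $r_\a=r_{-\a}$, while the last two are interchanged, which is immediate from the defining formulas $\D_p(F)=w(\Drp\setminus\D(F'))$, $\D_n(F)=w(\Drm\setminus\D(F'))$ (for $F=wF'$ with $F'$ fundamental) together with $\D(F')=-\D(F')$. Combined with $U_\a^\Cai=U_{-\a}$, this lets me move between identities of the form $ue\cdots$ and $eu\cdots$ by replacing $\a$ with $-\a$ and $u$ with $u^\Cai$.

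For (a), since $\a\in\D_*(F)\subseteq \D_p(F)\cup\D_*(F)$, Theorem \ref{isostabUa} (b) places $u$ in $Z_{U_\a}(V_F)$, so $ue=e$ by Proposition \ref{YYOmega} (d); the identity $eu=e$ follows from the parallel statement for $-\a\in\D_*(F)$ with $u^\Cai\in U_{-\a}\subseteq Z_{U_{-\a}}(V_F)$ via Proposition \ref{YYOmega} (e). For (b), $\a\in\D^*(F)\subseteq \D(F)$ yields $u\in N^\subseteq_{U_\a}(V_F)$ by Theorem \ref{isostabUa} (a), hence $ue=eue$ by Proposition \ref{YYOmega} (a); the analogous statement for $-\a$ gives $eu=eue$ by Proposition \ref{YYOmega} (b), and together they imply $ue=eu$. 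If in addition $ue=e$, then Proposition \ref{YYOmega} (d) would force $u\in Z_{U_\a}(V_F)$, which equals $\{1\}$ by Theorem \ref{isostabUa} (b) since $\a\notin \D_p(F)\cup\D_*(F)$, contradicting $u\neq 1$.

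For (c), $\a\in\D_p(F)\subseteq \D_p(F)\cup\D_*(F)$ again gives $u\in Z_{U_\a}(V_F)$ and thus $ue=e$ by Proposition \ref{YYOmega} (d). To see $eu\neq e$, I would exploit $-\a\in\D_n(F)$ together with Theorem \ref{isostabUa} (a) to obtain $N^\subseteq_{U_{-\a}}(V_F)=\{1\}$; if $eu$ were $e$ then $eu=eue$ trivially, so Proposition \ref{YYOmega} (b) would force $u^\Cai\in\{1\}$, contradicting $u\neq 1$. Part (d) is the mirror image of (c), and follows either by rerunning the argument with the roles of $\a$ and $-\a$ interchanged, or by applying (c) to the root $-\a$ and the element $u^\Cai$.

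The whole argument is bookkeeping once the symmetry of the four $\D$-sets under $\a\mapsto-\a$ is recorded, and I do not foresee any real obstacle; the only small pitfall to avoid is confusing $\D_p(F)\cup\D(F)$ (governing $N^\subseteq_{U_\a}(V_F)$) with $\D_p(F)\cup\D_*(F)$ (governing $Z_{U_\a}(V_F)$), which is why each of the four parts splits cleanly according to which of $\D_*(F)$, $\D^*(F)$, $\D_p(F)$, $\D_n(F)$ contains $\a$.
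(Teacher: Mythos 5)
Your proof is correct and follows the second of the two routes the paper explicitly names for this corollary (Proposition~\ref{YYOmega} combined with Theorem~\ref{isostabUa}), with the extra care of tracking how the Chevalley anti-involution sends $U_\a$ to $U_{-\a}$ and swaps $\D_p(F)$ with $\D_n(F)$ while fixing $\D_*(F)$ and $\D^*(F)$. The paper does not spell out the argument, and your write-up is a faithful and correct expansion of the route it acknowledges.
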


We employ many times the following consequence of Corollaries \ref{cG1} and \ref{cG2}.
\begin{corollary}\label{UeF}  Let $F$ be a fundamental face.

{\rm (a)} Write $u\in U$ as a product $u=u_1u_2$ with $u_1\in U_{\lambda^*(F)}$ and $u_2\in U_{\lambda_*(F)}\ltimes U^{\lambda(F)}$. Then
\[
 ue(F)=u_1 e(F)=e(F)u_1 .
\]

{\rm (b)} Write $u\in U^-$ as a product $u=u_1 u_2$ with  $u_1\in U^{\lambda(F)}_-\rtimes  U_{\lambda_*(F)}^- $ and $u_2\in U_{\lambda^*(F)}^-$. Then
\[
     e(F)u = e(F)u_2 =  u_2e(F) .
\]
\end{corollary}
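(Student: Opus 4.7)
My plan is first to verify the factorization $u=u_1u_2$ in part (a). Since $\lambda(F)=\lambda^*(F)\sqcup\lambda_*(F)$ and these two subsets of $\Pi$ are separated, the positive real roots of the sub-root-system split as $W_{\lambda(F)}\lambda(F)\cap\Drp = (W_{\lambda^*(F)}\lambda^*(F)\cap\Drp)\sqcup(W_{\lambda_*(F)}\lambda_*(F)\cap\Drp)$. Consequently $U_{\lambda(F)}=U_{\lambda^*(F)}\cdot U_{\lambda_*(F)}$ with the two factors commuting. Combined with the Levi decomposition $U=U_{\lambda(F)}\ltimes U^{\lambda(F)}$, this gives $U=U_{\lambda^*(F)}\cdot(U_{\lambda_*(F)}\ltimes U^{\lambda(F)})$, legitimating the stated factorization.

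Next I would establish $ue(F)=u_1e(F)$. By Corollary \ref{cG2}(a) the left stabilizer $S_G^{\,l}(e(F))=G_{\lambda_*(F)}Z_T(V_F)\ltimes U^{\lambda(F)}$ contains both $U_{\lambda_*(F)}\subseteq G_{\lambda_*(F)}$ and $U^{\lambda(F)}$; being a subgroup, it contains $U_{\lambda_*(F)}\ltimes U^{\lambda(F)}$. Hence $u_2\in S_G^{\,l}(e(F))$, so $u_2e(F)=e(F)$ and $ue(F)=u_1u_2e(F)=u_1e(F)$. The remaining identity $u_1e(F)=e(F)u_1$ I would obtain from Corollary \ref{RennerLemma}(b) together with the identification $\D^*(F)=W_{\lambda^*(F)}\lambda^*(F)$ recalled earlier in this section: every generator $U_\alpha$ of $U_{\lambda^*(F)}$ (with $\alpha\in W_{\lambda^*(F)}\lambda^*(F)\cap\Drp\subseteq\D^*(F)$) commutes with $e(F)$, and hence so does $U_{\lambda^*(F)}$.

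Part (b) I would deduce by applying the Chevalley anti-involution $\Cai$ to part (a), using $e(F)^\Cai=e(F)$, $U^\Cai=U^-$, $U_I^\Cai=U_I^-$, $(U^I)^\Cai=U^I_-$, and the order reversal of products. Setting $u:=v^\Cai$ where $v\in U$ factors as $v=v_1v_2$ with $v_1\in U_{\lambda^*(F)}$ and $v_2\in U_{\lambda_*(F)}\ltimes U^{\lambda(F)}$, one gets $u=v_2^\Cai v_1^\Cai$, which is the factorization claimed in (b) with $u_1:=v_2^\Cai\in U^{\lambda(F)}_-\rtimes U^-_{\lambda_*(F)}$ and $u_2:=v_1^\Cai\in U^-_{\lambda^*(F)}$; and applying $\Cai$ to the chain $ve(F)=v_1e(F)=e(F)v_1$ yields $e(F)u=e(F)u_2=u_2e(F)$.

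The only genuinely delicate point in the plan is the separation-based factorization $U_{\lambda(F)}=U_{\lambda^*(F)}\cdot U_{\lambda_*(F)}$ underlying part (a); once that is in hand, the rest is a routine application of the already-established descriptions of $S_G^{\,l}(e(F))$ and of $\D^*(F)$, so I do not anticipate serious obstacles.
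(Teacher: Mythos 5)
Your proof is correct, and it matches what the paper has in mind: the paper states this corollary without proof as an immediate consequence of Corollaries \ref{cG1} and \ref{cG2}, and your argument spells out precisely why. The factorization $U = U_{\lambda^*(F)}\cdot(U_{\lambda_*(F)}\ltimes U^{\lambda(F)})$ that you flag as the delicate point is indeed the one thing that needs checking, and your separation argument is the right way to get it. One tiny simplification: for the commuting identity $u_1e(F)=e(F)u_1$ you can bypass Corollary \ref{RennerLemma} entirely and note that $U_{\lambda^*(F)}\subseteq L_{\lambda(F)}=C_G(e(F))$ by Corollary \ref{cG1}(b); this is the more direct route from the two cited corollaries, though your route via Corollary \ref{RennerLemma} (itself a consequence of them) is equally valid.
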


Let $F$ be a fundamental face. We denote the projections that belong to the semidirect product decompositions $P_{\lambda(F)} = L_{\lambda(F)} \ltimes U^{\lambda(F)}$ and $P_{\lambda(F)}^- = U^{\lambda(F)}_- \rtimes L_{\lambda(F)}$ by
\[
   \theta_F :  P_{\lambda(F)} \rightarrow L_{\lambda(F)}  \quad\text{ and }\quad  \theta^-_F :  P^-_{\lambda(F)}\rightarrow L_{\lambda(F)}.
\]
Note that the projections $\theta_F$ and $\theta^-_F$ are morphisms of groups whose restrictions to $L_{\lambda(F)} = P_{\lambda(F)}\cap P^-_{\lambda(F)}$ coincide.

\begin{proposition}\label{PeF} Let $F$ be a fundamental face. If $g\in P_{\lambda(F)}$ and $h\in P^-_{\lambda(F)}$, then
\[
    \begin{aligned}
        {\rm (a)} \quad ge(F) &= \theta_F (g)e(F) = e(F)\theta_F(g).\\
        {\rm (b)} \quad e(F)h &= e(F)\theta^-_F(h) = \theta^-_F(h) e(F).\\
    \end{aligned}
\]
\end{proposition}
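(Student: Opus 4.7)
The plan is to reduce both parts to the already established Corollaries \ref{cG1} and \ref{RennerLemma} via the Levi decompositions of $P_{\lambda(F)}$ and $P^-_{\lambda(F)}$. First I would write $g = l u$ with $l := \theta_F(g) \in L_{\lambda(F)}$ and $u \in U^{\lambda(F)}$. To handle the factor $u$, I will use the description preceding Corollary \ref{cG1}: the subgroup $U^{\lambda(F)}$ is generated by the root groups $U_\a$ for $\a \in \Delta^{re}_+ \setminus W_{\lambda(F)}\lambda(F) = \Delta_p(F)$. By Corollary \ref{RennerLemma}(c), every such root-group element $u_\a$ satisfies $u_\a e(F) = e(F)$. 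A straightforward induction on the length of a product expressing $u$ then yields $u e(F) = e(F)$.

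Combining these observations gives $g e(F) = l u e(F) = l e(F) = \theta_F(g) e(F)$, which is the first equality in (a). For the second equality, I invoke Corollary \ref{cG1}(b), which asserts $L_{\lambda(F)} = C_G(e(F))$; in particular, $l e(F) = e(F) l$, so $\theta_F(g) e(F) = e(F) \theta_F(g)$.

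Part (b) is proved by the symmetric argument: write $h = u^- l$ with $u^- \in U^{\lambda(F)}_-$ and $l = \theta^-_F(h) \in L_{\lambda(F)}$. Since $U^{\lambda(F)}_-$ is generated by the root groups $U_\a$ with $\a \in \Delta_n(F)$, Corollary \ref{RennerLemma}(d) and induction give $e(F) u^- = e(F)$. Hence $e(F) h = e(F) u^- l = e(F) l = e(F) \theta^-_F(h)$, and the second equality again follows from $L_{\lambda(F)} = C_G(e(F))$.

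There is no real obstacle here: the work has already been done in the preceding results, and the proposition is essentially a bookkeeping statement that records the Levi-decomposition reformulation of the centralizer/stabilizer identities. The only point requiring minor care is verifying that the elementary identity $u_\a e(F) = e(F)$ for a single root-group generator propagates to arbitrary products in $U^{\lambda(F)}$, which is immediate from associativity.
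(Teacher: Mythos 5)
Your argument is correct and follows essentially the same route as the paper: decompose $g$ by the Levi decomposition $P_{\lambda(F)}=L_{\lambda(F)}\ltimes U^{\lambda(F)}$, annihilate the unipotent factor against $e(F)$, and commute the Levi factor using $L_{\lambda(F)}=C_G(e(F))$ from Corollary \ref{cG1}(b); the paper merely obtains (b) from (a) by applying the Chevalley anti-involution instead of rerunning the symmetric computation. The only place you work harder than necessary is the unipotent factor: your induction over root-group generators needs that $U^{\lambda(F)}$ is generated \emph{as a group} by the $U_\a$ with $\a\in\D_p(F)$, whereas the paper only records it as the \emph{normal} subgroup of $U$ generated by these; this can be bypassed by quoting Corollary \ref{cG2}(a) (as the paper does), which gives $U^{\lambda(F)}\subseteq S_G^{\,l}(e(F))$ and hence $ue(F)=e(F)$ for every $u\in U^{\lambda(F)}$ in one step.
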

\begin{proof}
Let $g=\theta_F (g)g_1$ where $\theta_F (g)\in L_{\lambda(F)}$ and $g_1\in U^{\lambda(F)}$. From Corollaries \ref{cG1} and \ref{cG2} we find that
\[
        ge(F)   = \theta_F (g)g_1 e(F)  = \theta_F (g)e(F) = e(F)\theta_F (g),
\]
which shows {\rm  (a)}. Applying the Chevalley anti-involution, we obtain {\rm (b)}.
\hfill $\Box$  \end{proof}

In Theorem \ref{PutchaDecomposition} below we show that the elements of $M$ can be written in the form
\begin{eqnarray*}
         a e(F)b \quad \text { where }\quad a,b\in G,\;F\in {\mathcal F } .
\end{eqnarray*}
The following theorem is another main result of this section. It describes the equality of such expressions. In combination with the Birkhoff decomposition of $G$ and Lemma \ref{NNormalizesE}, the theorem allows to describe the multiplication of two such expressions.
\begin{theorem}\label{geF} Let $g, h\in G$ and $F, F'$ be fundamental faces. The following statements are equivalent.
\[
  \begin{aligned}
    {\rm (a)}\quad &ge(F) = e(F')h. \\
    {\rm (b)}\quad &F=F',~ g\in P_{\lambda(F)}, ~ h\in P^-_{\lambda(F)}, \text{ and } \theta_F (g)=\theta_F^-(h)g_1 \text{ for some }
    g_1\in  G_{\lambda_*(F)}Z_T(V_{F}) .
  \end{aligned}
\]
\end{theorem}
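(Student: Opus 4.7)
The plan is to establish both directions of the equivalence, with $(a) \Rightarrow (b)$ being the substantive one.

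For $(b) \Rightarrow (a)$, I will compute both sides of the desired equality using Proposition \ref{PeF}. Since $g \in P_{\lambda(F)}$ and $h \in P^-_{\lambda(F)}$, that proposition gives $ge(F) = \theta_F(g)e(F)$ and $e(F)h = \theta_F^-(h)e(F)$. Using $g_1 e(F) = e(F)$ (which follows from $g_1 \in S_G(e(F))$ by Corollary \ref{cG2}(b)) and substituting $\theta_F(g) = \theta_F^-(h)g_1$, one obtains $ge(F) = \theta_F^-(h)g_1 e(F) = \theta_F^-(h)e(F) = e(F)h = e(F')h$.

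For $(a) \Rightarrow (b)$, I will first extract the subspace content of $ge(F) = e(F')h$. Comparing images of the two sides (using that $g$ and $h$ are bijections of $V$) yields $gV_F = V_{F'}$, and comparing kernels yields $hV_F^\perp = V_{F'}^\perp$. The crux is then to deduce $F = F'$. My approach here is to observe that, because $V_{F'}$ is $T$-stable, every $t \in T$ satisfies $(g^{-1}tg)V_F = g^{-1}tV_{F'} = V_F$, whence $g^{-1}Tg \subseteq N_G(V_F) = P_{\lambda(F)}$ by Theorem \ref{isostabG}. Both $T$ and $g^{-1}Tg$ are then maximal tori of $G$ contained in $P_{\lambda(F)}$, so by conjugacy of maximal tori inside $P_{\lambda(F)}$ (which follows from the twin BN-pair structure applied to the Levi $L_{\lambda(F)}$), there exists $p \in P_{\lambda(F)}$ with $gp \in N_G(T) = N$. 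Since $p \in N_G(V_F)$ stabilizes $V_F$ and $gp$ represents some $w \in W$, Lemma \ref{NNormalizesE} gives $V_{F'} = gV_F = (gp)V_F = V_{wF}$, so $F' = wF$; as both $F$ and $F'$ are fundamental, Corollary \ref{ff-cs} forces $F = F'$.

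With $F = F'$ established, the remaining assertions follow cleanly. Theorem \ref{isostabG} gives $g \in P_{\lambda(F)}$ directly from $gV_F = V_F$; for $h$, combining $hV_F^\perp = V_F^\perp$ with the identity $N_G(V_F^\perp) = N_G(V_F)^\Cai = P^-_{\lambda(F)}$ (from (\ref{orthisostab}) together with $(P_{\lambda(F)})^\Cai = P^-_{\lambda(F)}$) gives $h \in P^-_{\lambda(F)}$. Proposition \ref{PeF} applied to both sides of the original equation then produces $\theta_F(g)e(F) = \theta_F^-(h)e(F)$, so $g_1 := \theta_F^-(h)^{-1}\theta_F(g) \in L_{\lambda(F)}$ satisfies $g_1 e(F) = e(F)$. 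By Corollary \ref{cG2}(a), $S^{\,l}_G(e(F)) = G_{\lambda_*(F)}Z_T(V_F) \ltimes U^{\lambda(F)}$, whose intersection with $L_{\lambda(F)}$ (with respect to $P_{\lambda(F)} = L_{\lambda(F)} \ltimes U^{\lambda(F)}$) is precisely $G_{\lambda_*(F)}Z_T(V_F) = S_G(e(F))$, yielding the desired $g_1 \in S_G(e(F))$. The main obstacle will be justifying the conjugacy-of-tori step in the middle paragraph within the Kac-Moody setting.
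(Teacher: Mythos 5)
Your direction $(b)\Rightarrow(a)$ is correct and matches the paper. In $(a)\Rightarrow(b)$, the extraction of $gV_F=V_{F'}$ and $hV_F^\perp=V_{F'}^\perp$ is fine, and everything after "$F=F'$ is established" (getting $g\in P_{\lambda(F)}$, $h\in P^-_{\lambda(F)}$ via Theorem \ref{isostabG} and (\ref{orthisostab}), then $g_1:=\theta_F^-(h)^{-1}\theta_F(g)\in G_{\lambda_*(F)}Z_T(V_F)$ via Proposition \ref{PeF} and Corollary \ref{cG2}) also matches the paper.

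The gap is exactly where you flag it: the deduction of $F=F'$. You reduce it to conjugacy of maximal tori inside the parabolic $P_{\lambda(F)}$, asserting this "follows from the twin BN-pair structure applied to the Levi." It doesn't: a twin BN-pair fixes a torus $T$ and gives the Bruhat/Birkhoff decompositions relative to it, but does not by itself yield that an arbitrary conjugate $g^{-1}Tg$ inside $P_{\lambda(F)}$ is $P_{\lambda(F)}$-conjugate to $T$. Conjugacy of Cartan subgroups in Kac--Moody groups is a genuine theorem (Kac--Peterson), and even granting it you would still need a separate argument to force the conjugating element into the parabolic. None of this is in the paper's toolkit, so as written the step is unjustified.

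The paper avoids this entirely with an elementary decomposition: write $g=unv$ with $u,v\in U$ and $n\in N$ projecting to $w\in W$ (Bruhat decomposition of $G$, absorbing the $T$-factor into $n$). By Theorem \ref{isostabG}, $U\subseteq P_{\lambda(F)}=N_G(V_F)$ and $U\subseteq P_{\lambda(F')}=N_G(V_{F'})$, so $vV_F=V_F$ and $u^{-1}V_{F'}=V_{F'}$. Then $gV_F=V_{F'}$ forces $nV_F=V_{F'}$, i.e. $wF=F'$ by Lemma \ref{NNormalizesE}/(\ref{NTAction2}), and $F=F'$ by Corollary \ref{ff-cs} since both are fundamental. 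You should replace your middle paragraph with this Bruhat-decomposition argument; the rest of your write-up then goes through unchanged.
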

\begin{proof}
We first show that (b) implies (a). Since $F'=F$, from Proposition \ref{PeF} (a), Corollary \ref{cG2}, and Proposition \ref{PeF} (b) we get
\begin{equation*}
               ge(F) = \theta_F (g)e(F) = \theta_F^-(h)g_1e(F)= \theta_F^-(h)e(F)= e(F')h .
\end{equation*}

Next, we prove that {\rm (a)} implies {\rm (b)}. Comparing the images of $g e(F)$ and $e(F')h$ we find $g V_F =  V_{F'}$. Let $g=unv$ where $u, v\in U$, and $n\in N$ projecting to $w\in W$. From Theorem \ref{isostabG} we obtain $v V_F =  V_{F}$ and $u^{-1}V_{F'} =  V_{F'}$. So, $n V_F =  V_{F'}$. Hence, $wF = F'$, where $F$ and $F'$ are fundamental faces. It follows that $F = F'$.

 We observe from $ge(F) = e(F)h$ that $g V_F =  V_{F}$ and $h V_F^\perp =  V_{F}^\perp$. Then $g\in N_G(V_F)=P_{\lambda(F)}$ and $h\in N_G(V_F^\perp)= N_{G^\Cai}(V_F)^\Cai = N_G(V_F)^\Cai = P^-_{\lambda(F)}$ by Theorem \ref{isostabG} and (\ref{orthisostab}). Thanks to Proposition \ref{PeF}, we obtain
  \[
        \theta_F (g)e(F) = ge(F)=e(F)h=\theta_F^-(h)e(F).
  \]
From Corollary \ref{cG2} it follows that
  \[
    \theta_F^- (h)^{-1}\theta_F (g) \in ( G_{\lambda_*(F)}Z_T(V_{F})\ltimes U^{\lambda(F)} )\cap L_{\lambda(F)}=  G_{\lambda_*(F)}Z_T(V_{F}).
  \]
Thus $\theta_F (g) = \theta_F^-(h) g_1$ for some $g_1\in   G_{\lambda_*(F)}Z_T(V_{F})$.
\hfill $\Box$ \end{proof}

From the preceding theorem and Lemma \ref{NNormalizesE} we obtain easily the following result.
\begin{corollary}\label{conjugateFaces}  Let $F$ and $F'$ be faces of $H$. If $Ge(F)G = Ge(F')G$, then there exists $w\in W$ such that $F'=wF$.
\end{corollary}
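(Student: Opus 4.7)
The plan is to reduce the claim to the case where both $F$ and $F'$ are fundamental faces, and then to invoke Theorem \ref{geF} directly. The reduction is inexpensive: by Corollary \ref{ff-cs} every face of $H$ is $W$-equivalent to a unique fundamental face, and by Lemma \ref{NNormalizesE} any such $W$-equivalence is implemented by conjugation by an element of $N \subseteq G$, which preserves the double coset $G e(\cdot) G$.

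Concretely, I would pick fundamental faces $F_1, F_1' \in \mathcal{F}$ and elements $w_1, w_2 \in W$ with $F = w_1 F_1$ and $F' = w_2 F_1'$. Lifting $w_1, w_2$ to $n_1, n_2 \in N$, Lemma \ref{NNormalizesE} gives $e(F) = n_1 e(F_1) n_1^{-1}$ and $e(F') = n_2 e(F_1') n_2^{-1}$, so the hypothesis $G e(F) G = G e(F') G$ is equivalent to $G e(F_1) G = G e(F_1') G$. Moreover, the conclusion $F' = wF$ for some $w \in W$ is then equivalent to $F_1 = F_1'$, since fundamental faces form a cross-section for the $W$-action on $\mathcal{F}(H)$. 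Thus the task reduces to showing that $F = F'$ whenever $F, F' \in \mathcal{F}$ and $G e(F) G = G e(F') G$.

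In this reduced setup, the hypothesis furnishes $g, h \in G$ with $e(F') = g\, e(F)\, h$. Right-multiplying by $h^{-1}$ yields $g\, e(F) = e(F')\, h^{-1}$, which is precisely an equation of the form treated by Theorem \ref{geF}. The implication $\mathrm{(a)} \Rightarrow \mathrm{(b)}$ of that theorem immediately forces $F = F'$; the additional parabolic factorization data asserted in $\mathrm{(b)}$ concerning $g$ and $h^{-1}$ is not needed here.

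Because Theorem \ref{geF} has already been proved, no real obstacle remains. The only mild subtlety is the algebraic maneuver of transforming the three-term equation $e(F') = g\, e(F)\, h$ into the two-sided equation $g\, e(F) = e(F')\, h^{-1}$, which is nothing more than right-multiplication. All the genuine technical work --- the analysis of the left and right isotropy groups of the face vector spaces $V_F$, culminating in Theorem \ref{geF} --- was done earlier, so this corollary falls out as an essentially formal consequence.
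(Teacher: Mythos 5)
Your proposal is correct and follows exactly the route the paper intends: the paper states that the corollary is obtained from Theorem \ref{geF} together with Lemma \ref{NNormalizesE}, which is precisely your reduction to fundamental faces via $W$-conjugation followed by rewriting $e(F')=g\,e(F)\,h$ as $g\,e(F)=e(F')\,h^{-1}$ and invoking Theorem \ref{geF}. No gaps.
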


\subsection{The cross-section lattice}
The {\it cross-section lattice} of $M$ relative to $T$ and $B$ is defined by
\begin{equation}\label{defcrl}
    \Lambda := \{e(F) \mid F \in \mathcal F\}\subseteq E(\T).
\end{equation}
It is easily seen that $\Lambda$ is a finite monoid isomorphic to $({\mathcal F},\cap)$. Its elements are called {\it fundamental idempotents}.

To obtain alternative descriptions of $\Lambda$ we need the following lemma, but we omit its proof since it is straightforward.
\begin{lemma}\label{ZL}
Let $e\in E(\T)$. Then the following conditions are equivalent.

  {\rm (a)} $Be=eBe$.

  {\rm (b)} $Be\subseteq eBe$.

  {\rm (c)} $be=ebe$ for all $b\in B$.
%
%
%
%
\end{lemma}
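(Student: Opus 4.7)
The equivalence is essentially formal, relying only on $e^{2}=e$, so my plan is to verify the three short implications in a cycle: (a)$\Rightarrow$(b)$\Rightarrow$(c)$\Rightarrow$(a).

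The implication (a)$\Rightarrow$(b) is trivial, since equality of the two sets certainly entails the containment.

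For (b)$\Rightarrow$(c), I would pick an arbitrary $b\in B$. By the hypothesis $Be\subseteq eBe$, there exists $b'\in B$ with $be=eb'e$. Left-multiplying by $e$ and using $e^{2}=e$ yields $ebe=e(eb'e)=eb'e=be$, which is (c).

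For (c)$\Rightarrow$(a), the identity $be=ebe$ for every $b\in B$ gives immediately $Be\subseteq eBe$. Conversely, any element $ebe\in eBe$ equals $be\in Be$ by the same identity, so $eBe\subseteq Be$. Combining both inclusions gives (a). No step presents any real obstacle here; the lemma is just a bookkeeping statement whose only content is that $e$ is idempotent and that $B$ is closed under the operations involved, so nothing beyond elementary manipulation is needed.
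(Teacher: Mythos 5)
Your proof is correct; the paper itself omits the proof of this lemma, remarking only that it is straightforward. Your cyclic verification (a)$\Rightarrow$(b)$\Rightarrow$(c)$\Rightarrow$(a), using nothing beyond $e^2=e$ and associativity, is exactly the intended elementary argument.
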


The cross-section lattice $\Lambda$ can be obtained by the Borel subgroup $B$ as follows.
\begin{theorem}\label{PutchaLattice} We have $\Lambda = \{e\in E(\T)\mid Be\subseteq eB \}= \{e\in E(\T)\mid Be=eBe\}$.
\end{theorem}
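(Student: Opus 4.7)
The plan is to establish the two equalities in turn. The equivalence of the two right-hand sets follows immediately from Lemma~\ref{ZL}: $Be = eBe$ obviously gives $Be \subseteq eB$, and conversely, if $Be \subseteq eB$, then for each $b\in B$ there is $b'\in B$ with $be = eb'$; left-multiplying by $e$ yields $ebe = eb' = be$, which is condition (c) of Lemma~\ref{ZL}, hence equivalent to $Be = eBe$.

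For the inclusion $\Lambda \subseteq \{e \in E(\T) : Be = eBe\}$, I will invoke Corollary~\ref{cG1}(a): for any $F \in \F$, the containment $B \subseteq P_{\lambda(F)} = C_G^l(e(F))$ is automatic (every standard parabolic contains $B$), so $be(F) = e(F) b e(F)$ for every $b \in B$.

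The main part of the proof is the reverse inclusion. I would take $e = e(F) \in E(\T)$ satisfying $Be = eBe$; by Proposition~\ref{idempotentOfNbar}(a), $F \in \F(H)$, and the task is to show $F \in \F$. The case $F = \emptyset$ is immediate, so assume $F \neq \emptyset$. Applying Proposition~\ref{YYOmega}(a) to $ue = eue$ for each $u \in U$ gives $U \subseteq N_G^\subseteq(V_F)$, which upgrades to $U \subseteq N_G(V_F)$ because $U$ is a group. I then write $F = wF'$ with $F'$ fundamental (Corollary~\ref{ff-cs}), and take $w \in W^{\lambda(F')}$ to be the minimum-length representative of $wW_{\lambda(F')}$ (legitimate because $W_{\lambda(F')} = W(F')$ fixes $F'$ setwise). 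Theorem~\ref{isostabG} then identifies $N_G(V_F) = n_w P_{\lambda(F')} n_w^{-1}$, and conjugating by $n_w^{-1}$ produces $U_{w^{-1}\a} \subseteq P_{\lambda(F')}$ for every $\a \in \Drp$. The Levi decomposition of $P_{\lambda(F')}$ implies that $U_\beta \subseteq P_{\lambda(F')}$ if and only if $\beta \in \Drp$ or $\beta$ lies in $W_{\lambda(F')}\lambda(F') \cap \Drm$. Hence for every $\a \in \Drp$ with $w^{-1}\a < 0$, the element $\gamma := -w^{-1}\a$ is a positive root of the $\lambda(F')$-subsystem. Yet minimality of $w \in W^{\lambda(F')}$ forces $w\gamma > 0$ for every such $\gamma$ (any positive root of the $\lambda(F')$-subsystem is a non-negative $\Z$-combination of $\{\a_j : j \in \lambda(F')\}$, each of which $w$ sends to a positive real root, and a root that is a non-negative combination of positive real roots is itself positive); this contradicts $w\gamma = -\a < 0$. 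Therefore no such $\a$ exists, $w = 1$, and $F = F'$ is fundamental, so $e \in \Lambda$.

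The main obstacle will be this final step, where the monoidal identity $Be = eBe$ has to be converted into a root-theoretic constraint on $w$. Pinning down via the Levi decomposition precisely which real root groups $U_\beta$ lie in $P_{\lambda(F')}$, and then invoking the standard fact that a minimum-length coset representative $w \in W^{\lambda(F')}$ sends every positive root of the parabolic subsystem to a positive real root of $\Dr$, is the technical heart of the argument.
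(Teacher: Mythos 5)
Your proposal is correct. The skeleton agrees with the paper's proof (Lemma~\ref{ZL} for the equality of the two right-hand sets, and the centralizer result $C_G^{\,l}(e(F))=P_{\lambda(F)}\supseteq B$ for the inclusion $\Lambda\subseteq\{e\mid Be=eBe\}$), but you handle the reverse inclusion by a genuinely different route. The paper argues by contrapositive: for $e=e(wF')$ with $w\notin W(F')$ it cites \cite[Chapter 5, Proposition 3]{MP95} to produce a single witness root $\a\in\Drp$ with $w^{-1}\a\in\Drm\setminus\D(F')$, i.e.\ $\a\in\Drp\cap\D_n(wF')$, and then Corollary~\ref{RennerLemma}~(d) gives $eue=e\neq ue$, contradicting Lemma~\ref{ZL}. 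You instead argue directly: $Be=eBe$ yields $U\subseteq N_G(V_F)=n_wP_{\lambda(F')}n_w^{-1}$, and choosing $w\in W^{\lambda(F')}$ minimal you rule out every $\a\in\Drp$ with $w^{-1}\a<0$ by splitting according to whether $w^{-1}\a$ lies in $\D(F')$ (excluded by minimality of $w$) or not (excluded by the hypothesis), forcing $l(w)=0$. In effect you re-prove the cited Moody--Pianzola fact inline, which makes the argument self-contained; the cost is the extra input that $U_\beta\subseteq P_{\lambda(F')}$ with $\beta\in\Drm$ forces $\beta\in W_{\lambda(F')}\lambda(F')$. One small imprecision: this last criterion does not follow from the Levi decomposition alone (which only tells you which root groups are used to generate $P_{\lambda(F')}$); you need the Bruhat decomposition $P_{\lambda(F')}=\bigsqcup_{v\in W_{\lambda(F')}}BvB$ together with $U_{-\gamma}\setminus\{1\}\subseteq Br_\gamma B$, or, more in the spirit of the paper, simply Theorems~\ref{isostabUa} and~\ref{isostabG} applied to the fundamental face $F'$, which give $U_\beta\subseteq N_G(V_{F'})=P_{\lambda(F')}$ exactly for $\beta\in\D_p(F')\cup\D(F')=\Drp\cup W_{\lambda(F')}\lambda(F')$. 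With that reference supplied, the proof is complete.
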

\begin{proof} Let $e\in\Lambda$. Since $Te=eT$ it follows from Corollary \ref{UeF} that $Be\subseteq eB$. Clearly, $Be\subseteq eB$ implies $Be\subseteq eBe$, which is equivalent to $Be = eBe$ by Lemma \ref{ZL}.

Now let $e\in E(\T)\setminus\Lambda$. Then $e=e(wF')$ for some fundamental face $F'$ and $w\in W\setminus W(F')$. Since $w^{-1}\notin W(F')=W_{\lambda(F')}$ it follows from \cite[Chapter 5, Proposition 3]{MP95} that there exists a root $\a\in\Drp$ such that $w^{-1}\a\in\Drm\setminus (W_{\lambda(F')}\lambda(F'))= \Drm\setminus \D(F')$. Thus $\a\in \Drp\cap \Delta_n(F)$. Let $u\in U_\a\setminus\{1\}$. From {\rm (d)} of Corollary \ref{RennerLemma} we have $eu=e\neq ue$, hence $eue=e\neq ue$. Therefore, $Be\ne eBe$ by Lemma \ref{ZL}.
\hfill $\Box$\end{proof}

The cross-section lattice $\Lambda$ and the $G\times G$-orbit decomposition of $M$ are related.
\begin{theorem}\label{PutchaDecomposition}
\[
    M = G\Lambda G = \bigsqcup_{e\in \Lambda} G e G  .
\]
\end{theorem}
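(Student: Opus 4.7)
The inclusion $G\Lambda G\subseteq M$ is immediate. For the reverse inclusion, I observe that $G\cup E\subseteq G\Lambda G$: the containment $G\subseteq G\Lambda G$ is trivial (via $e(H)=\text{id}_V\in\Lambda$), while each $e(F)\in E$ is $N$-conjugate to some $e(F_0)\in\Lambda$ by Lemma \ref{NNormalizesE} together with the cross-section property of Corollary \ref{ff-cs}. Since $M$ is generated as a monoid by $G\cup E$, it will then suffice to verify that $G\Lambda G$ is closed under multiplication. Expanding $(g_1 e g_2)(g_3 e' g_4)=g_1\bigl(e\,(g_2g_3)\,e'\bigr)g_4$ reduces the closure to the single key assertion
\begin{equation*}
    e(F)\,g\,e(F')\in G\Lambda G\qquad\text{for all }F,F'\in\F\text{ and all }g\in G.
\end{equation*}

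To prove this assertion, I plan to use the Birkhoff decomposition of Section \ref{KM}, writing $g=b^-\,n\,b$ with $b^-\in B^-$, $b\in B$, and $n\in N$ projecting to some $w\in W$. Theorem \ref{PutchaLattice} provides $b_1\in B$ with $b\,e(F')=e(F')\,b_1$. Applying the Chevalley anti-involution of $M$, which fixes every $e(F)$ and interchanges $B$ with $B^-$, to Theorem \ref{PutchaLattice} yields the dual statement $e\,B^-\subseteq B^-\,e$ for $e\in\Lambda$; in particular there is $b_2^-\in B^-$ with $e(F)\,b^-=b_2^-\,e(F)$. Using Lemma \ref{NNormalizesE} and the multiplication rule \eqref{multE}, I then compute
\begin{equation*}
   e(F)\,g\,e(F') = b_2^-\,e(F)\,\bigl(n\,e(F')\,n^{-1}\bigr)\,n\,b_1 = b_2^-\,e(F)\,e(wF')\,n\,b_1 = b_2^-\,e(F\cap wF')\,n\,b_1.
\end{equation*}
The intersection $F\cap wF'$ is a face of $H$; by Corollary \ref{ff-cs} there exist $F''\in\F$ and $w'\in W$ with $F\cap wF'=w'F''$, so $e(F\cap wF')=n'\,e(F'')\,(n')^{-1}$ for any $n'\in N$ representing $w'$. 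Therefore $e(F)\,g\,e(F')\in G\,e(F'')\,G\subseteq G\Lambda G$, as required.

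For the disjointness of the $G\times G$-orbits, suppose $Ge(F)G\cap Ge(F')G\neq\emptyset$ with $F,F'\in\F$. As these are $G\times G$-orbits under $(g_1,g_2)\cdot m=g_1 m g_2^{-1}$, they coincide; Corollary \ref{conjugateFaces} then furnishes $w\in W$ with $F'=wF$, and Corollary \ref{ff-cs} forces $F=F'$.

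The main obstacle I anticipate is the need to absorb group factors on \emph{both} sides of the idempotent $e(F)$: Theorem \ref{PutchaLattice} performs the absorption only on one side. The choice of Birkhoff rather than Bruhat decomposition is deliberate, since it lets the two sides of $n$ in $g=b^-nb$ be handled by Theorem \ref{PutchaLattice} and by its $\Cai$-dual respectively, making the existence of the Chevalley anti-involution on $M$ the decisive ingredient.
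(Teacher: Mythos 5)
Your proof is correct and follows essentially the same route as the paper: reduce to showing $e(F)\,g\,e(F')\in G\Lambda G$, write $g$ via the Birkhoff decomposition, absorb the Borel factors past the idempotents, conjugate and intersect the faces, and finish with the cross-section property of $\F$, with disjointness from Corollary \ref{conjugateFaces}. The only cosmetic difference is that you absorb $b$ and $b^-$ using Theorem \ref{PutchaLattice} and its Chevalley dual, whereas the paper invokes Corollary \ref{UeF} directly; these are interchangeable here.
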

\begin{proof}
  From Lemma \ref{NNormalizesE} it follows that every element of $M$ can be written in the form
  \[
         g_0 e_1 g_1 e_2 g_2\cdots e_k g_k
  \]
where $g_0,\ldots, g_k\in G$ and $e_1, \ldots, e_k\in\Lambda$. We now show how this element can be further reduced to an element of $G\Lambda G$. It suffices to show that $e(F)ge(F') \in G\Lambda G$ for all $g\in G$ and $F, F'\in\mathcal F$. Since $G=B^-NB$, we see that $g=vnu$ for some $v\in U^-, n\in N$, and $u\in U$. Let $v=v_1v_2$ for some $v_1\in U^{\lambda(F)}_-  U_{\lambda_*(F)}^-$ and $v_2\in  U_{\lambda^*(F)}^-$, and let
 $u=u_1u_2$ for some $u_1\in U_{\lambda^*(F')}$ and $u_2\in U_{\lambda_*(F')} U^{\lambda(F')}$. From Corollary \ref{UeF} it follows that
\[
              e(F)ge(F') =  e(F) vnu e(F')  = v_2e(F) n e(F') u_1 = v_2 e(F) (n e(F') n^{-1}) nu_1 .
\]
Here $e(F) (n e(F') n^{-1})\in E(\T)$ by Lemma \ref{NNormalizesE} and (\ref{multE}). Hence,
\[
   e(F) (n e(F') n^{-1})=n_2e(F'')n_2^{-1}
\]
for some $n_2\in N$, $F''\in\mathcal F$. Thanks to Theorem \ref{geF}, the decomposition is disjoint.
\hfill$\Box$\end{proof}

\subsection{The unit regularity of $M(\br)$}

The following Lemma is used to determine the idempotents of $M$.
\begin{lemma}\label{decFF} Let $g\in G$ and let $F$ be a fundamental face. Then $e(F)ge(F)=e(F)$ if and only if $g$ has a product decomposition of the form
\begin{equation}\label{dec}
    g=u_- x u_+ \quad\text{ with }\quad u_-\in U^{\lambda(F)}_- ,\; x\in G_{\lambda_*(F)}Z_T(V_F) ,\;u_+\in U^{\lambda(F)} .
\end{equation}
Moreover, this decomposition is unique, and
\begin{eqnarray}\label{pdec}
    e(F)u_-=e(F) \quad\text{and}\quad xe(F)= e(F)x=e(F)  \quad\text{and}\quad u_+e(F)=e(F) .
\end{eqnarray}
\end{lemma}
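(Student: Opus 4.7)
The ($\Leftarrow$) direction together with the ``moreover'' clause (\ref{pdec}) is immediate: $U^{\lambda(F)}_-$ (respectively $U^{\lambda(F)}$) is generated by the root groups $U_\a$ for $\a\in\D_n(F)$ (respectively $\a\in\D_p(F)$), so Corollary \ref{RennerLemma}(d), (c) applied factor by factor give $e(F)u_-=e(F)$ and $u_+e(F)=e(F)$, while Corollary \ref{cG2}(b) identifies $G_{\lambda_*(F)}Z_T(V_F)$ with the two-sided stabilizer $S_G(e(F))$, so $xe(F)=e(F)x=e(F)$. Multiplying yields $e(F)ge(F)=e(F)xe(F)=e(F)$.

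For the ($\Rightarrow$) direction, the crux is to show that $g$ belongs to the big cell $P^-_{\lambda(F)}\cdot P_{\lambda(F)}$, and this is the main obstacle. My plan is to start from the Birkhoff decomposition $g=b^- n_w b$ of $g$ and to manipulate $e(F)ge(F)$ using Corollary \ref{UeF} (to replace $e(F)b^-$ by $t^- u^-_2 e(F)$ with $u^-_2\in U^-_{\lambda^*(F)}$, and $be(F)$ by $u_1 e(F) t$ with $u_1\in U_{\lambda^*(F)}$), Lemma \ref{NNormalizesE} (to convert $e(F)n_w$ into $n_w e(w^{-1}F)$), and the multiplication rule (\ref{multE}) (to merge $e(w^{-1}F)e(F)=e(w^{-1}F\cap F)$). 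This rewrites $e(F)ge(F)$ in the form $h_1\cdot e(w^{-1}F\cap F)\cdot h_2$ with $h_1, h_2\in G$, placing it in the orbit $G\,e(w^{-1}F\cap F)\,G$. Since $e(F)\in Ge(F)G$, the disjointness of the $G\times G$-orbits (Theorem \ref{PutchaDecomposition}) together with Corollary \ref{conjugateFaces} force $w^{-1}F\cap F$ to be $W$-equivalent to $F$; as $w^{-1}F\cap F\subseteq F$ and $W$-equivalent faces share dimension, we conclude $w^{-1}F\cap F = F$, i.e., $w\in W(F)=W_{\lambda(F)}$. Hence $g\in B^- N_{\lambda(F)} B\subseteq P^-_{\lambda(F)}\cdot P_{\lambda(F)}$.

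Once $g$ lies in the big cell, I use the Levi decompositions $P^-_{\lambda(F)}=U^{\lambda(F)}_-\rtimes L_{\lambda(F)}$ and $P_{\lambda(F)}=L_{\lambda(F)}\ltimes U^{\lambda(F)}$ to write $g=u_-\ell u_+$ with $u_-\in U^{\lambda(F)}_-$, $\ell\in L_{\lambda(F)}$, $u_+\in U^{\lambda(F)}$. The absorption identities from (\ref{pdec}) reduce the condition $e(F)ge(F)=e(F)$ to $e(F)\ell e(F)=e(F)$; and since $\ell\in L_{\lambda(F)}=P_{\lambda(F)}\cap P^-_{\lambda(F)}$ commutes with $e(F)$ by Proposition \ref{PeF}, this further becomes $\ell e(F)=e(F)$. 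Therefore $\ell\in L_{\lambda(F)}\cap S^l_G(e(F))=G_{\lambda_*(F)}Z_T(V_F)$, by Corollary \ref{cG2}(a) and the fact that $L_{\lambda(F)}\cap U^{\lambda(F)}=\{1\}$ from the semidirect product structure.

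Uniqueness of the decomposition follows from $U^{\lambda(F)}_-\cap P_{\lambda(F)}=\{1\}$ (a consequence of $P^-_{\lambda(F)}\cap P_{\lambda(F)}=L_{\lambda(F)}$ together with $U^{\lambda(F)}_-\cap L_{\lambda(F)}=\{1\}$) and the uniqueness of the Levi decomposition in $P_{\lambda(F)}$: if $u_-xu_+=u'_-x'u'_+$, then $(u'_-)^{-1}u_-\in U^{\lambda(F)}_-\cap P_{\lambda(F)}=\{1\}$ gives $u_-=u'_-$, and cancellation combined with $P_{\lambda(F)}=L_{\lambda(F)}\ltimes U^{\lambda(F)}$ yields $x=x'$ and $u_+=u'_+$.
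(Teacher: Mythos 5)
Your proof is correct and follows essentially the same strategy as the paper's: both directions reduce to Corollary \ref{cG2}, the key step $w\in W_{\lambda(F)}=W(F)$ is obtained via the same orbit-disjointness argument combining Theorem \ref{PutchaDecomposition} with Corollary \ref{conjugateFaces}, and uniqueness is proved identically from $P^-_{\lambda(F)}\cap P_{\lambda(F)}=L_{\lambda(F)}$ and the Levi decomposition. The only cosmetic difference is in the final extraction step: the paper applies Theorem \ref{geF} to the equation $u_2^{-1}e(F)=e(F)nv_1$, whereas you pass explicitly through the big-cell factorization $g=u_-\ell u_+$ and intersect $L_{\lambda(F)}$ with $S^{\,l}_G(e(F))$ via Corollary \ref{cG2}(a) — but the underlying computation is the same.
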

\begin{proof}
Suppose that $g\in G$ has a decomposition $g=u_-xu_+$ of the form (\ref{dec}). By Corollary \ref{cG2} we obtain the equations in (\ref{pdec}), from which we get $e(F)ge(F)=e(F)$.

If $g=u_-'x'u_+'$ is another decomposition of the form (\ref{dec}) then
\begin{eqnarray}\label{cdec}
        G_{\lambda_*(F)}Z_T(V_F) \ltimes U^{\lambda(F)} \ni x u_+(u_+')^{-1} = (u_-)^{-1}u_-'x' \in U^{\lambda(F)}_- \rtimes G_{\lambda_*(F)}Z_T(V_F)
\end{eqnarray}
Since $P_{\lambda(F)}\cap P_{\lambda(F)}^-=L_{\lambda(F)}$ we get $ x u_+(u_+')^{-1} = (u_-)^{-1}u_-'x' \in  G_{\lambda_*(F)}Z_T(V_F) $. We conclude that $u_+(u_+')^{-1} , (u_-)^{-1}u_-' \in  G_{\lambda_*(F)}Z_T(V_F) $. From the semidirect decompositions used in (\ref{cdec}) it follows that $u_+(u_+')^{-1} = 1 $ and $ (u_-)^{-1}u_-'=1$. Hence, also $x=x'$.

Now let $g\in G$ such that $e(F)ge(F)=e(F)$. We may write $g$ in the form $g=u_1u_2 n v_1 v_2$ where $u_1\in U^{\lambda(F)}_- $, $u_2\in U_{\lambda(F)}^-$, $v_1\in U_{\lambda(F)}$, $v_2\in U^{\lambda(F)}$, and $n\in N$ projecting to $w\in W$. From Proposition \ref{PeF} it follows that
\[
  e(F) = e(F)ge(F) = u_2 e(F) n e(F) v_1  = u_2 e(F) (n e(F)n^{-1})n v_1  .
\]
By Lemma \ref{NNormalizesE} and (\ref{multE}) we find that $e(F) (n e(F)n^{-1})=e(F) e(wF)=e(F\cap wF)$. Hence, $e(F)=u_2e(F\cap wF) n v_1$.
From Corollary \ref{conjugateFaces} we find that the dimensions of the faces $F$, $wF$, and $F\cap wF$ are the same. Since $F\cap wF$ is contained in $F$ and $wF$, we obtain $F=F\cap wF=wF$. Thus, $w\in W_{\lambda(F)}$ and $u_2^{-1}e(F)=e(F)nv_1$. From Theorem \ref{geF} it follows that $u_2^{-1}=nv_1g_1$ for some $g_1\in  G_{\lambda_*(F)}Z_T(V_{F})$. Therefore, $ g=u_1u_2n v_1v_2= u_1 g_1^{-1} v_2$.
\hfill $\Box$\end{proof}

\begin{theorem}\label{ui}
{\rm (a)} The unit group of $M$ is $G.$

{\rm (b)} The set of idempotents of $M$ is
\[
    \begin{aligned}
        E(M)    = \{geg^{-1}\mid e\in E(\T), ~ g\in G\}
                = \{geg^{-1}\mid e\in\Lambda,  ~g\in G\}.
    \end{aligned}
\]

{\rm (c)} $M = G E(M) = E(M) G$. Hence, the monoid $M$ is unit regular.
\end{theorem}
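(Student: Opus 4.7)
The plan is to combine the $G\times G$-orbit decomposition $M = \bigsqcup_{e\in\Lambda} GeG$ from Theorem \ref{PutchaDecomposition} with the factorization supplied by Lemma \ref{decFF} for elements $h$ satisfying $e(F)he(F) = e(F)$. Part (a) will be immediate from looking at images, part (b) will be the substantive computation, and part (c) will drop out of (b) together with Theorem \ref{PutchaDecomposition}.

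For (a), I will write any $x\in M$ as $x = a\,e(F)\,b$ with $a,b\in G$ and $F\in\mathcal F$ via Theorem \ref{PutchaDecomposition}. The image of $x$ in $V$ equals $aV_F$, so $x$ is invertible in $\mathrm{End}(V)$ only if $V_F = V$; since $H$ is the convex hull of $P(V)$, this forces $F=H$ and hence $e(F) = \mathrm{id}_V$, giving $x = ab\in G$. The reverse inclusion $G\subseteq M^\times$ is trivial.

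For (b), the inclusion $\{geg^{-1}\mid e\in E(\T),\, g\in G\}\subseteq E(M)$ is clear, and the equality of the two displayed sets follows because $E(\T) = E$ by Proposition \ref{idempotentOfNbar}(a) and every face projection is $N$-conjugate to a fundamental one by Lemma \ref{NNormalizesE}. The content is to show that every $x\in E(M)$ is $G$-conjugate to some fundamental idempotent. I will write $x = g_1 e(F) g_2$ via Theorem \ref{PutchaDecomposition}, set $h:=g_2g_1$, and deduce from $x^2 = x$, by cancelling $g_1$ on the left and $g_2$ on the right, that $e(F)\,h\,e(F) = e(F)$. Lemma \ref{decFF} then provides a (unique) factorization $h = u_-\,y\,u_+$ with $u_-\in U_-^{\lambda(F)}$, $y\in G_{\lambda_*(F)}Z_T(V_F)$ and $u_+\in U^{\lambda(F)}$, together with $e(F)u_- = e(F)$, $y\,e(F) = e(F)\,y = e(F)$ and $u_+\,e(F) = e(F)$. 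Setting $g := g_1u_+^{-1}y^{-1}\in G$, the conjugate $g\,e(F)\,g^{-1}$ collapses via $y^{-1}e(F)=e(F)$ and $u_+^{-1}e(F)=e(F)$ to $g_1\,e(F)\,u_+\,g_1^{-1}$, and equality with $x = g_1\,e(F)\,g_2$ reduces to the identity $e(F)u_+ = e(F)h = e(F)g_2g_1$, which itself follows from $e(F)u_- = e(F)$ and $e(F)y = e(F)$.

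For (c), Theorem \ref{PutchaDecomposition} supplies a decomposition $x = g_1\,e(F)\,g_2$ for every $x\in M$, and the identities $x = (g_1 e(F) g_1^{-1})(g_1g_2)$ and $x = (g_1g_2)(g_2^{-1} e(F) g_2)$ exhibit $x\in E(M)G$ and $x\in GE(M)$ respectively, since the outer factors are manifestly idempotent. Unit regularity is then immediate: if $x = fu$ with $f\in E(M)$ and $u\in G = M^\times$, then $xu^{-1}x = f\,u\,u^{-1}\,f\,u = fu = x$. The main obstacle is the conjugacy in (b): the naive choice $g = g_1$ fails because the kernels of $g_1e(F)g_1^{-1}$ and of $x$ are $g_1V_F^\perp$ and $g_2^{-1}V_F^\perp$, which need not agree; Lemma \ref{decFF} furnishes precisely the correction factor $u_+^{-1}y^{-1}\in P_{\lambda(F)}$ needed to align these two kernels, reducing the conjugacy equation back to the defining identity $e(F)he(F)=e(F)$.
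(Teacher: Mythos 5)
Your proof is correct and follows essentially the same route as the paper's: both argue through the $G\times G$-decomposition of Theorem \ref{PutchaDecomposition}, reduce the idempotent condition to $e(F)\,hg\,e(F)=e(F)$, and then invoke Lemma \ref{decFF} to extract the conjugating element. The only cosmetic differences are that the paper proves (a) by noting $e(F)=g^{-1}xh^{-1}$ is an idempotent unit and hence $1$, where you argue via $\mathrm{Im}(x)=aV_F$, and in (b) you conjugate by $g_1u_+^{-1}y^{-1}$ where the paper uses just $g_1u_+^{-1}$ --- the extra $y^{-1}$ is harmless but superfluous since $y^{-1}e(F)y=e(F)$.
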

\begin{proof} We make use of Theorem \ref{PutchaDecomposition} in the proof without mentioning it further.
Clearly, $G$ is a subgroup of the unit group of $M$. Now let $ge(F)h$ be a unit of $M$, where $g,h\in G$ and $F$ is a fundamental face. Then
\[
    e(F) = g^{-1}(ge(F)h)h^{-1}
\]
is a unit. Thus $e(F)=1$, and hence $ge(F)h=gh\in G$.

It is easily seen that $\{geg^{-1}\mid e\in\Lambda,  ~g\in G\}\subseteq \{geg^{-1}\mid e\in E(\T), ~ g\in G\}\subseteq E(M)$. Now let $ge(F)h$ be an idempotent of $M$, where $g,h\in G$ and $F$ is a fundamental face. Then $e(F)=e(F)hge(F)$. Let $hg=u_-xu_+$ be a decomposition as in Lemma \ref{decFF}. By (\ref{pdec}) we find
\begin{eqnarray*}
   ge(F)h= ge(F)hgg^{-1}= g u_+^{-1}u_+e(F)u_-xu_+ g^{-1}= gu_+^{-1} e(F) ( gu_+^{-1})^{-1} .
\end{eqnarray*}

If  $x\in M$ then $x=geh$ for some $e\in\Lambda$ and $g, h\in G$. So $x=gh(h^{-1}eh)=(geg^{-1})gh$. Hence, $M=GE(M)=E(M)G$.
\hfill $\Box$
\end{proof}

\begin{corollary}\label{conjugateFaces3}
    If two idempotents of $M$ are in the same $G\times G$-orbit, then they are $G$-conjugate.
\end{corollary}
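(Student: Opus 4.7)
The plan is to reduce the statement to the uniqueness of the fundamental face attached to a $G\times G$-orbit, using results already established.

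First, given two idempotents $f_1, f_2 \in E(M)$ with $G f_1 G = G f_2 G$, I would invoke Theorem \ref{ui}(b) to write $f_i = g_i e(F_i) g_i^{-1}$ for some $g_i \in G$ and fundamental faces $F_i \in \mathcal{F}$, $i=1,2$. Then $G f_i G = G e(F_i) G$, so the assumption becomes $G e(F_1) G = G e(F_2) G$.

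Next, I would apply Corollary \ref{conjugateFaces} to conclude that there exists $w \in W$ with $F_2 = w F_1$. Since both $F_1$ and $F_2$ are fundamental, and the set $\mathcal{F}$ of fundamental faces is a cross-section for the action of $W$ on $\mathcal{F}(H)$ by Corollary \ref{ff-cs}, the equality $F_2 = w F_1$ forces $F_1 = F_2$. Hence $e(F_1) = e(F_2)$, and
\[
   f_1 = g_1 e(F_1) g_1^{-1} = g_1 e(F_2) g_1^{-1} = (g_1 g_2^{-1})\, f_2\, (g_1 g_2^{-1})^{-1},
\]
which exhibits $f_1$ and $f_2$ as $G$-conjugate via $g_1 g_2^{-1} \in G$.

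The proof is essentially a bookkeeping chain through previously established results, so I do not anticipate a genuine obstacle. The only subtle point is ensuring that Corollary \ref{conjugateFaces} is applicable: it takes $G e(F) G = G e(F') G$ for arbitrary faces $F, F'$ of $H$ (not just fundamental ones) and produces a Weyl element conjugating one to the other. Once that is combined with the cross-section property of $\mathcal{F}$ inside $\mathcal{F}(H)$ from Corollary \ref{ff-cs}, the identification $F_1 = F_2$ (rather than merely $W$-conjugate) is immediate, and the $G$-conjugacy of $f_1, f_2$ follows at once.
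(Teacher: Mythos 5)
Your proof is correct and follows essentially the same route as the paper: write each idempotent as a $G$-conjugate of a fundamental idempotent via Theorem \ref{ui}(b), show the two fundamental faces must coincide, and conclude. The only cosmetic difference is that the paper gets $F_1=F_2$ directly from the disjointness of the orbit decomposition in Theorem \ref{PutchaDecomposition}, whereas you re-derive that disjointness from Corollary \ref{conjugateFaces} together with the cross-section property of $\mathcal F$ (Corollary \ref{ff-cs}) — both rest on the same underlying result, Theorem \ref{geF}.
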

\begin{proof}
Let $e, e_1$ be two idempotents of $M$ in the same $G\times G$-orbit. Then $e=g e'g^{-1}$ and $e_1=g_1 e_1' g_1^{-1}$ for some $g ,g_1\in G$ and $e',e_1'\in\Lambda$. From Theorem \ref{PutchaDecomposition} it follows that $e'=e_1'$. Hence, $e$ and $e_1$ are $G$-conjugate.
\hfill $\Box$\end{proof}

\subsection{The Renner monoid}
We obtain a congruence relation on $\overline N$ by
\[
    x_1 \sim x_2    \quad \text{if and only if} \quad x_1 T = x_2 T,
\]
where $x_1, x_2\in \overline N$. The quotient monoid
\[
        R  : = \N / \sim  \,=  \{xT \mid x\in\N\}
\]
is called the Renner monoid of $M$ relative to $T$.
We denote by $ \phi: \N \rightarrow R $ the quotient morphism: $\phi(n):=nT$, $n\in N$.

For $Y\subseteq R$ we denote by $E(Y)$ the set of idempotents of $R$ contained in $Y$. Clearly, if $Y$ is a submonoid of $R$ then $E(Y)$ is the set of idempotents of $Y$.

\begin{theorem}\label{RennerMonoidStructure}
{\rm (a)} The unit group of $R$ is $W$.

{\rm (b)} The quotient morphism restricts to a bijective map from $E(\N)$ to $E(R)$. In par-\
\hspace*{3.3em}ticu\-lar,  $E(R)$ is a commutative submonoid of $R$.

{\rm (c)} $R= W E(R) = E(R)W$. Hence, the monoid $R$ is unit regular.
\end{theorem}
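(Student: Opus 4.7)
The plan is to exploit the inverse-monoid structure of $\N$ established in Proposition~\ref{idempotentOfNbar}(b) and transfer it to $R$ via the quotient morphism $\phi$. The first step is to verify that $\sim$ is a congruence on $\N$ that is moreover compatible with the inverse-monoid inversion. For multiplicative compatibility, it suffices to show $Tx \subseteq xT$ for each $x \in \N$. Writing $x=ne$ via $\N = NE$, and using that $T$ is normal in $N$ and commutes pointwise with $E$, one computes $tne = n(n^{-1}tn)e = ne(n^{-1}tn) = xt'$ with $t' = n^{-1}tn \in T$. An analogous calculation on $x^{-1} = en^{-1}$ handles compatibility with inversion. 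Hence $R$ becomes an inverse monoid with $\phi(x)^{-1} = \phi(x^{-1})$ for every $x \in \N$.

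For part~(b), I would treat injectivity and surjectivity of $\phi|_{E(\N)}$ separately. Injectivity: if $e(F)T = e(F')T$, write $e(F) = e(F')t$ and multiply on the left by $e(F')$; using that $T$ and $E$ commute inside $\T$ one obtains $e(F')e(F) = e(F')t = e(F)$, and by symmetry $e(F)e(F') = e(F')$, so commutativity of $E$ forces $e(F)=e(F')$. For surjectivity the inverse-monoid setup pays off: any idempotent in an inverse monoid satisfies $r = rr^{-1}$, so lifting $r \in E(R)$ to $r = \phi(x)$ yields $r = \phi(x)\phi(x^{-1}) = \phi(xx^{-1})$ with $xx^{-1} \in E(\N) = E$. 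The claim that $E(R)$ is a commutative submonoid is then immediate from $E(R) = \phi(E)$.

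For part~(a), $W \subseteq R^\times$ follows at once from $\phi(N) \cong N/T = W$ and $N = \N^\times$. Conversely, any $r \in R$ decomposes as $r = w\phi(e)$ via $\N = NE$; if $r$ is invertible then so is $\phi(e)$, but an idempotent unit must equal $1_R$, forcing $e \in T \cap E = \{1\}$ (the only idempotent of the group $T$) and $r = w \in W$. Part~(c) is then formal: applying $\phi$ to $\N = NE = EN$ yields $R = WE(R) = E(R)W$, and unit regularity follows at once. The only mildly delicate point of the plan is the compatibility of $\sim$ with the inverse-monoid inversion on $\N$; once this promotes $R$ to an inverse monoid, the identity $r = rr^{-1}$ collapses surjectivity in (b) to a one-line computation, and (a), (c) reduce to formal consequences of Proposition~\ref{idempotentOfNbar}(b).
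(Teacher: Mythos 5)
Your argument for (a), (c), and the injectivity half of (b) is correct and close in spirit to the paper's: the paper also gets (a) from ``idempotent unit $=1$'' and (c) by pushing $\N=NE(\N)=E(\N)N$ through $\phi$, while for injectivity it invokes Proposition~\ref{YYOmega}~(e) where you instead use commutativity of $E$ inside $\T$ — your version is more elementary and equally valid. The genuine divergence is the surjectivity of $\phi|_{E(\N)}$, which is the only substantive content of the theorem. The paper proves it by a concrete computation: writing $x=\phi(ne(F))$, squaring, reducing via Lemma~\ref{NNormalizesE} and (\ref{multE}) to $e(F)t=e(F\cap wF)n$, applying Proposition~\ref{YYOmega}~(e) and a dimension argument to force $wF=F$, and concluding $x=\phi(e(F))$. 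You instead transfer the inverse-monoid structure of $\N$ (Proposition~\ref{idempotentOfNbar}~(b)) to $R$ and use $r=rr^{-1}=\phi(xx^{-1})$ with $xx^{-1}\in E(\N)=E$. That is a legitimately shorter route, and it is not circular with Corollary~\ref{Rinverse} since you derive the inverse structure of $R$ from $\N$ rather than from the theorem being proved.

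The one step you have not actually justified is the assertion that $R$ ``becomes an inverse monoid.'' Well-definedness of $\phi(x)\mapsto\phi(x^{-1})$ together with $\phi(x)\phi(x^{-1})\phi(x)=\phi(x)$ only shows that $R$ is a \emph{regular} monoid carrying a compatible involution-like map; to conclude that $R$ is inverse you need uniqueness of inverses, equivalently that the idempotents of $R$ commute — and knowing what $E(R)$ is constitutes precisely the surjectivity you are trying to prove. The fix is to invoke the standard theorem that a homomorphic image of an inverse semigroup is inverse (a consequence of Lallement's lemma; see \cite{Ho95}), which is exactly the statement that idempotents of $R$ lift to idempotents of $\N$. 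With that citation in place your proof is complete; without it, the ``one-line'' surjectivity argument is silently outsourcing all of its work to an unproved general fact. Be aware, then, that your gain in brevity over the paper's self-contained face-and-dimension computation is bought by an appeal to nontrivial semigroup theory rather than eliminated.
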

\begin{proof} We denote the unit group of $R$ by $R^\times$. Applying the quotient morphism to $\N= N E(\N)=E(\N) N$, we obtain
\begin{equation*}\label{RMStreq1}
   R= W \phi(E(\N)) = \phi( E(\N)) W,
\end{equation*}
where $W=\phi(N)\subseteq R^\times$ and $\phi(E(\N))\subseteq E(R)$. Moreover, $\phi(E(\N))$ is a commutative submonoid of $R$, since $E(\N)$ is a commutative submonoid of $\N$.

Let $x\in R^\times$. Then $x=we$ for some $w\in W$ and $e\in \phi(E(\N))\subseteq E(R)$. We conclude that  $e=w^{-1}x\in E(R)\cap R^\times=\{1\}$. Thus $x=w\in W$.

Let $x\in E(R)$. Then $x=\phi(n e(F))$ for some $n\in N$ and $F\in\F(H)$. We set $w=\phi(n)\in W$. Since $x=x^2$, it follows from Lemma \ref{NNormalizesE} and (\ref{multE}) that there exists $t\in T$ such that
\begin{eqnarray}\label{RMStreq2}
    e(F) t =  e(F) n e(F)=  e(F) n e(F )n^{-1} n=  e(F\cap wF) n .
\end{eqnarray}
From Proposition \ref{YYOmega} (e) we obtain $F=F\cap wF$, which is equivalent to $F\subseteq wF$. Since $F$ and $wF$ are faces of the same dimension we get $wF=F$. From Lemma \ref{NNormalizesE} and (\ref{RMStreq2}) it follows that
\begin{eqnarray*}
   n e(F) =  e(wF) n =e(F) n = e(F) t ,
\end{eqnarray*}
which shows that $x=\phi (n e(F))= \phi(e(F)t)=\phi(e(F))\in \phi(E(\N))$.

Let $\phi(e(F))=\phi(e(F_1))$ with $F,F_1\in \F(H)$. Then there exists $t\in T$ such that $e(F)t=e(F_1)$. By Proposition \ref{YYOmega} (e) we get $F=F_1$. Hence $e(F)=e(F_1)$.
\hfill$\Box$\end{proof}

We identify $E(\N)$ and $E(R)$ by the quotient morphism, which is possible by Theorem \ref{RennerMonoidStructure} (b). This is common in the theory of reductive linear algebraic monoids. In particular, we write $e(F)$ instead of $e(F)T$ for all $F\in\F(H)$.
If it is not clear from the context to which space the idempotent $e(F)$, $F\in\F(H)$, belongs, we write explicitly $e(F)\in \N$ or $e(F)\in R$.

We denote the image of the cross-section lattice $\Lambda$ defined in (\ref{defcrl}) under the quotient morphism still by $\Lambda$. Thus, $\Lambda=\{e(F)\mid F\in\F \}$ in $\N$, as well as in $R$.

\begin{corollary}\label{Rinverse} The monoid $R$ is an inverse monoid. Its inverse map \,$\mbox{}^{\rm inv}:R\to R$ is the anti-involution whose restriction to $W$ is the inverse map of $W$, and to $E(R)$ is the identity map.
\end{corollary}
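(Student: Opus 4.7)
The plan is to mimic the argument used for $\overline{N}$ in Proposition \ref{idempotentOfNbar}(b). By Theorem \ref{RennerMonoidStructure}(c), $R$ is unit regular, and by Theorem \ref{RennerMonoidStructure}(b) together with Proposition \ref{idempotentOfNbar}(b), the quotient map identifies $E(R)$ with the commutative monoid $E(\overline{N})$, so in particular idempotents of $R$ commute. Then \cite[Theorem 5.1.1]{Ho95} immediately yields that $R$ is an inverse monoid. (Alternatively, one could observe that $\overline{N}$ is already inverse and $\phi:\overline{N}\to R$ is a surjective monoid homomorphism, hence $R$ inherits the inverse monoid structure.)

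Next I would identify the inverse map explicitly. In an inverse monoid each $x$ has a unique element $x^{\mathrm{inv}}$ with $xx^{\mathrm{inv}}x=x$ and $x^{\mathrm{inv}}xx^{\mathrm{inv}}=x^{\mathrm{inv}}$. By Theorem \ref{RennerMonoidStructure}(c), any $x\in R$ can be written as $x=we$ with $w\in W$ and $e\in E(R)$. I claim that $x^{\mathrm{inv}}=ew^{-1}$. Indeed, using $e^2=e$,
\begin{equation*}
 x\,(ew^{-1})\,x = we\cdot ew^{-1}\cdot we = w e^2 w^{-1}w e = w e = x,
\end{equation*}
and
\begin{equation*}
 (ew^{-1})\,x\,(ew^{-1}) = ew^{-1}\cdot we\cdot ew^{-1} = e\cdot e\cdot ew^{-1} = ew^{-1}.
\end{equation*}
Specializing to $e=1$ gives $w^{\mathrm{inv}}=w^{-1}$ for every $w\in W$, and specializing to $w=1$ gives $e^{\mathrm{inv}}=e$ for every $e\in E(R)$, which are the two restriction properties claimed.

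Finally, the fact that $\mbox{}^{\mathrm{inv}}:R\to R$ is an anti-involution is a standard general property of inverse semigroups: $(xy)^{\mathrm{inv}}=y^{\mathrm{inv}}x^{\mathrm{inv}}$ and $(x^{\mathrm{inv}})^{\mathrm{inv}}=x$ hold automatically (see \cite[Chapter 5]{Ho95}). No significant obstacle is expected; the whole corollary is essentially bookkeeping on top of Theorem \ref{RennerMonoidStructure}, the only mild care needed being the computational verification above that the naive candidate $ew^{-1}$ really is the semigroup inverse of $we$.
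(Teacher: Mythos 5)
Your proposal is correct and follows essentially the same route as the paper: both invoke \cite[Theorem 5.1.1]{Ho95} together with Theorem \ref{RennerMonoidStructure} (b), (c) to get that $R$ is an inverse monoid, and then verify by the same direct computation that $ew^{-1}$ (the paper's $e\sigma^{-1}$) satisfies the two defining identities of the semigroup inverse of $we$.
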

\begin{proof} From \cite[Theorem 5.1.1]{Ho95} and Theorem \ref{RennerMonoidStructure} (b), (c) it follows that $R$ is an inverse monoid. By Theorem \ref{RennerMonoidStructure} (c) every element of $R$ is of the form $\sigma e$ with $\sigma\in W$ and $e\in E(R)$. Now,
$ (\sigma e) (e \sigma^{-1})(\sigma e)=\sigma e$ and $ (e \sigma^{-1})(\sigma e)(e \sigma^{-1})= e\sigma^{-1}$. Hence $(\sigma e)^{\rm inv}=e\sigma^{-1}$.
\hfill$\Box$\end{proof}

\begin{remark}
{\rm
The restriction of the Chevalley anti-involution to $\N = \N^{\,\Cai}$ induces the inverse map on $R$: $\phi(n)^{\rm inv}= \phi(n^\Cai)$ for all $n\in \N$.
}
\end{remark}

We have seen in Theorem \ref{RennerMonoidStructure} (b) that the set of idempotents $E(R)$ is a commutative submonoid of $R$. By \cite[Proposition 1.3.2]{Ho95} we obtain a partial order on $E(R)$ by
\[
    e \le f \quad \Leftrightarrow\quad ef =e,\quad e,f\in E(R),
\]
and $(E(R),\leq)$ is a lower semilattice. Furthermore, the multiplication of $E(R)$ coincides with the semilattice intersection:
\[
    ef= e \wedge f ,\quad e,f\in E(R).
\]
The Weyl group $W$ acts on $E(R)$ by conjugation, preserving the partial order.

\begin{corollary}\label{FL-EL} The map
\begin{eqnarray*}
  \F(H) &\to & E(R)\\
    F\;\;\:  & \mapsto &\: e(F)
\end{eqnarray*}
has the following properties:

{\rm (a)} It is a $W$-equivariant isomorphism of the partially ordered sets $(\F(H),\subseteq )$ and\\
\hspace*{3.1em}$(E(R),\leq )$. In particular, $(E(R),\leq)$ is a lattice.

{\rm (b)} It is a $W$-equivariant isomorphism of the monoids $(\F(H),\cap )$ and $E(R)$.

\end{corollary}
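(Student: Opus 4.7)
The plan is to deduce the corollary directly from three results already established in the excerpt: Proposition \ref{idempotentOfNbar}(b), which identifies $E(\N)$ with $\{e(F)\mid F\in\F(H)\}$; Theorem \ref{RennerMonoidStructure}(b), which says that the quotient morphism $\phi$ restricts to a bijection $E(\N)\to E(R)$; and the multiplication rule $e(F)e(F')=e(F\cap F')$ from (\ref{multE}), supplemented by Lemma \ref{NNormalizesE} for the $W$-equivariance.

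First I would check that $F\mapsto e(F)\in E(R)$ is bijective. Surjectivity is immediate from the two identifications above. Injectivity on the level of $\N$ is clear from the definition (\ref{idemDefinition}), since distinct faces give distinct face vector spaces $V_F$, hence distinct linear projections; composing with the injective restriction $\phi|_{E(\N)}$ preserves injectivity into $E(R)$.

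For part (b), equation (\ref{multE}) says exactly that $F\mapsto e(F)$ is a monoid homomorphism from $(\F(H),\cap)$ into the commutative monoid $E(\N)$; applying the monoid morphism $\phi$ transports this identity to $E(R)$, and combined with the bijection above this becomes a monoid isomorphism. For the $W$-equivariance, I would use Lemma \ref{NNormalizesE}: if $n_w\in N$ represents $w\in W$, then $n_we(F)n_w^{-1}=e(wF)$ in $\N$; since the unit group of $R$ identifies with $W=N/T$ via $\phi$ by Theorem \ref{RennerMonoidStructure}(a), passing to $R$ gives $w\,e(F)\,w^{-1}=e(wF)$. For part (a), I would unwind the natural partial order $e\leq f\Leftrightarrow ef=e$ on $E(R)$ using part (b):
\[
 e(F)\leq e(F')\;\Leftrightarrow\; e(F)e(F')=e(F)\;\Leftrightarrow\; e(F\cap F')=e(F)\;\Leftrightarrow\; F\cap F'=F\;\Leftrightarrow\; F\subseteq F'.
\]
Hence $F\mapsto e(F)$ is an order isomorphism between $(\F(H),\subseteq)$ and $(E(R),\leq)$, and the $W$-equivariance is the same as in (b). Since $\F(H)$ is a complete lattice, so is $(E(R),\leq)$.

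I do not anticipate any real obstacle; the statement is essentially a repackaging of prior results. The only point to be slightly careful about is the use of Theorem \ref{RennerMonoidStructure}(b) to ensure that the relation $e(F)T=e(F')T$ in $R$ really forces $e(F)=e(F')$ in $\N$ and hence $F=F'$, so that neither the cardinality nor the order-structure information is lost upon passage to the quotient.
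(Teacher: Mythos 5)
Your argument is correct and follows essentially the same route as the paper: Proposition \ref{idempotentOfNbar} and Theorem \ref{RennerMonoidStructure}(b) for bijectivity, equation (\ref{multE}) for the monoid homomorphism, Lemma \ref{NNormalizesE} for $W$-equivariance, and the observation $F\subseteq F'\Leftrightarrow F\cap F'=F$ to pass from (b) to (a). The only cosmetic difference is that the paper makes the identification $E(\N)\cong E(R)$ explicit just before stating the corollary, so it can cite Proposition \ref{idempotentOfNbar} directly for bijectivity, whereas you spell out the role of Theorem \ref{RennerMonoidStructure}(b) inside the proof.
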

\begin{proof} We first show (b). The $W$-equivariance of the map follows by applying the quotient morphism $\phi$ to the equation in Lemma \ref{NNormalizesE}.
The idempotents $e(F)\in \N$, $F\in\F(H)$, are pairwise different. Hence the map is bijective by Proposition \ref{idempotentOfNbar}. We conclude from (\ref{multE}) that it is also a monoid homomorphism.

For $F, F_1\in \F(H)$ we have $F\subseteq F_1$ if and only if $F=F\cap F_1$. Therefore, we obtain (a) from (b) and the definition of the partial order on $E(R)$.
\hfill$\Box$\end{proof}

The following lemma refines Theorem \ref{RennerMonoidStructure} (c).
\begin{lemma}\label{W-orbits}
\begin{equation*}
 R =\bigsqcup_{e\in E(R)} We = \bigsqcup_{e\in E(R)} eW,
\end{equation*}
and $E(We)=E(eW)=\{e\}$ for all $e\in E(R)$.
\end{lemma}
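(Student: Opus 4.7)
The plan is to leverage the inverse monoid structure of $R$ established in Corollary \ref{Rinverse}. The crucial observation will be that in any inverse monoid a right-factor idempotent of a product $we$ can be recovered as $(we)^{\rm inv}(we)$, and Corollary \ref{Rinverse} gives the explicit formula $(\sigma e)^{\rm inv}=e\sigma^{-1}$ in $R$. This will pin down the idempotent $e$ in any decomposition $x=we$, and symmetrically for $x=ew$.

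First, I will invoke Theorem \ref{RennerMonoidStructure} (c) to write $R=WE(R)=E(R)W$, which immediately yields the two set-theoretic unions $R=\bigcup_{e\in E(R)}We=\bigcup_{e\in E(R)}eW$. To establish disjointness of the first union, let $x\in We\cap We'$ with $x=we=w'e'$, and compute
\[
   x^{\rm inv}x=(we)^{\rm inv}(we)=(ew^{-1})(we)=ew^{-1}we=e,
\]
and likewise $x^{\rm inv}x=e'$, forcing $e=e'$. The disjointness of the second union follows from the symmetric identity $xx^{\rm inv}=(ew)(ew)^{\rm inv}=(ew)(w^{-1}e)=e$ for $x=ew\in eW$.

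For the equality $E(We)=\{e\}$, I will use that idempotents in an inverse monoid are self-inverse: given $x\in E(We)$, we have $x^{\rm inv}=x$, and writing $x=we$ the previous computation yields $e=x^{\rm inv}x=x\cdot x=x$, so $E(We)\subseteq\{e\}$; the reverse inclusion is trivial because $e=1\cdot e\in We$ is idempotent. The identity $E(eW)=\{e\}$ is handled identically, using $xx^{\rm inv}=e$ in place of $x^{\rm inv}x=e$. I anticipate no substantive obstacle: once Corollary \ref{Rinverse} is in hand, the whole proof reduces to a handful of lines of manipulation of the inverse-monoid identities.
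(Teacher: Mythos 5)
Your proof is correct, but it takes a genuinely different (if morally parallel) route from the paper. You extract the idempotent component via the standard inverse-monoid computations $x^{\rm inv}x$ and $xx^{\rm inv}$ (the domain and range idempotents), using the explicit inverse formula from Corollary \ref{Rinverse}, so that $x=we$ forces $e=x^{\rm inv}x$ and $x=ew$ forces $e=xx^{\rm inv}$; from here disjointness and $E(We)=E(eW)=\{e\}$ all fall out of one uniform identity. The paper instead works directly from commutativity of the idempotents (Theorem \ref{RennerMonoidStructure}(b)), never invoking Corollary \ref{Rinverse}: starting from $e_1=ew$ it computes $ee_1 = e(ew)=ew=e_1$ and $e_1e = (e_1w^{-1})e$ (noting $e=e_1w^{-1}$) $=e_1w^{-1}=e$, and then $ee_1=e_1e$ gives $e=e_1$ at once. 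The two arguments are essentially equivalent in substance --- commuting idempotents are exactly what underlies the inverse-monoid structure in the first place --- but your version is more systematic and scales cleanly to both sides, while the paper's is self-contained at a lower level and avoids any dependence on the inverse map (even though Corollary \ref{Rinverse} does appear earlier, so your citation is legitimate). One minor point worth spelling out in your computation $(ew)^{\rm inv}=w^{-1}e$: this follows because $\mathop{}^{\rm inv}$ is an anti-involution fixing $E(R)$ pointwise and inverting on $W$, which is precisely the content of Corollary \ref{Rinverse}, not just a rearrangement of the displayed formula $(\sigma e)^{\rm inv}=e\sigma^{-1}$.
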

\begin{proof} We have $R =\bigcup_{e\in E(R)} eW$ by Theorem \ref{RennerMonoidStructure} (c). To show that this union is disjoint let $e,e_1\in E(R)$ such that $eW\cap e_1W\neq\emptyset$. Then $e_1=ew$ for some $w\in W$. We observe that $ee_1 = ew = e_1$. From $e=e_1w^{-1}$ we get $e_1e = e$. But  $e$ and $e_1$ commute by Theorem \ref{RennerMonoidStructure} (b), so $e=e_1$. It also follows that $E(eW)=\{e\}$.

The remaining statements $R =\bigsqcup_{e\in E(R)} We$, and  $E(We)=\{e\}$ for all $e\in E(R)$ follow similarly.
\hfill$\Box$\end{proof}

For $x\in R$ we denote by $Cl_W(x):=\{wxw^{-1}\mid w\in W\}$ the set of $W$-conjugates of $x$. From Corollary \ref{FL-EL} and Corollary \ref{ff-cs} we obtain
\begin{equation}\label{transversal}
         E(R) = \bigcup_{w\in W} w\Lambda w^{-1} = \bigsqcup_{e\in\Lambda} Cl_W(e).
\end{equation}

\begin{lemma}\label{WW-orbits} The Renner monoid R is generated by $S$ and $\Lambda$. We have
\begin{equation*}
 R = W\Lambda W = \bigsqcup_{e\in \Lambda} WeW,
\end{equation*}
and $E(WeW)=Cl_W(e)$ for all $e\in \Lambda$.
\end{lemma}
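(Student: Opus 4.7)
The plan has four steps, two of which are essentially bookkeeping and two of which require an argument.

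First, I would establish the equality $R = W\Lambda W$. By Theorem \ref{RennerMonoidStructure} (c) we have $R = WE(R)$, and by (\ref{transversal}) we have $E(R) = \bigcup_{w\in W} w\Lambda w^{-1}$. Combining these gives $R = W\cdot W\Lambda W = W\Lambda W$. The generation statement then follows immediately: $W$ is generated by $S$ (as $(W,S)$ is a Coxeter system), and every element of $R$ is a product of elements of $W$ and $\Lambda$, so $R$ is generated by $S\cup\Lambda$.

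Next, I would prove the disjointness of the union $\bigcup_{e\in\Lambda} WeW$ by lifting the identity from $R$ to $M(\br)$. Suppose $w_1 e w_2 = w_1' e' w_2'$ in $R$ for some $e, e'\in\Lambda$ and $w_i, w_i'\in W$. Choose lifts $n_i, n_i'\in N$ and regard $e, e'$ as elements of $E(\N)\subseteq M$; then the relation in $R = \N/T$ gives $n_1 e n_2 = n_1' e' n_2' t$ in $\N$ for some $t\in T$. Viewing both sides inside $M(\br)$ and absorbing $t$ into $n_2'$, we obtain an equality $g_1 e g_2 = g_1' e' g_2'$ in $M(\br)$ with all $g_i\in G$, so $GeG \cap Ge'G \neq\emptyset$. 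The $G\times G$-orbit decomposition of Theorem \ref{PutchaDecomposition} then forces $e = e'$. The hardest part (conceptually) is this passage through $M$, but since Theorem \ref{PutchaDecomposition} is already at our disposal the argument is short.

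Finally, I would establish $E(WeW) = Cl_W(e)$. The inclusion $Cl_W(e)\subseteq E(WeW)$ is clear since $w e w^{-1}\in WeW$ for any $w\in W$, and $wew^{-1}$ is idempotent because $e$ is. For the reverse inclusion, let $x\in E(WeW)$. By (\ref{transversal}) there is a unique $e''\in\Lambda$ with $x\in Cl_W(e'')$, so $x = w e'' w^{-1}\in We''W$. The disjointness just proved then gives $WeW = We''W$, hence $e = e''$ and $x\in Cl_W(e)$. This completes the proof.
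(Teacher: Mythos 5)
Your proof is correct, but it routes through the monoid $M(\br)$ where the paper stays entirely inside $R$. For the disjointness of $\bigcup_{e\in\Lambda}WeW$ you lift an equality in $R$ to one in $\N\subseteq M$, absorb the torus element, and invoke the $G\times G$-orbit decomposition of Theorem~\ref{PutchaDecomposition} to conclude $e=e'$; you then deduce $E(WeW)=Cl_W(e)$ from this disjointness and (\ref{transversal}). The paper reverses the order and works at the level of $R$ alone: it first shows $E(WeW)=Cl_W(e)$ by writing any idempotent $e'=wew_1=(wew^{-1})ww_1\in (wew^{-1})W$ and applying Lemma~\ref{W-orbits}, which gives $E((wew^{-1})W)=\{wew^{-1}\}$, hence $e'=wew^{-1}$; disjointness then follows because a nonempty intersection of double cosets forces $WeW=WfW$, hence $Cl_W(e)=Cl_W(f)$, hence $e=f$ by (\ref{transversal}). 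Your route buys a shorter disjointness argument at the cost of invoking the heavier $M$-level result Theorem~\ref{PutchaDecomposition}, whereas the paper's version uses only the lighter, purely $R$-internal Lemma~\ref{W-orbits}; since Theorem~\ref{PutchaDecomposition} is already in hand at this point in the paper, both are legitimate, but the paper's argument is self-contained at the Renner monoid level and is what one would want if this lemma were to be quoted independently of the ambient monoid.
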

\begin{proof} Let $e\in\Lambda$. Clearly, $Cl_W(e)\subseteq E(WeW)$. Conversely, for any idempotent $e'\in E(WeW)$, there exist $w, w_1 \in W$ such that $e' = w ew_1 = (w e w^{-1}) w w_1$. Then $e' = w e w^{-1}$ by Lemma \ref{W-orbits}.

Theorem \ref{RennerMonoidStructure} (c) and (\ref{transversal}) show that $R=\bigcup_{e\in \Lambda} W e W$. So $R$ is generated by $S$ and $\Lambda$. To show that the union is disjoint let $e,f\in\Lambda$ such that $W e W \cap W f W \neq\emptyset$. Then $WeW=W f W$,
from which we get $Cl_W(e)=Cl_W(f)$. Now $e=f$ follows from (\ref{transversal}).
\hfill$\Box$\end{proof}

We continue to use the examples in Section \ref{s3} to describe their cross-section lattices and Renner monoids.
\begin{exam}
{\rm
Let $H$ be the weight hull of $\mu$ as in Example \ref{A2}, where $\mu=3\m_1+2\m_2$ is a dominant weight of the Kac-Moody algebra $\g$ of type $A_2$.

The set of idempotents $E(R)$ consists of 14 elements since $H$ has 14 faces. The cross-section lattice
\[
    \Lambda = \{0, ~e_1, ~e_2,  ~e_3, ~1 \}
\]
has five idempotents, where $e_1$ is the idempotent determined by the vertex face $\{\mu\}$, $e_2$ is the idempotent corresponding to the face $\overline{\mu \,r_1\mu}$, and $e_3$ corresponds to the face $\overline{\mu\, r_2\mu}$. The Renner monoid is finite and
\[
    R= \{0 \} \sqcup W e_1 W \sqcup W e_2 W \sqcup W e_3 W \sqcup W.
\]
}
\end{exam}

\begin{exam}
{\rm
Let $H$ be the weight hull of $\mu$ as in Example \ref{A11}, where $\mu=\mu_1$ is the first fundamental dominant weight of the affine Kac-Moody Lie algebra $\g$ of type $A_1^{(1)}$.

The set of idempotents $E(R)$ is infinite since $H$ has infinitely many faces. The cross-section lattice
\[
    \Lambda = \{0, ~e_1, ~e_2, ~1 \}
\]
has 4 elements, where $e_1$ is the idempotent determined by the vertex face $\{\mu\}$, and $e_2$ is the idempotent determined by the face $\overline{\mu\, r_1\mu}$. The Renner monoid is infinite and
\[
    R = \{0 \} \sqcup W e_1 W \sqcup W e_2 W \sqcup W.
\]
}
\end{exam}

\begin{exam}{\rm
Let $H$ be the weight hull of the dominant weight $\mu=\mu_1+\mu_2$ of the indefinite, strongly hyperbolic Kac-Moody Lie algebra $\g(A)$ of Example \ref{Aab}.

The set of idempotents $E(R)$ is infinite since $H$ has infinitely many faces. The cross-section lattice
\[
     \Lambda = \{0, ~e_1, ~e_2,  ~e_3, ~1 \}
\]
has 5 elements, where $e_1$ is the idempotent determined by the vertex face $\{\mu\}$, and $e_2$ is the idempotent corresponding to the face $\overline{\mu\,r_1\mu}$, and $e_3$ corresponds to the face $\overline{\mu\,r_2\mu}$. The Renner monoid is infinite and
\[
    R = \{0 \} \sqcup W e_1 W \sqcup W e_2 W \sqcup W e_3 W \sqcup W.
\]
}
\end{exam}

\vspace*{0.5em}
Let $e\in E(R)$. The left and right centralizers of $e$ in $W$ are defined by
\begin{eqnarray*}
C^{\,l}_W(e):=\{w\in W\mid we=ewe\} \quad\text{ and  }\quad C^{\,r}_W(e):=\{w\in W\mid ew=ewe\}.
\end{eqnarray*}
The centralizer of $e$ in $W$ is defined by $C_W(e):= \{w\in W\mid we=ew\}= C^{\,l}_W(e)\cap C^{\,r}_W(e)$.
Similarly, the left and right stabilizers of $e$ in $W$ are defined by
\begin{eqnarray*}
   S^{\,l}_W(e) := \{w\in W\mid we=e\} \quad \text{ and } \quad S^{\,r}_W(e) :=\{w\in W\mid ew=e\}.
\end{eqnarray*}
The stabilizer of $e$ in $W$ is defined by $ S_W(e):=\{w\in W\mid we=ew=e\}= S^{\,l}_W(e)\cap S^{\,r}_W(e)$.
\begin{theorem}\label{centralizersOfe}  Let $e=e(F)$ where $F$ is a face. Then:
\begin{itemize}
\item[\rm (a)] $C_W^{\,l}(e)= C_W^{\,r}(e)= C_W(e)= W(F)$.
\item[\rm (b)] $S_W^{\,l}(e)=S_W^{\,r}(e)= S_W(e) = W_*(F)$.
\end{itemize}
\end{theorem}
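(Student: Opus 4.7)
The plan is to translate each of the four defining equations of the centralizers and stabilizers into geometric conditions on $F$, via the $W$-equivariant monoid isomorphism $\F(H)\cong E(R)$ from Corollary \ref{FL-EL} combined with the key identity
\[
    we(F) = e(wF)\,w \quad \text{in } R \qquad (w\in W),
\]
which follows from Lemma \ref{NNormalizesE} after passing to $R = \N/T$. This identity immediately yields
\[
    ewe = e(F)\cdot e(wF)\cdot w = e(F\cap wF)\,w .
\]

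For part (a), I will observe that $we = ewe$ becomes $e(wF)\,w = e(F\cap wF)\,w$; right-cancelling $w$ (possible since $w\in W\subseteq R^\times$) and appealing to the isomorphism of Corollary \ref{FL-EL} turns this into $wF = F\cap wF$, i.e., $wF\subseteq F$, which forces $wF = F$ by equality of dimensions. The condition $ew = ewe$ similarly reduces to $F\subseteq wF$ and again to $wF = F$. Conversely, $w\in W(F)$ gives $we = e(F)\,w = ew$, which trivially implies both $we = ewe$ and $ew = ewe$. This establishes $C_W^{\,l}(e) = C_W^{\,r}(e) = C_W(e) = W(F)$.

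For the forward inclusion in part (b), if $w\in S_W^{\,l}(e)$ then $n_w e(F) = e(F)\,t$ in $\N$ for some $t\in T$; comparing images yields $V_{wF} = V_F$ and hence $wF = F$, and testing the equation on any nonzero $v_\eta\in V_\eta$ with $\eta\in F\cap P(V)$ gives $n_w v_\eta = t v_\eta$. Since $n_w v_\eta\in V_{w\eta}$ while $t v_\eta\in V_\eta$, this forces $w\eta = \eta$, whence Lemma \ref{alphaInDLowerStar}(b) gives $w\in W_*(F)$. The case $w\in S_W^{\,r}(e)$ is handled symmetrically, reading the identity $e(F)\,n_w = e(F)\,t$ instead.

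The main obstacle is the reverse inclusion $W_*(F)\subseteq S_W(e)$, where I must show that $n_w$ acts not merely setwise but \emph{pointwise} as the identity on $V_F$. Since $W_*(F)$ is generated by the reflections $r_\alpha$ with $\alpha\in\Delta_*(F)$, it suffices to treat $w = r_\alpha$. Theorem \ref{wsF}(b) tells me the $\alpha$-weight string through any $\eta\in F\cap P(V)$ consists of $\eta$ alone, so $V_{\eta\pm\alpha} = \{0\}$ and $e_\alpha, f_\alpha$ annihilate $V_\eta$; the formula $n_\alpha = \exp(e_\alpha)\exp(-f_\alpha)\exp(e_\alpha)$ then shows $n_\alpha$ fixes $V_\eta$ pointwise. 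Choosing representatives $n_w$ as products of such $n_\alpha$ for arbitrary $w\in W_*(F)$ makes $n_w$ act trivially on $V_F$, and since $w\in W(F)$ also preserves $V_F^\perp$, this gives $n_w e(F) = e(F) = e(F)\,n_w$ in $\N$, so $we = e = ew$ in $R$. This completes the proof of (b).
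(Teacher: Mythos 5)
Your proof is correct and follows essentially the same route as the paper's: part (a) is the same computation reducing $we=ewe$ to $wF\subseteq F$ together with the equal-dimension argument, and your part (b) simply inlines the content of Proposition \ref{YYOmega} (d) and Theorem \ref{isostabN} (whose proofs rest on exactly the weight-string facts you invoke) instead of citing them. The only cosmetic difference is that the paper deduces the right-handed centralizers and stabilizers from the left-handed ones via the inverse anti-involution of $R$ (Corollary \ref{Rinverse}), whereas you argue symmetrically.
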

\begin{proof} Applying the inverse map of $R$ described in Corollary \ref{Rinverse} to the equations which define the centralizers and stabilizers, we find  $C_W^{\,r}(e)= C_W^{\,l}(e)^{-1}$ and  $S_W^{\,r}(e)= S_W^{\,l}(e)^{-1}$.

Because of $C_W^{\,r}(e)= C_W^{\,l}(e)^{-1}$ and $C_W(e)=C_W^{\,r}(e)\cap C_W^{\,l}(e)$ it is sufficient to show $C_W^{\,l}(e)=W(F)$ in (a). For $w\in W$ we find by Corollary \ref{FL-EL} that
\begin{eqnarray*}
   we(F) = e(F)w e(F) = w w^{-1}e(F)w e(F)= w e(w^{-1}F\cap F)
\end{eqnarray*}
if and only if $F=w^{-1}F\cap F$, if and only if $F\subseteq w^{-1}F$. Since $F$ and $w^{-1}F$ are faces of the same dimension, $F\subseteq w^{-1}F$ is equivalent to $F=w^{-1}F$, which in turn is equivalent to $w\in W(F)$.

Since $S_W^{\,r}(e)= S_W^{\,l}(e)^{-1}$ and $S_W(e)=S_W^{\,r}(e)\cap S_W^{\,l}(e)$ it is sufficient to show $S_W^{\,l}(e)=W_*(F)$ in (b).
Moreover, it suffices to show this for a fundamental face $F$.

Let $w\in W$. Then $we(F)=e(F)$ in $W$ if and only if there exists $n_w\in N$ projecting to $w$ such that $n_w e(F)=e(F)$ in $G$. By Proposition \ref{YYOmega} (d) and Theorem \ref{isostabN} this is equivalent to $w\in W_{\lambda_*(F)}$. From Theorem \ref{faceIsotropyGroup} we have $W_{\lambda_*(F)}=W_*(F)$.
\hfill$\Box$
\end{proof}

As a supplement we now rewrite some of the results of this section in a form encountered in the theory of $J$-irreducible reductive linear algebraic monoids.

We define the {\it type map} $\lambda: \Lambda \rightarrow 2^\Pi$, and the maps $\lambda_*: \Lambda \rightarrow 2^\Pi$ and $\lambda^*: \Lambda \rightarrow 2^\Pi$ as follows: If $e=e(F)$ where $F$ is a fundamental face of $H$ then
\begin{equation*}
   \lambda(e):=\lambda(F) \quad\text{and}\quad  \lambda_*(e):=\lambda_*(F) \quad\text{and}\quad  \lambda^*(e):=\lambda^*(F) .
\end{equation*}
From Theorems \ref{faceIsotropyGroup} and \ref{centralizersOfe} we obtain the following characterizations. In the theory of reductive linear algebraic monoids these are often used as definitions, where the cross-section lattice $\Lambda$ is obtained as in Theorem \ref{PutchaLattice}.
\begin{corollary}\label{centralizersOfe2} Let $e\in\Lambda$. Then we have
\begin{eqnarray*}
    &&\lambda(e) = \{\a\in \Pi \mid r_\a e = e r_\a\},\\
     && \lambda_*(e) = \{\a \in \Pi \mid r_\a e = e r_\a =  e\},\\
    && \lambda^*(e)  =\{\a \in \Pi \mid r_\a e = e r_\a \ne e\}.
\end{eqnarray*}
\end{corollary}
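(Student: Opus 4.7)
The plan is to derive the three equalities directly from Theorems \ref{faceIsotropyGroup} and \ref{centralizersOfe}, translating statements about centralizers/stabilizers of $e=e(F)$ into statements about the sets $\lambda(F)$, $\lambda_*(F)$, $\lambda^*(F)$. Write $e=e(F)$ for a fundamental face $F$, so by definition $\lambda(e)=\lambda(F)$, $\lambda_*(e)=\lambda_*(F)$, $\lambda^*(e)=\lambda^*(F)$.

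For the first equality, note that $r_\a e=er_\a$ is precisely the condition $r_\a\in C_W(e)$. By Theorem \ref{centralizersOfe}(a), $C_W(e)=W(F)$, and by Theorem \ref{faceIsotropyGroup}, $W(F)=W_{\lambda(F)}$. Since $\a\in\Pi$, the standard fact for Coxeter systems that a simple reflection $r_\a$ lies in a standard parabolic $W_I$ with $I\subseteq\Pi$ if and only if $\a\in I$ gives $r_\a\in W_{\lambda(F)}\iff\a\in\lambda(F)=\lambda(e)$.

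For the second equality, $r_\a e=er_\a=e$ is the condition $r_\a\in S_W(e)$. By Theorem \ref{centralizersOfe}(b), $S_W(e)=W_*(F)=W_{\lambda_*(F)}$, and the same simple-reflection criterion yields $r_\a\in W_{\lambda_*(F)}\iff \a\in\lambda_*(F)=\lambda_*(e)$.

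For the third equality, $r_\a e=er_\a\neq e$ says $\a\in\lambda(e)$ (by the first equality) but $\a\notin\lambda_*(e)$ (by the second); hence $\a\in\lambda(F)\setminus\lambda_*(F)$. By the definition of $\lambda(F)=\lambda_*(F)\cup\lambda^*(F)$ together with disjointness of $\lambda_*(F)=I_*$ and $\lambda^*(F)=I^*=I$ (recall $I_*\subseteq J_0\setminus I^*$ when $F=F_I$ with $I$ $\mu$-connected), this difference equals $\lambda^*(F)=\lambda^*(e)$, as claimed. No step presents a genuine obstacle; the argument is essentially a dictionary translation, the only point requiring (standard) care being the Coxeter-theoretic fact that simple reflections in a standard parabolic subgroup come only from its defining simple roots.
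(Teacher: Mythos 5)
Your proof is correct and follows exactly the route the paper intends: the paper gives no written proof beyond the remark that the corollary follows from Theorems \ref{faceIsotropyGroup} and \ref{centralizersOfe}, and your argument is precisely that dictionary translation (including the standard fact $W_I\cap S=I$ and the disjointness $\lambda_*(F)\cap\lambda^*(F)=\emptyset$, which also holds trivially in the omitted case $F=\emptyset$, where $\lambda^*(\emptyset)=\emptyset$).
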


Corollary \ref{centralizersOfe2} can be used to eliminate in previous results the dependence on the faces of the orbit hull $H$. An example: For $e\in E(R)$ we set, as in the theory of reductive linear algebraic monoids, $W(e):=C_W(e)$ and $W_*(e):= S_W(e)$. From Theorems \ref{faceIsotropyGroup} and \ref{centralizersOfe} we get the following corollary.
\begin{corollary}\label{centralizersOfe-rew} Let $e\in\Lambda$. Then
\begin{equation*}
           W(e) = W_{\lambda(e)} =   W_{\lambda_*(e)} \times  W_{\lambda^*(e)}  \quad\text{and}\quad W_*(e) = W_{\lambda_*(e)}.
\end{equation*}
Furthermore, $eW(e)=eW(e)e=W(e)e$ is a group with identity $e$, isomorphic to $W_{\lambda^*(e)} $.
\end{corollary}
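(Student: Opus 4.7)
The plan is to assemble this corollary almost entirely from results already established, since at this stage every ingredient is in place. First I would translate the definitions: because $W(e):=C_W(e)$ and $W_*(e):=S_W(e)$, Theorem \ref{centralizersOfe} identifies $W(e)=W(F)$ and $W_*(e)=W_*(F)$, and then Theorem \ref{faceIsotropyGroup} rewrites these as $W_{\lambda(F)}=W_{\lambda_*(F)}\times W_{\lambda^*(F)}$ and $W_{\lambda_*(F)}$ respectively. Under the definitions of $\lambda$, $\lambda_*$, $\lambda^*$ on $\Lambda$, this gives the first displayed equality of the corollary directly. The direct product decomposition itself relies on the fact that $\lambda_*(e)$ and $\lambda^*(e)$ are separated subsets of $\Pi$, which is built into the definition of $I_*$ for $\mu$-connected $I$.

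Next I would handle the three equalities $eW(e)=eW(e)e=W(e)e$. Since $W(e)=C_W(e)$, every $w\in W(e)$ commutes with $e$, so $eW(e)=W(e)e$ is immediate, and multiplying on the right by $e$ (and using $e^2=e$) gives $eW(e)e=eW(e)$. I would then reduce $eW(e)$ to $eW_{\lambda^*(e)}$ using the direct product decomposition: for any $w_*\in W_{\lambda_*(e)}=W_*(e)$ one has $ew_*=w_*e=e$, so $eW_{\lambda_*(e)}=\{e\}$, and therefore
\[
  eW(e)=eW_{\lambda_*(e)}W_{\lambda^*(e)}=eW_{\lambda^*(e)}.
\]

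The heart of the statement is the isomorphism $eW(e)\cong W_{\lambda^*(e)}$. I would define $\phi:W_{\lambda^*(e)}\to eW_{\lambda^*(e)}$ by $\phi(w):=ew$. Surjectivity is tautological. Since every $w\in W_{\lambda^*(e)}\subseteq W(e)$ commutes with $e$, for $w_1,w_2\in W_{\lambda^*(e)}$ we have $(ew_1)(ew_2)=e(w_1 e)w_2=e(ew_1)w_2=ew_1w_2$, so $\phi$ is a homomorphism. For injectivity, if $ew_1=ew_2$ then $w_2^{-1}w_1\in S_W^{\,l}(e)=W_*(e)=W_{\lambda_*(e)}$; but $w_2^{-1}w_1$ also lies in $W_{\lambda^*(e)}$, and since $\lambda_*(e)$ and $\lambda^*(e)$ are disjoint subsets of $\Pi$ we have $W_{\lambda_*(e)}\cap W_{\lambda^*(e)}=\{1\}$, so $w_1=w_2$. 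Thus $\phi$ is a group isomorphism, which simultaneously shows that $eW(e)=eW_{\lambda^*(e)}$ is a group with identity $\phi(1)=e$.

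I do not foresee a genuine obstacle; the only place where care is needed is to ensure the multiplication in $eW(e)$ really is inherited from $R$ (as opposed to requiring a fresh operation), and this is settled by the commutation of $W(e)$ with $e$ combined with $e^2=e$, which make the computation $(ew_1)(ew_2)=ew_1w_2$ literal inside $R$. The rest is bookkeeping.
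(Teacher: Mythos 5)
Your proposal is correct and follows essentially the same route as the paper, which derives the displayed identities directly from Theorems \ref{centralizersOfe} and \ref{faceIsotropyGroup}; the paper leaves the ``Furthermore'' part implicit, and your verification of it (reducing $eW(e)$ to $eW_{\lambda^*(e)}$ via $eW_{\lambda_*(e)}=\{e\}$ and checking that $w\mapsto ew$ is an isomorphism using $S_W^{\,l}(e)=W_{\lambda_*(e)}$ and the trivial intersection $W_{\lambda_*(e)}\cap W_{\lambda^*(e)}=\{1\}$) is the intended standard argument.
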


The next corollary is a consequence of Corollary \ref{dfct} and Theorem \ref{faceIsotropyGroup}. The results are similar to those for $J$-irreducible reductive linear algebraic monoids obtained by M. S. Putcha and L. E. Renner in Corollary 4.12, Theorem 4.16, and Corollary 4.11 of \cite{PR88}.
\begin{corollary}\label{dcsl-R} The map $\lambda^*:\Lambda\to 2^\Pi$ restricts to an isomorphism of partially ordered sets
\begin{equation*}
  \lambda^*:\Lambda\setminus\{0\} \to\{ I\subseteq \Pi \mid I \text{ is $\mu$-connected } \}.
\end{equation*}
For $e\in\Lambda\setminus\{0\}$ we have $\lambda_*(e)=\{\a\in J_0\setminus\lambda^*(e)\mid r_\a r_\beta = r_\beta r_\a \text{ for all }\beta\in\lambda^*(e)\}$.
\end{corollary}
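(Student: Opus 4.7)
The plan is to assemble the statement from two poset isomorphisms that are already essentially in hand, and then read off the second assertion from the definition of $I_*$.

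First, I would recall that Corollary \ref{FL-EL}(a) gives a $W$-equivariant poset isomorphism $\F(H)\to E(R)$, $F\mapsto e(F)$. Restricting to the fundamental faces $\F\subseteq\F(H)$ and their idempotents $\Lambda\subseteq E(R)$, this yields a poset isomorphism $\F\to\Lambda$, $F\mapsto e(F)$; deleting the bottom elements it further restricts to a poset isomorphism $\F\setminus\{\emptyset\}\to\Lambda\setminus\{0\}$. Next, Corollary \ref{dfct} provides the poset isomorphism $I\mapsto F_I$ from the set of $\mu$-connected subsets of $\Pi$ onto $\F\setminus\{\emptyset\}$.

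Composing the inverse of the first isomorphism with the inverse of the second, I obtain a poset isomorphism $\Lambda\setminus\{0\}\to\{I\subseteq\Pi\mid I\text{ is $\mu$-connected}\}$ defined by $e(F_I)\mapsto F_I\mapsto I$. By the definition (\ref{typeF3}) of $\lambda^*$ on fundamental faces, $\lambda^*(e(F_I))=\lambda^*(F_I)=I$, so this composition coincides with the restriction of $\lambda^*$ to $\Lambda\setminus\{0\}$. This gives the first claim.

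For the second claim, let $e\in\Lambda\setminus\{0\}$ and write $e=e(F_I)$ with $I$ $\mu$-connected. Then by (\ref{typeF2}) we have $\lambda_*(e)=\lambda_*(F_I)=I_*$. Since $I$ is $\mu$-connected, $I^*=I$, and plugging $I^*=I$ into the defining equation (1) of $I_*$ yields
\begin{equation*}
  \lambda_*(e)=I_*=\{\a\in J_0\setminus I\mid r_\a r_\beta=r_\beta r_\a\text{ for all }\beta\in I\}
  =\{\a\in J_0\setminus\lambda^*(e)\mid r_\a r_\beta=r_\beta r_\a\text{ for all }\beta\in\lambda^*(e)\},
\end{equation*}
as desired. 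No step here presents a real obstacle; the entire content has been packaged into the earlier Corollaries \ref{dfct} and \ref{FL-EL} together with the definitions (\ref{typeF3})--(\ref{typeF2}) of $\lambda^*$ and $\lambda_*$, and the only care needed is to verify that the bottom element $0=e(\emptyset)$ corresponds correctly to the empty face under the restriction (which is immediate from Proposition \ref{idempotentOfNbar} and the injectivity part of Corollary \ref{FL-EL}).
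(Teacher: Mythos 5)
Your proof is correct and follows essentially the same route the paper sketches; the paper simply remarks that the corollary is a consequence of Corollary~\ref{dfct} and Theorem~\ref{faceIsotropyGroup}, and what you have written is a careful unwinding of that remark, substituting Corollary~\ref{FL-EL}(a) for the step from $\F\setminus\{\emptyset\}$ to $\Lambda\setminus\{0\}$ and appealing directly to the definitions (\ref{typeF3})--(\ref{typeF2}) and the defining equation for $I_*$ (with $I^*=I$ since $I$ is $\mu$-connected) for the second assertion. No gaps.
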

%
%

Generalized Renner-Coxeter systems have been introduced by E. Godelle in \cite[Definition 1.4]{Go} as a common concept for various sorts of monoids, called Renner monoids in the literature. Equivalent concepts can be found implicitly in the work of M. S. Putcha and L. E. Renner.
\begin{theorem}\label{grm} The triple $(R,\Lambda, S)$ is a generalized Renner-Coxeter system, that is, the triple has the following properties:
\vspace{-1.5mm}
\begin{enumerate}[{\rm(a)}]
    \item R is a unit regular monoid, and the idempotents of $R$ commute.
    \vspace{-3mm}
  \item $(R^\times, S)$ is a Coxeter system.
    \vspace{-3mm}
    \item $\Lambda$ is a sub-semilattice of $E(R)$, and a cross-section for the action of $R^\times$ on $E(R)$ by conjugation.
    \vspace{-3mm}
    \item For each pair $e_1\le e_2$ in $ E(R)$ there exist $w\in R^\times$ and a pair $f_1\leq f_2$ in $\Lambda$, such that $e_1 = w f_1 w^{-1}$ and $e_2=w f_2 w^{-1}$.
    \vspace{-3mm}
    \item For each $e\in \Lambda$, both $C_{R^\times}(e)$ and $S_{R^\times}(e)$ are standard parabolic subgroups of $R^\times$.
    \vspace{-3mm}
    \item The map $\lambda_S^*:\Lambda\to 2^S$, defined on $e\in\Lambda$ by $\lambda_S^*(e):=\{s\in S\mid se = es\neq e\}$, is non-decreasing: $e\le f\Rightarrow \lambda_S^*(e)\subseteq\lambda_S^*(f)$.
  \end{enumerate}
\end{theorem}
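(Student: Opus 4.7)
The plan is to verify the six axioms in turn, with most items following from results already established in the excerpt.

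For (a), unit regularity and commutativity of idempotents are exactly Theorem \ref{RennerMonoidStructure} (c) and (b). For (b), the unit group $R^\times = W$ by Theorem \ref{RennerMonoidStructure} (a), and $(W,S)$ is a Coxeter system by construction. For (c), the fact that $\Lambda$ is a sub-semilattice of $E(R)$ is obtained by transporting the sublattice property of $\mathcal{F} \subseteq \mathcal{F}(H)$ (Corollary \ref{ff-sl}) through the $W$-equivariant lattice isomorphism $F \mapsto e(F)$ of Corollary \ref{FL-EL}; the cross-section property is the disjoint decomposition $E(R) = \bigsqcup_{e\in\Lambda} Cl_W(e)$ recorded in (\ref{transversal}), which in turn is the combination of Corollary \ref{FL-EL} with the cross-section property of $\mathcal{F}$ in $\mathcal{F}(H)$ (Corollary \ref{ff-cs}). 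For (e), Corollary \ref{centralizersOfe-rew} gives $C_W(e) = W_{\lambda(e)}$ and $S_W(e) = W_{\lambda_*(e)}$, both standard parabolic.

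The proof of (d) is the item requiring the most work, and I expect it to be the main obstacle, though the machinery is already in place. Given $e_1 \le e_2$ in $E(R)$, apply Corollary \ref{FL-EL} to translate to faces $F_1 \subseteq F_2$ of $H$. By Corollary \ref{ff-cs} there is $w \in W$ and a fundamental face $F \in \mathcal{F}$ with $F_2 = wF$, so after replacing $F_1$ by $w^{-1}F_1$ we may assume $F_2 = F \in \mathcal{F}$ and $F_1 \subseteq F$. Now invoke Corollary \ref{ffl2}: every face of $H$ contained in a fundamental face $F$ is $W(F)$-equivalent to a unique fundamental face contained in $F$. So there exist $u \in W(F)$ and $F' \in \mathcal{F}$ with $F' \subseteq F$ and $F_1 = u F'$. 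Because $u \in W(F)$, we also have $F = uF$, hence setting $w' := wu$, $f_1 := e(F')$, $f_2 := e(F)$ we obtain $f_1, f_2 \in \Lambda$ with $f_1 \le f_2$ and $e_1 = w' f_1 (w')^{-1}$, $e_2 = w' f_2 (w')^{-1}$, as required.

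For (f), identify $S$ with $\Pi$ via $s_i \leftrightarrow \alpha_i$. For $e \in \Lambda \setminus \{0\}$, Corollary \ref{centralizersOfe2} gives $\lambda^*(e) = \{\alpha \in \Pi \mid r_\alpha e = e r_\alpha \neq e\}$, which is precisely $\lambda_S^*(e)$ under this identification; for $e = 0$ both $\lambda^*(0)$ and $\lambda_S^*(0)$ are empty. By Corollary \ref{dcsl-R} the map $\lambda^* : \Lambda \setminus \{0\} \to \{\mu\text{-connected } I \subseteq \Pi\}$ is an isomorphism of partially ordered sets, and $0$ is the smallest element of $\Lambda$ mapped to $\emptyset$, so $\lambda^* : \Lambda \to 2^\Pi$ is non-decreasing. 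Combined with the identification $\lambda_S^* = \lambda^*$, this gives (f).
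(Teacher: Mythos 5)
Your proposal is correct and follows essentially the same route as the paper: parts (a), (b), (e) from Theorem \ref{RennerMonoidStructure} and Corollary \ref{centralizersOfe-rew}, part (c) from Corollary \ref{ff-sl} and (\ref{transversal}), part (f) from Corollaries \ref{centralizersOfe2} and \ref{dcsl-R}, and for part (d) the same two-step reduction via Corollary \ref{ff-cs} followed by Corollary \ref{ffl2}. The paper's argument for (d) differs only in bookkeeping (it conjugates $F_2$ first and then adjusts $F_1$ by an element of $W(w'F_2)$, rather than renaming as you do), so there is nothing to add.
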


\begin{proof}
Parts (a) and (b) hold by Theorem \ref{RennerMonoidStructure}. Part (c) follows from Corollary \ref{ff-sl} and (\ref{transversal}). Part (e) holds by Corollary \ref{centralizersOfe-rew}. Part (f) is obtained from Corollary \ref{dcsl-R} and $\lambda^*(0)=\emptyset$.

It remains to prove (d). There exist two faces $F_1\subseteq F_2$ such that $e_1=e(F_1)$ and $e_2=e(F_2)$. By Corollary \ref{ff-cs} there exists $w'\in W$ such that $w'F_2$ is a fundamental face. Clearly, $w'F_1$ is a face of $w'F_2$. It follows from Corollary \ref{ffl2} that there exists $w_1\in W(w'F_2)$ such that $w_1w'F_1$ is a fundamental face contained in $w'F_2$. Moreover, $w_1w'F_1$ is a face of $w_1w'F_2=w'F_2$. Set $w:=w_1w'$, $f_1:=e(wF_1)$, and $f_2:=e(wF_2)$. Then $f_1, f_2\in \Lambda$ and $f_1\le f_2$. Furthermore, $w^{-1}f_iw = e_i$ for $i = 1, 2$.
 \hfill$\Box$
\end{proof}

\subsection{The Bruhat and Birkhoff decompositions}
The following decompositions of $M$ are the first step to establish the Bruhat and Birkhoff decompositions.
\begin{theorem}\label{GeB} Let $\epsilon\in \{+,-\}$. Then
\[
    M = \bigsqcup_{e\in E(\T)}GeB^\epsilon =  \bigsqcup_{e\in E(\T)}B^\epsilon e G .
\]
\end{theorem}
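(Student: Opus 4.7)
My plan is to derive both decompositions from Theorem \ref{PutchaDecomposition} by pushing a carefully chosen Borel factor through an element of $\Lambda$. The key asymmetric absorption is $fB^- \subseteq B^- f$ for $f \in \Lambda$, which I obtain from Theorem \ref{PutchaLattice} together with Lemma \ref{ZL} (giving $B^+ f \subseteq fB^+$) by applying the Chevalley anti-involution. Disjointness will follow from a direct image/kernel analysis combined with the triangular action of $U^\pm$ on weight vectors.

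For existence, given $x \in M$ I would write $x = g f g'$ with $g, g' \in G$ and $f = e(F) \in \Lambda$. I then decompose $g'$ so that a $B^-$-factor sits immediately to the right of $f$: for $\epsilon = -$ via the Bruhat decomposition $G = B^- N B^-$, and for $\epsilon = +$ via the Birkhoff decomposition $G = B^- N B^+$. In both cases one writes $g' = c\, n\, b$ with $c \in B^-$, $n \in N$ representing some $w \in W$, and $b \in B^\epsilon$. By the absorption $fB^- \subseteq B^- f$ there is $c^* \in B^-$ with $f c = c^* f$, and by Lemma \ref{NNormalizesE} we have $fn = n\, e(w^{-1}F)$; together these give
\[
    x = g f c n b = g c^* f n b = (g c^* n)\, e(w^{-1}F)\, b \in G\cdot E(\overline T)\cdot B^\epsilon,
\]
establishing $M = \bigcup_{e \in E(\overline T)} G e B^\epsilon$. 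The second decomposition $M = \bigcup_{e} B^\epsilon e G$ will then follow by applying the Chevalley anti-involution to the version for $-\epsilon$, since $(G\, E(\overline T)\, B^{-\epsilon})^\Cai = B^\epsilon \, E(\overline T)\, G$.

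For disjointness, suppose $g e b = g' e' b'$ with $e = e(F)$, $e' = e(F') \in E(\overline T)$ and $b, b' \in B^\epsilon$. Comparing images yields $g V_F = g' V_{F'}$, and comparing kernels gives $(b' b^{-1})\, V_F^\perp = V_{F'}^\perp$. Writing $b' b^{-1} = t u$ with $t \in T$ and $u \in U^\epsilon$, the fact that $T$ preserves every weight space reduces this to $u V_F^\perp = V_{F'}^\perp$. For $\epsilon = +$, any $u \in U^+$ acts on a weight vector $v_\eta$ by $u v_\eta = v_\eta + \sum_{\eta' > \eta} w_{\eta'}$, so for $\eta \in P(V) \setminus F$ and $v_\eta \in V_\eta\setminus\{0\}$ the inclusion $v_\eta \in V_F^\perp$ forces $u v_\eta \in V_{F'}^\perp$, and reading off the $V_\eta$-component forces $\eta \notin F'$. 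This yields $P(V) \cap F' \subseteq P(V) \cap F$; the symmetric argument applied to $u^{-1} \in U^+$ gives the reverse inclusion, so $P(V) \cap F = P(V) \cap F'$. Since every face of $H$ coincides with the convex hull of its weight points (as used in the proof of Lemma \ref{alphaInDLowerStar}), $F = F'$, hence $e = e'$. The case $\epsilon = -$ is analogous via the lower-triangular action of $U^-$.

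The main obstacle I anticipate is keeping track of the asymmetry in the absorption law $fB^- \subseteq B^- f$, which forces different choices of decomposition of $G$ depending on $\epsilon$; once that bookkeeping is handled, the existence step collapses into a single calculation, and disjointness reduces to the elementary triangularity of $U^\pm$ with respect to the weight ordering.
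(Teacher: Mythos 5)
Your proposal is correct. The existence half is essentially the paper's argument: both start from Theorem \ref{PutchaDecomposition}, absorb a $B^-$-factor sitting to the right of $f\in\Lambda$ (you via $fB^-\subseteq B^-f$ from Theorem \ref{PutchaLattice} and the anti-involution, the paper via Proposition \ref{PeF}(b)), conjugate the $N$-part past the idempotent with Lemma \ref{NNormalizesE}, and recover the second decomposition by applying the Chevalley anti-involution to the statement for $-\epsilon$. Where you genuinely diverge is disjointness. The paper first uses the disjointness of $M=\bigsqcup_{f\in\Lambda}GfG$ to reduce to $e,e_1\in Cl_W(f)$ for a single $f$, then manipulates $gnfn^{-1}b=n_1fn_1^{-1}$ into the form covered by Theorem \ref{geF}, concluding $n^{-1}bn_1\in P^-_{\lambda(f)}$ and finishing with the disjointness of $G=\bigsqcup_x B^\epsilon xP^-_{\lambda(f)}$. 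You instead compare kernels directly, reduce $b'b^{-1}$ to its unipotent part, and exploit the unitriangularity of $U^\epsilon$ on the weight grading to force $P(V)\cap F=P(V)\cap F'$, whence $F=F'$ since each face is the convex hull of its weight points (Lemma \ref{generatingProperties} applied to $H=\mathrm{co}(W\mu)$ covers arbitrary faces, not just the fundamental ones treated in Lemma \ref{alphaInDLowerStar}). Your route is more elementary and self-contained — it bypasses Theorem \ref{geF} and the parabolic coset decomposition entirely — while the paper's route recycles machinery it has already built and will reuse; both are sound, and your kernel/triangularity argument is in fact close in spirit to the computation the paper later performs inside the proof of Theorem \ref{bd}.
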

\begin{proof} Theorem \ref{PutchaDecomposition} shows that
\begin{equation}\label{GeBeq1}
   M = \bigsqcup_{f\in \Lambda}GfG .
\end{equation}

Now let $f\in\Lambda$. From $G=\bigsqcup_{w\in W}B^- wB^\epsilon$ and Corollary \ref{PeF} (b) we obtain
\begin{equation*}
    GfG = \bigcup_{w\in W}Gf B^- wB^\epsilon  =\bigcup_{w\in W}GfwB^\epsilon = \bigcup_{e\in Cl_W(f)}GeB^\epsilon.
\end{equation*}
This decomposition is also disjoint. Let $e=wfw^{-1}$ and $e_1=w_1fw_1^{-1}$, and suppose that $GeB^\epsilon \cap Ge_1B^\epsilon\neq\emptyset$. Then there exist $g\in G$, $b\in B^\epsilon $, and $n,n_1\in N$ projecting respectively to $w,w_1\in W$ such that $  g n f n^{-1}b=n_1 f n_1^{-1}$. So
\[
    n^{-1}g^{-1}n_1 f = f n^{-1}b n_1.
\]
It follows from Theorem \ref{geF} that $n^{-1}bn_1\in P^-_{\lambda(f)}$. Equivalently, $bn_1\in w P^-_{\lambda(f)}$.
But $G$ is a disjoint union of $B^\epsilon x P^-_{\lambda(f)}$ where $x\in W/W_{\lambda(f)}$.
Hence $w_1\in wW_{\lambda(f)}$, so $e _1= w_1 f w_1^{-1} = w f w^{-1} = e$ by Corollary \ref{centralizersOfe-rew}.

Inserting in (\ref{GeBeq1}), we get
\begin{equation*}
   M = \bigsqcup_{f\in \Lambda,\,e\in Cl_W(f)}GeB^\epsilon =\bigsqcup_{e\in E(\T)}GeB^\epsilon,
\end{equation*}
and $M= \bigsqcup_{e\in E(\T)}B^\epsilon eG$ is obtained by applying the Chevalley anti-involution.
\hfill$\Box$
\end{proof}

We also need a technical Lemma.

\begin{lemma}\label{chstabW} Let $F$ be a face of $H$ and $w \in W(F)$. The following are equivalent:
\begin{itemize}
\item[\rm (a)] $w\in W_*(F)$.
\item[\rm (b)] $w\eta\geq\eta$ for all $F$-weights $\eta$.
\item[\rm (c)]  $w\eta\leq\eta$ for all $F$-weights $\eta$.
\end{itemize}
\end{lemma}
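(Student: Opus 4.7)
The direction (a) $\Rightarrow$ (b) and (a) $\Rightarrow$ (c) is immediate from the definition of $W_*(F)$: if $w$ fixes every point of $F$, then in particular $w\eta=\eta$ for each $F$-weight $\eta$, which is both $\geq\eta$ and $\leq\eta$. For the reverse implications the endgame in each case is to show $w\eta=\eta$ for every $F$-weight $\eta$, whereupon Lemma \ref{alphaInDLowerStar}(b) yields $w\in W_*(F)$.

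For (b) $\Rightarrow$ (a) the plan is to iterate $w$ along a forward orbit. Fix an $F$-weight $\eta$ and set $\eta_k:=w^k\eta$ for $k\in\Z_+$. Because $w\in W(F)$ and $P(V)$ is $W$-invariant, every $\eta_k$ is an $F$-weight, so (b) applies at each stage and gives $\eta_{k+1}-\eta_k = w\eta_k-\eta_k\in Q_+$. Telescoping, $(\eta_k-\eta_0)_{k\geq 0}$ is a weakly increasing sequence in $Q_+$. Since $V$ has highest weight $\mu$ we have $\eta_k\leq\mu$, hence $\eta_k-\eta_0\leq \mu-\eta_0$ in $Q_+$. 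The crucial auxiliary fact is that the positive cone is locally finite from below: for any $q=\sum n_i\alpha_i\in Q_+$ the set $\{q'\in Q_+ : q'\leq q\}$ is finite, because each coefficient of $q'$ must lie in $\{0,1,\ldots,n_i\}$. Thus the increasing sequence $(\eta_k-\eta_0)_{k\geq 0}$ takes only finitely many values and stabilizes: there is $k_0$ with $w\eta_{k_0}=\eta_{k_0+1}=\eta_{k_0}$. Injectivity of $w$ on $\h_\R^*$ then propagates the fix backward: from $w\eta_{k_0-1}=\eta_{k_0}=w\eta_{k_0}$ one gets $\eta_{k_0-1}=\eta_{k_0}$, and inductively $\eta_0=\eta_1$, i.e. $w\eta=\eta$.

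The implication (c) $\Rightarrow$ (a) is handled by the mirror argument applied to the backward orbit $\eta_{-k}:=w^{-k}\eta$, which is again contained in $P(V)\cap F$ since $w^{-1}\in W(F)$. Applying (c) at $\eta_{-k}$ gives $\eta_{-k+1} = w\eta_{-k}\leq \eta_{-k}$, so $(\eta_{-k})_{k\geq 0}$ is weakly increasing and bounded above by $\mu$; the same local-finiteness argument stabilizes it, and the same $w$-injectivity step concludes $w\eta=\eta$.

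The main obstacle is that neither $W(F)$ nor $P(V)\cap F$ need be finite (in affine or indefinite type both are typically infinite), so one cannot simply cycle through a finite orbit or invoke an extremal-element argument. The proof hinges on local finiteness of $(Q_+,\leq)$ from below combined with the universal upper bound $\eta\leq\mu$ on all weights of the highest weight module; these together forbid infinite strictly increasing (or strictly decreasing) chains in $P(V)\cap F$ and force each orbit to stall.
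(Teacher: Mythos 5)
Your proof is correct and follows essentially the same route as the paper's: both directions rest on iterating $w$ to produce a weakly increasing chain of $F$-weights bounded above by $\mu$, forcing stabilization by the finiteness of $\{q\in Q_+ \mid q\le \mu-\eta\}$, and then concluding via Lemma \ref{alphaInDLowerStar}(b). The only cosmetic difference is that the paper derives (c)$\Rightarrow$(a) by applying the (b)$\Rightarrow$(a) implication to $w^{-1}$ rather than rerunning the orbit argument backward.
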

\begin{proof} Obviously, (a) implies (b) as well as (c). Now suppose that (b) holds. Let $\eta'$ be an $F$-weight. Since $w\in W(F)$, the weights $w\eta'$, $w^2\eta'$,  $w^3\eta'$, \ldots are $F$-weights. By (b) we find
\begin{equation*}
 \eta'\leq w\eta'\leq w^2\eta'\leq w^3\eta'\leq\cdots  .
\end{equation*}
All elements of this chain are smaller than or equal to $\mu$. Since there are only finitely many weights of $V$ between $\eta'$ and $\mu$ this chain gets stationary. Thus, there exists $k\in\Z_+$ such that $w^{k+1}\eta'=w^k\eta'$. Hence $w\eta'=\eta'$. Now (a) follows from Lemma \ref{alphaInDLowerStar}.

Suppose that (c) holds. Let $\eta$ be an $F$-weight. Because $w^{-1}\in W(F)$, the weight $w^{-1}\eta$ is an $F$-weight. We obtain $\eta=w w^{-1}\eta\leq w^{-1}\eta$ by (c). From the equivalence of (a) and (b) we get $w^{-1}\in W_*(F)$, from which (a) follows.\hfill$\Box$
\end{proof}

Now we can show the following result.
\begin{theorem}\label{bd} Let $\epsilon,\delta\in\{+,-\}$. Then
\[
  M = \bigsqcup_{x\in R}B^\epsilon xB^\delta  .
\]
\end{theorem}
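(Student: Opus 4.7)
The plan is to refine the decomposition $M = \bigsqcup_{e\in E(\overline T)} G e B^\delta$ from Theorem \ref{GeB} into the $R$-indexed form by showing $G e B^\delta = \bigsqcup_{x\in We} B^\epsilon x B^\delta$ for every $e\in E(\overline T) = E(R)$, and then summing over $e$ using $R = \bigsqcup_{e\in E(R)} We$ from Lemma \ref{W-orbits}.

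For coverage of $GeB^\delta$, I would write $e = n_u e_0 n_u^{-1}$ with $e_0 = e(F_0)\in\Lambda$ fundamental (via (\ref{transversal})), and expand an arbitrary $geb\in GeB^\delta$ using $G = \bigsqcup_{w\in W} B^\epsilon n_w B^\delta$: writing $g = b_1 n_w b_2$ with $b_1\in B^\epsilon$, $b_2\in B^\delta$, we get $geb = b_1 n_w (b_2 n_u)\, e_0\, n_u^{-1} b$. The key step is to push $b_2 n_u$ past $e_0$: Bruhat-decomposing $b_2 n_u$ relative to $P_{\lambda(F_0)}$ and applying Proposition \ref{PeF}(a) (for $\delta = +$), or the dual form $M = \bigsqcup_e B^\epsilon e G$ of Theorem \ref{GeB} together with Proposition \ref{PeF}(b) (for $\delta = -$), yields an identity $b_2 n_u\, e_0 = n_v\, e_0\, b''$ for some $v\in W$ and $b''\in B^\delta$. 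Reassembling, $geb \in B^\epsilon n_{wv}\, e_0\, n_u^{-1} B^\delta$, and the representative $n_{wv}\, e_0\, n_u^{-1}$ projects to $x\in We\subseteq R$.

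For disjointness, suppose $B^\epsilon x B^\delta \cap B^\epsilon x' B^\delta \neq \emptyset$ with $x = we$ and $x' = w'e'$. The inclusions $B^\epsilon x B^\delta \subseteq GeB^\delta$ and $B^\epsilon x' B^\delta \subseteq Ge'B^\delta$, combined with the disjointness in Theorem \ref{GeB}, force $e = e'$. After $N$-conjugation we may assume $e = e_0 = e(F_0)\in\Lambda$ is fundamental and $x,x' \in We_0$. A point in the intersection yields $b_1 n_w e_0 b_2 = b_1' n_{w'} e_0 b_2'$; applying both sides to a weight vector $v_\eta$ for $\eta \in F_0\cap P(V)$ and comparing supports shows that $(w')^{-1}w\eta$ must lie in $F_0\cap P(V)$, and moreover $(w')^{-1}w\eta \geq \eta$ or $(w')^{-1}w\eta \leq \eta$ depending on the signs $(\epsilon,\delta)$. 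Since $F_0 = \text{co}(F_0\cap P(V))$, the containment $(w')^{-1}w(F_0\cap P(V))\subseteq F_0\cap P(V)$, combined with equality of dimensions, forces $(w')^{-1}w \in W(F_0)$; Lemma \ref{chstabW} then upgrades the monotonicity to $(w')^{-1}w \in W_*(F_0)$. By Theorem \ref{centralizersOfe}(b) this equals $S_W(e_0)$, whence $we_0 = w'e_0$ in $R$ and $x = x'$.

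The main obstacle is the absorption step of coverage. For $\delta = +$ the identity $B^+ e_0 \subseteq e_0 B^+$ follows cleanly from Corollary \ref{UeF}(a) and Proposition \ref{PeF}(a); for $\delta = -$ the analogous $B^- e_0 \subseteq e_0 B^-$ fails, and only $e_0 B^- \subseteq B^- e_0$ holds, forcing one to work from the dual version $M = \bigsqcup_e B^\epsilon eG$ of Theorem \ref{GeB} and invoke Proposition \ref{PeF}(b). A second subtlety is that for non-fundamental $e$, the conjugation by $n_u$ introduces a twisted Borel $n_u^{-1}B^\delta n_u$ that interacts nontrivially with the Bruhat decomposition of $G$; I would manage this by keeping $n_u$ explicit and performing the Bruhat and Levi absorptions in $\overline N$ before projecting to $R$. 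Disjointness, by contrast, reduces to a uniform weight-theoretic calculation once Theorem \ref{GeB} separates the idempotent classes.
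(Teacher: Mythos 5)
Your architecture matches the paper's: refine Theorem \ref{GeB} to $GeB^\delta=\bigcup_{x\in We}B^\epsilon xB^\delta$ and prove disjointness by evaluating on weight vectors, comparing supports, and invoking Lemma \ref{chstabW} and Theorem \ref{centralizersOfe}. The disjointness half is essentially the paper's argument and is sound; note, though, that the paper runs it directly for an arbitrary face $F$, so your preliminary $N$-conjugation to a fundamental face (which would also twist the Borel subgroups $B^\epsilon$, $B^\delta$) is both unnecessary and not quite legitimate as stated.

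The coverage half has a genuine gap at exactly the point you flag. The identity $b_2n_ue_0=n_ve_0b''$ with $b''\in B^\delta$ does not follow from Bruhat-decomposing $b_2n_u$ relative to $P_{\lambda(F_0)}$ and applying Proposition \ref{PeF}: writing $b_2n_u=b_3n_vp$ with $b_3\in B^\delta$ and $p\in P_{\lambda(F_0)}$ only gives $b_2n_ue_0=b_3n_ve_0\,\theta_{F_0}(p)$, with a leftover $b_3$ on the wrong side and a Levi factor $\theta_{F_0}(p)\in L_{\lambda(F_0)}$ that lies in no Borel subgroup; and even granting your identity, in the reassembly $e_0b''n_u^{-1}b$ the element $n_ub''n_u^{-1}$ need not lie in $B^\delta$. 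This is precisely the ``twisted Borel'' obstruction you name, and ``performing the absorptions in $\overline{N}$ before projecting to $R$'' does not resolve it. The mechanism your proposal is missing is the paper's: parametrize $E(\overline{T})$ as $e=\sigma f\sigma^{-1}$ with $f\in\Lambda$ and $\sigma\in W^{\lambda(f)}$ a \emph{minimal} coset representative, insert $G=\bigsqcup_{w\in W^{\lambda(f)}}B^\epsilon wP_{\lambda(f)}$, and factor the parabolic in the sign-adapted form $P_{\lambda(f)}=U_{\lambda^*(f)}^\epsilon TN_{\lambda^*(f)}U_{\lambda^*(f)}^\delta G_{\lambda_*(f)}U^{\lambda(f)}$. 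Then $G_{\lambda_*(f)}U^{\lambda(f)}$ is absorbed into $f$ by Corollary \ref{cG2}, $U_{\lambda^*(f)}^\delta$ commutes past $f$ by Corollary \ref{RennerLemma}, and the minimality of $w$ and of $\sigma$ guarantees $wU_{\lambda^*(f)}^\epsilon w^{-1}\subseteq B^\epsilon$ and $\sigma U_{\lambda^*(f)}^\delta\sigma^{-1}\subseteq B^\delta$, so everything lands in $B^\epsilon wW_{\lambda(f)}f\sigma^{-1}B^\delta$. Without this choice of representatives and this factorization the coverage claim is not established.
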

\begin{proof}
Every idempotent $e\in E(\T)$ can be written uniquely in the form $e= \sigma f\sigma ^{-1}$ with $f\in \Lambda$ and $\sigma \in W^{\lambda(f)}$.
Moreover, $W^{\lambda(f)} = \{w\in W\mid w\a\in\Drp\text{ for all } \a\in\lambda(f)\}$ by \cite[Proposition 2.20]{AB08} and \cite[Lemma 3.11 a)]{Kac90}. From Theorem \ref{GeB} we obtain
\begin{equation}\label{bdeq1}
  M = \bigsqcup_{e\in E(\T)}GeB^\delta = \bigsqcup_{\sigma \in W^{\lambda(f)}, \,f\in \Lambda}Gf\sigma ^{-1}B^\delta .
\end{equation}

Now we consider a set $G f\sigma ^{-1}B^\delta$ of this union. Inserting $G= \bigsqcup_{w\in W^{\lambda(f)}}B^\epsilon w P_{\lambda(f)}$ and the decomposition
\[
    P_{\lambda(f)} = TG_{\lambda^*(f)} G_{\lambda_*(f)}\ U^{\lambda(f)} = U_{\lambda^*(f)}^\epsilon
T N_{\lambda^*(f)} U_{\lambda^*(f)}^\delta G_{\lambda_*(f)}\ U^{\lambda(f)},
\]
we obtain from Corollaries \ref{cG2} and \ref{cG1} that
\begin{align}
  Gf \sigma ^{-1}B^\delta
  \nonumber & = \bigcup_{w\in W^{\lambda(f)}}B^\epsilon w P_{\lambda(f)}f \sigma ^{-1}B^\delta
                      = \bigcup_{w\in W^{\lambda(f)}}B^\epsilon w  U_{\lambda^*(f)}^\epsilon W_{\lambda^*(f)} U_{\lambda^*(f)}^\delta f \sigma ^{-1}B^\delta
 \\
  \nonumber & = \bigcup_{w\in W^{\lambda(f)}}B ^\epsilon w  U_{\lambda^*(f)}^\epsilon w^{-1} wW_{\lambda^*(f)} f \sigma^{-1}\sigma U_{\lambda^*(f)}^\delta\sigma^{-1}B^\delta.
 \end{align}
Here, $w U_{\lambda^*(f)}^\epsilon w^{-1}\subseteq B^\epsilon$ and $\sigma  U_{\lambda^*(f)}^\delta \sigma ^{-1}\subseteq B^\delta$. In view of $f=W_{\lambda_*(f)}f$ we get
\begin{equation} \label{bdeq2}
  Gf \sigma ^{-1}B^\delta
     = \bigcup_{w\in W^{\lambda(f)}}B^\epsilon w W_{\lambda(f)} f \sigma ^{-1} B^\delta
     = \bigcup_{w\in W}B^\epsilon w \sigma ^{-1}e B^\delta
     = \bigcup_{x\in We}B^\epsilon x B^\delta.
\end{equation}

We next show that this union is disjoint. Note that $e=e(F)$ for a face $F$ of the weight hull $H$. If $B^\epsilon we B^\delta \cap B^\epsilon \tilde{w} e B^\delta\neq\emptyset $ there exist $u_\epsilon\in U^\epsilon$, $\tilde{u}_\delta\in U^\delta$ and $n, \tilde{n}\in N$ projecting to $w,\tilde{w}$, respectively, such that $u_\epsilon \tilde{n} e(F) \tilde{u}_\delta  = n e(F) $. Therefore,
\begin{equation}\label{eqBdis}
  (n^{-1}u_\epsilon n) (n^{-1}\tilde{n}) e(F) =  e(F) \tilde{u}_\delta^{-1}  .
\end{equation}

Comparing the images of both sides in (\ref{eqBdis}), we get $ (n^{-1}u_\epsilon n)V_{w^{-1}\tilde{w}F}= V_{F}$, and equivalently $ V_{w^{-1}\tilde{w}F}= (n^{-1}u_\epsilon^{-1} n)V_{F}$. By the action of $n^{-1} U^\epsilon n$ on the weight spaces we conclude that $w^{-1}\tilde{w}(F\cap P(V))\subseteq F\cap P(V)$ and $w^{-1}\tilde{w}(F\cap P(V))\supseteq F\cap P(V)$. From Lemma \ref{alphaInDLowerStar} (a) we obtain $w^{-1}\tilde{w}\in W(F)$.

Let $\eta$ be an $F$-weight. Evaluating both sides of (\ref{eqBdis}) at $v_\eta\in V_\eta\setminus\{0\}$, we find
\begin{equation*}\label{eqBdis3}
 \  w^{-1}\tilde{w}\eta\in \text{supp} ( (n^{-1}u_\epsilon n) ( n^{-1}\tilde{n}) v_\eta) = \text{supp} (  e(F) \tilde{u}_\delta^{-1}  v_\eta )\subseteq \eta+ Q_\delta  .
\end{equation*}
If $\delta$ is equal to $+$, we get $w^{-1}\tilde{w}\eta \geq \eta$ for all $F$-weights $\eta$. If $\delta$ is equal to $-$, we have $w^{-1}\tilde{w}\eta \leq \eta$ for all $F$-weights $\eta$. From Lemma \ref{chstabW} we find $w^{-1}\tilde{w}\in W_*(F)$ in both cases. Hence, $w e(F)= \tilde{w}e(F)$ by Theorem \ref{centralizersOfe}.

Inserting the disjoint union (\ref{bdeq2}) in (\ref{bdeq1}) we obtain
\[
  M = \bigsqcup_{e\in E(\T)} \,\bigsqcup_{x\in We } B^\epsilon x B^\delta = \bigsqcup_{x\in R}B^\epsilon x B^\delta .
\]
\hfill$\Box$\end{proof}

In the following proposition we generalize some properties of the twin BN-pairs $(B^\pm,N)$ of $G$ given in \cite[Definition 6.55, Lemma 6.80]{AB08} to the monoid $M$.
\begin{proposition}  Let $\a\in\Pi$, $x\in R$, and $\epsilon,\delta \in\{+,-\}$. Then
\begin{itemize}
\item[\rm (a)] $ (B^\epsilon r_\a B^\epsilon)(B^\epsilon x B^\delta) \subseteq B^\epsilon r_\a x B^\delta \cup B^\epsilon x B^\delta $.
\item[\rm (b)] $  (B^\delta x B^\epsilon)(B^\epsilon r_\a B^\epsilon) \subseteq B^\delta  x r_\a B^\epsilon \cup B^\delta x B^\epsilon$.
\end{itemize}
\end{proposition}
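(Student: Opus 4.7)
First, I would reduce (b) to (a) using the Chevalley anti-involution $\star$ on $M$. Since $\star$ interchanges $B^+$ and $B^-$ (as $T^\star = T$ and $(U^\pm)^\star = U^\mp$), fixes every idempotent in $E$ (because $e(F)^\star = e(F)$ by the orthogonal decomposition in (\ref{ImageKernel})), and induces the inverse map on $R$ (Remark after Corollary \ref{Rinverse}), applying $\star$ to the inclusion of (a) with the substitution $(\epsilon,\delta,x)\mapsto(-\delta,-\epsilon,x^{\rm inv})$ gives exactly (b). It therefore suffices to prove (a).

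For (a), I would first use $B^\epsilon B^\epsilon = B^\epsilon$ to rewrite
\[
(B^\epsilon r_\a B^\epsilon)(B^\epsilon x B^\delta) = B^\epsilon r_\a B^\epsilon x B^\delta,
\]
and write $x=we$ with $w\in W$, $e\in E(R)$ by Lemma \ref{W-orbits}. Applying the BN-pair axiom for the single BN-pair $(B^\epsilon,N)$ of $G$ (Corollary \ref{twinned BN for G}) gives
\[
B^\epsilon r_\a B^\epsilon \,w \;\subseteq\; B^\epsilon r_\a w B^\epsilon \;\cup\; B^\epsilon w B^\epsilon,
\]
so the LHS of (a) is contained in $B^\epsilon r_\a w B^\epsilon e B^\delta \cup B^\epsilon w B^\epsilon e B^\delta$. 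The whole question then reduces to the following key claim: for every $z\in W$ and $e\in E(R)$,
\[
   B^\epsilon z B^\epsilon \cdot e B^\delta \;\subseteq\; B^\epsilon z e B^\delta. \tag{$\ast$}
\]

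To establish $(\ast)$, I would conjugate $e$ to a fundamental idempotent: write $e=\tau f\tau^{-1}$ with $f\in\Lambda$ and $\tau\in W^{\lambda(f)}$ (a minimal coset representative), and bring $\tau$ across by a change of variable $z':=z\tau$, reducing $(\ast)$ to
\[
   B^\epsilon z' B^\epsilon \cdot f\tau^{-1} B^\delta \;\subseteq\; B^\epsilon z' f\tau^{-1} B^\delta.
\]
The central absorption identities are then Proposition \ref{PeF}: since $B^+\subseteq P_{\lambda(f)}$ and $B^-\subseteq P^-_{\lambda(f)}$, we have $B^+ f = f\,\theta_f(B^+)$ and $fB^- = \theta^-_f(B^-)\,f$, so multiplication by $f$ projects the adjacent Borel onto its intersection with the Levi $L_{\lambda(f)}$. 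Splitting the inner $B^\epsilon$ (the one sitting between $z'$ and $f$) via the Levi decomposition $B^\epsilon = (B^\epsilon\cap L_{\lambda(f)})(B^\epsilon\cap U^{\lambda(f)}_{\epsilon})$, the unipotent-radical part passes harmlessly through $f$ by Corollary \ref{UeF} (it either fixes $f$ on the correct side or commutes with $f$), while the Levi part can be pushed across $f$ and then handled by the BN-pair of $L_{\lambda(f)}$. Since $\tau\in W^{\lambda(f)}$ keeps the roots of $\lambda(f)$ positive, the resulting conjugates $\tau(B^\epsilon\cap L_{\lambda(f)})\tau^{-1}$ are controlled, and one may absorb the remaining factor into $\tau^{-1}B^\delta$ to obtain the desired inclusion.

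The main obstacle is the mixed-sign case $\epsilon\neq\delta$ in $(\ast)$, where one cannot na\"ively conclude $B^\epsilon e\subseteq eB^\delta$ --- indeed this already fails for fundamental $e$. The point is that a product of the form $b\,e\,b'$ with $b\in B^\epsilon$, $b'\in B^\delta$ does not simply collapse into $eB^\delta$; instead, the Levi-radical split of $B^\epsilon$ together with the absorption identities of Proposition \ref{PeF} and Corollary \ref{UeF} must be applied in the correct order, exploiting the fact that the parts of $b$ lying outside the parabolic that absorbs $f$ get neutralized either by $f$ itself (via the stabilizer description of Corollary \ref{cG2}) or by the rank-one identity $U_{-\epsilon\a}\setminus\{1\}\subseteq B^\epsilon r_\a B^\epsilon$ appearing implicitly through the BN-pair step.
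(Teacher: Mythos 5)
Your reduction of (b) to (a) by the Chevalley anti-involution is correct and coincides with the paper's. The proof of (a), however, rests on the key claim $(\ast)$, and $(\ast)$ is false --- not only in the mixed-sign case you flag at the end, but already for $\epsilon=\delta$. Here is the general mechanism. Suppose $e=e(F)$ with $-\epsilon\alpha\in\Delta_p(F)$ and $r_\alpha e\neq e$ in $R$; for instance $F=\{r_\alpha\mu\}$ with $\alpha\in J_>$ and $\epsilon=+$, so concretely for $\g=sl(2,\C)$ with its natural representation one may take $M\cong M_2(\C)$ and $e=E_{22}$, the projection onto the lowest weight line. Then $U_{-\epsilon\alpha}\setminus\{1\}\subseteq B^\epsilon r_\alpha B^\epsilon$, while $ue=e$ for all $u\in U_{-\epsilon\alpha}$ by Corollary \ref{RennerLemma} (c). Hence $e=u\,e\,1\in (B^\epsilon r_\alpha B^\epsilon)\,e\,B^\delta$, yet $e\notin B^\epsilon r_\alpha e\,B^\delta$, because $e\in B^\epsilon e B^\delta$ and the cosets $B^\epsilon r_\alpha e B^\delta$ and $B^\epsilon e B^\delta$ are disjoint by Theorem \ref{bd} whenever $r_\alpha e\neq e$ in $R$ (Theorem \ref{centralizersOfe}). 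So $(\ast)$ fails with $z=r_\alpha$: in the $M_2(\C)$ picture, $Br_1BE_{22}B$ contains $E_{22}$, but $Br_1E_{22}B=BE_{12}B$ does not. The product genuinely spreads over the two cosets that the Proposition allows, and no single-coset absorption statement can capture this.

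More structurally, the order of operations in your plan cannot be repaired. Once you apply the BN-pair axiom to $B^\epsilon r_\alpha B^\epsilon w$ first, you are left with sets $B^\epsilon z B^\epsilon e B^\delta$ in which a full Borel separates $z$ from $e$; the root groups $U_\beta\subseteq B^\epsilon$ with $\beta\in\Delta_n(e)$ satisfy $eu=e\neq ue$ (Corollary \ref{RennerLemma} (d)) and cannot be pushed past $e$ to the right, so each such set may split into several $B^\epsilon$--$B^\delta$ cosets not of the required form. The paper's proof keeps $r_\alpha$, $w$ and $e$ together: it factors $U^\epsilon=U^{\epsilon\alpha}U_{\epsilon\alpha}$ with $U^{\epsilon\alpha}=U^\epsilon\cap r_\alpha U^\epsilon r_\alpha$, conjugates the single root group $U_{\epsilon\alpha}$ past $w$ to $U_{\epsilon w^{-1}\alpha}$, and then distinguishes cases according to which of $\Delta_p(e)$, $\Delta_*(e)$, $\Delta^*(e)\cap\Delta^{re}_{\pm\delta}$, $\Delta_n(e)$ contains $\epsilon w^{-1}\alpha$, using Corollary \ref{RennerLemma}. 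Only in the bad case $\epsilon w^{-1}\alpha\in(\Delta^*(e)\cap\Delta^{re}_{-\delta})\cup\Delta_n(e)$ is the relation $r_\alpha B^\epsilon r_\alpha\subseteq B^\epsilon\cup B^\epsilon r_\alpha B^\epsilon$ invoked, after rewriting $we=r_\alpha(r_\alpha we)$ so that the relevant root changes sign --- and that is exactly where the second coset $B^\epsilon x B^\delta$ enters. Your sketch contains none of this case analysis; its concluding paragraph names the difficulty but does not resolve it.
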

\begin{proof} It suffices to show (a). Then (b) follows from (a) by applying the Chevalley anti-involution. We write $x$ in the form $x=we$ with $w\in W$ and $e\in E(R)$. Recall that
\begin{eqnarray*}
  \Delta^{re} = \Delta_p(e) \cup \Delta_*(e)\cup\Delta^*(e)\cup\Delta_n(e).
\end{eqnarray*}

From \cite[Proposition 4.2]{Kac90} we get $\bU^+= \bU_\a \bU^\a_+=\bU^\a_+ \bU_\a$ where $\bU^\a_+:=\bU^+\cap r_\a\bU^+ r_\a$.  Applying $\br$ and the Chevalley anti-involution, we find
\begin{equation*}
     U^\epsilon = U^{\epsilon \a} U_{\epsilon \a} \quad \text{ where }\quad  U^{\epsilon \a}:=U^\epsilon\cap r_\a U^\epsilon r_\a.
\end{equation*}
If $\epsilon w^{-1}\a\in \Delta_p(e)\cup \Delta_*(e)\cup  (\Delta^*(e)\cap\Delta^{re}_\delta)$ then by Corollary \ref{RennerLemma} we obtain
\begin{eqnarray*}
   r_\a B^\epsilon we = r_ \a U^{\epsilon\a} U_{\epsilon\a }wT e =  U^{\epsilon \a} r_\a U_{\epsilon\a} w T e = U^{\epsilon\a} r_\a w T U_{\epsilon w^{-1}\a}e \subseteq B^\epsilon r_\a weB^\delta.
\end{eqnarray*}
Hence $ B^\epsilon r_\a B^\epsilon we B^\delta = B^\epsilon r_\a we B^\delta$.

We have $ r_\a B^\epsilon r_\a \subseteq B^\epsilon \cup B^\epsilon r_\a B^\epsilon$ because $(B^\epsilon, N)$ is a BN-pair of $G$. If $\epsilon w^{-1}\a\in (\Delta^*(e)\cap\Delta^{re}_{-\delta})\cup  \Delta_n(e)$ then $\epsilon (r_\a w)^{-1}\a = - \epsilon w^{-1}\a\in\Delta_p(e)\cup (\Delta^*(e)\cap\Delta^{re}_\delta)$, and we find
\begin{eqnarray*}
      B^\epsilon r_\a B^\epsilon we B^\delta\hspace{-5mm} &&=   B^\epsilon r_\a B^\epsilon  r_\a r_\a we B^\delta \subseteq B^\epsilon(B^\epsilon \cup B^\epsilon  r_\a B^\epsilon) r_\a we B^\delta\\
    &&= B^\epsilon r_\a we B^\delta \cup B^\epsilon r_\a B^\epsilon  r_\a we B^\delta =  B^\epsilon r_\a we B^\delta \cup B^\epsilon r_\a r_\a we B^\delta\\
    &&= B^\epsilon r_\a we B^\delta \cup B^\epsilon we B^\delta .
\end{eqnarray*}
\hfill$\Box$\end{proof}

\vspace{10mm}
\noindent
Zhenheng Li

\noindent
College of Mathematics and Information Science, Hebei University, Baoding, Hebei, 071002, China, and Department of Mathematical Sciences, University of South Carolina Aiken, Aiken, SC 29803, USA. Email: zhenhengl@usca.edu

\vspace{2mm}
\noindent Zhuo Li

\noindent
Department of Mathematics, Xiangtan University, Xiangtan, Hunan, 411105, China. Email: zli@xtu.edu.cn

\vspace{2mm}
\noindent  Claus Mokler

\noindent
Department of Mathematics, University Bochum, 44780 Bochum, Germany. Email: claus.mokler@web.de

\end{document}